\theoremstyle{plain} 
\newtheorem{theorem}             {Theorem} 
\newtheorem{corollary}[theorem] {Corollary}
\newtheorem{conjecture}[theorem] {Conjecture}
\theoremstyle{definition}
\theoremstyle{plain} 
\newtheorem{proposition} [theorem] {Proposition}
\theoremstyle{remark}
\newtheorem{definition} [theorem]  {Definition}
\newtheorem*{definition*}  {Definition}
\newtheorem*{example*}    {Example}
\newtheorem{remark}  [theorem]           {Remark}
\newtheorem*{remark*}            {Remark}
\newtheoremstyle{itplain} 
    {6pt}                    
    {5pt\topsep}                    
    {\itshape}                   
    {}                           
    {\itshape}                   
    {.}                          
    {5pt plus 1pt minus 1pt}                       
    {}  
\theoremstyle{itplain} 
\newtheorem{lemma}[theorem]{Lemma}
\newtheorem*{lemma*}{Lemma}
\newtheorem*{corollary*} {Corollary} 
\theoremstyle{remark} 
\newtheorem*{lemmatest*}{Lemma}
\patchcmd{\section}{\scshape}{\bfseries}{}{}
\renewcommand{\@secnumfont}{\bfseries}
\renewcommand{\Re}{\mathrm{Re}}
\renewcommand{\Im}{\mathrm{Im}}
\renewcommand{\geq}{\geqslant}
\renewcommand{\leq}{\leqslant}
\numberwithin{equation}{section}
\numberwithin{theorem}{section}
\DeclareMathOperator{\SL}{SL}
\DeclareMathOperator{\GL}{GL}
\DeclareMathOperator{\htt}{ht}
\DeclareMathOperator{\ad}{ad}
\def\eps{\varepsilon}
\def\PGL{\operatorname{PGL}}
\DeclareMathOperator{\U}{U}
\DeclareMathOperator{\norm}{norm}
\DeclareMathOperator{\gen}{gen}
\DeclareMathOperator{\PSL}{PSL}
\DeclareMathOperator{\Lie}{Lie}
\def\O{\operatorname{O}}
\DeclareMathOperator{\Eis}{Eis}
\DeclareMathOperator{\reg}{reg}
\DeclareMathOperator{\Ind}{Ind}
\DeclareMathOperator{\res}{res}
\DeclareMathOperator{\vol}{vol}
\author{Paul D. Nelson}
\address{ETH Z{\"u}rich, Department of Mathematics, R{\"a}mistrasse 101, CH-8092, Z{\"u}rich, Switzerland}
\email{paul.nelson@math.ethz.ch}
\subjclass[2010]{Primary 11F66; Secondary 11F70, 11F03}
\date{\today}
\title{Eisenstein series and the cubic moment for
  $\PGL_2$}
\begin{document}

\begin{abstract}
  Following a strategy suggested by Michel--Venkatesh, we study
  the cubic moment of automorphic $L$-functions on $\PGL_2$
  using regularized diagonal periods of products of Eisenstein
  series.  Our main innovation is to produce vectors whose
  integral transforms achieve arbitrarily weighted moments.
  Applications include general Motohashi-type identities and
  Weyl-type subconvex bounds for some families of $L$-functions,
  extending some results of Conrey--Iwaniec and Petrow--Young to
  the number field setting.  We deduce
  improved estimates for representation numbers of ternary
  quadratic forms over number fields and
  for the
  prime geodesic theorem on arithmetic hyperbolic $3$-folds.
\end{abstract}
\maketitle

\setcounter{tocdepth}{1} \tableofcontents

\section{Introduction}
\label{sec-2}
\subsection{Overview and motivation}\label{sec:intro-overview}
Michel--Venkatesh
(see
\cite[\S4.5.3]{michel-2009}, \cite{MichelVenkateshICM})
suggested a strategy
for establishing spectral
identities between moments of $L$-functions, generalizing those
introduced by Motohashi \cite{MR1226527}.  They emphasized the
further problem \cite[\S4.5.4]{michel-2009} of implementing that
strategy in a sufficiently flexible form, and suggested that
doing so might lead to strong subconvex bounds.

We address that
problem and apply their strategy to establish a general formula
for the cubic moment of central values of automorphic
$L$-functions on $\PGL_2$.
As a first application of that formula,
we generalize a theorem of Conrey--Iwaniec
\cite{MR1779567}
(see also \cite{MR3394377, MR3968874})
from the rational numbers
to general number fields:
\begin{theorem}\label{thm:CI}
  Let $F$ be a number field with adele ring $\mathbb{A}$, 
  let $\chi$ be a quadratic character of
  $\mathbb{A}^\times/F^\times$,
  and let $\sigma$ be
  either a cuspidal automorphic representation
  of $\PGL_2(\mathbb{A})$
  or a unitary Eisenstein series.
  Then the Weyl-type subconvex bound
  \begin{equation}\label{eq:weyl-bound-sigma-chi}
    L(\sigma \otimes \chi, 1/2)
    \ll_{\sigma}
    C(\chi)^{1/3+\eps}
  \end{equation}
  holds,
  with the implied constant depending polynomially
  upon $C(\sigma)$.
  In particular,
  \begin{equation}\label{eqn:weyl-bound-chi}
    L(\chi,1/2) \ll C(\chi)^{1/6+\eps}.
  \end{equation}
\end{theorem}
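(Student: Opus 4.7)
The plan is to deduce Theorem~\ref{thm:CI} from the general cubic moment identity that constitutes the main technical contribution of the paper, combined with the non-negativity of central values. The underlying strategy is the one of Conrey--Iwaniec: bound a cubic moment over a family of quadratic characters by a ``dual'' geometric side, then extract an individual estimate by dropping all but one term.

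Fix $\sigma$ and a target quadratic character $\chi_{0}$ of conductor $Q = C(\chi_{0})$. By Waldspurger's formula (in the cuspidal case) or a direct factorization (in the unitary-Eisenstein case), one has
\[
L(\sigma \otimes \chi, 1/2) \geq 0
\]
for every quadratic $\chi$. I would then apply the cubic moment formula with a test vector $\Psi$ tailored using the flexible construction advertised in the abstract, to obtain a spectral identity schematically of the shape
\[
\sum_{\chi \text{ quadratic}} L(\sigma \otimes \chi, 1/2)^{3} \, h_{\Psi}(\chi) \;+\; (\text{continuous contribution}) \;=\; \mathcal{I}(\Psi),
\]
where $h_{\Psi} \geq 0$ is concentrated on characters of conductor $\asymp Q$ with $h_{\Psi}(\chi_{0}) \gg 1$, and $\mathcal{I}(\Psi)$ is a regularized diagonal period of a product of Eisenstein series built from $\sigma$ and $\Psi$. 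Dropping all summands but $\chi_{0}$ on the left yields
\[
L(\sigma \otimes \chi_{0}, 1/2)^{3} \ll \mathcal{I}(\Psi),
\]
so that \eqref{eq:weyl-bound-sigma-chi} reduces to showing $\mathcal{I}(\Psi) \ll_{\sigma} Q^{1+\eps}$. The bound \eqref{eqn:weyl-bound-chi} then follows by specializing to the Eisenstein $\sigma = \mathbf{1} \boxplus \mathbf{1}$ (or a nearby degeneration in the unitary-Eisenstein family), using the factorization $L(\sigma \otimes \chi, 1/2) = L(\chi, 1/2)^{2}$.

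The main obstacle will be the uniform estimation of $\mathcal{I}(\Psi)$ in the number field setting. Unfolding the regularized diagonal period expresses it as a sum of local integral transforms of $\Psi$ against arithmetic data indexing the relevant double cosets in $\PGL_{2}(F) \backslash \PGL_{2}(\mathbb{A})$. The key innovation highlighted in the abstract --- the ability to engineer $\Psi$ so that its spectral transform $h_{\Psi}$ is sharply localized at $\chi_{0}$ while its geometric transforms remain amenable to evaluation --- is precisely what resolves the tension flagged by Michel--Venkatesh in \cite[\S4.5.4]{michel-2009}, and is where the principal technical work resides. Granted this flexibility, the remaining geometric sum should be bounded by Kloosterman- or divisor-type estimates performed place by place, with ramified contributions and cuspidal-geometry terms handled by local analysis uniform across number fields; this is where the generalization of Conrey--Iwaniec and Petrow--Young to arbitrary $F$ must pay its dues.
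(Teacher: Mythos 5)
Your high-level philosophy (nonnegativity of central values, a cubic moment identity, positivity to drop all but one term) is the right one, but the moment you propose to use is not the one the paper establishes, and the discrepancy is not cosmetic. You take the cubic moment over a family of \emph{quadratic characters} $\chi$ with $\sigma$ fixed, i.e.\ $\sum_{\chi}L(\sigma\otimes\chi,1/2)^{3}h_{\Psi}(\chi)$. The identity \eqref{eqn:basic-moment-identity} --- which is what both this paper and Conrey--Iwaniec actually use --- is a moment over the \emph{spectrum of} $\PGL_2$: $\int_{\sigma:\text{generic}}L^{(S)}(\sigma,1/2)^{3}h(\sigma)/L^{(S),*}(\sigma\times\sigma,1)$, with the fixed character $\chi$ entering only through the choice of local weights $h_{\mathfrak{p}}$, which are engineered (via pre-Kuznetsov kernels supported on conductor-$C(\chi)$ congruence-type subsets, \S\ref{sec:constr-suit-weight}) to be nonnegative on the unitary dual and $\gg 1/Q$ at the target representation $\sigma_0\otimes\chi$, a single member of that spectral family. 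A cubic moment over quadratic twists of a fixed form is a genuinely different object (of multiple Dirichlet series type) and does not arise from expanding $\int_{[A]}\Eis^*(f_1)\Eis^*(f_2)$ in the two ways described in \S\ref{sec:intro-motivation}; no identity of the shape you write down is produced by this paper's machinery.

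Relatedly, your description of the dual side as a geometric sum to be handled by Kloosterman- or divisor-type estimates misses its actual structure: Parseval on $[A]$ applied to the same period produces a \emph{fourth moment of $\GL_1$ $L$-functions}, $\int_{\omega}|L^{(S)}(\omega,1/2)|^{4}\tilde{h}(\omega)$, plus fifteen degenerate functionals. Closing the argument then requires (a) upper bounds for the local dual weights $\tilde{h}_{\mathfrak{p}}(\omega)$, which is where the two-variable character sums and Deligne's bounds enter (Proposition \ref{prop:non-arch-estimates-key} and Appendix \ref{lem:char-sum-CI}); (b) the global fourth moment bound \eqref{eqn:fourth-moment-after-2} for Hecke $L$-functions over number fields, extracted from Han Wu's work, without which no amount of local analysis suffices; and (c) the estimate $(\dotsb)\ll Q^{\eps}$ for the degenerate terms (\S\ref{sec:handl-degen-terms-1}, \S\ref{sec:handl-degen-terms-2}). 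None of these appears in your outline. Two smaller omissions: the claimed polynomial dependence on $C(\sigma)$ forces one to replace the soft construction of Lemma \ref{lem:crude-lower-bound-individual} by the effective weights of \S\ref{sec:crude-local-estim}, and the specialization to $\sigma=\Eis^*(\mathcal{I}(0))$ needed for \eqref{eqn:weyl-bound-chi} requires the regularization \eqref{eqn:eis-upper-bound-integral-trick}, since the spectral measure degenerates there ($L^{*}(\sigma\times\sigma,1)\asymp t^{-2}$).
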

Here and henceforth $\eps$ denotes a fixed sufficiently small
positive quantity, whose precise value may change from line to
line, and the asymptotic notation $A \ll B$
or $A = \O(B)$ denotes an estimate of the form
$|A| \leq C |B|$, where the \emph{implied constant}
$C \geq 0$ depends only upon $\eps$ and
the number field $F$.
The refined notation
$A \ll_{x,y,z} B$
or $A = \O_{x,y,z}(B)$
signifies
that $C$ may depend also upon
$x,y,z$.
We write $C(\chi)$ for the analytic conductor,
given by the product over all places $\mathfrak{p}$ 
of the local analytic conductor $C(\chi_\mathfrak{p})$
as defined in \cite[\S3.1.8]{michel-2009} or \S\ref{sec:intro-l-factors}.

The above ``subconvex'' estimates improve upon the
respective ``trivial''
or ``convexity'' bounds of $C(\chi)^{1/2+\eps}$ and
$C(\chi)^{1/4+\eps}$, and also upon earlier nontrivial subconvex
bounds (see \cite{2009arXiv0904.2429B, MR3977317, MR3213837, MR3594414} and
references).  Via period formulas as in \cite{MR3885172, MR4001088,MR2322488} (see also \cite{MR646366,
  MR783554, MR1233447,MR1404335,MR3112415, MR3112415,MR3649366}) these
estimates lead to improved bounds for the Fourier coefficients
of half-integral weight modular forms over number fields
(cf. \cite[Cor 1]{2009arXiv0904.2429B}), hence to improved
estimates for representation numbers of ternary quadratic forms
over number fields.
For instance,
we obtain the following
numerical improvement upon \cite[Cor 2]{2009arXiv0904.2429B},
reducing the exponent
$7/16 + \vartheta/8$
to $5/12$:
\begin{corollary}
  Let $Q$ be a positive integral ternary quadratic form over
  a
  totally real number field $F$.
  For an element $n$ of the ring of integers of $F$,
  let $r_Q(n)$ denote the number of integral representations
  of $n$ by $Q$.
  For squarefree $n$ locally represented by $Q$,
  \begin{equation}
    r_Q(n)
    = r(n)
    + \O_Q(\norm(n)^{5/12+\eps}),
  \end{equation}
  where
  $r(n) = \norm(n)^{1/2+o(1)}$
  is the
  product of local densities as in the Siegel mass formula.
\end{corollary}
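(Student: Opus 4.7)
The plan is to reduce the estimate on $r_Q(n) - r(n)$ to a pointwise bound on Fourier coefficients of half-integral weight cusp forms, and then apply Theorem~\ref{thm:CI} through a Waldspurger-type period formula.

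First, I would invoke the theta correspondence for $(\O(Q), \widetilde{\SL}_2)$ to realize the generating series $\sum_n r_Q(n) e(n)$ as a theta lift of weight $3/2$ on the metaplectic double cover of $\SL_2(\mathbb{A})$. By the Siegel--Weil formula, the Eisenstein part of this theta series has $n$-th Fourier coefficient equal to the product of local densities $r(n)$, so the difference $r_Q(n) - r(n)$ is captured by the cuspidal part $\theta_Q^{\cusp}$, a finite sum (depending on $Q$ but not on $n$) of half-integral weight cusp forms $g$ on the metaplectic group. It therefore suffices to bound the Fourier coefficients $c_g(n)$ of each such $g$ by $\O_g(\norm(n)^{5/12+\eps})$ for squarefree $n$.

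Second, for each such $g$, I would appeal to the Waldspurger/Baruch--Mao formula (in the number field version given, e.g., in \cite{MR3885172, MR4001088, MR2322488}), which expresses
\begin{equation*}
  |c_g(n)|^2 = \norm(n)^{1/2} \cdot L(\sigma \otimes \chi_n, 1/2) \cdot \mathcal{L}(n),
\end{equation*}
where $\sigma$ is the Shimura lift of $g$ (a cuspidal representation of $\PGL_2(\mathbb{A})$), $\chi_n$ is the quadratic character associated with the extension $F(\sqrt{n})/F$, and $\mathcal{L}(n)$ is a product of local factors at the ramified places, which are bounded polynomially in the local conductor (and are nonzero precisely because $n$ is locally represented by $Q$). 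The conductor of $\chi_n$ is $\asymp \norm(n)$ for $n$ squarefree.

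Third, I would apply \eqref{eq:weyl-bound-sigma-chi} to the inner $L$-value, obtaining $L(\sigma \otimes \chi_n, 1/2) \ll_\sigma \norm(n)^{1/3+\eps}$. Combining this with the Waldspurger formula gives
\begin{equation*}
  |c_g(n)|^2 \ll_g \norm(n)^{1/2} \cdot \norm(n)^{1/3+\eps} = \norm(n)^{5/6+\eps},
\end{equation*}
hence $|c_g(n)| \ll_g \norm(n)^{5/12+\eps}$, which summed over the finitely many $g$ appearing in $\theta_Q^{\cusp}$ yields the claimed bound.

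I expect the main technical obstacle to be bookkeeping the local factors $\mathcal{L}(n)$ in the Waldspurger-type formula over a general totally real number field: one must verify that the ramified local factors grow at most polynomially in the conductor and vanish in a way that is controlled by the ``locally represented'' hypothesis, so that no loss beyond $\norm(n)^\eps$ is incurred. The global analytic input, by contrast, is supplied cleanly by Theorem~\ref{thm:CI}.
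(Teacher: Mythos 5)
Your proposal is correct and follows exactly the route the paper intends: Siegel--Weil to identify $r(n)$ with the Eisenstein part of the theta series, Waldspurger/Baruch--Mao type period formulas (the ones cited in \S\ref{sec:intro-overview}) to convert the cuspidal Fourier coefficients into $\norm(n)^{1/2} L(\sigma \otimes \chi_n,1/2)$ up to controlled local factors, and then the Weyl bound \eqref{eq:weyl-bound-sigma-chi}, giving $\norm(n)^{(1/2+1/3)/2} = \norm(n)^{5/12}$. The paper does not write this out in more detail than you have, so there is nothing to compare beyond noting that your arithmetic reproduces both the new exponent $5/12$ and (with the older subconvexity input) the previous exponent $7/16+\vartheta/8$.
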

As a further application, we may combine Theorem \ref{thm:CI}
with
recent work of Balog, Bir{\'o}, Cherubini and Laaksonen
(see
\cite[Cor 1.4, Rmk 2]{2019arXiv191101800B})
to sharpen the error term in the prime
geodesic theorem for $\mathbb{Q}(i)$ \cite[Thm 5.1]{MR723012},
reducing the exponent $\approx 1.60023$ of \cite[Cor
1.2]{2019arXiv191101800B} to $67/42 \approx 1.59524$:
\begin{corollary}
  Let $\Psi(X)$ denote the Chebyshev-type counting function for
  primitive geodesics on
  $\PSL_2(\mathbb{Z}[i]) \backslash \mathbb{H}^3$,
  as defined in
  \cite[p1]{2019arXiv191101800B}.
  We have
  \begin{equation}
    \Psi(X) = (1/2) X^2
    + \O(X^{67/42+ \eps}).
  \end{equation}
\end{corollary}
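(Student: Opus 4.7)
The plan is to combine Theorem \ref{thm:CI} with the conditional estimate derived by Balog, Bir{\'o}, Cherubini and Laaksonen \cite[Cor 1.4, Rmk 2]{2019arXiv191101800B}, which expresses the exponent in the error term of the prime geodesic theorem on $\PSL_2(\mathbb{Z}[i]) \backslash \mathbb{H}^3$ as an explicit (monotone) function of the best available subconvex exponent $\theta$ in a bound of the form $L(\chi, 1/2) \ll C(\chi)^{\theta + \eps}$ for quadratic Hecke characters $\chi$ of $\mathbb{Q}(i)$. Their \cite[Cor 1.2]{2019arXiv191101800B} records the exponent $\approx 1.60023$ that results from inserting the previously available Burgess-type subconvex exponent.

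The first step is to specialize the estimate \eqref{eqn:weyl-bound-chi} of Theorem \ref{thm:CI} to the case $F = \mathbb{Q}(i)$, yielding the Weyl-type bound $L(\chi, 1/2) \ll C(\chi)^{1/6 + \eps}$ for every quadratic Hecke character $\chi$ of $\mathbb{A}^{\times}/\mathbb{Q}(i)^{\times}$. The second step is to substitute $\theta = 1/6$ into the explicit formula of \cite[Cor 1.4]{2019arXiv191101800B} in place of the Burgess exponent used in their Corollary~1.2, and to carry out the rational simplification; the result should collapse to $67/42$.

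The only delicate point is bookkeeping rather than analysis: one must verify that the class of characters, the normalization of the analytic conductor $C(\chi)$, and the form of the polynomial dependence on auxiliary parameters that are required as input by \cite{2019arXiv191101800B} all match the corresponding conventions adopted in Theorem \ref{thm:CI}. Once these conventions are aligned, the corollary follows immediately from the substitution above, and there is no genuinely difficult step to overcome.
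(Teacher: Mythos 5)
Your proposal is correct and follows exactly the paper's own argument: specialize Theorem \ref{thm:CI} to $F=\mathbb{Q}(i)$ (noting the needed polynomial dependence on the $t$-aspect, i.e.\ $L(\chi,1/2+it)\ll(1+|t|)^{\O(1)}C(\chi)^{1/6+\eps}$) and insert the exponent $1/6$ into \cite[Cor 1.4]{2019arXiv191101800B}, the arithmetic being $3/2+(4/7)\cdot(1/6)=67/42$.
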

\begin{proof}
  Theorem \ref{thm:CI} implies the bound
  $L(\chi,1/2 + it) \ll (1 + |t|)^{\O(1)} C(\chi)^{1/6+\eps}$
  for quadratic characters $\chi$ of
  $\mathbb{A}^\times /F^\times$,
  $F = \mathbb{Q}(i)$.  Inserting this bound into
  \cite[Cor 1.4]{2019arXiv191101800B}
  (and computing
  that $3/2 + (4/7) \cdot (1/6) = 67/42$)
  gives the required
  estimate.
\end{proof}


\subsection{The proposed strategy}\label{sec:intro-motivation}
We summarize the strategy proposed by Michel--Venkatesh for
establishing spectral identities
generalizing those of Motohashi.

Let $F$ be a number field with adele ring $\mathbb{A}$.  Set
$G := \PGL_2(F)$, let $A \leq G$ denote the diagonal subgroup,
and write
$[G] := G \backslash G_{\mathbb{A}}$ and $[A] := A \backslash
A_{\mathbb{A}}$ for the corresponding adelic quotients.  Let
$\mathcal{I}(0)$ denote the representation of $G_\mathbb{A}$
defined by normalized induction of the trivial character
of the standard Borel
(see \S\ref{sec:intro-representations}).  For
$f \in \mathcal{I}(0)$, write $\Eis^*(f)$ for the associated
normalized Eisenstein series (\S\ref{sec-5-2}).
For instance, if $F = \mathbb{Q}$
and $f$ is normalized spherical,
then $\Eis^*(f)$
corresponds
to the derivative $\frac{d }{d s} E(s,z)|_{s=1/2}$
at the central point
of the classical 
$\SL_2(\mathbb{Z})$-invariant
Eisenstein series
$E(s,z) = y^s + \dotsb$.
Working formally for the moment
(ignoring important issues of convergence and regularization),
consider the
(divergent) diagonal integral of the product of two such Eisenstein series,
attached to $f_1, f_2 \in \mathcal{I}(0)$, over
the adelic quotient $[A]$ of the diagonal subgroup:
\begin{equation}\label{eq:integral-product-eis}
  \int_{[A]} \Eis^*(f_1) \Eis^*(f_2).
\end{equation}
We may expand \eqref{eq:integral-product-eis} in two ways.
On the one hand, expanding the product of Eisenstein series
over the spectrum of $[G]$
yields
\begin{equation}\label{eq:intro-spectral-expn}
  \int_{\sigma:\text{generic}}
  \sum _{\varphi \in \mathcal{B}(\sigma)}
  \int_{[G]} \Eis^*(f_1) \Eis^*(f_2) \overline{\varphi }
  \int_{[A]} \varphi 
  + (\dotsb),
\end{equation}
where the integral is over generic standard automorphic
representations $\sigma$ of $\PGL_2(\mathbb{A})$
and $(\dotsb)$ denotes the ``contribution of the
one-dimensional representations.''
(We note that the integral
over $\sigma$ is typically
written as a sum over the cuspidal representations
plus an integral over Eisenstein series,
see \S\ref{sec-9-3} for details
and precise
normalizations.)
On the other
hand, the Parseval relation on the group $[A]$ yields
\begin{equation}\label{eq:intro-parseval-A}
  \int_{\omega\text{:unitary}}
  \left(
    \int_{[A]} \Eis^*(f_1)
    \omega
  \right)
  \left(
    \int_{[A]} \Eis^*(f_2) \omega^{-1}
  \right),
\end{equation}
where the integral is taken over unitary characters $\omega$ of
$[A]$.
By unfolding the global Hecke and
Rankin--Selberg integrals (\S\ref{sec-5-4}, \S\ref{sec-5-6}, \S\ref{sec:global-inv-func})
in each of these expansions,
we ``deduce'' that for factorizable vectors
$f_i = \otimes f_{i \mathfrak{p}}$ and some large enough finite
collection $S$ of places of $F$,
\begin{align}
  \label{eqn:basic-moment-identity}
  &\int_{\substack{
    \sigma:\text{generic}, \\
  \text{unram. outside $S$}
  }}
  \frac{L^{(S)}(\sigma,1/2)^3}{L^{(S),*}(\sigma \times \sigma,
  1)}
  h(\sigma)
  \\ \nonumber
  &\quad =
    (\dotsb) +
    \int_{\substack{
    \omega:\text{unitary}, \\
  \text{unram. outside $S$}
  }}
  \frac{|L^{(S)}(\omega,1/2)|^4}{\zeta_F^{(S),*}(1)^2}
  \tilde{h}(\omega),
\end{align}
where $L^{(S)}(\dotsb)$ denotes a partial $L$-function,
an asterisk signifies taking the first nonvanishing
Laurent coefficient,
and the weights
\[
  h(\sigma) =
  \prod_{\mathfrak{p} \in S}
  h_\mathfrak{p}(\sigma_\mathfrak{p}),
  \quad
  \tilde{h}(\omega) = \prod_{\mathfrak{p} \in S}
  \tilde{h}_\mathfrak{p}(\omega_\mathfrak{p})
\]
are products of local weights given
in terms of local Hecke and Rankin--Selberg integrals
(\S\ref{sec-4-3}, \S\ref{sec-4-5}):
\begin{equation}\label{eq:intro-ell-sigma}
    h_\mathfrak{p}(\sigma_\mathfrak{p})
  :=
  \sum _{W_\mathfrak{p} \in \mathcal{B}(\sigma_{\mathfrak{p}})}
  \int_{N_\mathfrak{p} \backslash G_\mathfrak{p}}
  \overline{W_{\mathfrak{p}}}
  W_{f_{1 \mathfrak{p}}}
  f_{2 \mathfrak{p}} 
  \int_{A_{\mathfrak{p}}} W_\mathfrak{p},
\end{equation}
\begin{equation}\label{eq:intro-ell-omega}
    \tilde{h}_\mathfrak{p}(\omega_\mathfrak{p})
  :=
  \int_{A_\mathfrak{p}}
  W_{f_{1 \mathfrak{p}}} \omega
  \int_{A_\mathfrak{p}}
  W_{f_{2 \mathfrak{p}}} \omega^{-1}.
\end{equation}

\subsection{The basic spectral identity}
While the argument just recorded was highly non-rigorous, we
prove that the conclusion is nevertheless valid
(see Corollary \ref{cor:basic-identity-summary}
for the precise statement):
\begin{theorem}\label{thm:basic-spectr-ident}
\label{thm:intro-basic-decomp-mv}
The identity \eqref{eqn:basic-moment-identity} holds
with
$(\dotsb)$ the limit of a sum of fifteen ``degenerate
functionals''
$\mathcal{I}(0) \otimes \mathcal{I}(0) \rightarrow \mathbb{C}$
defined in \S\ref{sec-12}.
\end{theorem}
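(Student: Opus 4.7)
The plan is to give rigorous meaning to the divergent heuristic \eqref{eq:integral-product-eis} and to expand the resulting regularized period in the two ways sketched in \S\ref{sec:intro-motivation}. Since each $\Eis^*(f_i)$ fails to decay at the cusps, one cannot integrate the product directly over $[A]$. My first step is therefore to introduce a holomorphic family of Eisenstein series $E(s_i, f_i)$ whose leading Laurent coefficient at $s_i=0$ is $\Eis^*(f_i)$, and to consider the two-parameter regularized integral
\[
  Z(s_1, s_2) := \int_{[A]} E(s_1, f_1) \, E(s_2, f_2),
\]
which converges absolutely for $\Re(s_i)$ large enough that both series decay on the cusp of $[A]$. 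The diagonal period is defined as the appropriate Laurent coefficient of the meromorphic continuation of $Z$ at $(0,0)$, and the content of the theorem is that this coefficient admits two expressions whose equality is \eqref{eqn:basic-moment-identity}.

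Next I would carry out each expansion separately. On the \emph{spectral side}, Plancherel on $[G]$ decomposes $\Eis^*(f_1)\Eis^*(f_2)$ against the $\PGL_2(\mathbb{A})$-spectrum; for each generic $\sigma$, the corresponding piece pairs the triple product $\langle \Eis^*(f_1)\Eis^*(f_2), \varphi \rangle$ with the toric period $\int_{[A]} \varphi$, and the Rankin--Selberg and Hecke unfoldings recalled in \S\ref{sec:global-inv-func} and the preceding sections factorize these globally as $L^{(S)}(\sigma,1/2)^3 / L^{(S),*}(\sigma \times \sigma, 1)$ times the local product $h(\sigma)$ from \eqref{eq:intro-ell-sigma}. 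On the \emph{character side}, Parseval applied to the quotient $[A]$ splits the integral into factors $\int_{[A]} E(s_i, f_i)\, \omega$; each factor unfolds to a global Hecke $L$-value $L^{(S)}(\omega,1/2)$ weighted by the local integrals in \eqref{eq:intro-ell-omega}, producing the second line of \eqref{eqn:basic-moment-identity}.

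The heart of the argument is the isolation of the fifteen degenerate functionals comprising $(\dotsb)$. These arise from several distinct sources: the non-generic part of the $[G]$-spectrum (constants and one-dimensional subrepresentations); the poles of $E(s_i, f_i)$ appearing in the Laurent expansion at $(s_1, s_2) = (0,0)$ and their interactions with the toric period; the boundary terms produced when exchanging the regularization with the Parseval decomposition on $[A]$; and the trivial and quadratic characters on $[A]$, which force extra polar contributions. Each such source contributes a bilinear functional $\mathcal{I}(0) \otimes \mathcal{I}(0) \to \mathbb{C}$; I would enumerate them case by case and organize them into the fifteen functionals of \S\ref{sec-12}, whose limit is precisely $(\dotsb)$.

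The main obstacle I anticipate is the justification of the interchange of limits: showing that, after meromorphic continuation in $(s_1, s_2)$, both the spectral and Parseval expansions converge absolutely on a common domain, and that the Laurent operation at $(0,0)$ commutes with the spectral integrals. This reduces to uniform estimates on the truncated Rankin--Selberg integrals and their derivatives in the spectral parameter, together with a careful accounting of the poles of the local zeta integrals defining $h$ and $\tilde h$ when the inducing data degenerate. Everything else, including the unfolding manipulations and the identification of the $L$-factors, should reduce to the machinery already developed in the preceding sections of the paper.
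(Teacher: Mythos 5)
Your overall strategy --- deform the Eisenstein data, expand the diagonal period both spectrally on $[G]$ and via Parseval/Mellin on $[A]$, and collect the degenerate contributions from the regularization and the contour shifts --- is the strategy the paper actually follows. But there is a genuine gap at the foundation of your argument. You define $Z(s_1,s_2) := \int_{[A]} E(s_1,f_1)E(s_2,f_2)$ and assert absolute convergence ``for $\Re(s_i)$ large enough that both series decay on the cusp of $[A]$.'' No such $s_i$ exist: an Eisenstein series never decays at the cusp, and by \eqref{eq:estimate-eisenstein-series-near-cusp} the product restricted to $[A]$ behaves like a combination of $|y|^{1\pm s_1\pm s_2}$ as $|y|\to\infty$ (and $|y|^{-1\pm s_1 \pm s_2}$ as $|y|\to 0$). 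Since $(1+s_1+s_2)+(1-s_1-s_2)=2$, at least one exponent always has positive real part, so $\int_{[A]}$ of the product diverges for \emph{every} $(s_1,s_2)$. The integral must be regularized from the outset (the notion of \S\ref{sec-9-1}), which is legitimate exactly when the trivial character does not occur among the exponents --- whence the genericity condition \eqref{eqn:assumptions-s-2} --- and your Laurent-coefficient definition of the period has no absolutely convergent region to continue from.

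Relatedly, you are missing the third deformation parameter. The paper works with $\ell_s(f_1\otimes f_2)=\int_{[A]}^{\reg}\Eis^*(f_1)\Eis^*(f_2)\,|.|^{s_3}$ for $s=(s_1,s_2,s_3)$, and the parameter $s_3$ is not cosmetic: the interchange of the regularized $[A]$-integral with the spectral expansion of $\Psi-\mathcal{E}$ is justified only in the region $\Re(s_3)>1/2$, where $\int_{[A]}(\Psi_\sigma-\Psi_{\sigma,N})|.|^{s_3}$ is dominated uniformly in $\sigma$ via \eqref{eqn:estimate-Psi-sigma-minus-constant-term}; one then continues back to $s_3$ near $0$ by shifting the Eisenstein spectral contour, and it is precisely the residues at $t=\pm(s_3-1/2)$ that produce two of the fifteen degenerate functionals ($\ell^{\deg}_{6,s},\ell^{\deg}_{7,s}$), with a further $N_{\mathbb{A}}$-invariant term from the constant terms $\Psi_{\sigma,N}$. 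Your two-parameter setup has no mechanism for this interchange, and your enumeration of the degenerate terms (non-generic spectrum, poles of the $E(s_i,f_i)$, boundary terms, trivial/quadratic characters) does not match the actual bookkeeping: the paper's count is $4$ (regularizing Eisenstein series $\mathcal{E}$) $+\,1$ (constant term) $+\,2$ (contour-shift residues) on the $G$-side and $4+4$ (subtracted asymptotics plus Mellin-shift residues at $\omega=|.|^{\pm 1/2\pm s_i}$) on the $A$-side. To repair the proposal you would need to replace $Z(s_1,s_2)$ by the regularized three-parameter functional and carry out the two continuation arguments of Theorems \ref{thm:GG-G-A} and \ref{thm:GG-AA-A}.
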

The proof begins by deforming the Eisenstein series
generically and ends by taking a limit.
Following Zagier \cite{MR656029} and Michel--Venkatesh
\cite[\S4.3]{michel-2009}, the
product of deformed Eisenstein series differs by a
finite linear combination of
Eisenstein series from an
$L^2$-function, which in turn admits a spectral expansion.  On
the other hand, the diagonal integral
\eqref{eq:integral-product-eis} of that product admits a
canonical regularization and enjoys a modified form of
Parseval's identity \eqref{eq:intro-parseval-A}.
Interchanging the regularized diagonal
integral with the spectral expansion introduces
additional terms.  Some of these arguments were sketched or
suggested by Michel--Venkatesh \cite[\S4.5]{michel-2009} in the
special case that $F = \mathbb{Q}$ and the $f_i$ are spherical.


\subsection{Achieving arbitrarily weighted cubic moments}\label{sec:achi-arbitr-weight}
We turn to the main problem addressed by this paper,
which was
raised in \cite[\S4.5.4]{michel-2009}.

In seeking to apply
Theorem \ref{thm:intro-basic-decomp-mv}, several fundamental
questions arise.  Which weights $h$ (or $\tilde{h}$) arise from
the above scheme applied to some choice of
$f_1, f_2 \in \mathcal{I}(0)$, or more generally of some tensor
$f \in \mathcal{I}(0) \otimes \mathcal{I}(0)$?  Let us call such
weights \emph{admissible}.  It is not \emph{a priori} obvious
that the collection of admissible weights is sufficiently rich
for the sake of applications.  Can one find a nonnegative-valued
admissible $h$ (or $\tilde{h}$) that localizes to a given subset
of its domain?  Can one explicitly relate $h$ to $\tilde{h}$?
Can one efficiently estimate the degenerate terms $(\dotsb)$?
The main point of this paper is to initiate the systematic study
of such questions.

We focus here on studying the cubic moment
via \eqref{eqn:basic-moment-identity}.
To do so effectively,
we need to know that the characteristic functions of
interesting spectral families of automorphic representations
$\sigma$ may be approximated by nonnegative admissible weights
$h(\sigma)$.  This possibility is confirmed by one of our main local
results 
(Theorem \ref{thm:constr-admiss-weight}),
summarized here informally:
\begin{theorem}\label{thm:summarize-kuznetsov-weights-adm}
  Let $h(\sigma)$ be a weight function that shows up on the
  spectral side of the pre-Kuznetsov formula (i.e., the relative
  trace formula for twisted unipotent periods with a
  compactly-supported test function).  Then $h$ is admissible.
  The corresponding dual weight $\tilde{h}(\omega)$ may be
  evaluated via explicit integral transforms.
\end{theorem}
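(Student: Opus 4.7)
I propose to prove the theorem by explicitly constructing, for each admissible pre-Kuznetsov test function $\phi$ on $G_\mathfrak{p}$, a pair $(f_{1\mathfrak{p}}, f_{2\mathfrak{p}}) \in \mathcal{I}(0)_\mathfrak{p} \otimes \mathcal{I}(0)_\mathfrak{p}$ so that (\ref{eq:intro-ell-sigma}) recovers the Kuznetsov weight $h_\phi$, and then reading off $\tilde{h}$ from (\ref{eq:intro-ell-omega}) via the same data. A typical pre-Kuznetsov weight takes the form
\[
    h_\phi(\sigma) = \sum_{W \in \mathcal{B}(\sigma)} \lambda(\pi_\sigma(\phi) W)\, \overline{\lambda(W)},
\]
where $\lambda$ is the $\psi$-Whittaker functional. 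A reproducing-kernel manipulation of (\ref{eq:intro-ell-sigma}) rewrites the admissible weight as $\int_{A_\mathfrak{p}} V_\sigma(a)\, da$, where $V_\sigma$ is the projection of the Whittaker-transforming function $W_{f_{1\mathfrak{p}}} f_{2\mathfrak{p}}$ onto the $\sigma$-isotypic Whittaker model. Matching $h = h_\phi$ therefore reduces to engineering the pair $(f_1, f_2)$ so that this projected vector has Hecke period equal to the Bessel transform of $\phi$ against $\sigma$.

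I would proceed as follows. First, parametrize sections of $\mathcal{I}(0)$ via the Iwasawa decomposition $G_\mathfrak{p} = N_\mathfrak{p} A_\mathfrak{p} K_\mathfrak{p}$ and realize the associated Whittaker functions in the Kirillov model. Second, construct $f_2$ as an explicit pullback of $\phi$ supported on the open Bruhat cell $N_\mathfrak{p} w A_\mathfrak{p} N_\mathfrak{p}$, arranged so that the product $W_{f_1} f_2$, once projected to the Whittaker model of $\sigma$, has an $A$-period matching the Bessel-transform evaluation of $\phi$ against $\sigma$. Third, construct $f_1$ via inverse Mellin transform along the split torus with a Mellin symbol chosen so that its Whittaker function plays the role of an approximate identity in the Kirillov coordinate, effectively pinning the torus period in (\ref{eq:intro-ell-sigma}) to the Whittaker value $W(1) = \lambda(W)$. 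Unfolding then yields $h = h_\phi$.

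With $(f_1, f_2)$ thus constructed, each factor of (\ref{eq:intro-ell-omega}) is a Mellin transform along $A_\mathfrak{p}$ of $W_{f_i}$ against $\omega^{\pm 1}$. The preceding steps express these Mellin transforms as Hankel--Bessel transforms of $\phi$ against principal-series kernels; the product is the desired explicit integral description of $\tilde{h}$, matching the Kloosterman-type weight that appears on the geometric side of the pre-Kuznetsov formula.

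The main obstacle is the Mellin inversion in Step three: the formal inverse may in general produce only a distribution, and one must verify that the construction yields an honest element of $\mathcal{I}(0)$ and that the resulting sum (\ref{eq:intro-ell-sigma}) converges absolutely across the entire unitary dual---including the complementary series and the edge of the tempered spectrum, where the toric period $\int_A W$ is sensitive to poles of local Tate factors hidden inside the Hecke integral. At archimedean places this will require careful growth and contour-shift analysis in $K$-coordinates; at non-archimedean places, a level-aware truncation in the Mellin variable is needed to keep $f_1$ smooth and $K$-finite. Once the matching is secured, the dual-weight computation reduces to bookkeeping in Mellin and Bessel transforms.
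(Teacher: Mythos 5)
Your overall architecture is recognizably the paper's: you correctly rewrite the admissible weight as $\int_{A}(V_f)_\sigma$ with $V_f = W_{f_1}f_2$, you correctly identify that one must first realize the partial Whittaker transform $V(g)=\int\phi(n(x)g)\psi(-x)\,dx$ of the kernel inside the model $\{V_f\}$ (this is Lemma \ref{lem:many-V-f}, proved via a Schwartz-space parametrization on the open cell, essentially your Step 2), and you see that the dual weight then falls out of the transforms relating $W_f$ to $V_f$. But your Step 3 — the mechanism that is supposed to convert the Hecke periods $\int_A W$ in \eqref{eq:intro-ell-sigma} into the Whittaker values $W(1)$ appearing in the pre-Kuznetsov weight — does not work as stated, and this is the crux of the whole theorem. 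The factor $\int_A W$ is a period of the basis vectors $W$ of $\sigma$; it is completely decoupled from $f_1$, so no choice of $f_1$ (approximate identity in its Kirillov coordinate or otherwise) can alter it. Even granting your delta-mass heuristic for $W_{f_1}$, the resulting expression is still $\sum_W c_W \int_A W$, which carries the local $L$-factor $L(\sigma,1/2)$ hidden in the Hecke integral and therefore cannot equal $h_\phi(\sigma)=\sum_W c_W W(1)$ as $\sigma$ varies.

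The correct device is to apply the smoothed Whittaker-vector operator $\phi_0^{\iota}\ast N_\psi$ (with $N_\psi v=\int\psi(-x)n(x)v\,dx$ and $\phi_0\in C_c^\infty(A)$ of total mass one) \emph{diagonally} to the full tensor $f_0$ satisfying $V_{f_0}=V$, as in \eqref{eqn:defn-f-via-f0-basic-case}. Because the projection $\pi\to\mathcal{W}(\sigma,\psi)$ is equivariant for the diagonal $G$-action, this operator passes through the projection and acts on $(V_{f_0})_\sigma$ in the Kirillov model by $W\mapsto \phi_0(1/\cdot)\,W(1)$ (see \eqref{eqn:action-phi0-N-psi-on-kirillov-model}); integrating over $A$ then extracts exactly $(V_{f_0})_\sigma(1)=h(\sigma)$. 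Note that the resulting $f$ is in general \emph{not} a pure tensor $f_1\otimes f_2$, so your insistence on producing a pair rather than a general element of $\mathcal{I}(0)\otimes\mathcal{I}(0)$ is itself an obstruction. Once this is in place, the analytic worries you raise about Mellin inversion producing distributions largely evaporate: the iterated integral defining $\phi_0^\iota\ast N_\psi f_0$ converges to an honest smooth vector, and the convergence of the sum over $\mathcal{B}(\sigma)$ is a separate Sobolev-norm matter independent of the construction. Finally, the dual weight is not the Kloosterman-side weight of the Kuznetsov formula; it is the specific composite transform \eqref{eqn:h-tilde-of-omega-via-h-sharp} built from $V^\wedge$ and $V^\sharp$.
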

We show by
example in \S\ref{sec:appl-cubic-moment}
that the ``pre-Kuznetsov weights''
$h(\sigma)$ alluded to in
Theorem \ref{thm:summarize-kuznetsov-weights-adm}
are adequate for applications.
We show also that the degenerate
terms may be either discarded altogether
(\S\ref{sec:handl-degen-terms-1})
or evaluated explicitly (\S\ref{sec:handl-degen-terms-2}).

The idea behind the proof
of Theorem \ref{thm:summarize-kuznetsov-weights-adm}
is to consider $f$ belonging to a
specific class of generalized vectors (``Whittaker vectors'')
for which the weights $h(\sigma)$ on the cubic moment side
resemble those appearing in the pre-Kuznetsov formula.  We hope
that this technique will be useful more broadly in
period-based approaches to proving spectral identities
between families of $L$-functions.

Theorems \ref{thm:intro-basic-decomp-mv} and
\ref{thm:summarize-kuznetsov-weights-adm} combine to yield
spectral identities for the cubic moment with nonnegative
weights that localize to any given subset of the spectrum.
One can likely adapt the basic proof technique of Theorem
\ref{thm:summarize-kuznetsov-weights-adm} to produce a rich
class of admissible $\tilde{h}$ and study the inverse transform
from $\tilde{h}$ to $h$,
but we leave such pursuits and
their applications to future work.

\subsection{Applications to subconvexity}
The $L$-values $L(\sigma,1/2)$ are known to be nonnegative, so
taking $h$ nonnegative and estimating the RHS of
\eqref{eqn:basic-moment-identity} yields an upper bound for such
$L$-values.  This feature has been exploited over
$F = \mathbb{Q}$ in the work of Conrey--Iwaniec
\cite{MR1779567}, Ivic \cite{MR1879668} and Petrow and Young
\cite{MR3394377, MR3635360, MR3968874,
  2018arXiv181102452P, 2019arXiv190810346P}.
We hope the methods of this paper may
be useful in generalizing such results to
the setting of number fields.

We have already recorded in \S\ref{sec:intro-overview} a
generalization of the theorem of Conrey--Iwaniec
\cite{MR1779567}.
As a further application,
we present a partial generalization of
a recent result of Petrow--Young \cite{2018arXiv181102452P}:

\begin{theorem}\label{thm:PY1}
  Let $F$ be a number field.  Let $\chi$ be a
  unitary character of $\mathbb{A}^\times/F^\times$
  whose infinite component $\chi_\infty$ is
  trivial and whose finite conductor is cubefree.
  Then the Weyl-type subconvex bound \eqref{eqn:weyl-bound-chi}
  holds.
\end{theorem}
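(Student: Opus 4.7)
The plan is to apply the basic spectral identity of Theorem \ref{thm:intro-basic-decomp-mv} and extract $|L(\chi, 1/2)|^6$ from the Eisenstein part of the cubic moment side. For unitary $\chi$ the Eisenstein representation $\sigma_\chi := \pi(\chi, \chi^{-1})$ on $\PGL_2(\mathbb{A})$ satisfies $L(\sigma_\chi, 1/2) = L(\chi, 1/2) L(\chi^{-1}, 1/2) = |L(\chi, 1/2)|^2$, so $L(\sigma_\chi, 1/2)^3 = |L(\chi, 1/2)|^6$. Thus, if I can localize a nonnegative admissible weight $h(\sigma)$ in \eqref{eqn:basic-moment-identity} near $\sigma_\chi$ and bound the full right-hand side by $C(\chi)^{1+\eps}$, nonnegativity of central $L$-values on $\PGL_2$ yields $|L(\chi,1/2)|^6 \ll C(\chi)^{1+\eps}$, i.e.\ \eqref{eqn:weyl-bound-chi}.

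First I would use Theorem \ref{thm:summarize-kuznetsov-weights-adm} to build $h = \prod_{\mathfrak{p} \in S} h_\mathfrak{p}$ from local pre-Kuznetsov weights. At each finite $\mathfrak{p}$ in the conductor of $\chi$, the cubefree hypothesis forces $a_\mathfrak{p}(\chi) \in \{1,2\}$, keeping the local analysis within the range covered by the Petrow--Young local templates. I would choose local Whittaker-type vectors so that $h_\mathfrak{p}$ is nonnegative, supported on generic $\sigma_\mathfrak{p}$ whose local conductor matches that of $\sigma_{\chi,\mathfrak{p}}$, with $h_\mathfrak{p}(\sigma_{\chi, \mathfrak{p}}) \gg C(\chi_\mathfrak{p})^{-o(1)}$. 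Triviality of $\chi_\infty$ removes any nontrivial archimedean conductor, so at the infinite places a standard localizing weight near the trivial Eisenstein parameter suffices. Having built $h$, the left-hand side of \eqref{eqn:basic-moment-identity} then dominates, by nonnegativity of $L(\sigma, 1/2)$ on the generic unitary spectrum of $\PGL_2$ (known for self-dual cuspidal $\sigma$, manifest for unitary Eisenstein series), a neighborhood average of $|L(\mu, 1/2)|^6$ over the Eisenstein parameter $\mu$ near $\chi$, which in turn bounds $|L(\chi, 1/2)|^6$ up to a Plancherel density factor.

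The remaining task is to bound the right-hand side of \eqref{eqn:basic-moment-identity} by $C(\chi)^{1+\eps}$. The dual weight $\tilde h(\omega)$ is given by the explicit integral transforms of Theorem \ref{thm:summarize-kuznetsov-weights-adm}; by design it should concentrate on unitary characters $\omega$ of conductor $\lesssim C(\chi)$ with bounded archimedean parameter, so that the fourth-moment integral is amenable to a standard large-sieve or mean-value estimate of the requisite strength. The degenerate terms from Theorem \ref{thm:intro-basic-decomp-mv} are then handled via the explicit evaluations or discardings of \S\ref{sec:handl-degen-terms-1}--\S\ref{sec:handl-degen-terms-2}. The main obstacle is the local engineering at ramified places: one must simultaneously secure nonnegativity and a sharp lower bound for $h_\mathfrak{p}$ at $\sigma_{\chi, \mathfrak{p}}$ while keeping the dual transform $\tilde h_\mathfrak{p}$ sufficiently narrow. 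This is precisely where Petrow--Young's local analysis for cubefree conductor supplies a template; extending beyond cubefree would require substantially refining the Whittaker-function calculus at higher conductor exponents, which is why the present statement is only a partial generalization.
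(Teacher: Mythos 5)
Your proposal is correct and follows essentially the same route as the paper: pre-Kuznetsov weights at the ramified finite places majorizing $\Sigma_{F_\mathfrak{p}}(\chi_\mathfrak{p})$ (with the cubefree hypothesis serving precisely to avoid the ``atypical'' regime of Proposition \ref{prop:non-arch-estimates-key}, where the upper bound for the dual weight $\tilde h_\mathfrak{p}$ degrades), nonnegativity of the central values, the $\GL_1$ fourth-moment bound on the dual side, and the degenerate-term estimates of \S\ref{sec:handl-degen-terms-1}--\S\ref{sec:handl-degen-terms-2}. The one step you gloss over with ``up to a Plancherel density factor'' is the extraction of the single value $|L(\chi,1/2)|^6$ from the continuous Eisenstein spectrum, where the density $1/\Lambda^*(\sigma\times\sigma,1)$ can vanish quadratically at the point of interest (when $\chi|.|^{it}$ passes through a quadratic character); the paper handles this via the derivative-convexity trick \eqref{eqn:eis-upper-bound-integral-trick}.
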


The point of departure for our proof of Theorems \ref{thm:CI}
and \ref{thm:PY1} is similar to that of earlier work over
$\mathbb{Q}$ in that we seek to bound a cubic moment.  The
conclusion of our proof is likewise similar: we eventually
reduce to essentially the same character sum estimates (relying
in turn upon Deligne's results) and fourth moment bounds for
$\GL_1$ $L$-functions as in those works.  The remaining parts of
our arguments differ somewhat.  For instance, a major difficulty
encountered in \cite{MR3968874} (see especially
\cite[\S1.3]{MR3968874}) is that the relevant cubic moment
involves newforms of different conductors, so the approximate
functional equations for their $L$-values have different
lengths, which causes problems when one seeks to estimate an odd
power moment of nonnegative $L$-values via the Petersson
formula.  The authors of \cite{MR3968874} overcome this
difficulty by developing general Petersson formulas for
newforms, which are of independent interest.  Such difficulties
are avoided in the approach pursued here.  We do not use
approximate functional equations.  The cubic moment formula
implied by Theorems \ref{thm:basic-spectr-ident} and
\ref{thm:summarize-kuznetsov-weights-adm} reduces our task to
the local problem of producing weights $h(\sigma)$ majorizing
the family of interest and estimating the integral transforms
defining the dual weights $\tilde{h}(\omega)$ and degenerate
terms $(\dotsb)$.  The same formula can likely be supplemented
with additional local analysis to prove many variations on
Theorems \ref{thm:CI} and \ref{thm:PY1}.  We note also that the
delicate archimedean stationary phase calculations required
already in \cite{MR1779567} are not needed in our approach.





The restriction in Theorem \ref{thm:PY1} to cubefree conductor
arises here due to local phenomena as in
\cite{2018arXiv181102452P}.  Recently Petrow--Young
\cite{2019arXiv190810346P} have removed this restriction from
their earlier result (over $F = \mathbb{Q}$) and established the
bound \eqref{eqn:weyl-bound-chi} over $F = \mathbb{Q}$ for all
unitary $\chi$, i.e., without the quadraticity assumption.  We
establish many of the local estimates relevant for adapting
their strategy to our setting (compare Proposition
\ref{prop:non-arch-estimates-key} with
\cite[\S3]{2019arXiv190810346P}).
To complete this adaptation
requires verifying the following generalization of
\cite[Thm 1.4]{2019arXiv190810346P}:
\begin{conjecture}\label{conj:fourth-moment-cosets}
  Let $F$ be a number field.
  Let $\chi$ be a unitary character of $\mathbb{A}^\times$.  Let
  $B = (B_\mathfrak{p})_{\mathfrak{p}}$ be a collection of real
  numbers $B_\mathfrak{p} \geq 1$, indexed by the places of $F$,
  with $B_\mathfrak{p}$ belonging to the value group of
  $F_\mathfrak{p}$ for all $\mathfrak{p}$ and
  satisfying
  $B_\mathfrak{p} = 1$ for all but finitely many $\mathfrak{p}$.
  Let $\mathcal{F}(\chi,B)$ denote the set of all characters
  $\omega$ of $\mathbb{A}^\times/F^\times$ such that for each
  place $\mathfrak{p}$ of $F$, we have
  \[C(\omega_\mathfrak{p}/\chi_\mathfrak{p}) \leq
    B_\mathfrak{p}.
  \]
  Assume that for each $\mathfrak{p}$,
  we have
  \[
    B_\mathfrak{p} \geq C(\chi_\mathfrak{p})^{2/3}.
  \]
  Then
  \begin{equation}\label{eq:required-fourth-moment-bound-for-generalization}
    \int_{\omega \in \mathcal{F}(\chi,B)}
    |L(\omega,1/2)|^4
    \ll
    (\prod_\mathfrak{p} B_\mathfrak{p})^{1+\eps}.    
  \end{equation}
\end{conjecture}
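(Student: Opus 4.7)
The plan is to adapt the strategy of Petrow--Young \cite{2019arXiv190810346P} from $\mathbb{Q}$ to the adelic setting over the number field $F$.  First, apply an approximate functional equation to each factor of $|L(\omega,1/2)|^4 = L(\omega,1/2)^2 L(\omega^{-1},1/2)^2$, writing
\[
  L(\omega,1/2)^2 \approx \sum_{\mathfrak{a}} \frac{\tau(\mathfrak{a}) \, \omega(\mathfrak{a})}{\sqrt{\norm(\mathfrak{a})}} V_\omega(\norm(\mathfrak{a}))
\]
with smooth adelic cutoff $V_\omega$ at the natural scale $C(\omega) \asymp C(\chi) \prod_\mathfrak{p} B_\mathfrak{p}^{1/2}$, and similarly for $L(\omega^{-1},1/2)^2$.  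Expand $|L(\omega,1/2)|^4$ as a double sum, interchange it with the integral over $\omega \in \mathcal{F}(\chi,B)$, and apply orthogonality.  The set $\mathcal{F}(\chi,B)$ is a coset of the group of Hecke characters with local conductor at most $B_\mathfrak{p}$ at each place and has total measure $\asymp \prod_\mathfrak{p} B_\mathfrak{p}$; its orthogonality collapses $\int_{\omega} \omega(\mathfrak{a}\mathfrak{b}^{-1})\, d\omega$ to a $\chi(\mathfrak{a}\mathfrak{b}^{-1})$-weighted delta supported on those pairs $(\mathfrak{a},\mathfrak{b})$ with $\mathfrak{a}\mathfrak{b}^{-1}$ principal and lying in the adelic $B$-neighborhood of the identity.

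This reduces the problem to bounding a shifted convolution of divisor functions twisted by $\chi$, subject to an adelic congruence condition with modulus determined by $B$.  The diagonal contribution ($\mathfrak{a} = \mathfrak{b}$) yields the main term $\prod_\mathfrak{p} B_\mathfrak{p} \cdot (\log C(\omega))^{O(1)}$, comfortably within the claimed bound.  The off-diagonal is handled by adelic Voronoi (i.e., $\GL_1$ Poisson) summation, dualizing both divisor sums; the resulting expressions are weighted at non-archimedean places by hyper-Kloosterman-type character sums of conductor determined jointly by $\chi$ and $B$, and at archimedean places by Bessel-type integral transforms.  Sharp square-root bounds for the non-archimedean character sums are essentially the content of Proposition \ref{prop:non-arch-estimates-key} of this paper; the archimedean transforms succumb to standard stationary phase analysis as in \cite[\S4]{2019arXiv190810346P}.

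The main obstacle is the sharpness of the threshold $B_\mathfrak{p} \geq C(\chi_\mathfrak{p})^{2/3}$: below the threshold trivial bounds on the hyper-Kloosterman sums suffice, but exactly at the threshold the full Deligne-type square-root cancellation is required, and the resulting estimate is tight.  Correctly tracking the conductor of the hyper-Kloosterman character sums in the presence of highly ramified $\chi_\mathfrak{p}$ is the most delicate local step, precisely paralleling the analysis of \cite[\S3]{2019arXiv190810346P}.  A further complication specific to the number field setting is the continuous archimedean variation of characters in $\mathcal{F}(\chi,B)$ through their infinity types: the orthogonality relation must be interpreted as archimedean Fourier inversion joined to non-archimedean orthogonality of Dirichlet-type characters, with careful bookkeeping via Dirichlet's unit theorem to couple the archimedean and finite parts correctly.
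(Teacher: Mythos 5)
The statement you are trying to prove is labelled a \emph{Conjecture} in the paper: the author explicitly does not prove it, remarking only that it is the missing ingredient for extending the Petrow--Young strategy to number fields and that one might attack it ``by applying the basic formula \eqref{eqn:basic-moment-identity} in the direction opposite to the primary one considered in this paper \ldots but we leave this for future work.'' So there is no proof in the paper to compare against, and your text should be judged on its own as a purported proof of an open statement.

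As it stands, it is a strategy outline, not a proof. Every step where the actual difficulty lives is deferred: the off-diagonal after Poisson/Voronoi summation is ``handled by adelic Voronoi summation,'' the archimedean integrals ``succumb to standard stationary phase,'' and the conductor bookkeeping for the character sums is acknowledged as ``the most delicate local step'' without being carried out. Even over $\mathbb{Q}$, the corresponding statement is \cite[Thm 1.4]{2019arXiv190810346P}, whose proof is a substantial piece of work; asserting that the same steps go through adelically does not discharge them. Two concrete points where the sketch is not merely incomplete but misleading. First, you cannot borrow Proposition \ref{prop:non-arch-estimates-key} for the ``hyper-Kloosterman-type'' sums: that proposition bounds the dual weights $\tilde{h}_{\mathfrak{p}}(\omega)$ arising from the \emph{cubic-moment} side of the identity (the two-variable sums $\rho_{U,V}$ of Conrey--Iwaniec type), which are not the character sums produced by dualizing a shifted divisor correlation in a $\GL_1$ fourth moment; the paper itself only claims these local estimates are ``relevant for adapting'' the Petrow--Young strategy, not that they prove the conjecture. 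Second, your stated AFE scale $C(\omega) \asymp C(\chi) \prod_{\mathfrak{p}} B_{\mathfrak{p}}^{1/2}$ is wrong: for $\omega \in \mathcal{F}(\chi,B)$ one has $C(\omega_{\mathfrak{p}}) \leq \max(C(\chi_{\mathfrak{p}}), B_{\mathfrak{p}})$, which under the hypothesis $B_{\mathfrak{p}} \geq C(\chi_{\mathfrak{p}})^{2/3}$ can be as large as $B_{\mathfrak{p}}^{3/2}$. The resulting ratio of sum length to family size, about $\prod_{\mathfrak{p}} B_{\mathfrak{p}}^{1/2}$, is exactly why full square-root cancellation is forced and why the exponent $2/3$ is critical; getting this scale right is a prerequisite for the diagonal/off-diagonal split to come out as claimed. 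Finally, the coupling of archimedean Fourier inversion with non-archimedean orthogonality over $\mathcal{F}(\chi,B)$, modulo $F^\times$ (units, class group, several infinite places), is a genuine obstruction in the number field setting that you flag but do not resolve. The statement remains open as far as this paper is concerned, and your proposal does not close it.
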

Conjecture \ref{conj:fourth-moment-cosets} is also relevant for
removing the restriction on $\chi_\infty$ in Theorem
\ref{thm:PY1}: the local phenomena responsible for the
restriction to cubefree conductors show up over any local field
in which $-1$ is a square, and thus arise whenever the number
field $F$ has a complex embedding.
One can study the LHS of
\eqref{eq:required-fourth-moment-bound-for-generalization}
by applying
the basic formula \eqref{eqn:basic-moment-identity}
in the direction opposite to the primary one considered in this
paper (i.e., as in the original work of Motohashi),
but we leave this for future work.

\subsection{Further remarks}
It might be interesting to compare the formulas obtained here
with those in \cite{MR1226527} and \cite{MR1879668}, or to
replace the degenerate Eisenstein series occurring in
\eqref{eq:integral-product-eis} with other Eisenstein or with
cusp forms; the case treated here is in some sense the most
degenerate and (apparently) the most relevant for applications.
In another direction, with some refined local analysis it should
be possible to improve the estimate
\eqref{eq:weyl-bound-sigma-chi} to be simultaneously subconvex
in $\sigma$ and $\chi$ (compare with \cite{MR3635360}).

We mention the works \cite{MR2124019, MR2279942,
  2019arXiv190207042B}, which offer other perspectives on
Motohashi's formula and its generalizations.

\subsection{Organization of this
  paper}\label{sec:organ-this-paper}
In Part \ref{part:main-ideas}, we aim to introduce the main
ideas of this paper with minimal technical overhead.  In
\S\ref{sec-6}, we give the precise statement and proof of
Theorem \ref{thm:summarize-kuznetsov-weights-adm}, which allows
us to study arbitrarily weighted cubic moments in terms of
periods of Eisenstein series.  In
\S\ref{sec:appl-cubic-moment}, we define and study a class of
weights concentrating on the ``short families'' of primary
interest in applications of the cubic moment to subconvexity.
We explain in particular how the two-variable exponential sums
that featured in the work of Conrey--Iwaniec and Petrow--Young
arise naturally from local representation-theoretic
considerations.  In \S\ref{sec-8}, we take for granted the basic
spectral identity, Theorem \ref{thm:basic-spectr-ident}, and
give the proofs of our main applications,
Theorems \ref{thm:CI} and \ref{thm:PY1}, modulo some
technicalities concerning the degenerate terms $(\dotsb)$ and
the required polynomial dependence of
\eqref{eq:weyl-bound-sigma-chi} upon $\sigma$.  The remainder of
the paper is then devoted to addressing those technicalities
and proving
Theorem \ref{thm:basic-spectr-ident}.

Part \ref{part:preliminaries} contains preliminaries of a
general nature, concerning regularized integration
(\S\ref{sec-9-1}), Sobolev norms on representations of reductive
groups (\S\ref{sec:norms-repr}), the local (\S\ref{sec-4}) and
global (\S\ref{sec-5}) theory of integral representations of
$L$-functions, and (regularized) spectral decompositions of the
space of automorphic forms (\S\ref{sec-5}).  Our Sobolev norm
discussion is similar to that of \cite[\S2]{michel-2009}, but
applies also to non-unitary representations, as is convenient
when studying degenerate integrals like
\eqref{eq:integral-product-eis} via deformation.

Part \ref{part:basic-identity} gives the precise statement and
proof of Theorem \ref{thm:basic-spectr-ident}.  We begin by
studying in detail the functionals \eqref{eq:intro-ell-sigma}
and \eqref{eq:intro-ell-omega}, in local
(\S\ref{sec:local-inv-func}) and global
(\S\ref{sec:global-inv-func}) settings.  In \S\ref{sec-12}, we
relate these two families of functionals by decomposing in two
ways a deformation of the integral \eqref{sec-12}.

Part \ref{part:analys-local-weights} contains several results
needed for the proofs of our main applications.
In particular, we estimate the degenerate terms
in cases relevant to our applications.

\subsection{General notation and conventions}\label{sec:gener-notat-conv}

\subsubsection{Asymptotic notation and terminology}
Let us recall and elaborate upon the conventions introdued earlier.
The notation $A = \O(B)$ or $A \ll B$ signifies that
$|A| \leq C |B|$ for some \emph{fixed} quantity $C$,
while $A \asymp B$ is shorthand for $A \ll B \ll A$.
Here and henceforth, we consider
a quantity to be \emph{fixed} if we explicitly label it as such,
or if it depends only upon some previously defined fixed
quantities.  The small positive parameter $\eps > 0$ is always
regarded as fixed.
For instance, we have $x^n \ll \exp(\eps x)$ for any fixed
natural number $n$ and all positive reals $x$.

When considering a number field $F$ and a nontrivial unitary
character $\psi$ of its adele class group $\mathbb{A}/F$, we
regard the pair $(F,\psi)$ as fixed.  When working over a local
field $F$ equipped with a nontrivial unitary character $\psi$,
we consider the pair $(F,\psi)$ as fixed \emph{except} when $F$
is non-archimedean and $\psi$ is unramified, in which case we
only regard the absolute degree of $F$ as fixed.  This
convention ensures that implied constants remain uniform as the
pair $(F,\psi)$ traverses the local components of corresponding
global data.


\subsubsection{Local fields}
Let $F$ be a local field,
thus $F$ is either $\mathbb{R}$, $\mathbb{Q}_p, \mathbb{F}_p(t)$
or a finite extension of one of these fields.
When $F$ is non-archimedean, we denote
by $\mathfrak{o}$ the ring of integers and by $\mathfrak{p}$ the
maximal ideal, and set $q := \# \mathfrak{o}/\mathfrak{p}$.
When $F$  is archimedean,
it will be convenient to set $q := 1$.

We will often consider local fields $F$ equipped with nontrivial
unitary characters $\psi : F \rightarrow \U(1)$.  Recall that if
$F$ is non-archimedean, then $\psi$ is \emph{unramified} if it
is trivial on $\mathfrak{o}$ but not on $\mathfrak{p}^{-1}$.  We
say that the pair $(F,\psi)$ is \emph{unramified} if $F$ is
non-archimedean and $\psi$ is unramified.

\subsubsection{Characters}
A \emph{character} of a topological group $G$ is a continuous
homomorphism $\chi : G \rightarrow \mathbb{C}^\times$; a
\emph{unitary character} is one with image in the unit circle
$\U(1)$.

When $G$ is the multiplicative group $F^\times$ of a local field
$F$ or the idele class group $\mathbb{A}^\times/F^\times$ of a
number field $F$, the normalized absolute value $|.|$
defines a character of $G$, and every positive-valued character
is of the form $|.|^c$ for some real number $\chi$.  The
\emph{real part} $\Re(\chi)$ of any character
$\chi : G \rightarrow \mathbb{C}^\times$ is then characterized
by the identity $|\chi| = |.|^{\Re(\chi)}$.

\subsubsection{Local zeta functions}
For a local field $F$,
we write $\zeta_F(s)$ for the local zeta function, given
respectively by $\pi^{-s/2} \Gamma(s/2)$ or
$2 (2 \pi)^{-s} \Gamma(s)$ or $(1 - q^{-s})^{-1}$ according as
$F$ is real or complex or non-archimedean.

\subsubsection{Matrix notation}
When working with $\PGL_2$ over a ring,
we use the notation
\[
  n(x) := \begin{pmatrix}
    1 & x \\
    0 & 1
  \end{pmatrix},
  \quad  a(y) :=
  \begin{pmatrix}
    y & 0 \\
    0 & 1
  \end{pmatrix},
  \quad
  n'(z) :=
  \begin{pmatrix}
    1 & 0 \\
    z & 1
  \end{pmatrix}
  \quad
  w := \begin{pmatrix}
     & -1 \\
    1 & 
  \end{pmatrix}.
\]
We will use that $w n(x) = n(-1/x) a(1/x^2) n'(1/x)$
for invertible $x$.

\subsubsection{Representations}\label{sec:intro-representations}
Let $F$ be a local field,
and let $G$ be a reductive group over $F$.  By a
``representation'' $\sigma$ of $G$, we always mean a smooth
representation in the non-archimedean case (see
\cite[\S4.2]{MR1431508}) and a smooth moderate growth
Fr{\'e}chet representation in the archimedean case (see
\cite{MR1013462}, \cite[\S11]{MR1170566}, \cite{MR3219530}).  In
particular, any vector $v \in \sigma$ is assumed smooth.

Given natural numbers $r_1,\dotsc,r_k$ with sum
$r := \sum_j r_j$, we may form the standard parabolic subgroup
$P = M U$ of $\GL_{r}(F)$, with
$M \cong \GL_{r_1}(F) \times \dotsb \times \GL_{r_k}(F)$.  Given
representations $\chi_j$ of $\GL_{r_j}(F)$ ($j=1..k$), we write
\begin{equation}\label{eqn:parabolic-induction-chi-1-through-chi-k}
  \Ind(\chi_1 \boxtimes \dotsb \boxtimes \chi_k)
\end{equation}
for the
normalized induction from $P$ to $G$ of the corresponding
representation of $M$.
Given a character $\chi$ of $F^\times = \GL_1(F)$,
we abbreviate
\begin{equation}
  \mathcal{I}(\chi) := \Ind(\chi \boxtimes
  \chi^{-1});
\end{equation}
it defines a representation of $\GL_2(F)$ with trivial central
character, hence a representation of $\PGL_2(F)$
consisting of smooth functions
$f : \PGL_2(F) \rightarrow \mathbb{C}$
satisfying
\begin{equation}\label{eq:f-of-n-x-a-y-g-equals-blah}
  f(n(x) a(y) g)
  = |y|^{1/2} \chi(y) f(g).
\end{equation}
We write
simply $\mathcal{I}(s)$ for $\mathcal{I}(|.|^s)$.  We will never
use the potentially ambiguous notation $\mathcal{I}(1)$.

Following \cite[\S1.5]{MR3753910}, by a \emph{Whittaker type}
representation $\sigma$ of $\GL_r(F)$ we mean one of the form
\eqref{eqn:parabolic-induction-chi-1-through-chi-k} with the
$\chi_j$ essentially square-integrable, and in particular
generic.
Thus
$\chi_j = \chi_j^0 \otimes |.|^{c_j}$ with $\chi_j^0$
square-integrable and $c \in \mathbb{R}$,
and
\begin{equation}\label{eqn:whittaker-type-induction}
  \sigma =
  \Ind(
  \chi_1^0 \otimes |.|^{c_1}
  \boxtimes
  \dotsb
  \boxtimes
  \chi_k^0 \otimes |.|^{c_k}
  ).
\end{equation}
(In \cite[\S2.1]{MR701565}, ``Whittaker type'' has a more
general meaning.)

We distinguish between a Whittaker type representation
and its isomorphism class.
Every generic irreducible representation is
isomorphic to at least one representation of Whittaker type.
Writing
$\GL_r(F)^\wedge_{\gen}$ for the set of isomorphism classes of
generic irreducible representations of $\GL_r(F)$, we have a
surjective map with finite fibers
\begin{equation}\label{eqn:from-whittaker-type-to-generic-0}
  \{\text{Whittaker type representations of } \GL_r(F)\}
  \rightarrow \GL_r(F)^\wedge_{\gen}
\end{equation}
given by taking the unique generic subquotient.

A Whittaker type representation of $\PGL_2(F)$ is either
square-integrable or of the form $\mathcal{I}(\chi)$ for some
character $\chi$ of $A$.

\subsubsection{Bounds toward
  Ramanujan}\label{sec:bounds-towards-raman}
Let $F$ be a local field.  We say that a Whittaker type
representation $\sigma$ of $\GL_r(F)$ is
\emph{$\vartheta$-tempered} if in
\eqref{eqn:whittaker-type-induction}, each $c_j$ is bounded in
magnitude by $\vartheta$.  Then $\sigma$ is $0$-tempered iff it
is tempered in the customary sense.

A Whittaker type representation $\sigma$ of $\GL_2(F)$ is
$\vartheta$-tempered precisely when
\begin{itemize}
\item $\sigma = \sigma_0 \otimes |.|^c$
  with $\sigma_0$
  square-integrable and $|c| \leq \vartheta$, or
\item $\sigma = \Ind(\mu_1 \boxtimes \mu_2)$
  of some characters $\mu_1, \mu_2$ of $F^\times$
  with $|\Re(\mu_i)| \leq \vartheta$.
\end{itemize}
It is known \cite{MR2811610} that for $\GL_2$ over a number
field, the local component of any cuspidal automorphic
representation with unitary central character is
$7/64$-tempered.

\subsubsection{$L$-factors}\label{sec:intro-l-factors}
The local $L$-factors considered in this paper are defined and
studied in work of Jacquet--Piatetski-Shapiro--Shalika
\cite{MR701565} and Jacquet \cite{MR0401654}.
Over any local field $F$,
those works attach
to any pair of Whittaker type representations
$\sigma_1, \sigma_2$ of $\GL_{r_1}(F), \GL_{r_2}(F)$
an $L$-factor $L(\sigma_1 \otimes \sigma_2, s)$,
$\eps$-factor $\eps(\psi,\sigma_1 \otimes \sigma_2, s)$
and $\gamma$-factor
\begin{equation}\label{eqn:defn-local-rs-gamma-factor}
  \gamma(\psi,\sigma_1 \otimes \sigma_2, s) = \eps(\psi,\sigma_1
  \otimes \sigma_2, s) \frac{L(\tilde{\sigma}_1 \otimes
    \tilde{\sigma}_2, 1 - s)}{L(\sigma_1 \otimes \sigma_2, s)}.
\end{equation}
When $r_2 = 1$ and $\sigma_2$ is trivial
one writes simply $L(\sigma_1,s)$ for $L(\sigma_1 \otimes
\sigma_2, s)$, and similarly for the $\eps$- and
$\gamma$-factors.

The local $L$-factors
$L(\sigma_1 \otimes \sigma_2, s)$
may be written
\begin{equation}\label{eqn:factorization-of-local-L-factor-into}
  \prod_{j=1}^n \zeta_F(s -
  u_j),
\end{equation}
where $\{u_j\}_{j=1}^n$ is a collection of complex
numbers with $n \leq r_1 r_2$.
If $\sigma_i$
is $\vartheta_i$-tempered,
then
$\Re(u_j) \leq \vartheta_1 + \vartheta_2$.
If is a character of $F^\times$, then $L(\chi,s)$
has the form \eqref{eqn:factorization-of-local-L-factor-into}
with $n \leq 1$ and $\Re(u_j) \leq \Re(\chi)$.
The $L$-factors are multiplicative with respect to induction
in the sense that
\begin{equation}\label{eqn:multiplicativity-L-factors-wrt-induction}
  L(
  \Ind(\chi_1 \boxtimes \dotsb \boxtimes \chi_k)
  \otimes 
  \Ind(\omega_1 \boxtimes \dotsb \boxtimes \omega_{\ell})
  ,s)
  = \prod_{i,j}
  L(\chi_i \otimes \omega_j, s),
\end{equation}
and similarly for the $\eps$- and $\gamma$-factors.

The local $\gamma$-factors may be characterized by the local
functional equation of \emph{loc. cit.}, which we recall below
in special cases as needed.  The local $\eps$-factors will not
play an important role for us.

We define the analytic conductor
$C(\sigma_1 \otimes \sigma_2,s) = C(\sigma_1 \otimes \sigma_2
\otimes |.|^s) \in \mathbb{R}_{\geq 1}$ as in
\cite[\S3.1.8]{michel-2009}: in
the non-archimedean case it is defined by the relation
$\eps(\psi_0, \sigma_1 \otimes \sigma_2, s) =
C(\sigma_1
\otimes \sigma_2, s)^{1/2-s}
\eps(\psi_0, \sigma_1 \otimes \sigma_2, 1/2)$
for an unramified character
$\psi_0$ of $F$, while in the archimedean case it is defined as
$C(\sigma_1 \otimes \sigma_2, s) := \prod_{j} (2 + |\mu_j + s|)$
if
$L(\sigma_1 \otimes \sigma_2,s) = \prod_j \zeta_{\mathbb{R}}(s +
\mu_j)$.
We abbreviate
$C(\sigma_1 \otimes \sigma_2) := C(\sigma_1 \otimes \sigma_2,0)$.
For our purposes, the key property of the analytic conductor is
that
if $\sigma_1$ and $\sigma_2$ are unitary,
then
\begin{equation}\label{eq:stirling-for-general-RS-gamma}
  \gamma(\psi,\sigma_1 \otimes \sigma_2,1/2 + s)  \asymp
  C(\sigma_1 \otimes \sigma_2,s)^{-\Re(s)} q^{\O(1)}
\end{equation}
provided that $s$ is at least some fixed positive distance away
from any poles or zeros of the LHS of
\eqref{eq:stirling-for-general-RS-gamma}.  In the archimedean
case, the estimate \eqref{eq:stirling-for-general-RS-gamma} is a
consequence of Stirling's formula.  In the non-archimedean case,
it follows from the definition of the analytic conductor
in terms of the $\eps$-factor and the fact that
$|\eps(\psi_0,\sigma_1 \otimes \sigma_2,1/2)| = 1$ in the
unitary case; the factor $q^{\O(1)}$ crudely bounds the ratio of
$L$-factors in \eqref{eqn:defn-local-rs-gamma-factor} and may be
omitted when those $L$-factors are identically $1$ or
when $s$ is chosen so that their ratio is $\asymp 1$.
The
estimate \eqref{eq:stirling-for-general-RS-gamma} may be usefully applied in conjunction with the
multiplicativity property of $\gamma$-factors with respect to
induction.

\subsubsection{$L$-functions}
When $F$ is a number field,
we denote completed $L$-functions (including archimedean
factors) by $\Lambda(\dotsb)$ and their finite parts by
$L(\dotsb)$.  For a finite set of places $S$ that contains every
archimedean place, we write $L^{(S)}(\dotsb)$ for the Euler
product taken over $\mathfrak{p} \notin S$.

We write
$\xi_F(s) = \prod_{\mathfrak{p}} \zeta_{F_\mathfrak{p}}(s)$ for
the Dedekind zeta function and $\zeta_F^{(S)}(s)$ for the
corresponding product over $\mathfrak{p} \notin S$.
We use a superscripted asterisk,
as in $\xi_F^*(1)$ or $\zeta_F^{(S)*}(1)$
or $\Lambda^*(\sigma \times \sigma, 1)$ (for a generic
automorphic representation $\sigma$ of $\PGL_2$),
to denote the first nonvanishing Laurent coefficient (typically
the residue).



\subsubsection{Groups and measures}
\label{sec-4-1}
When working over a local field $F$, we
generally use the notation
\[
  G := \PGL_2(F),
\]
$N := \{n(x) : x \in F\},A :=\{a(y) : y \in F^\times \}$,
$N' := \{n'(z) : z \in F\}$, and $B := N A$.
(The exception to this convention is
\S\ref{sec:norms-repr},
in which we work more generally).  We let
$K \leq G$ denote the standard maximal compact subgroup.  We
identify $N \cong F$, $A \cong F^\times$, and $N' \cong F$ in
the evident way; in particular, we identify their character
groups.

We denote by $A^\wedge$ the group of characters
$\chi : A \rightarrow \mathbb{C}^\times$, or equivalently,
$\chi : F^\times \rightarrow \mathbb{C}^\times$.
It has the
natural structure of a Riemann surface with at most countably
connected components, with charts given by
$\chi |.|^s \mapsto s$.

We denote by $G^\wedge_{\gen}$ the set of isomorphism classes of
generic irreducible representations $\sigma$ of $G$.  Via the
map \eqref{eqn:from-whittaker-type-to-generic-0}, we may regard
$G^\wedge_{\gen}$ as a quotient of the set of Whittaker type
representations of $G$.  The latter set is naturally a complex
manifold.
We equip $G^\wedge_{\gen}$ with the quotient topology,
and say that a function
$h : G^\wedge_{\gen} \rightarrow \mathbb{C}$ is
\emph{holomorphic} if it pulls back to a holomorphic function on
the set of Whittaker type representations.

Given a nontrivial unitary character $\psi$ of $F$, we equip the
additive group $F$ with the $\psi$-self-dual Haar measure $d x$,
so that the Fourier inversion formula
$\int_{x \in F} (\int_{y \in F} f(y) \psi(x y) \, d y) \, d x =
f(0)$ holds for $f \in C_c^\infty(F)$.  We equip the
multiplicative group $F^\times$ with the measure
$\frac{d y}{|y|}$.  We note that if $(F,\psi)$ is unramified,
then
\begin{equation}\label{eq:volume-of-unit-group}
  \vol(\mathfrak{o}^\times, \tfrac{d y}{|y|})
  = 1/\zeta_F(1).
\end{equation}
By our identifications, we obtain Haar measures on $N, A, N'$.
We equip $G$ with the Haar given by the pushforward of
$d x \, \frac{d y}{|y|} \, d z$ under the map
$F \times F^\times \times F \rightarrow G$,
$(x,y,z) \mapsto n(x) a(y) w n(z)$, or equivalently, under
$(x,y,z) \mapsto n(x) a(y) n'(z)$.  If $(F,\psi)$ is unramified,
then the chosen Haar measure on $G$ assigns volume
$1/\zeta_F(2)$ to $K$ (see \cite[\S3.1.6]{michel-2009} and
\eqref{eq:volume-of-unit-group}).
It will be convenient
for us to equip $K$ with the Haar measure $d k$
for which the Haar measure on $G$
is given by the pushforward
of $d x \, \frac{d y}{|y|} \, d k$
under $(x,y,k) \mapsto n(x) a(y) k$;
the volume of $d k$ is then $\asymp 1$.

We will use freely that any irreducible representation of $G$
is self-dual, i.e., isomorphic to its own contragredient
(see \cite[Exercise 2.5.8, Thm 4.2.2]{MR1431508}).



\part{Main ideas}\label{part:main-ideas}
In this part, we precisely formulate and prove Theorem
\ref{thm:summarize-kuznetsov-weights-adm}.  We then record most
of the arguments required to deduce our main applications
(Theorems \ref{thm:CI} and \ref{thm:PY1}).  Along the way, we
explain how the character sums appearing in the works of
Conrey--Iwaniec and Petrow--Young arise naturally from the
perspective of representation theory.


The reader who is not familiar with integral representations of
$L$-functions and the related local representation theory might
wish to peruse \S\ref{sec-4} and \S\ref{sec-5} before
proceeding.

\section{Basic study of local weights}
\label{sec-6}
Let $F$ be a local field, and let $\psi$ be a nontrivial unitary
character of $F$.

\subsection{Some induced representations of $G$}
\label{sec-6-2}





Recall that for a character $\chi$ of $F^\times \cong A$,
we write $\mathcal{I}(\chi)$ for the corresponding normalized
induced representation of $G$,
consisting of
smooth functions $f : G \rightarrow \mathbb{C}$
satisfying
\eqref{eq:f-of-n-x-a-y-g-equals-blah}.
We are interested primarily in the
representation
$\mathcal{I}(0) = \mathcal{I}(|.|^0)$ induced by the trivial character,
consisting of smooth functions $f : G \rightarrow \mathbb{C}$
satisfying
\begin{equation}\label{eqn:transformation-law-I-of-0}
  f(n(x) a(y) g)
  = |y|^{1/2} f(g).
\end{equation}
It is a generic irreducible unitary representation.  It arises
naturally as a local component of the Eisenstein series central
to this paper.

For $f \in \mathcal{I}(0)$,
we write $W_f$
for the Whittaker function,
given for $g \in G$ by
\begin{equation}\label{eqn:W-f-via-f-0-defn}
  W_f(g) := 
  \int_{x \in F}
  f(w n(x)g) \psi(-x) \, d x.
\end{equation}
This and similar integrals must be understood in general via
meromorphic continuation or regularization.  For instance, in
the non-archimedean case,
a standard regularization
\cite[\S1.9]{MR3889963}
is
given by summing the integrals over cosets of the
unit group.  One may similarly regularize in the archimedean
case via a smooth dyadic partition of unity
or convolution (see \eqref{eq:whittaker-function-explicated-defn} for details).

We also write $W_f : F^\times \rightarrow \mathbb{C}$
for the corresponding Kirillov model element,
given for $t \in F^\times$ by
\begin{equation}
  W_f(t) :=
  W_f(a(t))
  = |t|^{1/2}
  \int_{x \in F}
  f(w n(x)) \psi(-t x) \, d x,
\end{equation}
which satisfies  the (convergent) Fourier inversion formula
\begin{equation}\label{eqn:f-from-Wf-Fourier-inv}
  \int_{t \in F}
  |t|^{-1/2} W_f(t) \psi(t x) \, d t
  = f (w n(x))
\end{equation}
(see around \eqref{eq:fourier-inversion-for-whittaker-intertwiner}
for details).

\subsection{A representation $\pi$ of $G \times G$}\label{sec:representation-pi-g}
We denote by $\mathcal{I}(0) \otimes \mathcal{I}(0)$ the space
of smooth functions $f : G \times G \rightarrow \mathbb{C}$
satisfying
\[
  f(n(x_1) a(y_1) g_1, n(x_2) a(y_2) g_2)
  = |y_1 y_2|^{1/2} f(g_1,g_2).
\]
It defines a representation of $G \times G$.
We will refer often to this representation,
so we abbreviate
\begin{equation}\label{eqn:defn-pi-I0-I0}
  \pi := \mathcal{I}(0) \otimes \mathcal{I}(0)
\end{equation}
A pair of elements $f_1, f_2 \in \mathcal{I}(0)$
defines an element $f_1 \otimes f_2 \in \pi$
by the rule $(f_1 \otimes f_2)(g_1,g_2)
= f_1(g_1) f_2(g_2)$.
In the non-archimedean case, $\pi$ identifies with the usual tensor product;
in the archimedean case, it may be interpreted
as the completed tensor product,
regarding $\mathcal{I}(0)$
as a
nuclear Fr{\'e}chet space.
In all cases, $\pi$ satisfies the universal property: any
continuous bilinear form on $\mathcal{I}(0)$ extends uniquely to
a continuous linear functional on $\pi$,
as follows (for instance) from
\S\ref{sec:reduct-pure-tens} below.
In our global analysis, an element of $\pi$
gives local data describing a linear combination of products of
pairs of Eisenstein series, as in
\eqref{eq:integral-product-eis}.


\subsection{Some $A$-invariant functionals on $\pi$}
\label{sec-6-3}
We consider two families
of $A$-invariant functionals
$\pi \rightarrow \mathbb{C}$,
the motivation for which
should be clear from \S\ref{sec:intro-motivation}.
(Here we identify $G$ with a subgroup of $G \times G$
via the diagonal embedding;
in particular, we regard $A$
as a subgroup of $G \times G$.)

\subsubsection{$G \times G \geq G \geq A$}
One family is indexed by generic irreducible
representations
$\sigma$ of $G$:
for each such representation,
we define
\begin{equation}\label{eq:defn-ell-sigma-basic}
  \ell_\sigma (f_1 \otimes f_2)
  :=
  \sum_{W \in \mathcal{B}(\sigma)}
  (\int_{N \backslash G}
  \tilde{W} W_{f_1}  f_2)
  \int_A W
\end{equation}
Here and henceforth $W$ traverses a basis
for  the Whittaker model $\mathcal{W}(\sigma,\psi)$
and
$\tilde{W}$
the corresponding element
of a dual basis for the dual Whittaker model
$\mathcal{W}(\sigma,\bar{\psi})$,
with the duality given by regularized integration over $A$
(\S\ref{sec:whitt-intertw-dual}).
We note in passing that $\sigma$,
like any irreducible representation of $G$, is self-dual,
so it is unnecessary
to pass to the contragredient.

The integral over $N \backslash G$ is to
be understood as a local Rankin--Selberg integral (see
\S\ref{sec-4-5} for details); it converges if $\sigma$ is
unitary, and may be meromorphically continued to all $\sigma$
for which $L(\sigma,1/2)$ is finite.  The integral over $A$ is
a local Hecke integral to which similar remarks apply (see
\S\ref{sec-4-3} for details).

The precise choice of basis $\mathcal{B}(\sigma)$
is described in \S\ref{sec:norms-whit-type-reps}.
In particular,
we take $W, \tilde{W}$
to be $K$-isotypic.
The sum \eqref{eq:defn-ell-sigma-basic}
then
converges as written;
see \S\ref{sec:defn-ell-sigma-s-makes-sense} for details.

The definition may be understood
without reference to a  basis:
$f_1 \otimes f_2 \mapsto \sum_{W \in \mathcal{B}(\sigma)} (\int_{N
  \backslash G} \tilde{W}  W_{f_1}  f_2) W$ is the
linear map $\pi \rightarrow \mathcal{W}(\sigma,\psi)$ adjoint to
the Rankin--Selberg functional
$\pi \otimes \mathcal{W}(\sigma,\overline{\psi }) \rightarrow
\mathbb{C}$.
The existence of the required adjoint
is closely related to the convergence
of the indicated sum over suitable bases.

We will be primarily (although not exclusively) concerned with
the case that $\sigma$ is unitary, in which case one may take
for $\tilde{W}$ the complex conjugate of $W$ and for
$\mathcal{B}(\sigma)$ an orthonormal basis of
$\mathcal{W}(\sigma,\psi)$, with the norm defined by the
absolutely-convergent integral over $A$.


\subsubsection{$G \times G \geq A \times A \geq A$}
The other family of functionals is indexed
by characters $\omega$ of $F^\times$:
for each such character,
we define
\[
  \ell_{\omega}(f_1 \otimes f_2)
  :=
  (\int_{A}
  W_{f_1} \omega )
  (\int_{A}
  W_{f_2} \omega^{-1} ).
\]
The integrals are understood as local Hecke integrals (\S\ref{sec-4-3});
they converge for unitary $\omega$,
and extend meromorphically to all $\omega$
for which $L(\omega, 1/2)$ is finite.

\subsubsection{Further remarks}
These definitions extend continuously
from pure tensors to general elements of $\pi$
(see \S\ref{sec:defn-ell-sigma-s-makes-sense} for
proof of continuity).
They may be
understood as compositions
\begin{equation}\label{eqn:ell-sigma-arrows}
  \ell_{\sigma} : \pi \rightarrow \sigma \rightarrow \mathbb{C},
\end{equation}
\begin{equation}\label{eqn:ell-omega-arrows}
  \ell_{\omega} : \pi \rightarrow
  \omega^{-1} \otimes \omega \rightarrow \mathbb{C}.
\end{equation}
The first arrow in \eqref{eqn:ell-sigma-arrows}
is $G$-invariant,
and described by the Rankin--Selberg integral;
the second is $A$-invariant,
and described by the Hecke integral.
The first arrow in \eqref{eqn:ell-omega-arrows}
is $A \times A$-invariant,
and given by a pair of Hecke integrals;
the second is the $A$-invariant canonical pairing.

\subsection{Variation with respect to the additive character}\label{sec:vari-with-resp-1}
The definitions of the functionals $\ell_{\sigma}$ and
$\ell_{\sigma}$ depend upon the choice of additive character
$\psi$.  The dependence is mild: changing $\psi$ to
$\psi^b(x) := \psi(b x)$ has the effect of multiplying these
functionals by an explicit power of $|b|$.  For the purpose of
proving estimates, there is thus no harm in assuming $\psi$ to
be of a convenient form, e.g., unramified when $F$ is
non-archimedean.  We record explicit formulas describing this
variation, in a more general setting, in
\S\ref{sec:vari-with-resp}.

\subsection{Definition of admissible weights}
\label{sec-6-4}
Recall the general notation of \S\ref{sec-4-1}.
We say that
a function
\[
  h : G^\wedge_{\gen} \rightarrow \mathbb{C},
\]
respectively,
\[
  \tilde{h} : A^{\wedge} \rightarrow \mathbb{C},
\]
is
\emph{admissible} if for some $f \in \pi$
we have
\[\text{$h(\sigma) = \ell_\sigma(f)$ for all $\sigma$,}\]
respectively,
\[\text{$\tilde{h}(\omega) = \ell_{\omega}(f)$
    for all $\omega$.}\]
We say that
$h$ and $\tilde{h}$
are dual 
if they may be defined using
the same $f$.

\subsection{Crude estimates}\label{sec:crude-estimates}
\begin{lemma}\label{lem:crude-estimates}
  Any admissible weight $h$ or $\tilde{h}$ as above is
  meromorphic on its domain, with poles controlled by local
  $L$-factors in the sense
  described in \S\ref{sec:local-inv-func}.
  There
  are no poles on the unitary subsets of
  $A^\wedge, G^\wedge_{\gen}$.

  Any admissible weight is bounded and rapidly-decreasing
  on the unitary subset
  $\{\text{unitary } \sigma \in G^\wedge_{\gen}\}$.
  More precisely:
  \begin{itemize}
  \item If $F$ is non-archimedean, then $h$ and $\tilde{h}$ are
    uniformly bounded on the unitary subsets, and vanish on
    arguments of sufficiently large conductor.
  \item If $F$ is archimedean, then the restrictions of
    $h$ and
    $\tilde{h}$ to the unitary subset are bounded by constant
    multiples of $C(\cdot)^{-d}$ for each $d \geq 0$.
  \end{itemize}
\end{lemma}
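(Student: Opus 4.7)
The plan is to reduce both weights to standard local integrals whose meromorphic structure is controlled, via the work of Jacquet--Piatetski-Shapiro--Shalika and Jacquet, by local $L$-factors attached to $\mathcal{I}(0)$, $\omega$, and $\sigma$. By the continuity statement alluded to in \S\ref{sec:defn-ell-sigma-s-makes-sense}, it suffices to treat a pure tensor $f = f_1 \otimes f_2$.

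For $\tilde{h}(\omega)$, the defining formula factors as the product of two local Hecke integrals $\int_A W_{f_i} \omega^{\pm 1}$ attached to the unitary generic representation $\mathcal{I}(0)$. Each factor is meromorphic on $A^\wedge$ with poles dominated by shifts of $L(\omega^{\pm 1}, s)$; unitarity of $\omega$ places these shifts off the unitary axis, establishing both meromorphicity and regularity on the unitary subset. For $h(\sigma)$, I would view $\ell_\sigma$ as the composition \eqref{eqn:ell-sigma-arrows} of a Rankin--Selberg map $\pi \to \sigma$ with the Hecke functional on $\sigma$. Meromorphicity and pole location of the former are governed by Rankin--Selberg $L$-factors built from $\sigma$ and $\mathcal{I}(0)$, and of the latter by $L(\sigma, 1/2)$; unitarity of $\sigma$ together with the Ramanujan bounds of \S\ref{sec:bounds-towards-raman} places the poles of all these $L$-factors in strips disjoint from the unitary subset, yielding the claimed regularity.

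For the boundedness and rapid-decay assertions I would distinguish the two cases. In the non-archimedean case, the vector $f$ lies in a fixed $K$-isotypic subspace, so the Whittaker functions $W_{f_i}$ have support of bounded size in the Kirillov model; hence the Hecke and Rankin--Selberg integrals vanish identically once the conductor of $\omega$ or $\sigma$ exceeds the level of $f$, and below that level they are uniformly bounded by finite sums of matrix coefficients. In the archimedean case I would invoke the Sobolev-norm machinery of \S\ref{sec:norms-repr}: repeated integration by parts against the action of $A$ (for the Hecke integral) and against spectral or Casimir parameters (for the Rankin--Selberg integral) produces any prescribed negative power of $C(\omega)$ or $C(\sigma)$, with implied constant a fixed Sobolev norm of $f$.

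The principal obstacle I expect is bookkeeping rather than any new analytic input: one must track the auxiliary $L$-factors arising from the various normalizations of the Hecke and Rankin--Selberg integrals, and verify in each case that no spurious poles on the unitary axis survive the cancellations. Once this is organized the remaining bounds follow from standard asymptotics for Whittaker functions together with the Sobolev formalism already developed in \S\ref{sec:norms-repr}.
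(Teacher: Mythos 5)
Your proposal is correct and follows essentially the same route as the paper, whose proof of this lemma is a one-line deferral to the detailed estimates \eqref{eq:crude-estimate-for-ell-sigma-s} and \eqref{eq:crude-ell-omega-s-estimate} of \S\ref{sec:local-inv-func}: meromorphy and pole location via the normalized Hecke and Rankin--Selberg integrals, non-archimedean vanishing from the smoothness (fixed level) of $f$, and archimedean decay from the Sobolev-norm integration by parts of \S\ref{sec:integration-parts} together with the reduction to pure tensors of \S\ref{sec:reduct-pure-tens}. The only ingredient you leave implicit is the convergence of the basis sum defining $\ell_\sigma$ (the uniform trace class property of \S\ref{sec:uniform-trace-class}), but you correctly point to \S\ref{sec:defn-ell-sigma-s-makes-sense}, where exactly this is handled.
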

The conclusion follows from the smoothness of $f$ and
``integration by parts''
(see around
\eqref{eq:crude-estimate-for-ell-sigma-s}
and
\eqref{eq:crude-ell-omega-s-estimate}
for details).

\subsection{Models}
\label{sec-6-5}
Here we consider a pair of models of the representation
$\pi$ that arise
naturally when studying the functionals
$\ell_{\sigma}$ and $\ell_{\omega}$.

\subsubsection{}
We write $C^\infty(N \backslash G, \psi)$
for the space of smooth functions $V$ on $G$
satisfying $V(n(x) g) = \psi(x) V(g)$.

For $f = f_1 \otimes f_2 \in \pi$,
we define
\[V_f \in C^\infty(N \backslash G, \psi),
  \quad
  W_f \in C^\infty(F^\times \times F^\times)
\]
by
the formulas
\begin{equation}
  V_f(g) := W_{f_1}(g) f_2(g),
\end{equation}
\begin{equation}
  W_f(t_1,t_2) := W_{f_1}(t_1) W_{f_2}(t_2).
\end{equation}
The map $f \mapsto V_f$
arose also in \cite[\S3.2.7]{michel-2009}.

These definitions extend continuously to all $f \in
\pi$,
and may be defined directly
by the formulas
\begin{equation}\label{eqn:V_f-formula}
  V_f(g)
  = \int_{x \in F} f(w n(x) g, g) \psi(-x) \, d x,
\end{equation}
\begin{equation}\label{eqn:W-f-formula}
  W_f(t_1,t_2)
  =
  |t_1 t_2|^{1/2}
  \int_{x_1,x_2 \in F}
  f(w n(x_1), w n(x_2)) \psi(- t_1 x_1 - t_2 x_2) \, d x_1 \, d x_2.
\end{equation}
The functionals defined previously
are then given by
\begin{equation}\label{eqn:ell-sigma-via-V-f-sigma}
  \ell_\sigma(f)
  =
  \int_{A} (V_f)_{\sigma}
  \text{ with }
  (V_f)_\sigma :=
  \sum_{W \in \mathcal{B}(\sigma)}
  (\int_{N \backslash G}
  \tilde{W}  V_f)
  W,
\end{equation}
\begin{equation}
  \ell_\omega(f)
  =
  \int_{A \times A}
  W_f \cdot (\omega \otimes \omega^{-1} ).
\end{equation}

The maps $f \mapsto V_f$ and $f \mapsto W_f$ define models of
$\pi$.
Our aim in this section is to verify that these models
contain the compactly-supported elements and to establish
integral transforms relating them.
We will see later that such transforms
are at the heart
of the spectral identities
discussed in \S\ref{sec-2}.

\subsubsection{}
We define for any $V \in C_c^\infty(N \backslash G,\psi)$
the integral transform
$V^\wedge : F^2 \rightarrow \mathbb{C}$ by
\begin{equation}\label{eqn:V-wedge-defn}
  V^\wedge(\xi,z)
  :=
  \int_{y \in F^\times}
  |y|^{-1} V(a(y) n'(z)) \psi(\xi y) \, d y.
\end{equation}
For $f \in \pi$, we may define
$V_f^\wedge : F^2 \rightarrow \mathbb{C}$ by the same formula.
We note that
if $V$ is supported on
the dense open
neighborhood $N A N'$ of the identity in $G$,
then the map $F^\times \times F \ni (y,z) \mapsto V(a(y) n'(z))$
is compactly-supported,
and so $V^\wedge$ belongs to the Schwartz space
$\mathcal{S}(F^2)$.
\begin{lemma}
  For $f \in \pi$
  and almost all $(\xi,z) \in F^2$,
  we have
  \begin{align}\label{eqn:formula-Vf-wedge-via-f-1}
    V_f^\wedge(\xi,z)
    &=
      f(w n(\xi) n'(z), n'(z))
    \\
    \label{eqn:formula-Vf-wedge-via-f-2}
    &=
      |x_2 - x_1|
    f(w n(x_1), w n(x_2))
  \end{align}
  where
  \begin{equation}
    x_1 =
    \frac{\xi }{1 + \xi z},
    \quad
    x_2 = \frac{1}{z},
  \end{equation}
  so that
  \begin{equation}\label{eqn:xi-z-via-x2-x1}
    \xi = \frac{x_2 x_1}{x_2 - x_1},
    \quad
    z = \frac{1}{x_2}.
  \end{equation}
\end{lemma}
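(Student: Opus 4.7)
The plan is to verify the two equalities by direct manipulation of definitions, reducing everything to the transformation law of $f \in \pi$ under left translation by $NA \times NA$. Since both sides are continuous functionals of $f$, it suffices to treat pure tensors (or, in the archimedean case, functions concentrated on the big cell) and the general case follows by continuity of the two models constructed in \S\ref{sec-6-5}.

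For the first equality, I would substitute the defining integral $V_f(a(y) n'(z)) = \int_F f(w n(x) a(y) n'(z), a(y) n'(z)) \psi(-x)\, dx$ into \eqref{eqn:V-wedge-defn}. Using the identity $w n(x) a(y) = a(y^{-1}) w n(x/y)$ in $\PGL_2$ (which is a consequence of $w a(y) = a(y^{-1}) w$ modulo scalars, together with $a(y) n(x) = n(yx) a(y)$), and applying the left $NA$-invariance in each variable, the factors $|y|^{-1/2}$ and $|y|^{1/2}$ arising from the two arguments cancel. After the substitution $x \mapsto y x$ (which contributes $|y|$ and cancels the $|y|^{-1}$ in \eqref{eqn:V-wedge-defn}), the resulting double integral in $(x,y)$ is precisely Fourier inversion in the $\psi$-self-dual measure, collapsing to $f(w n(\xi) n'(z), n'(z))$ for almost every $\xi$.

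For the second equality I would reduce both arguments to the form $w n(\cdot)$ by an Iwasawa-type decomposition on the big cell. For the second argument, rearranging $w n(t) = n(-1/t) a(1/t^2) n'(1/t)$ gives $n'(1/t) = a(t^2) n(1/t) w n(t)$ in $\PGL_2$; setting $t = x_2 = 1/z$, rewriting $a(x_2^2) n(1/x_2) = n(x_2) a(x_2^2)$, and using the transformation law in the second variable yields $f(g_1, n'(z)) = |x_2|\, f(g_1, w n(x_2))$. For the first argument I would compute the matrix product $w n(\xi) n'(z) = \begin{pmatrix} -z & -1 \\ 1 + \xi z & \xi \end{pmatrix}$ in $\GL_2$, rescale by $(1+\xi z)^{-1}$ (trivial in $\PGL_2$), and match with the template $n(a) a(b) w n(x_1)$. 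Solving the resulting system gives $x_1 = \xi/(1+\xi z)$ and $b = (1+\xi z)^{-2}$, contributing a further factor $|1+\xi z|^{-1}$. Invoking the elementary identity $1 + \xi z = x_2/(x_2 - x_1)$ (obtained from \eqref{eqn:xi-z-via-x2-x1}) converts $|x_2| \cdot |1+\xi z|^{-1}$ into $|x_2 - x_1|$, establishing \eqref{eqn:formula-Vf-wedge-via-f-2}.

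The main obstacle is the careful bookkeeping of normalizations in $\PGL_2$: scalar factors drop out, but the diagonal entry $b$ of the $NA$-part must be extracted unambiguously, and one must verify that the Bruhat-type rearrangements are valid away from the measure-zero exceptional locus where $z = 0$, $\xi = 0$, $x_1 = x_2$, or $1 + \xi z = 0$; this locus is exactly what the qualifier ``almost all'' in the statement accommodates.
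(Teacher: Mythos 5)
Your proposal is correct and follows essentially the same route as the paper: Bruhat decompositions of $w n(\xi) n'(z)$ and $n'(z)$ into $n(\cdot) a(\cdot) w n(\cdot)$, the transformation law \eqref{eqn:transformation-law-I-of-0}, and Fourier inversion. The only cosmetic difference is that you re-derive the Bruhat decompositions from the preliminary identity $w n(x) = n(-1/x) a(1/x^2) n'(1/x)$ and unwind the Fourier inversion directly from \eqref{eqn:V_f-formula} rather than quoting \eqref{eqn:f-from-Wf-Fourier-inv}, but the substance is identical.
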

\begin{proof}
  The first identity follows (for $f = f_1 \otimes f_2$,
  hence in general)
  from
  \eqref{eqn:f-from-Wf-Fourier-inv}, the second from the readily
  verified identities
  \[
    w n(\xi) n'(z)
    =
    n(\dotsb)
    a (\frac{1}{(1 + \xi z)^2})
    w n(x_1),
  \]
  \[
    n'(z) = n(1/z) a(1/z^2) w n(x_2),
  \]
  \[
    x_2 - x_1
    = \frac{1}{z(1+\xi z)}
  \]
  and the transformation law \eqref{eqn:transformation-law-I-of-0}.
\end{proof}

\subsubsection{}
We verify here that our models contain
the compactly-supported elements.
\begin{lemma}\label{lem:many-W-f}
  The set $\{W_f : f \in \pi \}$
  contains $C_c^\infty(F^\times \times F^\times)$.
\end{lemma}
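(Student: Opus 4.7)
The plan is to exploit the tensor product structure of $\pi$ together with the identity $W_{f_1 \otimes f_2}(t_1,t_2) = W_{f_1}(t_1) W_{f_2}(t_2)$, and reduce to the classical fact that the Kirillov model of the irreducible generic representation $\mathcal{I}(0)$ contains $C_c^\infty(F^\times)$. Once this reduction is made, the remaining task is to realize an arbitrary $\phi \in C_c^\infty(F^\times \times F^\times)$ as an appropriate (finite or completed) sum of tensor products.

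I would first establish the Kirillov inclusion for $\mathcal{I}(0)$. Given $\phi_1 \in C_c^\infty(F^\times)$, set $\phi_1'(t) := |t|^{-1/2} \phi_1(t) \in C_c^\infty(F^\times)$ and define $h_1(x) := \int_F \phi_1'(t) \psi(t x) \, d t$; the Schwartz--Bruhat theory gives $h_1 \in C_c^\infty(F)$ when $F$ is non-archimedean and $h_1 \in \mathcal{S}(F)$ when $F$ is archimedean. Define $f_1 \in \mathcal{I}(0)$ by $f_1(w n(x)) := h_1(x)$ on the big Bruhat cell and extend by the transformation law \eqref{eqn:transformation-law-I-of-0}. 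The relation $w n(x) = n(-1/x) a(1/x^2) n'(1/x)$ forces $f_1(n'(z)) = h_1(1/z)/|z|$ for $z \neq 0$; super-polynomial decay of $h_1$ (respectively, its compact support in the non-archimedean case) shows this extends smoothly across $z = 0$, so $f_1$ is a well-defined smooth function on all of $G$. Fourier inversion against the self-dual measure then yields $W_{f_1}(t) = |t|^{1/2} \int_F h_1(x) \psi(- t x) \, d x = |t|^{1/2} \phi_1'(t) = \phi_1(t)$, as required.

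Having produced $f_1, f_2 \in \mathcal{I}(0)$ realizing prescribed $\phi_1, \phi_2 \in C_c^\infty(F^\times)$, the pure tensor $f_1 \otimes f_2 \in \pi$ satisfies $W_{f_1 \otimes f_2} = \phi_1 \otimes \phi_2$. In the non-archimedean case, any $\phi \in C_c^\infty(F^\times \times F^\times)$ is a finite $\mathbb{C}$-linear combination of such tensor products (decompose its support into a finite disjoint union of products of compact-open boxes), and the corresponding finite sum in $\pi = \mathcal{I}(0) \otimes \mathcal{I}(0)$ realizes $\phi$. In the archimedean case, the map $\phi_1 \mapsto f_1$ constructed above is continuous from $C_c^\infty(F^\times)$ into $\mathcal{I}(0)$, so by the universal property of the completed tensor product it extends to a continuous map from the completed tensor square of $C_c^\infty(F^\times)$ into $\pi$; Grothendieck's nuclear-space theory identifies this completed tensor square with $C_c^\infty(F^\times \times F^\times)$, and postcomposing with $f \mapsto W_f$ gives the tautological inclusion.

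The main obstacle I anticipate is the archimedean smoothness check at the closed Bruhat cell: for $F = \mathbb{C}$, one must verify that $h_1(1/z)/|z|$ is genuinely $C^\infty$ at $z = 0$ despite the non-smooth factor $|z|^{-1}$, which requires using super-polynomial decay of every partial derivative of $h_1$. The nuclear-space identification needed in the archimedean assembly step is essentially formal once this continuity is granted, as is the non-archimedean decomposition.
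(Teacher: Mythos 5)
Your proposal is correct and follows essentially the same route as the paper, whose proof is a one-line reduction to the corresponding property of the Kirillov model of $\mathcal{I}(0)$; you have simply spelled out both that Kirillov-model fact (via the Godement-type section $f_1(w n(x)) = h_1(x)$ and Fourier inversion) and the tensor-product assembly step that the paper leaves implicit.
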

\begin{proof}
  This may be deduced from the corresponding property of the
  Kirillov model of $\mathcal{I}(0)$, or proved in the same way.
\end{proof}
We have recorded lemma \ref{lem:many-W-f} for motivational
purposes; we do not use it directly in the present work.
More relevant for our purposes is the following analogue:

\begin{lemma}\label{lem:many-V-f}
  The set $\{V_f : f \in \pi \}$ contains
  $C_c^\infty(N \backslash G, \psi)$.
\end{lemma}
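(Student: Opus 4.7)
The plan is to construct $f \in \pi$ directly from $V$ by inverting the integral transform $V \mapsto V^\wedge$ of \eqref{eqn:V-wedge-defn}, after localizing via a partition of unity. First I exploit the $G$-equivariance of the assignment $f \mapsto V_f$ for the diagonal right $G$-action on $\pi$ and the right translation action on $C^\infty(N \backslash G, \psi)$. Since the image of $NAN'$ in $N \backslash G$ is open dense, its right $G$-translates cover; a smooth partition of unity on $\supp V$ subordinate to finitely many such translates then reduces the lemma to the case that $V$ is supported in the image of $NAN'$. In that case, $(y,z) \mapsto V(a(y) n'(z))$ has compact support in $F^\times \times F$, so $V^\wedge \in \mathcal{S}(F^2)$, with compact support in its second argument.

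Second, I define $f$. By \eqref{eqn:formula-Vf-wedge-via-f-2}, the condition $V_f^\wedge = V^\wedge$ is equivalent to the prescription, on the open Bruhat cell of $G \times G$,
\[
f(wn(x_1), wn(x_2)) \;:=\; \frac{V^\wedge\!\bigl(\tfrac{x_1 x_2}{x_2 - x_1},\, \tfrac{1}{x_2}\bigr)}{|x_2 - x_1|}
\qquad (x_1 \neq x_2,\ x_2 \neq 0),
\]
extended elsewhere by the $(NA)^2$-transformation law defining $\pi$. Granting smoothness of the extension (addressed next), Fourier inversion in the first argument of $V^\wedge$ yields $V_f = V$ on the big cell of $N \backslash G$; the boundary value $f(wn(x_1), e) = V^\wedge(x_1, 0)$, obtained by letting the second factor approach the identity via $n'(z)$ with $z \to 0$, gives $V_f = V$ on the small cell as well.

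Third, I verify smoothness of the extension. Viewing $f$ as a section of the appropriate line bundle on $(NA \backslash G)^2 \cong \mathbb{P}^1 \times \mathbb{P}^1$, I must check smoothness along the diagonal $x_1 = x_2$ and near the boundary points $x_i = \infty$. At the diagonal, $\xi = x_1 x_2/(x_2 - x_1) \to \infty$, so the Schwartz decay of $V^\wedge$ in $\xi$ dominates the $|x_2 - x_1|^{-1}$ singularity, giving infinite-order vanishing. Near $x_2 = \infty$, setting $v := 1/x_2$ gives $|x_2|\, f(wn(x_1), wn(x_2)) = |1/(1 - x_1 v)|\, V^\wedge(x_1/(1 - x_1 v),\, v)$, manifestly smooth in $(x_1, v)$ near $v = 0$ and matching the required transformation behavior; the case $x_1 \to \infty$ is symmetric.

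The main obstacle I anticipate is this smoothness verification in the archimedean setting, where one must control derivatives of all orders uniformly; the Schwartz character of $V^\wedge$, ensured by the reduction in step one, is exactly what makes this work. In the non-archimedean case, the analysis is essentially trivial: $V^\wedge$ is locally constant with compact support, and "smooth" reduces to "locally constant".
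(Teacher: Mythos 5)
Your proposal is correct and its first step coincides with the paper's: reduce by diagonal right-$G$-equivariance and a partition of unity to the case where $V$ is supported in the image of $N A N'$, so that $V^\wedge$ is Schwartz, and then arrange $V_f^\wedge = V^\wedge$. Where you diverge is the construction of $f$. You define $f$ directly on the product of big Bruhat cells by inverting \eqref{eqn:formula-Vf-wedge-via-f-2} and then check by hand that the extension to all of $G \times G$ is smooth. The paper instead parametrizes candidate vectors by $\Phi \in \mathcal{S}(F^3)$ via a Godement-section-type formula $f(g, n'(z)) = |\det g|^{1/2} \int_r \Phi((0,r)g, z)\,dr$, computes $V_f^\wedge(\xi,z) = \int_r \Phi(r(1+\xi z), r\xi, z)\,dr$, and solves for $\Phi$ in terms of $V^\wedge$; the point of that detour is precisely to make your ``main obstacle'' disappear, since $\Phi \in \mathcal{S}(F^3)$ automatically yields a smooth vector satisfying the transformation law, with no boundary analysis on $\mathbb{P}^1 \times \mathbb{P}^1$ required. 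Your route buys a more transparent formula for $f$ at the cost of the archimedean smoothness check, which you handle correctly at the diagonal and at $x_2 = \infty$ but leave implicit at the corner $x_1 = x_2 = \infty$, where in coordinates $u = 1/x_1$, $v = 1/x_2$ one has $\xi = 1/(u-v)$ and the same Schwartz-decay-versus-polynomial-blowup mechanism applies. Two small points to tidy: the locus $x_2 \to 0$ (i.e. $z \to \infty$) is harmless only because $V^\wedge(\cdot,z)$ vanishes for $|z|$ large, which deserves explicit mention; and your ``small cell'' remark is mislabeled --- the value $z = 0$ corresponds to the interior point $n'(0) = e$ of the big cell of $N \backslash G$, whereas the genuine small cell $N A w$ corresponds to $x_2 = 0$, where $V_f$ vanishes for the reason just given and hence agrees with $V$ there.
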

Before giving the proof, we sketch informally why this should be
true.  Take $f = f_1 \otimes f_2$, where
$y \mapsto W_{f_1}(a(y))$ and $z \mapsto f_2(n'(z))$ approximate
$\delta$-masses
at
$y = 1$ and $z=0$, respectively.  Then
$V_f(a(y) n'(z))$ approximates a $\delta$-mass at $y=1, z=0$.
By taking linear combinations of translates of $f$, we can
approximate any element of $C_c^\infty(N \backslash G, \psi)$,
giving something like the required conclusion.


\begin{proof}
  Both sets in question are invariant under right translation by
  $G$, so we reduce to verifying that $\{V_f : f \in \pi \}$
  contains every element $V$ of
  $C_c^\infty(N \backslash G, \psi)$ whose support is contained
  in some small neighborhood of the identity.  In particular, we
  may assume that $V$ is supported on $N A N'$,
  so that $V^\wedge$ is Schwartz.

  We aim to construct $f \in \pi$ with $V_f = V$.  It is enough
  to check that $V_f^\wedge = V^\wedge$.  It seems simplest to
  us to use the following Schwartz space parametrization: for
  each $\Phi \in \mathcal{S}(F^3)$, there is a unique $f \in \pi$
  for which
  \[
    f(g, n'(z))
    = |\det g|^{1/2}
    \int_{r \in F}
    \Phi((0,r) g, z) \, d r.
  \]
  Let us compute $V_f^\wedge$ for such an $f$.
  First, we specialize \eqref{eqn:V_f-formula}
  to see that
  \begin{align*}
    V_f(a(y) n'(z))
    =
    |y|
    \int_{\xi \in F}
    f(w n(\xi) n'(z), n'(z))
    \psi(-\xi y) \, d \xi.
  \end{align*}
  Since $(0,r) w n(\xi) n'(z)
  = (r (1 + \xi z), r \xi)$,
  the definition of $f$
  then gives
  \[
    V_f(a(y) n'(z))
    =
    |y|
    \int_{\xi,r \in F}
    \Phi(r ( 1 + \xi z), r \xi, z)
    \psi(-\xi y) \, d \xi \, d r,
  \]
  hence
  \[
    V_f^\wedge(\xi,z) = 
    \int_{r \in F}
    \Phi(r (1 + \xi z), r \xi, z)
    \, d r.
  \]
  This formula suggests the choice
  \begin{equation}\label{eqn:Phi-via-phi-0-V-wedge}
    \Phi(x,y,z)
    :=
    \phi_0(x - y z)
    V^\wedge (\frac{y}{x - y z}, z)
  \end{equation}
  for some
  $\phi_0 \in C_c^\infty(F^\times)$
  with $\int_{r \in F} \phi_0(r) \, d r = 1$.
  Then $V_f^\wedge = V^\wedge$.
\end{proof}

\subsubsection{}
For $V \in C_c^\infty(N \backslash G, \psi)$,
we define the integral
transform $V^\sharp : F^2 \rightarrow \mathbb{C}$ by
the convergent integral
\begin{equation}\label{eqn:V-sharp-defn}
  V^\sharp(x,y)
  :=
  \int _{\xi \in F}
  V^\wedge(\xi,-x/\xi) \psi(-\xi y) \, d \xi.
\end{equation}

\begin{lemma}\label{lem:W_f-via-V_f-0}
  For $f \in \pi$,
  we have
  \begin{equation}\label{eqn:W-f-via-V-f-sharp}
    W_f(t_1,t_2)
    =
    |t_1 t_2|^{1/2}
    \int_{x \in F}
    V_f^\sharp (x, \frac{(t_1 + t_2) x - t_2}{x(1-x)})
    \, \frac{d x}{|x(1-x)|}.
  \end{equation}
\end{lemma}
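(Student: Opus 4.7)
The plan is to derive the identity by two successive changes of variables in the defining integral \eqref{eqn:W-f-formula} for $W_f(t_1,t_2)$, carrying out the computation first for $f$ in a dense subspace where every integral converges absolutely and then extending by continuity.

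First, I would substitute $(x_1, x_2) \mapsto (\xi, z)$ with $\xi = x_1 x_2/(x_2 - x_1)$, $z = 1/x_2$ as in \eqref{eqn:xi-z-via-x2-x1}. A direct computation starting from $x_2 - x_1 = 1/(z(1 + \xi z))$ gives Jacobian $|d x_1\, d x_2| = |x_2 - x_1|^2\, |d\xi\, dz|$, and \eqref{eqn:formula-Vf-wedge-via-f-2} rewrites $f(w n(x_1), w n(x_2)) = V_f^\wedge(\xi, z)/|x_2 - x_1|$. Combining these yields the intermediate identity
\[
W_f(t_1, t_2) = |t_1 t_2|^{1/2} \iint V_f^\wedge(\xi, z)\, \psi(- t_1 x_1 - t_2 x_2)\, |x_2 - x_1|\, d\xi\, dz.
\]

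Next, I would change variables from $z$ to $x := -\xi z$ (holding $\xi \neq 0$ fixed), so that $|dz| = |dx|/|\xi|$ and $1 + \xi z = 1 - x$. This gives $x_1 = \xi/(1-x)$, $x_2 = -\xi/x$, and $|x_2 - x_1| = |\xi|/|x(1-x)|$; the two factors of $|\xi|$ cancel, leaving the overall weight $1/|x(1-x)|$. A short computation then produces
\[
t_1 x_1 + t_2 x_2 \;=\; \xi \cdot \frac{(t_1 + t_2) x - t_2}{x(1-x)} \;=\; \xi y,
\]
with $y$ as in the statement. Interchanging the order of integration and recognizing the inner $\xi$-integral as $V_f^\sharp(x,y)$ via the definition \eqref{eqn:V-sharp-defn} delivers precisely \eqref{eqn:W-f-via-V-f-sharp}.

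The delicate point, which I expect to be the main obstacle, is justifying the Fubini interchange and interpreting $V_f^\sharp$ for general $f \in \pi$, since $V_f$ need not be compactly supported. My strategy is to first establish the identity on the dense subclass of $f$ coming from $\Phi \in \mathcal{S}(F^3)$ as constructed in the proof of Lemma~\ref{lem:many-V-f}, for which $V_f^\wedge \in \mathcal{S}(F^2)$ and all manipulations are manifestly legitimate. Both sides of \eqref{eqn:W-f-via-V-f-sharp} depend continuously on $f$ in suitable Sobolev norms (using the continuity statements that underlie the continuous extension of $f \mapsto V_f$ and $f \mapsto W_f$ to $\pi$, cf.\ Lemma~\ref{lem:crude-estimates}), so the identity propagates to all $f \in \pi$, with $V_f^\sharp$ interpreted via that limiting procedure.
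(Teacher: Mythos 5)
Your proposal is correct and follows essentially the same route as the paper: rewrite the integrand of \eqref{eqn:W-f-formula} via \eqref{eqn:formula-Vf-wedge-via-f-2}, change variables so the phase becomes $\xi\cdot\frac{(t_1+t_2)x-t_2}{x(1-x)}$ (the paper does $(x_1,x_2)\mapsto(\xi,x)$ in one step with the same Jacobian you obtain in two), and identify the inner $\xi$-integral with $V_f^\sharp$. The only cosmetic difference is that the paper handles convergence by regarding both integrals as regularized Whittaker intertwiners (analytic continuation in the induction parameters), whereas you propose a density-and-continuity argument; both are acceptable.
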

The RHS of \eqref{eqn:W-f-via-V-f-sharp} does not in general
converge absolutely, but may be regularized as in the definition
\eqref{eqn:W-f-via-f-0-defn} of the Whittaker intertwiner,
either by analytic continuation with respect to the parameters
of the induced representations
$\mathcal{I}(s_1) \otimes \mathcal{I}(s_2)$ deforming
$\pi = \mathcal{I}(0) \otimes \mathcal{I}(0)$, or using smooth
dyadic partitions of unity near the singular points $x =0$ and
$x = 1$.  Indeed, the proof below will show that the $x$ and
$\xi$ integrals in \eqref{eqn:V-sharp-defn} and
\eqref{eqn:W-f-via-V-f-sharp} arise via a change of variables
from a pair of Whittaker intertwiners
\eqref{eqn:W-f-via-f-0-defn}.
\begin{proof}
  By the formulas \eqref{eqn:W-f-formula} and
  \eqref{eqn:formula-Vf-wedge-via-f-2} relating $W_f$ and
  $V_f^\wedge$ to $f$,
  we have
  \begin{equation}
    W_f(t_1,t_2)
    =
    |t_1 t_2|^{1/2}
    \int_{x_1, x_2 \in F}
    \psi(-t _1 x_1 - t_2 x_2)
    V_f^\wedge(\xi,z) \, \frac{d x_1 \, d x_2}{|x_2-x_1|},
  \end{equation}
  where $\xi, z$ are given by \eqref{eqn:xi-z-via-x2-x1}.
  We set $x := - \xi/x_2$, so that
  $x_1 = \xi / (1-x), x_2 = - \xi/x$
  and
  \[
    t_1 x_1 + t_2 x_2 = \xi \frac{(t_1 + t_2) x - t_2}{x(1-x)}.
  \]
  The Jacobian calculation
  $|\partial(x_1,x_2)/\partial(\xi,z)|
  = |x_2 - x_1|/|x(1-x)|$
  then yields \eqref{eqn:W-f-via-V-f-sharp}.
\end{proof}
  

\begin{remark}
  Lemmas  \ref{lem:many-V-f} and \ref{lem:W_f-via-V_f-0}
  should hold (in modified form) for
  any representation $\pi$ of $G \times G$ of the form
  $\pi_1 \otimes \pi_2$, with $\pi_1$ generic irreducible and
  $\pi_2$ belonging to the principal series.  The proof
  technique indicated above applies when $\pi_1$ also belongs to
  the principal series (see \S\ref{sec:holom-famil-local}); in
  general, one can argue using the local functional equation for
  $\pi_1$.  The case indicated above is the most relevant one
  for our applications.
\end{remark}

\subsection{Pre-Kuznetsov weights}\label{sec:pre-kuzn-weights}
\begin{definition}
  By a \emph{pre-Kuznetsov weight}
  $h : G_{\gen}^\wedge \rightarrow \mathbb{C}$,
  we mean
  a function
  for which there exists
  $\phi \in C_c^\infty(G)$
  so that
  \begin{equation}\label{eqn:defn-h-via-phi}
    h(\sigma) =
    \sum_{W \in \mathcal{B}(\sigma)}
    (\int_G \tilde{W} \phi)
    W(1)
  \end{equation}
  for all $\sigma$.
  In that case, we refer to $\phi$ as a \emph{kernel} for $h$.
\end{definition}
We give several examples of such weights in
\S\ref{sec:constr-suit-weight}.  We note the sum
\eqref{eqn:defn-h-via-phi} converges absolutely and that any
such $h$ is holomorphic (\S\ref{sec:holomorphy-of-pre-K-wts}).
Moreover, if $\phi = \phi_1 \ast \phi_2$ is the convolution
of $\phi_1, \phi_2 \in C_c^\infty(G)$,
then
\begin{equation}
    h(\sigma) = \sum_{W \in \mathcal{B}(\sigma)}
  (\int_G \tilde{W} \phi_1)
  (\int_G W \phi_2^\iota),
  \quad
  \phi_2^{\iota}(g) := \phi_2(g^{-1}).
\end{equation}

\begin{remark}
  In the archimedean case, one could generalize the above
  definition to allow rapidly decaying $\phi$ in the sense of
  \cite[p392]{MR1013462}, but doing so is not necessary for our
  purposes.
\end{remark}

We record here a cheap construction that is often useful at
``bad primes.''
\begin{lemma}\label{lem:crude-lower-bound-individual}
  For each compact subset $\Sigma \subseteq G^\wedge_{\gen}$
  consisting of unitary representations,
  there is a pre-Kuznetsov weight
  $h : G^\wedge_{\gen} \rightarrow \mathbb{C}$ for which
  \begin{itemize}
  \item $h(\sigma)  \geq 0$ for all
    unitary $\sigma \in G^\wedge_{\gen}$, and
  \item $h(\sigma) \geq 1$ for all $\sigma  \in \Sigma$.
  \end{itemize}
\end{lemma}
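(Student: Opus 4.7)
The plan is to construct $h$ as a pre-Kuznetsov weight whose kernel has the form $\phi = \phi_1 \ast \phi_1^{\ast}$, where $\phi_1^{\ast}(g) := \overline{\phi_1(g^{-1})}$ and $\phi_1 \in C_c^\infty(G)$, then to assemble finitely many such weights using compactness of $\Sigma$. By the convolution identity recorded in \S\ref{sec:pre-kuzn-weights}, applied with $\phi_2 := \phi_1^{\ast}$ so that $\phi_2^\iota = \overline{\phi_1}$, and using for unitary $\sigma$ an orthonormal basis $\mathcal{B}(\sigma)$ with $\tilde W = \overline W$, such a kernel produces
\begin{equation*}
h_{\phi_1}(\sigma) \; = \; \sum_{W \in \mathcal{B}(\sigma)} \overline{\Bigl(\int_G W\, \overline{\phi_1}\Bigr)} \, \Bigl(\int_G W\, \overline{\phi_1}\Bigr) \; = \; \sum_{W \in \mathcal{B}(\sigma)} \Bigl| \int_G W\, \overline{\phi_1} \Bigr|^2 \geq 0,
\end{equation*}
so nonnegativity on the unitary locus is automatic for any choice of $\phi_1$.

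Next, for each $\sigma_0 \in \Sigma$ I exhibit a $\phi_1 = \phi_1^{(\sigma_0)} \in C_c^\infty(G)$ with $h_{\phi_1^{(\sigma_0)}}(\sigma_0) > 0$. Pick any nonzero $K$-isotypic element $W_0 \in \mathcal{B}(\sigma_0)$, and a point $g_0 \in G$ where $W_0(g_0) \neq 0$; then choose a nonnegative real-valued bump $\chi \in C_c^\infty(G)$ supported in a small neighborhood of $g_0$ on which $W_0$ is nonzero, and set $\phi_1^{(\sigma_0)} := W_0 \cdot \chi$. The $W_0$-term of $h_{\phi_1^{(\sigma_0)}}(\sigma_0)$ equals $\bigl|\int_G |W_0|^2 \chi\bigr|^2 > 0$, and the remaining terms are nonnegative, so $h_{\phi_1^{(\sigma_0)}}(\sigma_0) > 0$ as desired.

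Each $h_{\phi_1^{(\sigma_0)}}$ is continuous on $G^\wedge_{\gen}$ by the holomorphy assertion of \S\ref{sec:pre-kuzn-weights}, so there exist an open neighborhood $U_{\sigma_0} \subseteq G^\wedge_{\gen}$ of $\sigma_0$ and a constant $c_{\sigma_0} > 0$ with $h_{\phi_1^{(\sigma_0)}}(\sigma) \geq c_{\sigma_0}$ for all unitary $\sigma \in U_{\sigma_0}$. Compactness of $\Sigma$ yields a finite subcover $\Sigma \subseteq U_{\sigma_1} \cup \dotsb \cup U_{\sigma_n}$, and I set
\begin{equation*}
\phi \; := \; \sum_{j=1}^n c_{\sigma_j}^{-1}\, \phi_1^{(\sigma_j)} \ast \bigl(\phi_1^{(\sigma_j)}\bigr)^{\ast} \; \in \; C_c^\infty(G).
\end{equation*}
The associated pre-Kuznetsov weight $h = \sum_j c_{\sigma_j}^{-1}\, h_{\phi_1^{(\sigma_j)}}$ is a sum of nonnegative functions on the unitary locus and satisfies $h(\sigma) \geq 1$ for every $\sigma \in \Sigma$. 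The one nontrivial step is pointwise positivity at a given $\sigma_0$; the bump-against-a-Whittaker-vector construction handles it, and the rest is a routine open-cover argument combined with the formulas already established.
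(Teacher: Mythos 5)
Your proof is correct and follows essentially the same strategy as the paper's: a kernel of convolution-square form to force nonnegativity on the unitary locus, a test function adapted to a Whittaker vector to get strict positivity at a given point, and continuity plus compactness to conclude. The only (immaterial) difference is in the pointwise positivity step — the paper takes $W_0$ with $W_0(1)=1$ and a bump $\phi_0$ near the identity so that $\phi_0\ast W_0(1)$ stays close to $1$, whereas you take $\phi_1 = W_0\cdot\chi$ with $\chi$ a bump near an arbitrary point where $W_0$ is nonvanishing; both yield the same conclusion.
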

\begin{proof}
  By continuity and compactness, we may assume that
  $\Sigma = \{\sigma _0\}$ is a singleton.  Let
  $W_0 \in \sigma_0$ with $W_0(1) = 1$.  By continuity, we may
  find a small neighborhood $U$ of the identity in $G$ so that
  $\Re(W_0(u)) \geq 1/2$ for all $u \in U$.  Choose a
  nonnegative-valued $\phi_0 \in C_c^\infty(U)$ with
  $\int \phi_0 = 1$.  Take for $\phi$ the convolution
  $\phi_0^* \ast \phi_0$, where
  $\phi_0^*(g) := \overline{\phi_0(g^{-1})}$.  Take for $h$ the
  pre-Kuznetsov weight with kernel $\phi$.  Then
  \[
    h(\sigma)
    = \sum_{W \in \mathcal{B}(\sigma)}
    |\phi_0 \ast W|^2(1) \geq 0.
  \]
  Moreover, $\Re(\phi_0 \ast W_0) \geq 1/2$, so likewise
  $h(\sigma_0) \geq (1/4) \|W_0\|^{-2} > 0$.  We conclude by
  replacing $h$ with a suitable positive multiple.
\end{proof}
Lemma \ref{lem:crude-lower-bound-individual} has the
disadvantage of being ineffective.  For instance, invoking it in
the proof of Theorem \ref{thm:CI} would lead to a weaker form of
the estimate \eqref{eq:weyl-bound-sigma-chi} in which the
implied constant depends in an unspecified manner (rather than
polynomially) upon $\sigma$.  A more complicated but effective
variant will be given below in \S\ref{sec:crude-local-estim}.

\subsection{Whittaker vectors}\label{sec:whittaker-vectors}

\subsubsection{}
Let $\sigma$ be a smooth representation of $G$ (smooth moderate
growth Fr{\'e}chet
in the archimedean case).
The main example here is when $\sigma$
is the restriction to $G \hookrightarrow G \times G$
of $\pi$.

By a \emph{generalized vector} in $\sigma$, we mean an element
of the algebraic dual of the contragredient.  Any closely
related definition (e.g., via negative index Sobolev spaces as
in \cite[\S2]{michel-2009} or \cite[\S3.2]{nelson-venkatesh-1}
or \S\ref{sec:norms-repr}) would also work for us.  We identify
$\sigma$ with a subspace of the space of generalized vectors.
The $G$-action extends naturally to this larger space.

By a \emph{$\psi$-Whittaker vector} (or
simply \emph{Whittaker vector} when $\psi$ is understood) we
mean a generalized vector $v$ for which $n(x) v = \psi(x) v$ for
all $x \in F$.

\subsubsection{}
Given any (smooth) vector $v$,
the formula
\begin{equation}
  N_\psi v := \int_{x \in F}
  \psi(-x) n(x) v \, d x
\end{equation}
defines a Whittaker vector in $\sigma$.
We note that $N_\psi$ commutes with $G$-equivariant homomorphisms.

\subsubsection{}
Given any $\phi_0 \in C_c^\infty(A)$,
the convolution
\begin{equation}\label{eqn:phi0-compose-N-psi-v}
  \phi_0 \ast N_\psi v :=
  \int_{a \in A}
  \phi_0(a) a N_\psi v
\end{equation}
defines an actual vector, i.e., an element of $\sigma$.
We may give a direct definition of this vector
via the formula
\begin{equation}\label{eqn:direct-defn-phi0-N-psi}
  \phi_0 \ast N_\psi v
  =
  \int_{x \in F}
  (\int_{u \in F^\times}
  \phi_0(1/u)
  \psi(-x u)
  n(x) a(1/u) v
  \, \frac{d u }{|u|})
  \, d x
\end{equation}
obtained from \eqref{eqn:phi0-compose-N-psi-v}
by writing $a = a(1/u)$ substituting
$x \mapsto xu$.
The iterated integral
\eqref{eqn:direct-defn-phi0-N-psi}
converges in the indicated order.
(To see this,
integrate by parts in the $u$-integral with
respect to the phase $\psi(-x u)$
in
the archimedean case,
and
use that $v$ is invariant by an open subgroup of the unit group
in the non-archimedean case.)

It follows that for any
functional $\ell$ on $\pi$ that transforms under $A$ with
respect to a character $\nu$, we may canonically define
$\ell(N_\psi v)$ to be $\ell(\phi_0 \ast N_\psi V)$ for any
$\phi_0$ whose Mellin transform takes the value $1$ at $\nu$.

\subsubsection{}
We record how $N_\psi$ acts on the $\psi$-Kirillov model of a
generic irreducible representation $\sigma$ of $G$.  Let
$W \in \sigma$, realized as
$W : F^\times \rightarrow \mathbb{C}$ with
the actions
$n(x) W(t) = \psi(x t ) W(t)$, $a(y) W(t) = W(t y)$.  Then
$N_\psi W = W(1) \delta_1$, where $\delta_1$ denotes the
$\delta$-mass at $1 \in F^\times$.  It follows that
\begin{equation}\label{eqn:action-phi0-N-psi-on-kirillov-model}
  \phi_0 \ast N_\psi W(t) =
  \phi_0(1/t) W(1).
\end{equation}

\subsubsection{}
For the sake of concreteness, we will work throughout the paper
primarily with the smooth vectors defined by
\eqref{eqn:phi0-compose-N-psi-v} rather than the generalized
vectors $N_\psi v$.

\subsection{Admissibility of
  pre-Kuznetsov weights}\label{sec:constr-admiss-weight}
\label{sec-6-6}
We are now prepared to state the precise form of Theorem
\ref{thm:summarize-kuznetsov-weights-adm}.

\begin{theorem}\label{thm:constr-admiss-weight}
  Let $h : G_{\gen}^\wedge \rightarrow \mathbb{C}$
  be a pre-Kuznetsov
  weight
  with kernel $\phi \in C_c^\infty(G)$.
  Then $h$ is admissible (\S\ref{sec-6-4}).
  An admissible dual
  $\tilde{h} : A^\wedge \rightarrow \mathbb{C}$
  is
  given by the convergent formulas
  \begin{equation}\label{eqn:h-tilde-of-omega-via-h-sharp}
    \tilde{h}(\omega)
    =
    \int_{t \in F}
    h^\sharp(t)
    \omega (\frac{1-t}{t})
    \, \frac{d t}{|t(1-t)|^{1/2}}
  \end{equation}
  where
  \begin{equation}
    h^\sharp(t)
    = \int_{x \in F}
    V^\sharp(x, \frac{x-t}{x(1-x)})
    \, \frac{d x}{|x(1-x)|}
  \end{equation}
  where $V^\sharp$ is defined
  as above in terms of  the element
  $V \in C_c^\infty(N \backslash G, \psi)$
  defined by
  \begin{equation}\label{eqn:defn-V-via-phi}
    V(g) := \int_{x \in F} \phi(n(x) g) \psi(-x) \, d x.
  \end{equation}
\end{theorem}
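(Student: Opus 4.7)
The approach is to construct $f \in \pi$ with $\ell_\sigma(f) = h(\sigma)$ for all generic $\sigma$, and then to compute $\ell_\omega(f)$ directly. The construction is $f := \phi_0 \ast N_\psi f_0$, where $f_0 \in \pi$ is selected via Lemma \ref{lem:many-V-f} to satisfy $V_{f_0} = V$ (the partial Whittakerization of $\phi$ defined in \eqref{eqn:defn-V-via-phi}), and $\phi_0 \in C_c^\infty(A)$ is normalized by $\int_A \phi_0 = 1$. The vector $f$ lies in $\pi$ (a smooth vector, not merely generalized) by the discussion after \eqref{eqn:phi0-compose-N-psi-v}.

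For admissibility, I would first verify by direct unfolding (substituting the definition of $V$ and using the left $N$-equivariance of $\tilde{W}$) that $\int_{N \backslash G} \tilde{W} V = \int_G \tilde{W} \phi$, so that $h(\sigma) = P_\sigma(V)(1)$ where $P_\sigma$ denotes projection to the Whittaker model. Since the map $f_0 \mapsto V_{f_0}$ is $G$-equivariant, one has $V_f = \phi_0 \ast N_\psi V$. By naturality, $P_\sigma$ commutes with $G$-actions, hence with $\phi_0 \ast N_\psi$, yielding $P_\sigma(V_f) = \phi_0 \ast N_\psi P_\sigma(V)$; applying \eqref{eqn:action-phi0-N-psi-on-kirillov-model} and integrating over $A$ with $\int_A \phi_0 = 1$ gives $\ell_\sigma(f) = \int_A P_\sigma(V_f) = P_\sigma(V)(1) = h(\sigma)$.

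For $\tilde{h}(\omega)$, I would compute $W_f$ in the Kirillov model of $\pi$. The diagonal $N$-action multiplies $W_{f_0}(t_1, t_2)$ by $\psi(x(t_1 + t_2))$, so $N_\psi$ projects $W_{f_0}$ to the distribution $W_{f_0}(t_1, t_2) \, \delta(t_1 + t_2 - 1)$; the subsequent $\phi_0$-smoothing through the $A$-action (which scales the Kirillov coordinates) then yields $W_f(t_1, t_2) = \phi_0(a(1/(t_1+t_2))) \, W_{f_0}(t_1/(t_1+t_2), t_2/(t_1+t_2))$. Substituting into $\ell_\omega(f) = \int W_f \cdot (\omega \otimes \omega^{-1}) \, dt_1^* dt_2^*$ and changing variables to $\lambda = t_1 + t_2$, $t = t_2/\lambda$ (so that $(t_1, t_2) = (\lambda(1-t), \lambda t)$, $\omega(t_1/t_2) = \omega((1-t)/t)$, Jacobian $|\lambda|$), the $\lambda$-integral separates and collapses via $\int_A \phi_0 = 1$, leaving $\ell_\omega(f) = \int W_{f_0}(1-t, t) \, \omega((1-t)/t) \, dt/|t(1-t)|$. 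Applying Lemma \ref{lem:W_f-via-V_f-0} to $W_{f_0}$ at $(t_1, t_2) = (1-t, t)$, where $(t_1 + t_2)x - t_2 = x - t$, gives $W_{f_0}(1-t, t) = |t(1-t)|^{1/2} \, h^\sharp(t)$, and the formula \eqref{eqn:h-tilde-of-omega-via-h-sharp} follows.

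The main technical obstacle is rigorously justifying the distributional manipulations underlying the formula for $W_f$ in terms of $W_{f_0}$: the raw identity involves a $\delta$-distribution on $t_1 + t_2 = 1$ arising from $N_\psi$, which acquires rigorous meaning only after smoothing by $\phi_0$. Its validity should be verified either directly via the explicit integral definition \eqref{eqn:direct-defn-phi0-N-psi} combined with Fourier inversion on the Kirillov model, or via analytic deformation within the holomorphic family $\mathcal{I}(s_1) \otimes \mathcal{I}(s_2)$ deforming $\pi$, where all integrals converge for $\Re(s_1), \Re(s_2)$ in appropriate strips before specialization to $s_1 = s_2 = 0$.
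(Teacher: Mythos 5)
Your proposal is correct and follows essentially the same route as the paper: choose $f_0$ with $V_{f_0}=V$ via Lemma \ref{lem:many-V-f}, form $f = \phi_0^{\iota}\ast N_\psi f_0$, derive $W_f(t_1,t_2)=\phi_0(t_1+t_2)\,W_{f_0}(t_1/(t_1+t_2),t_2/(t_1+t_2))$ by Fourier inversion from the explicit integral \eqref{eqn:direct-defn-phi0-N-psi}, and collapse the scaling integral before invoking Lemma \ref{lem:W_f-via-V_f-0}. The only differences are cosmetic (your $\phi_0$ versus the paper's $\phi_0^\iota$, and a single change of variables $\lambda=t_1+t_2$, $t=t_2/\lambda$ in place of the paper's two-step substitution), and your flagged technical point is resolved exactly as the paper does it.
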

\begin{proof}
  Note that $\int_G \tilde{W}  \phi
  = \int_{N \backslash G} \tilde{W}  V$.
  By lemma \ref{lem:many-V-f},
  we may find $f_0 \in \pi$ so that
  $V_{f_0} = V$.
  Define
  \begin{equation}\label{eqn:V-sig-proj-defn}
    V_\sigma(g) := \sum_{W \in
      \mathcal{B}(\sigma)}
    (\int_{N \backslash G} V \tilde{W}) W(g).
  \end{equation}
  (See \S\ref{sec:holomorphy-of-pre-K-wts}
  for details regarding convergence.)
  Then $V_\sigma(1) = h(\sigma)$.

  Choose $\phi_0 \in C_c^\infty(F^\times) \cong C_c^\infty(A)$
  with $\int_{A} \phi_0 = 1$,
  set $\phi_0^{\iota}(y) := \phi_0(1/y)$,
  and define
  (with notation as in \S\ref{sec:whittaker-vectors})
  \begin{equation}\label{eqn:defn-f-via-f0-basic-case}
    f
    :=
    \phi_0^{\iota} \ast N_\psi f_0
    \in \pi.
  \end{equation}
  In the Kirillov model,
  we
  may expand
  \[
    W_f(t_1,t_2)
  =
  \int_{x \in F}
  \int_{u \in F^\times}
  \phi_0(u)
  \psi(((t_1 + t_2)/ u - 1) x)
  W_{f_0}(t_1 /u, t_2 /u)
  \, d x
  \, \frac{d u}{|u|}
  \]
  and apply Fourier inversion
  to see that
  \begin{equation}\label{eqn:W-f-via-W-f0}
    W_f(t_1,t_2)
    =
    \phi_0(t_1 + t_2)
    W_{f_0} (\frac{t_1}{t_1 + t_2}, \frac{t_2}{t_1 + t_2}).
  \end{equation}
  For each $\sigma$, the projection
  $(V_f)_{\sigma}$
  defined by analogy to \eqref{eqn:V-sig-proj-defn}
  is given by  
  \begin{equation}
    (V_f)_\sigma(a(y))    = \phi_0(y)    (V_{f_0})_\sigma(1),
  \end{equation}
  by \eqref{eqn:action-phi0-N-psi-on-kirillov-model}.
  Thus
  \begin{equation}
    \ell_{\sigma}(f)    =    (V_{f_0})_\sigma(1)    = h(\sigma).
  \end{equation}
  It remains only to verify the required formula
  for $\ell_{\omega}(f)$.
  By definition,
  \[
    \ell_{\omega}(f)
    =
    \int_{t_1,t_2}
    \omega(t_1/t_2)
    W_f(t_1,t_2)
    \, \frac{d t_1 \, d t_2}{|t_1 t_2|}
    =
    \int_{s,t}
    \omega(s)
    W_f(s t,t)
    \, \frac{d s \, d t}{|s t|}.
  \]
  Inserting \eqref{eqn:W-f-via-W-f0}
  gives
  \begin{equation}
    \ell_{\omega}(f)
    =
    \int_{s,t}
    \omega(s)
    \phi_0((s+1)t)
    W_{f_0}
    (\frac{s}{1+s},
    \frac{1}{s + 1})
    \, \frac{d s \, d t}{|s t|}.
  \end{equation}
  Invoking the normalization of $\phi_0$,
  the above integral simplifies to
  \[
    \int_{s}
    \omega(s)
    W_{f_0}
    (\frac{s}{s + 1},
    \frac{1}{s + 1})
    \, \frac{d s}{|s|}.
  \]
  The substitution $s := 1/(1-t)$
  then yields
  \[
    \int_{t}
    \omega(\frac{1-t}{t})
    W_{f_0}
    (1 - t, t)
    \, \frac{d t}{|t(1-t)|}.
  \]
  We evaluate
  $W_{f_0}$ using \eqref{eqn:W-f-via-V-f-sharp}
  to arrive at
  \eqref{eqn:h-tilde-of-omega-via-h-sharp}.
  The convergence in each step above follows
  from that of the Hecke integrals $\ell_{\omega}(f)$.
\end{proof}

\begin{remark}
  One can verify, using the Whittaker--Plancherel
  theorem for $G$, that 
  \begin{equation}
    \int_{\text{unitary }\sigma \in G^\wedge_{\gen}}
    |h(\sigma)|^2
    = \int_{\text{unitary }\omega \in A^\wedge}
    |\tilde{h}(\omega)|^2,
  \end{equation}
  where the integrals are taken with respect
  to the Plancherel measures dual to the measures
  defined on $G$ and $A$, respectively.  
\end{remark}

\begin{remark}
  A pre-Kuznetsov weight $h$ typically admits many kernels
  $\phi$, so the reader may find it unnatural that $\tilde{h}$
  is expressed in terms of $\phi$ rather than $h$.
  One can express $\tilde{h}$ directly in
  terms of the Bessel transform
  $J_h(g) := \int_\sigma h(\sigma) J_\sigma(g)$, where
  $J_\sigma(g)$ denotes the Bessel distribution
  $\sum_{W \in \mathcal{B}(\sigma)}\tilde{W}(1) W(g)$,
  but our experience suggests that it is more efficient
  to pass first through $\phi$.
\end{remark}

\section{Local estimates for short families}\label{sec:appl-cubic-moment}

\subsection{Families}\label{sec:short-families}
Here we record some notation for referring in a unified manner
to families of representations corresponding to ``short''
families of automorphic forms, such as
\begin{itemize}
\item for a large positive real $T$,
  the set of Maass forms of eigenvalue $1/4 + t^2$
  for some $t \in [T-1, T + 1]$
  (corresponding below to the family
  $\Sigma_{\mathbb{R}}(\omega)$
  with $\omega = |.|^{i T}$), or
\item
  for a large prime $p$,
  the set of twists by the quadratic character $(\cdot|p)$
  of a newform on $\SL_2(\mathbb{Z})$ or $\Gamma_0(p)$
  (corresponding below to the family $\Sigma_{\mathbb{Q}_p}(\omega)$,
  with $\omega$ a ramified quadratic character of $\mathbb{Q}_p^\times$).
\end{itemize}

Let $F$ be a local field.  We call a character $\chi$
of
$F^\times$ \emph{analytically unramified} if
\begin{itemize}
\item $F$ is non-archimedean and $\chi$ is unramified
  in the usual sense (trivial restriction
  to the unit group), or
\item $F$ is archimedean and $\chi = |.|^{s}$
  for some $s \in \mathbb{C}$ with $|\Im(s)| \leq 1$.
\end{itemize}
For a character $\chi$ of $F^\times$, let $\Omega_F(\chi)$
denote the set of characters $\omega$ of $F^\times$ for which
the ratio $\omega/\chi$ is analytically unramified,
and let
$\Sigma_F(\chi)$ denote the set of irreducible representations
$\sigma$ of $\PGL_2(F)$ for which
there exists $\omega \in \Omega_F(\chi)$
so that either
\begin{itemize}
\item $\sigma$ is the principal series representation
  $\mathcal{I}(\omega)$ induced by
  $\omega$ (\S\ref{sec:intro-representations}), or
\item
  $F$ is non-archimedean, $\omega$ is quadratic and $\sigma$ is the twist by
  $\omega$ of the Steinberg representation of $G$
  (thus $\sigma$ is the unique irreducible subrepresentation
  of $\mathcal{I}(\omega |.|^{1/2})$).
\end{itemize}


\subsection{Construction of suitable weights}\label{sec:constr-suit-weight}
Let $F$ be a local field,
let $\psi$ be a nontrivial unitary character of $F$,
and let $\chi$ be a unitary character
of $F^\times$.
We denote by $Q := C(\chi)$ its analytic conductor.
We aim to construct a pre-Kuznetsov weight
$h$ that is nonnegative on unitary representations
and uniformly bounded from below on the family $\Sigma_F(\chi)$.

We give the construction of $h$ separately in the
non-archimedean and archimedean cases, but the reader will
notice very close parallels.

\subsubsection{Non-archimedean case}\label{sec:constr-wt-non-archimedean-case}
Suppose that  $F$ is non-archimedean.
We assume in this case that $\chi$ is ramified,
so that $Q \in \{q, q^2, q^3, \dotsc \}$.
We assume also that $\psi$ is unramified;
this last assumption simplifies slightly the construction,
but has no effect on the subsequent estimates,
as explained in \S\ref{sec:vari-with-resp-1}.

Let $J \leq G$ denote the compact open subgroup consisting of
elements of the form
\[
  \text{$g = n(x) a(y) n'(z)$
    with $|x| \leq 1, |y| = 1, |z| \leq 1/Q$.}
\]
We note that $\vol(J) = \zeta_F(1)/Q$.

Let $\chi_J$ denote the
character of $J$ given by $n(x) a(y) n'(z) \mapsto \chi(y)$.

Define
$\phi \in C_c^\infty(G)$ to be supported on $J$ and given there
by $\chi_J^{-1}$.

Let $h : G^\wedge_{\gen} \rightarrow \mathbb{C}$ denote the
pre-Kuznetsov weight with kernel $\phi$.

\subsubsection{Archimedean case}\label{sec:constr-wt-archimedean-case}
Suppose now that $F$ is archimedean, thus $F = \mathbb{R}$ or
$F = \mathbb{C}$.

We take $\alpha_0 \in (0,1)$ sufficiently small but fixed, and
then take $\alpha_1 \in (0,1)$ fixed but small enough in terms
of $\alpha_0$.  We set
\begin{equation}
  J := \left\{ n(x) a(y) n'(z) :
    |x|, |y-1|, Q |z| \leq \alpha_1
  \right\},
\end{equation}
and note that $\vol(J) \asymp 1/Q$.

We take for $\phi_0 \in C_c^\infty(G)$ a ``smoothened
characteristic function of $J$,'' by which we mean more
precisely an element with the following properties:
\begin{itemize}
\item $\phi_0$ is supported in $J$, and nonnegative.
\item $\int_G \phi_0 \asymp \vol(J)$.
\item
  For any fixed multi-indices $\alpha,\beta,\gamma
  \in \mathbb{Z}_{\geq 0}^{[F:\mathbb{R}]}$, 
  \[
    \partial_x^{\alpha}
    \partial_y^{\beta}
    \partial_z^{\gamma}
    \phi_0 (n(x) a(y) n'(z))
    \ll
    Q^{|\gamma|}.
  \]
  Here the partial derivatives are the usual ones when
  $F = \mathbb{R}$ and are given by differentiating with respect
  to the real and imaginary coordinates when $F = \mathbb{C}$.
  Here
  $|\gamma| = \sum_{1 \leq j \leq [F:\mathbb{R}]} |\gamma_j|$,
  as usual.
\end{itemize}
Such an element exists (e.g., take a product of suitably rescaled bump
functions of the coordinates $x,y,z$).

Let $\chi_J$ denote the function on $J$ given by
$n(x) a(y) n'(z) \mapsto \chi(y)$.

We may define
the multiple $\phi_1 := \chi_J^{-1} \phi_0 \in C_c^\infty(G)$ of
$\phi_0$.  Let $\phi_1^*(g) := \overline{\phi_1(g^{-1})}$ denote
its adjoint, and define the convolution product
$\phi := \vol(J)^{-1} \phi_1^* \ast \phi_1 \in C_c^\infty(G)$.

Let $h$ denote the
pre-Kuznetsov weight with kernel $\phi$.

\subsection{Lower bounds for weights}\label{sec:lower-bounds-for-wts}
\begin{theorem}\label{thm:lower-bounds-weights}
  Fix $\vartheta \in [0,1/2)$.
  Let $h$ be as in
  \S\ref{sec:constr-wt-non-archimedean-case}
  ($F$ non-archimedean, $\chi$ ramified)
  or
  \S\ref{sec:constr-wt-archimedean-case}
  ($F$ archimedean, $\chi$ general).
  Let $\sigma \in G_{\gen}^\wedge$ be unitary.
  \begin{enumerate}[(i)]
  \item We have $h(\sigma) \geq 0$.
  \item If $\sigma$ is $\vartheta$-tempered (\S\ref{sec:bounds-towards-raman})
    and belongs to $\Sigma_F(\chi)$,
    then
    $h(\sigma) \gg_{\vartheta} 1/Q$.
  \item\label{item:assertion-about-unramified-vanishing-of-h}
    If $F$ is non-archimedean
    and $\sigma$ is unramified,
    then
    $h(\sigma) = 0$.
  \end{enumerate}
\end{theorem}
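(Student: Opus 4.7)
The strategy is to rewrite $\phi$ in both cases as a positive convolution $\phi = c\,\phi_1^{\ast} \ast \phi_1$ for some $c > 0$ and $\phi_1 \in C_c^\infty(G)$. In the archimedean case this is precisely the definition, with $c = \vol(J)^{-1}$. In the non-archimedean case I would take $\phi_1 := \vol(J)^{-1/2}\,\chi_J^{-1}\,1_J$, after first checking that $\chi_J$ is a genuine unitary character of the compact open subgroup $J$. To verify multiplicativity, observe that $J$ identifies in matrix coordinates as $\{\left(\begin{smallmatrix}a & b \\ c & d\end{smallmatrix}\right) : a, d \in \mathfrak{o}^\times,\ b \in \mathfrak{o},\ c \in \mathfrak{p}^n\}$ (where $Q = q^n$), on which $\chi_J(k) = \chi(a/d)$; for $k_1, k_2 \in J$ the top-left entry of $k_1 k_2$ is $a_1 a_2 + b_1 c_2 \equiv a_1 a_2 \pmod{\mathfrak{p}^n}$, and similarly for the bottom-right, so $\chi_J(k_1 k_2) = \chi_J(k_1)\chi_J(k_2)$ by the conductor hypothesis on $\chi$. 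A direct integration then gives $\phi_1^{\ast} \ast \phi_1 = \phi$.

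With this factorization, the manipulation from the proof of Lemma~\ref{lem:crude-lower-bound-individual} yields the Bessel-side identity
\begin{equation*}
h(\sigma) \;=\; c \sum_{W \in \mathcal{B}(\sigma)} \bigl|(\sigma(\phi_1) W)(1)\bigr|^2,
\end{equation*}
which is manifestly nonnegative for unitary $\sigma$, establishing (i).

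For (iii), in the non-archimedean case, Schur orthogonality on $J$ gives $\sigma(\phi_1) = \vol(J)^{1/2}\,P_{\chi_J}$, where $P_{\chi_J}$ is the orthogonal projection onto the $\chi_J$-isotypic subspace $\sigma^{J,\chi_J}$. Casselman's newform theory then gives $\dim\sigma^{J,\chi_J} = 0$ when $\sigma$ is unramified and $\chi$ is ramified, since the conductor of the $\GL_2$-twist $\sigma \otimes \chi^{-1}$ is $2n > n$. Hence $\sigma(\phi_1) = 0$ and $h(\sigma) = 0$.

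For (ii), in the non-archimedean case the same projection identity gives $h(\sigma) = \vol(J)\,|W^{\mathrm{new}}(1)|^2 \asymp Q^{-1}\,|W^{\mathrm{new}}(1)|^2$, with $W^{\mathrm{new}}$ the unit-norm generator of the one-dimensional $\chi_J$-isotypic (existence and uniqueness holding for both the principal-series and Steinberg-twist sub-cases of $\Sigma_F(\chi)$). The uniform lower bound $|W^{\mathrm{new}}(1)| \gg_\vartheta 1$ follows from explicit Kirillov-model formulas: the newform normalization $W^{\mathrm{new}}|_{\mathfrak{o}^\times} \equiv 1$ together with uniform boundedness of $\|W^{\mathrm{new}}\|^2$ for $\vartheta$-tempered $\sigma$. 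In the archimedean case, I would construct an approximate $\chi$-equivariant test vector $v \in \sigma = \mathcal{I}(\omega)$ via a bump in the induced model supported near $1 \in K$ and normalized so that $v(1) = 1$; the analytic unramification of $\omega/\chi$ makes $v$ approximately $\chi_J$-equivariant under $J$, so direct integration yields $\sigma(\phi_1) v(1) \gg \vol(J)$. Combined with $\|v\|^2 \ll \vol(J)$, the contribution of $W = v/\|v\|$ to the sum gives $h(\sigma) \gg 1 \geq 1/Q$.

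The main obstacle is the uniform Whittaker-function lower bound in case (ii): while standard, it requires explicit (though routine) Kirillov- or induced-model normalization calculations, particularly in the archimedean case where the $L^2$-size of the approximate newform must be controlled uniformly as $\omega$ varies in $\Omega_F(\chi)$.
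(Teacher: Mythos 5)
Your non-archimedean argument is essentially the paper's: the idempotent/convolution-square structure gives (i), the conductor computation $C(\sigma\otimes\chi^{-1})=C(\chi)^2>Q$ gives (iii), and the newvector normalization $\|W\|^2 = |W(1)|^2 L(\ad\sigma',1)/\zeta_F(2)$ with $L(\ad\sigma',1)\asymp_\vartheta 1$ gives (ii). Two small imprecisions there: the $\chi_J$-isotypic subspace need not be one-dimensional (e.g.\ for quadratic $\chi$ and $\sigma=\mathcal{I}(\chi\eta)$ with $\eta$ unramified, the twist $\sigma\otimes\chi^{-1}$ is unramified and its $K_0(Q)$-fixed space has dimension $n+1$), though this is harmless for a lower bound; and you still owe the verification that $C(\sigma\otimes\chi^{-1})\leq Q$ for \emph{both} sub-cases of $\Sigma_F(\chi)$, which is where membership in the family is actually used.

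The archimedean case of (ii) has a genuine gap. The quantity entering $h(\sigma)=\vol(J)^{-1}\sum_W|\phi_1\ast W(1)|^2$ is the \emph{Whittaker} function evaluated at the identity of $A$, not the induced-model value $v(1)$; your test vector is built and normalized in the induced model, and the two evaluations differ by the Whittaker intertwiner $W_v(1)=\int v(wn(x))\psi(-x)\,dx$, an oscillatory integral whose size is precisely what needs to be controlled. Relatedly, your two estimates are mutually inconsistent in the Whittaker normalization: a vector with $W(y)\approx\chi(y)$ on the $|y-1|\leq\alpha_1$ slice of $J$ (needed for $\phi_1\ast v(1)\gg\vol(J)$, since $\int_J\phi_0\chi_J^{-1}W$ sees $W(a(y)\cdots)$ on a set of measure $\asymp 1$ in $F^\times$) forces $\|v\|^2=\int_{F^\times}|W_v|^2\gg 1$, not $\ll\vol(J)$. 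The correct bookkeeping with a \emph{unit} vector gives $h(\sigma)\geq\vol(J)^{-1}|\phi_1\ast W(1)|^2\gg\vol(J)^{-1}\cdot\vol(J)^2\asymp 1/Q$; your claimed $h(\sigma)\gg 1$ is in fact impossible, since $\int_\sigma h(\sigma)\,d\mu_{\mathrm{Pl}}(\sigma)\asymp\vol(J)^{-1}\|\phi_1\|_{L^2(G)}^2\asymp 1$ while $\Sigma_F(\chi)$ carries Plancherel mass $\asymp Q$. The missing ingredient is the existence of a unit vector whose Whittaker function is within $1/2$ of $\chi_J$ on $J$ — this is exactly the ``analytic newvector'' statement of Theorem~\ref{lem:produce-whittaker-against-random-central-character-wannabe}, whose proof is not a soft bump-function construction but goes through the local functional equation and Mellin analysis to show that a fixed Kirillov-model bump is approximately invariant under $wn(x/c)w$ for $|x|$ small relative to $1/C(\pi)$.
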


\begin{proof}[Proof in the non-archimedean case]
  Since $\chi_J$ is a character of $J$, the normalized element
  $\vol(J)^{-1} \phi$ defines an idempotent in the convolution
  algebra $C_c^\infty(G)$.
  Thus $h(\sigma) \geq 0$.
  More precisely, by \eqref{eqn:defn-h-via-phi},
  we see that $h(\sigma)$ is the sum of $\vol(J) |W(1)|^2$ taken
  over $W$ in an orthonormal basis for the space
  \[
    \sigma^{\chi_J}   := \{W \in \sigma : g W = \chi_J(g) W
    \text{ for all } g \in J\}.
  \]

  We now determine a
  criterion for when $\sigma^{\chi_J}$
  is nontrivial
  (see \cite[Lem 22]{nelson-padic-que}
  for a related argument).
  We verify readily that the image of $\sigma^{\chi_J}$
  under the twisting map
  $\sigma \rightarrow \sigma \otimes \chi^{-1}$
  consists of all vectors
  transforming under the group
  \[
    K_0(Q) := \{\begin{pmatrix}
      a & b \\
      c & d
    \end{pmatrix} : |a| = |d| = 1, |b| \leq 1, |d| \leq 1/Q\}
    \leq \GL_2(F)
  \]
  via the character
  \[
    \begin{pmatrix}
      a & b \\
      c & d
    \end{pmatrix} \mapsto \chi^{-2}(d).
  \]
  By newvector theory,
  it follows that
  \begin{equation}\label{eq:nontrivial-chi-J-invariants-criterion}
    \sigma^{\chi_J} \neq \{0\}
    \iff
    C(\sigma \otimes \chi^{-1}) \leq Q = C(\chi).
  \end{equation}

  Assertion \eqref{item:assertion-about-unramified-vanishing-of-h}
  follows:
  under its hypotheses,
  we have $C(\sigma \otimes \chi^{-1}) = C(\chi^{-1})^2 = Q^2 >
  Q$,
  so $\sigma^{\chi_J} = \{0\}$ and thus $h(\sigma) = 0$.

  Since $\vol(J) \asymp 1/Q$,
  the proof of the remaining
  assertions will be complete
  once we show that for each
  $\vartheta$-tempered $\sigma \in \Sigma_F(\chi)$
  that there is a nonzero
  $W \in \sigma^{\chi_J}$
  with $|W(1)| \gg_{\vartheta} \|W\|$.
  If $\sigma$ belongs to the principal
  series, then
  $\sigma = \mathcal{I}(\chi \eta)$
  with $\eta$ unramified,
  so
  $C(\sigma \otimes \chi^{-1}) = C(\eta) C(\chi^{-2} \eta^{-1})
  = C(\chi^{-2}) \leq C(\chi)$
  (see \cite[p8]{Sch02}).  If
  $\sigma$ is a twist of Steinberg, then
  $\sigma$ is an irreducible subrepresentation
  of $\mathcal{I}(|.|^{1/2} \chi \eta)$
  with $\eta$ unramified,
  so
  $C(\sigma \otimes \chi^{-1}) = q \leq C(\chi)$
  (see \emph{loc. cit.}).
  In either case, the 
  inequality in \eqref{eq:nontrivial-chi-J-invariants-criterion}
  holds,
  and so $\sigma^{\chi_J} \neq 0$.

  The proof of the criterion
  \eqref{eq:nontrivial-chi-J-invariants-criterion}
  shows moreover that if
  $\sigma \in \Sigma_F(\chi)$, then $\sigma^{\chi_J}$
  contains the inverse image of a newvector $W$ under
  the
  map $\sigma \rightarrow \sigma '$
  for some unitary
  twist $\sigma '$ of $\sigma$.  As recalled in
  \S\ref{sec:whitt-intertw-dual},
  we have $\|W\|^2 = |W(1)|^2 L(\ad \sigma ', 1) / \zeta_F(2)$
  for any such $W$.
  Since $\sigma$ and hence also $\sigma '$
  is $\vartheta$-tempered, we have $L(\ad \sigma ', 1)
  \asymp_{\vartheta} 1$,
  thus $W(1) \gg_{\vartheta} \|W\|$.
  (The parameter $\vartheta$
  is relevant only when $\sigma$ is a quadratic twist of a non-tempered
  unramified
  representation,
  since otherwise both representations are tempered.)
  The twisting map
  $\sigma \rightarrow \sigma '$ is unitary, so the inverse image
  of $W$ has the same norm as $W$, and the required lower bound
  $W(1) \gg_{\vartheta} \|W\|$ follows.
\end{proof}
\begin{proof}[Proof in the archimedean case]
  (We note that the results proved
  here are not used
  in the present paper, but should be of
  direct use in future work.)
  We have
  \begin{equation}
    h(\sigma) =
    \vol(J)^{-1} \sum_{W \in \mathcal{B}(\sigma)}
    | \phi_1 \ast W (1) |^2,
  \end{equation}
  so $h(\sigma) \geq 0$.  Suppose now that
  $\sigma \in \Sigma_F(\chi)$.
  Since $\vol(J) \asymp 1/Q$,
  it suffices to show that there
  is a unit vector $W \in \sigma$ with
  $|\phi_1^* \ast W(1)| \gg 1/Q$.  The proof will be very
  similar to that given above in the non-archimedean case, but
  making use of the ``analytic newvector theory''
  given in \S\ref{sec:analyt-newv-oper} rather than the ``usual''
  newvector theory.
  The reader might wish to skim \S\ref{sec:analyt-newv-oper}
  before proceeding.

  Set $\sigma ' := \sigma \otimes \chi^{-1}$.  We may assume
  $\alpha_0$ taken small enough that Theorem
  \ref{lem:produce-whittaker-against-random-central-character-wannabe}
  holds with $\delta = 1/2$ and $\eps = \alpha_0$.  Let
  $W' \in \sigma'$ denote the unit vector produced by that
  result, and let $W \in \sigma$ denote the inverse image of
  $W'$.
  With notation as in \S\ref{sec:analyt-newv-oper},
  set \[J ' := K_1(C(\sigma'), C(\omega_{\sigma '} \omega^{-1}),
    \alpha_0) \subseteq \GL_2(F).\]
  The conclusion of Theorem
  \ref{lem:produce-whittaker-against-random-central-character-wannabe}
  then reads
  \begin{equation}
    |W'(g) - \eta_{\chi^{-2}}(g)| \leq 1/2
    \text{ for all } g \in J'.
  \end{equation}

  The central
  character $\omega_{\sigma '}$ is $\chi^{-2}$, so with
  $\omega := \chi^{- 2}$ we have
  $C(\omega_{\sigma '} \omega^{-1}) \ll 1$ (perhaps equal to
  $1$, depending upon one's convention).
  We may write $\sigma$
  as the normalized induction of some character $\eta$ of
  $F^\times$, with $\eta/\chi$ analytically unramified.  From
  this it follows that
  $C(\sigma ') = C(\eta/\chi) C(\eta^{-1}/\chi) \asymp
  C(\chi^{-2}) \asymp Q$.
  Since $\alpha_1$ is small in terms of
  $\alpha_0$, we see that
  $J$ is contained
  in the image of $J'$ under the quotient map $\GL_2(F)
  \twoheadrightarrow G$.
  In other words, each $g \in J$ admits
  a lift $\tilde{g} \in J'$.

  We note also that the product $\chi(\det g)
    \eta_{\chi^{-2}}(g)$,
  defined initially for $g \in J'$,
  descends to a well-defined function
  of $g \in J$.
  More precisely,
  by lifting $g = n(x) a(y) n'(z)$
  to the matrix
  \[
    \tilde{g}
    = \begin{pmatrix}
      1 & x \\
      & 1
    \end{pmatrix}
    \begin{pmatrix}
      y &  \\
      & 1
    \end{pmatrix}
    \begin{pmatrix}
      1 &  \\
      z & 1
    \end{pmatrix}
    = \begin{pmatrix}
      y + x z & x \\
      z & 1
    \end{pmatrix}
    \in \GL_2(F)
  \]
  we see that
  \begin{equation}
    \chi(\det g)
    \eta_{\chi^{-2}} (g) =
    \chi_J(g)
  \end{equation}
  Since $W(g) = \chi(\det g) W'(g)$,
  we deduce that
  \begin{equation}
    |W(g) - \chi_J(g)| \leq 1/2
    \text{ for all } g \in J.
  \end{equation}
  Expanding out
  \begin{equation}
    \phi_1 \ast W(1) = \int_{g \in G} \phi_0(g) \chi_J^{-1}(g)
    W(g),
  \end{equation}
  it follows that
  \begin{equation}
    |\phi_1 \ast W(1)
    -
    \int_G \phi_0
    |
    \leq (1/2) \int_G \phi_0,
  \end{equation}
  hence that $|\phi_1 \ast W(1)| \geq (1/2) \int_G \phi_0 \gg 1/Q$, as required.
\end{proof}

\subsection{Upper bounds for dual weights}\label{sec:upper-bounds-dual-wts}
Let $\tilde{h}$
denote
the dual admissible weight
furnished by Theorem \ref{thm:constr-admiss-weight},
and let $\omega$ be a unitary character
of $F^\times \cong A$.
We aim to estimate $\tilde{h}(\omega)$.

We do this in the case that $F$ is non-archimedean and, as
before, $\chi$ is ramified and $\psi$ is unramified.  This case
is the relevant one for our immediate applications.  It should
be possible to carry out analogous archimedean calculations, but
we leave those for future work.

We assume moreover that \emph{$F$ is of characteristic zero},
since it will be convenient in some proofs to refer to the
exponential map without fuss.  This case is anyway the relevant
one in applications to the subconvexity problem.

\begin{definition}
  We say that the pair $(\chi,\omega)$ is
  \emph{atypical}
  if
  \begin{itemize}
  \item $q$ is odd and sufficiently large
    in terms of the degree of $F$,
  \item $-1$ is a square in $\mathfrak{o}/\mathfrak{p}$,
  \item $C(\chi) \geq q^3$,
    say
    $C(\chi) = q^{\alpha + \alpha '}$
    with $1 \leq \alpha \leq \alpha ' \leq \alpha + 1$,
  \item $C(\omega) = C(\chi)$, and
  \item the class
    $\xi \in \mathfrak{o}^\times / (1 + \mathfrak{p}^{\alpha'})$
    characterized by $\omega(\exp(a)) = \chi(\exp(\xi a))$ for
    $a \in \mathfrak{p}^{\alpha}$ satisfies
    $1 + 4 \xi^2  \in \mathfrak{p}$.
  \end{itemize}
\end{definition}
We note that $(\chi,\omega)$ is atypical
if and only if $(\chi,\omega^{-1})$ is atypical.
\begin{proposition}\label{prop:non-arch-estimates-key}
  Let $\omega$ be a unitary character of $F^\times \cong A$.
  \begin{enumerate}[(i)]
  \item If $C(\omega) = 1$,
    then $\tilde{h}(\omega) \ll 1$.
  \item If $C(\omega) > Q$,
    then $\tilde{h}(\omega) = 0$.
  \item If $1 < C(\omega) \leq Q$,
    then
    $\tilde{h}(\omega) \ll 1/Q$
    unless $(\chi,\omega)$ is atypical.
  \item Suppose that $(\chi,\omega)$ 
    is atypical.
    Let $\alpha$ and $\xi$ be as above.
    Then
    \begin{equation}
      \tilde{h}(\omega) \ll
      \frac{N_\alpha(\xi)}{Q}
      \cdot
      \begin{cases}
        q^{1/2} & \text{ if } C(\chi) = q^{2 \alpha + 1}, \\
        1& \text{ if } C(\chi) = q^{2 \alpha},
      \end{cases}
    \end{equation}
    where
    $N_\alpha(\xi) := \# \{\tau \in \mathfrak{o}/\mathfrak{p}^\alpha : \xi^2
    \tau^2 - \tau  - 1 \equiv  0 \}$.
  \end{enumerate}
\end{proposition}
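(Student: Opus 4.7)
The plan is to unpack the definition of $\tilde{h}(\omega)$ from Theorem \ref{thm:constr-admiss-weight} step by step, converting each integral transform into a concrete oscillatory expression involving Gauss sums, and then to analyze the resulting character integral by non-archimedean stationary phase.

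First I would compute $V \in C_c^\infty(N \backslash G, \psi)$ from its definition $V(g) = \int_F \phi(n(x)g) \psi(-x)\, dx$. Since $\phi$ equals $\chi_J^{-1}$ on $J$ and $\psi$ is unramified, the $x$-integral collapses to give
\[
V(a(y) n'(z)) = \chi^{-1}(y) \cdot \mathbf{1}_{|y|=1,\ |z| \leq 1/Q}.
\]
Plugging this into \eqref{eqn:V-wedge-defn} produces a Gauss sum in $\xi$, supported on $|\xi| = Q$ with absolute value $\asymp Q^{-1/2}$. Computing $V^\sharp$ from \eqref{eqn:V-sharp-defn} by exchanging orders of integration replaces the inner $\xi$-integral with a Ramanujan-type sum in the variable $y - y'$, and yields an expression supported on $|x| \leq 1$ and $|y| = 1$.

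Next I would carry these explicit expressions through the formulas for $h^\sharp(t)$ and $\tilde{h}(\omega)$ given in Theorem \ref{thm:constr-admiss-weight}. The $x$-integral in $h^\sharp$ can be computed after a change of variable $x \mapsto y'$, collapsing a localization constraint that forces $t$ into a specific small set. This reduces $\tilde{h}(\omega)$ to a single integral of the schematic shape
\[
\tilde{h}(\omega) = \frac{1}{Q} \int_{\mathfrak{o}^\times} \chi^{-1}(y') \, \omega\!\left(\tfrac{1-t(y')}{t(y')}\right) \psi\!\left(Q \cdot \Phi(y')\right) dy',
\]
for a suitable rational phase $\Phi$ and explicit change of variables $t(y')$, plus boundary and lower-order terms. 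The coarse cases (i)--(iii) then follow from elementary orthogonality: if $C(\omega) = 1$ the character factor is essentially constant and a crude estimate gives $O(1)$; if $C(\omega) > Q$ the combined character $\chi^{-1} \omega$ is too ramified relative to the kernel and the integral vanishes; and when $1 < C(\omega) \leq Q$ without a stationary phase contribution, cancellation delivers the target bound $\tilde{h}(\omega) \ll 1/Q$.

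Finally I would isolate the atypical case. After passing to exponential coordinates on $1 + \mathfrak{p}^\alpha$, the combined character $\chi^{-1} \omega$ takes the form $\exp(a) \mapsto \chi(\exp((\xi - 1) a))$ with $\xi$ the class defined in the statement, and the phase $\Phi$ becomes a quadratic polynomial in the exponential variable. The critical point equation for non-archimedean stationary phase reads precisely $\xi^2 \tau^2 - \tau - 1 \equiv 0 \pmod{\mathfrak{p}^\alpha}$, whose discriminant is $1 + 4\xi^2$. When $1 + 4\xi^2 \in \mathfrak{o}^\times$ the critical points are non-degenerate and one recovers the bound $\ll 1/Q$ of case (iii); when $1 + 4\xi^2 \in \mathfrak{p}$ the critical points collide, and the second-derivative analysis of non-archimedean stationary phase yields an additional factor of $q^{1/2}$ or $1$ according as the conductor satisfies $C(\chi) = q^{2\alpha+1}$ or $q^{2\alpha}$, multiplied by the number $N_\alpha(\xi)$ of residual solutions of the quadratic modulo $\mathfrak{p}^\alpha$. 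The main obstacle I expect is the bookkeeping in the stationary phase expansion along the boundary between the ``shallow'' and ``deep'' levels, tracking carefully the parity of $C(\chi)$ and matching the constants and localization conditions to the precise definition of atypical given in the statement.
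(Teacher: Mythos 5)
Your opening steps match the paper's: the computation of $V$, the Gauss-sum evaluation of $V^\wedge$ supported on $|\xi|=Q$, and the resulting formula $V^\sharp(x,y)=1_{|x|\leq 1}1_{|y|=1}\chi(y)$ are exactly what the paper does, and your final stationary-phase picture (critical-point equation $\xi^2\tau^2-\tau-1\equiv 0$, discriminant $1+4\xi^2$, the count $N_\alpha(\xi)$, and the extra $q^{1/2}$ in the degenerate odd-conductor case) is the correct endgame. However, there is a genuine gap in the middle.

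First, the claimed collapse of $\tilde h(\omega)$ to a \emph{single} integral $\frac{1}{Q}\int_{\mathfrak{o}^\times}\chi^{-1}(y')\,\omega(\cdots)\,\psi(Q\Phi(y'))\,dy'$ is not justified and is structurally inconsistent with what actually arises. Substituting $V^\sharp$ into the formula of Theorem \ref{thm:constr-admiss-weight} leaves a genuinely two-dimensional character integral in $(x,t)$; the inner $x$-integral is itself an incomplete character sum that cannot be "computed" in closed form. The paper instead splits the support into the regions $|x|=|t|=1$ and $|1-x|=|1-t|=1$, changes variables to $u=x/t$, $v=1/x$, and decomposes dyadically into the blocks $\rho_{U,V}$ of \eqref{eqn:D-1-defn}-type, proving that all but one block vanishes when $C(\omega)>1$ (this vanishing analysis is also what gives part (ii), which you treat only by assertion). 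The target bound $\ll 1/Q$ reflects square-root cancellation in \emph{both} variables of a sum of total mass $\asymp 1$; your one-variable schematic with prefactor $1/Q$ would either overshoot ($\ll Q^{-3/2}$, which is false) or require the inner integral to carry no cancellation at all, in which case the atypical enhancement could not emerge from its stationary phase.

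Second, and more fundamentally: in the complete-sum regime $C(\chi)=C(\omega)=q$ (i.e., $U=V=1$ with $Q=q$, which lies inside case (iii)), the surviving block is the complete two-variable sum $\sum_{u,v\in\mathbb{F}_q^\times}\omega(uv-1)\chi\bigl(\tfrac{1-1/u}{1-1/v}\bigr)$ of Conrey--Iwaniec type. There is no depth to exploit, so no form of elementary orthogonality or $p$-adic stationary phase yields the required two-dimensional square-root cancellation; the paper obtains $\rho\ll 1/q$ here by citing the Deligne-dependent character sum bounds of Petrow--Young. Your proposal never invokes such an input, so case (iii) fails at prime conductor. (You also omit the separate treatment of dyadic residue characteristic, where the exponential-coordinate argument must be modified, but that is minor by comparison.)
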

Note the similarity between these estimates and
those of \cite[\S3]{2019arXiv190810346P}.

\begin{proof}
  Define $V$ in terms of $\phi$ as in \S\ref{sec-6-6}.
  Then
  \begin{equation}
    V(a(y) n'(z)) = 1_{|y| = 1}
    1_{|z| \leq 1/Q} \chi(y),
  \end{equation}
  and so
  \begin{equation}
    V^\wedge(\xi,z)
    =
    1_{|z| \leq 1/Q}
    \int_y
    1_{|y| = 1}
    \chi(y) \psi(\xi y) \, d y.
  \end{equation}
  Standard properties of Gauss sums
  imply that this last integral
  vanishes unless $|\xi| = Q$.
  Thus
  \begin{equation}
    V^\wedge(\xi,-x/\xi)
    =
    1_{|x| \leq 1}
    \int_y
    1_{|y| = 1}
    \chi(y)
    \psi(\xi y) \, d y
  \end{equation}
  and so by Fourier inversion,
  \begin{equation}\label{eqn:V-sharp-non-arch-evald-0-0}
    V^\sharp(x,y)
    = 1_{|x| \leq 1} 1_{|y| = 1} \chi(y).
  \end{equation}
  Theorem \ref{thm:constr-admiss-weight}
  now gives
  \begin{equation}
    \tilde{h}(\omega)
    =
    \int_{\substack{
        x, t \in F :  \\
        |x| \leq 1, \\
        |t - x| = |x(1-x)|
      }
    }
    \omega \left( \frac{1-t}{t} \right)
    \chi \left( \frac{x - t}{x(1-x)} \right)
    \, \frac{d t \, d x}{|t(1-t)|^{1/2} |x(1-x)|}.
  \end{equation}
  Observe that the support conditions
  imply that at least one of the following
  pair of conditions holds:
  \begin{itemize}
  \item $|x| = |t| = 1$
  \item $|1 - x| = |1 - t| = 1$
  \end{itemize}
  (For instance, if $|x| < 1$ and $|1-t| < 1$,
  then $|t| = 1 = |1-x|$,
  thus $1 = |t - x| = |x| < 1$,
  contradiction.)
  The two pairs of conditions are swapped
  by the substitution
  $x \mapsto 1-x, t \mapsto 1-t$,
  which also swaps
  $\omega$ with $\omega^{-1}$.
  Thus
  \[
    |\tilde{h}(\omega)|
    \leq
    |\rho(\omega)|
    + |\rho(\omega^{-1})|
    + |\rho'(\omega)|
  \]
  where
  \begin{equation}
    \rho(\omega)
    :=
    \int_{\substack{
        x, t \in F :  \\
        |x| \leq 1, \\
        |t - x| = |x(1-x)|, \\
        |1 - x| = |1-t| = 1
      }
    }
    \omega \left( \frac{1-t}{t} \right)
    \chi \left( \frac{x-t}{x(1-x)} \right)
    \, \frac{d t \, d x}{|t(1-t)|^{1/2} |x(1-x)|}.
  \end{equation}
  and $\rho '$ is defined similarly but with the
  additional conditions
  $|x| = |t| = 1$.
  We thereby reduce
  to bounding $\rho(\omega)$ and $\rho ' (\omega)$.
  
  We introduce the new variables
  $u := x/t, v := 1/x$,
  so that $x = 1/v, t = 1/u v$.
  We compute that $|\partial (x, t) / \partial (u, v)|
  = 1/|u^2 v^3| = |t^2 x|$,
  from which it follows
  that
  \begin{equation}
    \frac{d t \, d x}{|t(1-t)|^{1/2} |x(1-x)|}
    =
    \left\lvert \frac{u v}{u v - 1} \right\rvert^{1/2}
    \left\lvert \frac{v}{v-1} \right\rvert
    \, \frac{d u \, d v}{|u v|^{3/2}},
  \end{equation}
  thus
  \begin{equation}
    \rho(\omega)
    =
    \int_{\substack{
        u,v \in F: \\
        |u  - 1| = |u|, \\
        |v - 1| = |v|, \\
        |u v - 1| = |u v|
      }
    }
    \omega \left( u v - 1 \right)
    \chi \left( \frac{1 - 1/u}{1 - 1/v} \right)
    \, \frac{d u \, d v}{  |u v|^{3/2} }.
  \end{equation}
  (Compare with the exponential
  sums occurring in \cite{MR1779567, 2018arXiv181102452P,
    2019arXiv190810346P}.)
  The integration conditions
  force $|u|, |v| \geq 1$,
  so we may split the integral
  dyadically as
  \[
    \rho(\omega)
    = \sum_{U,V \in \{1, q, q^2, \dotsc \} }
    (U V)^{-1/2} \rho_{U, V},
  \]
  where
  \begin{equation}
    \rho_{U,V}
    :=
    \int_{\substack{
        u,v \in F: \\
        |u  - 1| = |u| = U, \\
        |v - 1| = |v| = V, \\
        |u v - 1| = U V
      }
    }
    \omega \left( u v - 1 \right)
    \chi \left( \frac{1 - 1/u}{1 - 1/v} \right)
    \, \frac{d u \, d v}{  |u v| }.
  \end{equation}
  We note that $\rho '(\omega) = \rho_{1,1}$.
  Our task thereby reduces to estimating
  the dyadic integrals $\rho_{U,V}$
  suitably.
  We record proofs of adequate estimates
  in Appendix \ref{lem:char-sum-CI}.
\end{proof}


\section{Reduction of the proofs of the main applications}
\label{sec-8}
Here we record the essential
parts of the proofs of our main applications, Theorems
\ref{thm:CI} and \ref{thm:PY1}.

More precisely,
-- that is to say, assuming
that the basic identity \eqref{eqn:basic-moment-identity} holds
and ignoring the degenerate terms $(\dotsb)$,
whose definition and treatment we postpone.

\subsection{The case of Theorem \ref{thm:CI}}\label{sec:CI}
Let $F$ be a number field
and
$\psi$
a nontrivial unitary character of $\mathbb{A}/F$.  Let $\sigma_0$ be either a cuspidal
automorphic representation of $\PGL_2(\mathbb{A})$ or a unitary
Eisenstein series.  Let $\chi$ be a quadratic character of
$\mathbb{A}^\times/F^\times$,
with analytic conductor $Q := C(\chi)$.
We regard $\sigma_0$ as fixed.
We explain first how to prove the estimate
\begin{equation}\label{eqn:desired-bound-for-L-sigma-0-chi-one-half} 
  L(\sigma_0 \otimes \chi, 1/2) \ll Q^{1/3+\eps},
\end{equation}
with polynomial
dependence of the implied constant upon $C(\sigma_0)$.
We will do so using \eqref{eqn:basic-moment-identity}, the lower
bounds established for various $h_\mathfrak{p}$, and the upper
bounds established for various $\tilde{h}_\mathfrak{p}$.

It suffices to consider the following cases:
\begin{itemize}
\item $\sigma_0$ is cuspidal.
\item $\sigma_0 = \Eis^*(\mathcal{I}(0))$,
  i.e., $\sigma_0$ is the unitary Eisenstein series
  with degenerate parameter
  for which
  $L(\sigma_0, s) = \zeta_F(s)^2$.
  In that case,
  we use the standard estimate
  \begin{equation}\label{eqn:eis-upper-bound-integral-trick}
    L(\chi,1/2)^6
    \ll
    Q^{\eps}
    \int_{t \in \mathbb{R}  : |t| \leq 1}
    t^2
    |L(\chi, 1/2 + i t)|^6
    \, d t,
  \end{equation}
  which may be deduced (in sharper forms) via the convexity
  bound for the derivatives of $t \mapsto L(\chi,1/2+it)$.
  (The exponents $2$ and $6$ are not special.)
\end{itemize}

Let $S_0$ denote the set consisting of all infinite places of
$F$ together with all finite places at which $\sigma_0$
ramifies.  Let $S_1$ denote the set of finite places not in
$S_0$ at which $\chi$ ramifies.  Set $S := S_0 \cup S_1$.
The consequence $\exp(\# S) \ll Q^{\eps}$ of the divisor bound
may be used to control products of implied constants indexed by
$S$.  We note also that, thanks to any fixed bound
$\vartheta < 1/2$ towards Ramanujan, any local $L$-factor
appearing on either side of \eqref{eqn:basic-moment-identity} is
$\exp(\O(1))$.  These observations imply that any product of
such factors taken over $\mathfrak{p} \in S$ is of size
$Q^{o(1)}$ as $Q \rightarrow \infty$.

To each $\mathfrak{p} \in S$
we attach an admissible weight $h_\mathfrak{p}$,
as follows:
\begin{itemize}
\item Let $\mathfrak{p} \in S_0$.
  By lemma
  \ref{lem:crude-lower-bound-individual},
  we may find
  $h_\mathfrak{p}$ nonnegative on unitary representations and
  bounded from below by $1$ on the local component
  $\sigma_{0, \mathfrak{p}}$ together with each of its quadratic
  twists.
  If $\sigma = \Eis^*(\mathcal{I}(0))$,
  we may assume moreover
  that $h_\mathfrak{p}$
  is $\geq 1$ on $\{\mathcal{I}_\mathfrak{p} (i t) : |t| \leq
  1\}$,
  where
  $\mathcal{I}_\mathfrak{p}(i t)$
  denotes the corresponding induced
  representation of $\PGL_2(F_\mathfrak{p})$.
  We then bound the dual
  $\tilde{h}_\mathfrak{p}$ crudely via lemma
  \ref{lem:crude-estimates}.

  The arguments just recorded do not quite suffice for our purposes:
  they lead to estimates
  \eqref{eqn:desired-bound-for-L-sigma-0-chi-one-half}
  with an \emph{unspecified}
  dependence upon $\sigma_0$.  The required polynomial
  dependence follows from the slightly finer arguments given below
  in \S\ref{sec:crude-local-estim}.
\item For $\mathfrak{p} \in S_1$,
  our assumptions imply that
  $\chi_\mathfrak{p}$ is ramified.  We construct $h_\mathfrak{p}$
  as in \S\ref{sec:constr-suit-weight} to majorize the family
  $\Sigma_{F_\mathfrak{p}}(\chi_\mathfrak{p})$, and estimate the
  dual $\tilde{h}_\mathfrak{p}$ via
  \S\ref{sec:upper-bounds-dual-wts}.
\end{itemize}

By \eqref{eqn:eis-upper-bound-integral-trick},
the lower bounds for $h_\mathfrak{p}$
proved in \S\ref{sec:lower-bounds-for-wts},
and
standard upper bounds
for $L^{(S),*}(\sigma \times \sigma, 1)$,
we have
\begin{equation}\label{eqn:bound-cube-via-integral}
  L(\sigma_0 \otimes \chi,1/2)^3
  \ll_{\sigma_0}
  Q^{1+\eps}
  \int_{\substack{
      \sigma:\text{generic}, \\
      \text{unram. outside $S$}
    }
  }
  \frac{L^{(S)}(\sigma,1/2)^3}{L^{(S),*}(\sigma \times \sigma,
    1)}
  \prod_{\mathfrak{p} \in S} h_\mathfrak{p}(\sigma_\mathfrak{p}).
\end{equation}
By the precise form of Theorem \ref{thm:constr-admiss-weight}
stated below in \S\ref{sec:summary-main-results}, the integral
on the RHS of \eqref{eqn:bound-cube-via-integral}
is equal to a degenerate term $(\dotsb)$ plus
\begin{equation}\label{eqn:dual-fourth-moment-in-pf-sketch}
  \int_{\substack{
      \omega:\text{unitary}, \\
      \text{unram. outside $S$}
    }}
  \frac{|L^{(S)}(\omega,1/2)|^4}{\zeta_F^{(S),*}(1)^2}
  \prod_{\mathfrak{p} \in S}
  \tilde{h}_\mathfrak{p}(\omega_\mathfrak{p}),
\end{equation}
at least if $\psi_\mathfrak{p}$ is unramified for all
$\mathfrak{p} \in S$; in general, the indicated relation holds
up to a scalar $\asymp 1$, by the remarks of
\S\ref{sec:vari-with-resp-1}.

By the upper bounds for $\tilde{h}_\mathfrak{p}$
noted above,
we may majorize \eqref{eqn:dual-fourth-moment-in-pf-sketch} by
\begin{equation}\label{eqn:fourth-moment-after-1}
  Q^{-1+\eps}
  \int_{\substack{
      \omega:\text{unitary}, \\
      \text{unram. outside $S$}, \\
      C(\omega_\mathfrak{p})
      \ll 1 \text{ for finite }
      \mathfrak{p} \in S_0, \\
      C(\omega_\mathfrak{p})
      \leq C(\chi_\mathfrak{p}) \text{ for }
      \mathfrak{p} \in S_1
    }
  }
  |L(\omega,1/2)|^4
  \prod_{\text{infinite } \mathfrak{p}}
  C(\omega_\mathfrak{p})^{-d}
\end{equation}
for any fixed $d$,
which we take sufficiently large.

We claim that \eqref{eqn:fourth-moment-after-1} is $\ll Q^{2 \eps}$.
To see this, it suffices to show 
for any collection $(X_\mathfrak{p})_{\mathfrak{p}}$
of parameters $X_\mathfrak{p} \geq 1$,
with $X_\mathfrak{p} = 1$ for almost all $\mathfrak{p}$ and
$X_\mathfrak{p}$
belonging to the value group of $F_\mathfrak{p}$
for all $\mathfrak{p}$,
that
\begin{equation}\label{eqn:fourth-moment-after-2}
  \int_{\substack{
      \omega:\text{unitary}, \\
      C(\omega_\mathfrak{p})
      \leq X_\mathfrak{p} \text{ for all } \mathfrak{p}
    }
  }
  |L(\omega,1/2)|^4
  \ll (\prod_\mathfrak{p} X_\mathfrak{p} )^{1+\eps}.
\end{equation}
Such an estimate is implicit in work of Han Wu \cite{MR3977317}
on the subconvexity problem for the $L$-functions
$L(\omega,1/2)$.  Wu gives a geometric proof, using unipotent
translates of Eisenstein series as in Sarnak \cite{MR780071}
and Michel--Venkatesh \cite{michel-2009}, of bounds for amplified fourth moments of
$L(\omega,1/2)$ over families as in
\eqref{eqn:fourth-moment-after-2}.  Omitting the amplifier, his
arguments give the required estimate.  In fact, such arguments
may be understood as special cases of
\eqref{eqn:basic-moment-identity} in which the cubic moment side
is particularly simple, so it should be instructive in future work
to revisit those cases from this perspective.

We verify below in \S\ref{sec:handl-degen-terms-1},
\S\ref{sec:handl-degen-terms-2} that the degenerate term
$(\dotsb)$ in \eqref{eqn:basic-moment-identity} satisfies the
estimate $(\dotsb) \ll Q^{\eps}$.
(Morally, this corresponds ``up to logarithms''
to the fact that the kernels
$\phi_\mathfrak{p}$
used to define $h_\mathfrak{p}$ for $\mathfrak{p} \in S_1$
satisfy $\phi_\mathfrak{p}(1) \ll 1$.)
Assuming this, the required bound for $L(\sigma_0 \otimes
\omega,1/2)$
follows.



\subsection{The case of Theorem \ref{thm:PY1}}\label{sec:PY1}
Recall that $\chi$ is a character of
$\mathbb{A}^\times/F^\times$ with $\chi_\infty$ trivial and
finite conductor cubefree.  We define $S = S_0 \cup S_1$, with
$S_0$ the set of archimedean places and $S_1$ the set of finite
places at which $\chi$ ramifies.  For $\mathfrak{p} \in S_0$, we
choose $h_\mathfrak{p}$ to be nonnegative on unitary
representations and bounded from below by $1$ on
$\{\mathcal{I}_p(i t) : |t| \leq 1\}$.  For
$\mathfrak{p} \in S_1$, we choose $h_\mathfrak{p}$ as in
\S\ref{sec:constr-suit-weight} to majorize
$\Sigma_{F_\mathfrak{p}}(\chi_\mathfrak{p})$.  We then argue
exactly as in \S\ref{sec:CI}.
The cubefree hypothesis
ensures that we avoid the atypical case
of Proposition \ref{prop:non-arch-estimates-key}
when we estimate $\tilde{h}_\mathfrak{p}$ for
$\mathfrak{p} \in S_1$.


\part{Preliminaries}\label{part:preliminaries}
We recall here the basic local and global theory relevant for studying $L$-functions on $\PGL_2$ via their (regularized) integral
representations, together with some basics concerning
automorphic forms and their (regularized) spectral expansions.
As general references, we mention \cite{MR2508768, michel-2009, MR0379375,
  MR1431508, MR0401654,Ja72, MR546600}.

\section{Regularized integration}
\label{sec-9-1}
We
record a notion of regularized integration, adapted from
\cite[\S4.3]{michel-2009} (see also \cite{zagier-mellin,
  MR656029}), that is adequate for our purposes.

\subsection{Regularizable functions}
Let $A$ be a
locally compact abelian group,
but not a finite group.  Let $X$ be an
$A$-space, equipped with an invariant measure $\mu$.  We say that a
measurable function $f : X \rightarrow \mathbb{C}$ is
\emph{regularizable} (with respect to $\mu$ and $A$)
if there is a bounded variation complex
Borel measure $r$ on $A$, with $\int_A r \neq 0$, so that the
convolution $r \ast f$ lies in $L^1(X,\mu)$; the
\emph{regularized integral}
is then
defined to be the ratio
\[
  \int_X^{\reg} f \, d \mu
  :=
  \frac{\int_X r \ast f}{\int_A r }.
\]
The definition is independent of the choice of $r$, and defines
an $A$-invariant functional on the space of regularizable
functions on $X$.  This notion of regularized
integration generalizes
those
cited above, but applies a
bit more generally, e.g., to the integrals used to define
Whittaker functions of principal series representations.

The definition extends with the evident modifications to the
case that $\mu$ is quasi-invariant, i.e., transforms under $A$
with respect to a character.


\subsection{Finite functions}
A \emph{finite}
function $\phi : X \rightarrow \mathbb{C}$ is one whose
$A$-translates span a finite-dimensional space $\langle A \phi  \rangle$; if that space
does not contain the trivial representation of $A$, then $\phi$
is called \emph{admissible}.
An \emph{exponent} $\chi$ of a finite function $\phi$ is a
character
of $A$ that arises as a generalized eigenvalue
for the action of $A$ on $\langle A \phi  \rangle$.

We say that $f$ is \emph{strongly regularizable}
if there is a finite cover
$X = \cup U_i$ and admissible finite functions $\varphi_i$ on
$X$ so that $f - \varphi_i$ is $\mu$-integrable on $U_i$;
in that case,
$f$ is regularizable.

\subsection{Holomorphy criteria}
Given a family of integrable functions $f_s$ depending pointwise
holomorphically upon a complex parameter $s$,
a standard criterion for their integrals
$\int f_s$ to vary holomorphically is that $|f_s| \leq h$ for
some integrable function $h$.
This criterion may be applied locally in $s$.

We have a similar criterion for regularized integrals,
which may also be applied locally:
\begin{lemma}\label{lem:reg-int-holom-var}
  Suppose given a complex manifold $M$
  and a family of measurable functions
  $f_s : X \rightarrow \mathbb{C}$
  indexed by $s \in M$
  that vary pointwise holomorphically
  (i.e., for each $x \in X$,
  the map $s \mapsto f_s(x)$ is holomorphic).
  Suppose that we may find
  \begin{itemize}
  \item a finite cover $X = \cup_{i=1}^n U_i$,
  \item admissible finite functions $\varphi_{i,s}$ on $X$
    that vary pointwise holomorphically, and
  \item integrable functions $h_i$ on $U_i$
    so that $|f_s - \varphi_{i,s}| \leq h_i$.
  \end{itemize}
  Assume that
  there exists $r \geq 0$ so that for each
  $i$ and each $s$, the dimension of the span of the
  $A$-translates of $\varphi_{i,s}$ is bounded by $r$.
  Then each $f_s$ is strongly regularizable
  and the map
  $M \ni s \mapsto \int_X^{\reg} f_s \, d \mu$ is holomorphic.
\end{lemma}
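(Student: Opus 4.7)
The plan is to argue locally. Given $s_0 \in M$, I fix a compact neighborhood $K \subset M$ of $s_0$ and produce a single bounded variation measure $\rho$ on $A$, independent of $s \in K$, with $\int_A \rho \neq 0$ and $\rho \ast \varphi_{i,s} = 0$ for every $i$ and every $s \in K$. Given such $\rho$, holomorphy of $s \mapsto \int_X^{\reg} f_s \, d\mu$ on the interior of $K$ will follow from differentiation under the integral sign applied to $\int_X \rho \ast f_s \, d\mu$; covering $M$ by such neighborhoods then gives the global conclusion.

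Construction of $\rho$ is the central technical point. For each pair $(i,s)$, set $V_{i,s} := \langle A \varphi_{i,s} \rangle$, a subspace of dimension at most $r$ on which $A$ acts with generalized eigenvalues that are characters of $A$, all distinct from the trivial character by admissibility. Pointwise holomorphy of $\varphi_{i,s}$ together with the uniform dimension bound force these exponents, as $(i,s)$ ranges over $\{1,\ldots,n\} \times K$, to lie in a compact subset $E \subset \hat{A} \setminus \{1\}$; the key input is that eigenvalues of holomorphically varying endomorphisms with matrix entries bounded on a compact set vary continuously and cannot escape to infinity in $\hat{A}$ or accumulate at the trivial character without contradicting admissibility at a limit value of $s$. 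I then invoke regularity of the Fourier algebra of $A$ to produce a bounded variation measure $\rho$ whose Fourier transform vanishes to order at least $r$ on an open neighborhood of $E$ (enough to kill Jordan blocks of length up to $r$) and is nonzero at the trivial character. In the cases of interest --- $A$ a local multiplicative group $F^\times$ or an idele class group --- this reduces to a concrete construction using smooth compactly supported functions, or alternatively a finite convolution product of elementary measures of the form $\delta_a - \chi(a)\delta_e$ raised to the $r$-th power. Convolution by such $\rho$ then annihilates every generalized eigenspace whose exponent lies in $E$, hence every $V_{i,s}$.

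With $\rho$ fixed, the remainder is routine. For each $s \in K$ and each $i$, the identity $\rho \ast \varphi_{i,s} = 0$ gives $\rho \ast f_s = \rho \ast (f_s - \varphi_{i,s})$ on $U_i$, so $|\rho \ast f_s| \le |\rho| \ast h_i$ is an integrable majorant independent of $s$. Summing over the finite cover shows $\rho \ast f_s \in L^1(X,\mu)$, so each $f_s$ is strongly regularizable and
\[
  \int_X^{\reg} f_s \, d\mu = \frac{1}{\int_A \rho} \int_X \rho \ast f_s \, d\mu.
\]
The integrand on the right is pointwise holomorphic in $s$ (since $\rho$ is fixed and convolution is pointwise $\mathbb{C}$-linear in $f_s$) and is dominated by a fixed integrable majorant on $K$; interchanging $\bar\partial$ with $\int_X$, or equivalently invoking Morera's theorem via Fubini, gives holomorphy on the interior of $K$.

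The principal obstacle is the simultaneous annihilation step. One must both confine the exponent family to a compact subset of $\hat{A} \setminus \{1\}$ and cap the sizes of the Jordan blocks, and both rest essentially on the uniform dimension bound $r$: it prevents exponents from proliferating as $s$ varies, and caps Jordan block lengths so that a single finite-order zero of $\hat\rho$ suffices. Once these structural facts are in hand, producing $\rho$ by Fourier-theoretic means is standard, and the rest of the argument is dominated convergence and differentiation under the integral sign.
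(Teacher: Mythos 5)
Your central structural move---fixing a compact $K\subset M$ around $s_0$ and producing a \emph{single} measure $\rho$, independent of $s\in K$, with $\widehat{\rho}(1)\neq 0$ and $\rho\ast\varphi_{i,s}=0$ for all $i$ and all $s\in K$---does not work in the generality of the lemma, and the specific claim on which it rests is false. You assert that the exponents of $\varphi_{i,s}$, as $s$ ranges over $K$, lie in a compact subset $E\subset\hat{A}\setminus\{1\}$, and you justify this by a continuity argument for eigenvalues of a ``holomorphically varying'' family of $A$-actions on the spans $V_{i,s}=\langle A\varphi_{i,s}\rangle$. The problem is that $\dim V_{i,s}$ can jump downward at special $s$, so the exponents do not form a continuously varying family; an exponent can wander arbitrarily close to the trivial character while its coefficient in $\varphi_{i,s}$ tends to zero, leaving the limiting $\varphi_{i,s_0}$ admissible with no trace of the offending exponent. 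Concretely, with $A=X=\mathbb{R}^\times_+$, take $\varphi_s(y)=|y|+s\,|y|^{s}$. This is pointwise holomorphic in $s$, has $\dim\langle A\varphi_s\rangle\leq 2$ for all $s$, and is admissible for every $s$ (exponents $|.|$ and $|.|^{s}$ for $s\neq 0$, just $|.|$ for $s=0$); yet the exponent $|.|^{s}$ accumulates at the trivial character as $s\to 0$. Any $\rho$ with $\widehat{\rho}(|.|^{s})=0$ for all small $s\neq 0$ has $\widehat{\rho}(1)=0$ by continuity, so no $s$-independent $\rho$ of the kind you seek can exist near $s_0=0$.

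The paper sidesteps this entirely by letting the annihilating measure depend on $s$. It takes $r+2$ distinct points $a_1,\dotsc,a_{r+2}\in A$ and, for each $i,s$, solves the linear system $\sum_j c^i_j(s)=1$, $\sum_j c^i_j(s)\varphi_{i,s}(a_j x)=0$ for all $x$, in the unknowns $c^i_j(s)$. The dimension bound gives more unknowns than independent equations; admissibility gives solvability; and holomorphy of $\varphi_{i,s}$ in $s$ means the coefficient matrix varies holomorphically, so after shrinking to an open set where a particular minor is invertible one can choose the $c^i_j(s)$ holomorphically. The resulting $\kappa_s:=\kappa^1_s\ast\cdots\ast\kappa^n_s$, with $\kappa^i_s=\sum_j c^i_j(s)\delta_{a_j}$, then plays the role you wanted $\rho$ to play, and the rest of the argument (domination by $|\kappa_s|\ast h_i$, holomorphy of $s\mapsto\int_X\kappa_s\ast f_s\,d\mu$) goes through locally uniformly in $s$. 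Your final ``dominated convergence plus Morera'' step is fine; the gap is entirely in the construction of the annihilating measure. If you want to keep a Fourier-analytic flavor, you could still build your $\rho$ as a convolution product of the elementary measures $\delta_a-\chi(a)\delta_e$ you mention, but the characters $\chi$ must be allowed to vary with $s$, and then you face the same problem of choosing them (or equivalently the coefficients) holomorphically---which is exactly what the paper's finite linear system accomplishes.
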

\begin{proof}
  Since $A$ is infinite, we may find distinct elements
  $a_1,\dotsc,a_{r+2} \in A$.  For each $i \in \{1..n\}$ and
  $s \in M$, consider the system of linear equations in the
  variables $c^i_{1}(s),\dotsc,c^i_{r+2}(s) \in \mathbb{C}$
  given by
  \[
    \sum_j c^i_j(s) = 1,
  \]
  \[
    \sum_j c^i_j(s) \varphi_{i,s}(a_j x) = 0
    \text{ for all } x \in X.
  \]
  The assumption on the dimension of the span of the translates
  of the $\varphi_{i,s}$ implies that there are more variables
  than independent equations.  The assumption that $\varphi_{i,s}$ is
  admissible implies then that the system is solvable for each
  $s$.  By passing to an open subset of $M$, we may find a
  specific invertible minor in the matrix describing this system
  and hence a family of solutions $c^i_j(s)$ that vary
  holomorphically with $s$.  Having chosen one such family, let
  $\kappa_s^i$ denote the measure on $A$ given by the linear
  combination of point masses $\sum_j c^i_j(s) \delta_{a_j}$.
  Take for $\kappa_s := \kappa_s^1 \ast \dotsb \ast \kappa_s^n$
  their convolution product.  Then $\int \kappa_s = 1$, while
  each convolution $\kappa_s \ast \varphi_{i,s}$ vanishes.
  Thus $\kappa_s \ast f_s$ is integrable
  and
  \begin{equation}\label{eq:integrate-regularized-via-convolution-holomorphy}
    \int_X^{\reg} f_s \, d \mu
    =
    \int_X (\kappa_s \ast f_s) \, d \mu.
  \end{equation}
  Moreover, $|\kappa_s \ast f_s|$ is bounded on $U_i$,
  locally uniformly in $s$,
  by a linear combination of $h_i$ and its translates.
  Thus the standard criterion implies
  that the RHS of
  \eqref{eq:integrate-regularized-via-convolution-holomorphy} is
  holomorphic in $s$.
\end{proof}

\subsection{Main examples}\label{sec:regularization-specialized-to-A}
We will apply these notions primarily when either
\begin{itemize}
\item $A$ is the multiplicative group of a local field $F$ 
  and $X = A$, or
\item $A$ is the multiplicative group $\mathbb{A}^\times$ of the
  adele ring $\mathbb{A}$
  of a global field $F$
  and $X = \mathbb{A}^\times / F^\times$,
\end{itemize}
equipped with suitable Haar measures.
In either case, a convenient cover is given by $X = U_{\infty} \cup U_0$
with
$U_\infty  := \{x \in X : |x| \geq 1\}$
and
$U_0 := \{x \in X : |x| < 1\}$.
The finite
functions are the linear combinations of functions of the form
$y \mapsto \chi(y) \log^{m-1} |y|$ for some character $\chi$ of
$X$ and some positive integer $m$.
If $f$ is strongly regularizable
(with respect to this cover),
then its regularized integral may be defined
as above using convolution,
or (as noted in \cite[\S4.3]{michel-2009}) in other equivalent ways:
\begin{itemize}
\item By truncation:
  We may write
  $\int_{x \in X : 1/T \leq |x| \leq T}
  f(x) \, d \mu(x)$
  as the sum $g(T) + h(T)$,
  where $g$ is an admissible finite
  function on the value group  $\{ |x| : x \in X \}$
  and $h(T)$ has a limit as $T \rightarrow \infty$;
  then $\int_X^{\reg} f = \lim_{T \rightarrow \infty} h(T)$.
\item By meromorphic continuation:
  The integrals
  $\int_{x \in U_\infty} f(x) |x|^{s} \, d \mu(x)$ and
  $\int_{x \in U_0} f(x) |x|^s \, d \mu(x)$ converge absolutely for $\Re(s)$
  sufficiently negative and positive, respectively.
  They extend meromorphically to
  functions $F_\infty, F_0$ on the complex plane
  for which
  $\int_X^{\reg} f = F_\infty(0) + F_0(0)$.

  Alternatively, we may find an admissible finite function $\phi_\infty$
  so that the integral
  $F(s) := \int_{x \in X} (f - \phi_\infty)(x) |x|^s \, d
    \mu(x)$
  converges
  absolutely for $\Re(s)$ sufficiently large.
  Then $F$
  continues meromorphically to the complex plane, and
  \begin{equation}\label{eqn:regularized-integral-via-mellin-analysis}
    \int_X^{\reg} f = F(0).
  \end{equation}
\end{itemize}

\section{Norms on representations}\label{sec:norms-repr}
Let $F$ be a local field.  Let $G$ be a reductive group over
$F$.
(The groups $\GL_1(F)$, $\PGL_2(F)$ and products thereof
are the relevant ones for this paper.)  
Our aim in this section is to define a system of Sobolev
norms $\mathcal{S}_d$ ($d \in \mathbb{R}$) on certain
representations $\sigma$ of $G$.  Michel--Venkatesh
\cite[\S2]{michel-2009} constructed a suitable system when
the representation $\sigma$ is unitary.  It will be important for
us to work with \emph{non-unitary} representations (possibly far
from the ``tempered axis''), so we give a more general
construction sufficient for our aims.

We will often use these norms to estimate linear functionals
$\ell : \sigma \rightarrow \mathbb{C}$ by asserting that
$\ell(v) \ll \mathcal{S}_d(v)$ for some fixed $d$.  In the
archimedean case, the existence of such an estimate is
equivalent to the continuity of $\ell$.  In the non-archimedean
case (where our conventions imply that any linear functional is
continuous), such estimates should be understood informally as
asserting that $\ell(v)$ is bounded polynomially with respect to
the ramification of $v$.
The purpose of these norms is to give a
convenient way to formulate such estimates \emph{uniformly} as
the representation $\sigma$ and the underlying local field $F$
vary.  We use them in this paper primarily to verify that
certain estimates (e.g., \eqref{eq:weyl-bound-sigma-chi}) depend
polynomially upon auxiliary parameters.

\subsection{Setting}\label{sec:norms-reps-setting}
We assume that $G$ comes equipped with a faithful linear
representation $G \hookrightarrow \SL_r(F)$ and a maximal
compact subgroup $K$.  In the non-archimedean case, we assume
that $K$ is special and contains $G \cap \GL_r(\mathfrak{o})$.
We assume given
a Haar measure $d k$ on $K$ of volume $\asymp 1$
(e.g., the probability Haar).

Let $P = M U$ be a parabolic subgroup of $G$, with Levi $M$ and
unipotent radical $U$.  Let $\chi_0$ be an irreducible
unitary representation
of $M$, with inner product $\langle , \rangle$ and norm $\|.\|$.
We may then define the normalized induction
$\sigma_0 := \Ind_P^G(\chi_0)$, consisting of smooth
$f : G \rightarrow \chi_0$ satisfying
$f(n m g) = \delta_P^{1/2}(m) \chi_0(m) f(g)$.  It is a unitary
representation with respect to the inner product
$\langle f_1, f_2 \rangle := \int_{k \in K} \langle f_1(k),
f_2(k) \rangle \, d k$.  By the decomposition $G = P K$, we see that restriction to
$K$ identifies the restricted representation $\sigma_0|_K$ with
$\Ind_{M \cap K}^K (\chi_0|_{M \cap K})$.

Let $\theta : M \rightarrow \mathbb{R}^\times_+$ be a
positive-valued character of $M$.  We may then form the twisted
representation $\chi := \chi_0 \otimes \theta$ of $M$ and its
induction $\sigma := \Ind_P^G(\chi)$.
We regard $\chi$ as the same underlying space
as $\chi_0$, but with a modified action.  Since $\theta$ is
positive-valued and $M \cap K$ is compact,
the representations $\chi$ and $\chi_0$ have
the same restrictions to $M \cap K$, hence their inductions $\sigma$
and $\sigma_0$ have the same restrictions to $K$.
We equip $\sigma$ with the inner product $\langle , \rangle$
and norm $\|.\|$ transferred from $\sigma_0$.
The inner product on $\sigma$ is thus $K$-invariant,
but not in general $G$-invariant.

We note that if $G$ is a quasi-split classical group (e.g., if $G =
\GL_r(F)$),
then
each generic irreducible representation $\sigma$
is known to 
arise (essentially uniquely) from the above construction
with $\chi_0$ tempered and
$\theta$ strictly dominant
(see the second paragraph of
\cite{MR2046512}, or \cite{MR507800,GKdim}).

\subsection{Construction of the Sobolev norms}
\label{sec:constr-sobol-norms}
We now define Sobolev norms $\mathcal{S}_d$ on $\sigma$.  In
summary, we first apply (a slight modification of) the
construction of \cite[\S2]{michel-2009} to obtain such norms on
$\sigma_0$.  We then transfer those norms to $\sigma$ via the
$K$-equivariant identification $\sigma \cong \sigma_0$.

We formulate the construction in
terms of $K$-types.  Let $K^\wedge$ denote the set of
isomorphism classes of
irreducible representations of $K$.  For $\nu \in K^\wedge$, we
denote by $\sigma^{\nu}$ the $\nu$-isotypic component of
$\sigma$.
The notation applies also to $\sigma_0$,
and we have $\sigma_0^{\nu} = \sigma^\nu$ as representations
of $K$.

For each $\nu \in K^\wedge$
such that $\sigma^{\nu} \neq \{0\}$,
we define a scalar $N_{\sigma \nu} \geq 1$
as follows.
In the non-archimedean case, we define for $n \geq 0$ the
principal congruence subgroup
\[
  K[n] := G \cap \{g \in \GL_r(\mathfrak{o}) : g \equiv 1
  (\mathfrak{p}^n)
  \}
\]
of $K$.  We set
\[
  N_{\sigma \nu} := q^n
\]
if $n \geq 0$ is the
smallest nonnegative integer such that $K[n]$ acts trivially
on $\nu$.
In the archimedean case, we fix an orthonormal basis
$\{x_i\} \cup \{y_j\}$ for $\mathfrak{g} := \Lie(G)$ with
respect to the trace pairing derived from the given linear
embedding, where the $x_i \in \mathfrak{k}$ and
$y_j \in \mathfrak{p}$ belong to summands of the Cartan
decomposition
$\mathfrak{g} = \mathfrak{k} \oplus \mathfrak{p}$.  The
corresponding Casimir elements for $G$ and $K$ are then given
by $\mathcal{C}_G = -\sum_i x_i^2 + \sum_j y_j^2$ and
$\mathcal{C}_K = -\sum_i x_i^2$.  The subspaces $\sigma^{\nu}$
and $\sigma_0^{\nu}$ are eigenspaces for the actions of these
Casimir elements.  We write $c_{\sigma}, c_{\sigma_0}$ for the
corresponding eigenvalues of $\mathcal{C}_G$ and $c_\nu$ for
that of $\mathcal{C}_K$.  Set
\begin{equation}\label{eqn:Delta-G-defn}
  \Delta_G := - \sum_i x_i^2 - \sum_j y_j^2 = - \mathcal{C}_G +
2 \mathcal{C}_K.
\end{equation}
Since $\sigma_0$ is unitary, the element
$\Delta_G$ acts on $\sigma_0$ by a positive-definite operator,
as do its summands $- \sum_i x_i^2$ and $- \sum_j y_j^2$.  It
follows that the quantities $c_\nu, c_\nu - c_{\sigma_0}$ and
$- c_{\sigma_0} + 2 c_\nu$ are nonnegative.  We set
\begin{equation}
  N_{\sigma \nu} :=
  (1 - c_{\sigma_0} + 2 c_{\nu})^{1/2} \in \mathbb{R}_{\geq 1}.
\end{equation}
In other words,
$N_{\sigma \nu}$ is the eigenvalue
for $(1 + \Delta_G)^{1/2}$ on $\sigma_0^{\nu}$.
In either case, we define for $d \in \mathbb{R}$ the Sobolev
norm $\mathcal{S}_d$ on $\sigma$ by the rule
\begin{equation}\label{eq:defn-S-d-of-v-squared}
  \mathcal{S}_d(v)^2 :=
  \sum_{\nu}
  N_{\sigma \nu}^{2 d}
  \|v_\nu \|^2,
\end{equation}
where $v = \sum_{\nu} v_\nu$ is the decomposition of the
(smooth) vector $v \in \sigma$ into $K$-isotypic components.  In
the non-archimedean case, this is really a finite sum, while in
the archimedean case the sum converges in the natural
Fr{\'e}chet topology on $\sigma$.

\begin{remark}
  The norms $\mathcal{S}_d$ coincide with those defined in
\cite[\S2]{michel-2009} when $\sigma = \sigma_0$, except for a
slight difference in normalization
(our $\mathcal{S}_d$ is the
``$\mathcal{S}_{d/2}$'' of \cite[\S2]{michel-2009}
when $F$ is archimedean).
In the non-archimedean case, the construction of
\cite[\S2]{michel-2009} refers only to the restriction of
$\sigma$ to $K$, hence applies directly even when $\sigma$ is
non-unitary. The subtlety in the archimedean case responsible
for the roundabout definition given above is that the operator
$\sigma(\Delta_G)$ need not be positive-definite, or even
self-adjoint.  For that definition to be useful in practice, we
still need to control the operator $\sigma(\Delta_G)$ and the
norms $\mathcal{S}_d$ in terms of one another, or in other
words, to compare the eigenvalues of $\sigma(\Delta_G)$ and
$\sigma_0(\Delta_G)$ on their common eigenspaces
$\sigma_0^\nu \cong \sigma_0^{\nu}$.  The required comparison is
given below in lemma \ref{lem:comp-betw-lapl}.
\end{remark}

\begin{remark}
When $\sigma$ is a non-tempered unitary representation, the
norms defined here typically differ from those defined in
\cite[\S2]{michel-2009}.
\end{remark}

\begin{remark}
We note that in the archimedean case, the seminorms
$\mathcal{S}_d$ define the natural Fr{\'e}chet topology on
$\sigma$.  Thus the continuous functionals
$\ell : \sigma \rightarrow \mathbb{C}$ are precisely those for
which $|\ell(v)| \leq C \mathcal{S}_d$ for some $C$ and $d$.
\end{remark}

\subsection{Norms on Schwartz spaces}\label{sec:norms-schw-spac}
Here we
define Sobolev norms $\mathcal{S}_d$ ($d \in \mathbb{R}$) on
the Schwartz spaces $\mathcal{S}(F^r)$ ($r \geq 1$).  These may
be obtained by adapting the above construction to standard
representations of Heisenberg groups, but it seems simpler for
our purposes to give the definition more directly.

We assume given a nontrivial unitary character $\psi$ of $F$,
hence a Haar measure on $F$ and on $F^r$.

In the archimedean case, we choose a basis for
$F$ over $\mathbb{R}$ to identify $F^r$ with
$\mathbb{R}^{[F:\mathbb{R}] r}$.
We write $x \in F^r$
in coordinates as $x = (x_1,\dotsc,x_{[F:\mathbb{Q}]})$
For multi-indices $\alpha, \beta \in \mathbb{Z}_{\geq
  0}^{[F:\mathbb{R}]}$,
we denote by $M^\alpha$ the function
$\mathcal{S}(F^r) \ni x \mapsto \prod_{j=1}^{[F:\mathbb{R}]}
x_j^{\alpha_j}$
and by $\partial^\beta$ the differential
operator
$\prod_{j=1}^{[F:\mathbb{R}]} \frac{\partial }{\partial x_j}$.
For $\phi \in \mathcal{S}(F^r)$,
we then define $\mathcal{S}_d(\phi)$
to be the sum, over all multi-indices
$\alpha, \beta \in \mathbb{Z}_{\geq 0}^{[F:\mathbb{R}]}$ with
$\sum \alpha_i + \sum \beta_j \leq d$, of
$\|M^\alpha \partial^{\beta} \phi \|_{L^2(F^r)}$.

We turn to the non-archimedean case.  Choose an unramified
character $\psi_0$ of $F$.  For $x,y \in \mathfrak{o}^{r}$ and
$\phi \in \mathcal{S}(F^r)$, define
$(x,y) \cdot \phi \in \mathcal{S}(F^r)$ by the formula
$(x,y) \cdot f(z) = \phi(z + x) \psi_0(y z)$.  This defines an
action of the compact group $\mathfrak{o}^{2 r}$ on
$\mathcal{S}(F^r)$.  Each $\phi \in \mathcal{S}(F^r)$ may be
decomposed accordingly as a finite sum $\phi = \sum \phi_\nu$,
where $\nu$ runs over the characters of $\mathfrak{o}^{2 r}$.
We define $N_\nu := q^n$ if $n \geq 0$ is the smallest
nonnegative integer for which $\nu$ has trivial restriction to
$(\mathfrak{p}^n)^{\oplus 2 r}$, and set
$\mathcal{S}_d(\phi)^2 := \sum_{\nu} N_{\nu}^{2 d}
\|\phi_\nu\|^2_{L^2(F^r)}$.  Explicitly, we may write each
$\phi \in \mathcal{S}(F^r)$ uniquely as a linear combination
$\phi = \sum_{x,\xi} c(x,\xi) \phi_{x,\xi}$ of the functions
$\phi_{x,\xi} \in \mathcal{S}(F^r)$ defined for
$x,\xi \in F^r / \mathfrak{o}^r$ by
$\phi_{x,\xi}(y) := 1_{x + \mathfrak{o}}(y) \psi_0(\xi y)$, and
we have
$\mathcal{S}_d(\phi)^2 = \sum_{x,\xi} \max(1,|x_1|,\dotsc,|\xi_r|)^2
|c(x,\xi)|^2$.

Many of the results stated below for the norms $\mathcal{S}_d$
attached above to representations of reductive groups hold with
minor modifications for the norms $\mathcal{S}_d$ defined here
on Schwartz spaces.

\subsection{Uniformity}\label{sec:uniformity}
For the estimates stated below,
we assume given a \emph{fixed} number field $\mathbf{F}$, a
\emph{fixed} reductive group $\mathbf{G}$ over $\mathbf{F}$ with
a \emph{fixed} linear embedding
$\mathbf{G} \hookrightarrow \mathbf{G} \mathbf{L}_r$ from which
the local field $F$, the group $G$ and its linear embedding
$G \hookrightarrow \GL_r(F)$ arise as local components at some
place $\mathfrak{p}$ of $\mathbf{F}$.
Implied constants
are thus allowed to depend upon $\mathbf{F}$
and $\mathbf{G}$,
but not upon $F$ or $G$.

In the case that $F$ is archimedean, we assume that
$\theta$ belongs to a \emph{fixed} bounded collection $\Theta$
of positive-valued characters of $M$.  For instance, if
$G = \PGL_2(F)$ and $M = A \cong F^\times$, then
$\theta = |.|^c$ for some $c \in \mathbb{R}$, and we may define
$\Theta$ by requiring that $|c| \leq \vartheta$ for some fixed
$\vartheta > 0$.

As in \cite[\S2]{michel-2009}, we adopt the ``implied index''
convention that $\mathcal{S}(v)$ denotes a Sobolev norm of the form
$\mathcal{S}_d(v)$ for some fixed index $d$.

\subsection{Comparison between Laplace eigenvalues}
\label{sec:comp-betw-lapl}

\begin{lemma}\label{lem:comp-betw-lapl}
  Suppose that $F$ is archimedean.
  Let $\nu \in K^\wedge$ be such that
  the isotypic component $\sigma^{\nu}$
  (equivalently, $\sigma_0^{\nu}$) is nontrivial.
  Let $\delta = - c_{\sigma} + 2 c_\nu$
  and $\delta_0 =  - c_{\sigma_0} + 2 c_\nu$
  denote the respective eigenvalues
  for $\Delta_G$.
  Then
  \begin{equation}
    \delta
    =
    \delta_0 + \O(\delta_0^{1/2} + 1).
  \end{equation}
  In particular, if $C \geq 1$ is fixed
  but large enough
  (in terms of $\Theta$),
  then
  \begin{equation}\label{eqn:C-plus-delta-comparison}
    C + \delta \asymp C + \delta_0.
  \end{equation}
\end{lemma}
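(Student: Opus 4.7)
The plan is to compare $c_\sigma$ and $c_{\sigma_0}$ by tracking how the infinitesimal character of the inducing representation shifts under the twist $\theta$, and to control the size of the infinitesimal character of $\chi_0$ via the unitarity of $\sigma_0$. Normalized parabolic induction preserves infinitesimal characters, so if $\chi_0$ has infinitesimal character $\lambda_0 \in \mathfrak{h}_\mathbb{C}^*/W_G$ (for $\mathfrak{h}$ a Cartan of $\mathfrak{m}$, embedded in a Cartan of $\mathfrak{g}$), then so does $\sigma_0$, while $\sigma$ has infinitesimal character $\lambda_0 + Y$ with $Y := d\log\theta \in \mathfrak{a}^* \hookrightarrow \mathfrak{h}^*_\mathbb{C}$. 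Via the Harish-Chandra isomorphism, the Casimir acts by a quadratic polynomial $p(\lambda) = B(\lambda, \lambda) + \mathrm{const}$ on any representation with infinitesimal character $\lambda$, where $B$ is a symmetric bilinear form on $\mathfrak{h}_\mathbb{C}^*$. Thus
\[
c_\sigma - c_{\sigma_0} = p(\lambda_0 + Y) - p(\lambda_0) = 2 B(\lambda_0, Y) + B(Y, Y),
\]
and since $\theta \in \Theta$ lies in a fixed bounded set so that $\|Y\| \ll 1$, we obtain $|c_\sigma - c_{\sigma_0}| \ll 1 + \|\lambda_0\|$.

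Next, I will bound $\|\lambda_0\| \ll \sqrt{\delta_0} + 1$. Unitarity of $\chi_0$ forces $\|\Re\lambda_0\| \ll 1$ (vacuous when $M$ is a torus, as in the $\PGL_2$-applications motivating this paper; in general via bounds on complementary series parameters), while by the explicit formula for $p$ we get $\|\Im \lambda_0\|^2 \ll |c_{\sigma_0}| + 1$. Unitarity of $\sigma_0$ makes each of $-\sum_i x_i^2$ and $-\sum_j y_j^2$ a nonnegative self-adjoint operator on $\sigma_0$ (the Lie algebra acts skew-adjointly); their eigenvalues on $\sigma_0^\nu$ are $c_\nu$ and $c_\nu - c_{\sigma_0}$, forcing $c_\nu \geq \max(0, c_{\sigma_0})$ and hence $|c_{\sigma_0}| \leq -c_{\sigma_0} + 2c_\nu = \delta_0$ in both sign cases. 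Combining gives $\|\lambda_0\|^2 \ll \delta_0 + 1$.

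Substituting this into the previous display yields $|\delta - \delta_0| = |c_\sigma - c_{\sigma_0}| \ll \sqrt{\delta_0} + 1$, which is the main claim; the comparison \eqref{eqn:C-plus-delta-comparison} then follows routinely by the triangle inequality once $C$ is taken larger than a fixed multiple of the implied constant. The main technical obstacle is the bound $\|\Re\lambda_0\| \ll 1$ for nonabelian $M$, which requires input from the classification of the unitary dual of reductive Lie groups; in the applications of this paper, $M$ is abelian and this input is not needed.
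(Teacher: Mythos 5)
Your overall route --- comparing $c_\sigma$ with $c_{\sigma_0}$ through the shift $\lambda_0 \mapsto \lambda_0 + Y$ of infinitesimal characters and the Harish--Chandra isomorphism --- is viable and genuinely different from the paper's argument, but the step where you bound $\|\lambda_0\|$ by $1+\delta_0^{1/2}$ has a real gap. The claim ``$\|\Im\lambda_0\|^2 \ll |c_{\sigma_0}|+1$'' is false whenever the Cartan of $M$ has a compact factor, because the invariant form $B$ is then \emph{indefinite} on the real span of the parameters, so the Casimir eigenvalue $c_{\sigma_0}=B(\lambda_0,\lambda_0)+\mathrm{const}$ does not control any positive-definite norm of $\lambda_0$. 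This already fails in the paper's main case of interest: for $G=\PGL_2(\mathbb{C})$, $M=A\cong\mathbb{C}^\times$ and $\chi_0(z)=(z/|z|)^n|z|^{it}$, one has $\|\lambda_0\|^2\asymp n^2+t^2$ while $c_{\sigma_0}$ is of the shape $a n^2-b t^2+\O(1)$ with fixed $a,b>0$; choosing $n,t\to\infty$ with $an^2\approx bt^2$ makes $|c_{\sigma_0}|$ bounded while $\|\lambda_0\|$ is unbounded. (This indefiniteness is precisely why the paper works with $\Delta_G=-\mathcal{C}_G+2\mathcal{C}_K$ rather than $\mathcal{C}_G$.) The repair requires bringing the $K$-type into the estimate: by Frobenius reciprocity any $\nu$ with $\sigma_0^\nu\neq\{0\}$ contains $\chi_0|_{K\cap M}$ upon restriction, whence $c_\nu\gg n^2-\O(1)$; combining this with $\delta_0\geq c_\nu$ and $\delta_0\geq c_\nu-c_{\sigma_0}\geq -c_{\sigma_0}$ gives $t^2\ll 1+\delta_0$, which is what you actually need, since $Y$ lies in $\mathfrak{a}_P^*$ and hence $B(\lambda_0,Y)$ only sees the split component of $\lambda_0$. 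For nonabelian $M$ a similar (heavier) repair via lowest-$K$-type bounds is needed; note also that your claim $\|\Re\lambda_0\|\ll 1$ is false in the naive real structure for discrete series of $M$, which are unitary but have large real Harish--Chandra parameter, so even tempered $\chi_0$ are problematic for the argument as written.

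For comparison, the paper's proof stays entirely at the level of operators: it takes a unit vector $f\in\sigma_0^\nu$, observes that $\theta f$ is a unit vector in $\sigma^\nu$ (since $\theta|_K\equiv 1$), expands $\Delta_G(\theta f)=\theta\,\Delta_G f+u_1+u_0$ by the product rule with $u_1=-2\sum_x (x\theta)(xf)$ and $u_0=(\Delta_G\theta)f$, and concludes by Cauchy--Schwarz together with the first-order bound $\|xf\|\ll 1+\delta_0^{1/2}$, which follows directly from the unitarity of $\sigma_0$. That bound plays exactly the role that the missing $K$-type input plays in your approach, but it is obtained without any reference to infinitesimal characters or the unitary dual.
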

\begin{proof}
  We denote by $\langle , \rangle$ the norm on
  $\Ind_{M \cap K}^{K}(\chi_0) = \sigma_0|_K \cong \sigma|_K$
  defined as in \S\ref{sec:norms-reps-setting} by
  $\langle f_1, f_2 \rangle = \int_{k \in K} \langle f_1(k), f_2(k)
  \rangle\, d k$.  Let $f$ be a unit vector in $\sigma_0^{\nu}$,
  so that $\delta_0 = \langle \Delta f, f \rangle$ with $\Delta := \Delta_G$.  We may
  extend $\theta$ from $M$ to $P$ via pullback and then to $G$
  via the decomposition $G = P K$.  Since $\theta|_K \equiv 1$,
  the product $\theta f$ is a unit vector in $\sigma^{\nu}$, and
  so $\delta = \langle \Delta (\theta f), \theta f \rangle$.  On
  the other hand, we have
  $\Delta(\theta f) = \theta (\Delta f) + u_1 + u_0$, where
  \begin{equation}
    u_1 :=
    -2\sum_{x \in \mathcal{B}(\mathfrak{g})}
    (x \theta)  (x f),
    \quad
    u_0
    :=
    (\Delta \theta ) f.
  \end{equation}
  Since $\langle \theta (\Delta f), f \rangle = \langle \Delta
  f, f \rangle = \delta_0$,
  it follows that
  $\delta = \delta_0 +  \langle u_1 + u_0, f \rangle$,
  hence by Cauchy--Schwarz that
  $|\delta - \delta_0| \leq \|u_1\| + \|u_0\|$.
  Since  $\theta$ lies in the fixed bounded collection $\Theta$,
  the $L^\infty(K)$-norms of $\theta$ and $x \theta$
  are $\O(1)$.
  Thus $\|u_0\| \ll 1$.
  It follows from (e.g.)
  \cite[\S3.5]{nelson-venkatesh-1}
  that
  \begin{equation}\label{eq:estimate-for-first-order-diff-op}
    \|x f\| \ll 1 + \delta_0^{1/2},
  \end{equation}
  thus $\|u_1\| \ll 1 + \delta_0^{1/2}$.
  The required estimate
  follows.
\end{proof}

In the non-archimedean case, we \emph{define} the operators
$\Delta^d := \Delta_G^d$ ($d \in \mathbb{R}$) by
\begin{equation}\label{eqn:defn-Delta-nonarch}
  \Delta^d \sum v_\nu = \sum N_{\sigma \nu}^{d/2} v_\nu,
\end{equation}
so that
$\mathcal{S}_d(v)^2 = \langle \Delta^d v, v \rangle$.
In either case, we see that if $C$ is large enough but fixed,
then
\begin{equation}\label{eqn:sobolev-norms-via-C-plus-delta}
  \mathcal{S}_d(v)^2 \asymp
  \langle (C + \Delta)^d v, v \rangle.
\end{equation}

\subsection{Uniform trace class property}\label{sec:uniform-trace-class}
\begin{lemma}\label{lem:uniform-bounds-N-sigma-nu}
  There is a fixed $d_0 \geq 0$
  so that
  for all all irreducible
  representations $\sigma$ of $G$,
  \begin{equation}\label{eq:sum-N-sigma-nu-neg-d-0}
    \sum_{\nu}
    \dim(\sigma^\nu)
    N_{\sigma \nu}^{-d_0}
    \ll 1.
  \end{equation}
\end{lemma}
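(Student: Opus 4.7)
The plan is to reduce the sum to a trace-class bound for an explicit operator on a compact group, by combining Frobenius reciprocity with the identification $\sigma|_K \cong \Ind_{M \cap K}^K(\chi_0|_{M \cap K})$ furnished by the Iwasawa decomposition $G = P K$.

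First I would apply Frobenius reciprocity to obtain the pointwise bound
\[
    \dim \sigma^\nu = \dim \nu \cdot \dim \Hom_{M \cap K}(\nu|_{M \cap K}, \chi_0|_{M \cap K}) \leq (\dim \nu)^2,
\]
the last inequality using that for the groups of interest in this paper ($M$ a product of copies of $\GL_1$, so $\chi_0$ a unitary character of the compact abelian group $M \cap K$), the multiplicity on the right is at most $\dim \nu$. Crucially, this bound is uniform in the unitary representation $\chi_0$, hence in $\sigma$.

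In the archimedean case I would next exploit the unitarity of $\sigma_0$ to get
\[
    N_{\sigma \nu}^2 = 1 - c_{\sigma_0} + 2 c_\nu \geq 1 + c_\nu,
\]
since $\sigma_0(-\sum_j y_j^2)$ is a non-negative self-adjoint operator on the unitary representation $\sigma_0$, forcing $c_\nu - c_{\sigma_0} \geq 0$ on the common eigenspace $\sigma_0^\nu$. The sum in \eqref{eq:sum-N-sigma-nu-neg-d-0} is then majorized by $\sum_\nu (\dim \nu)^2 (1 + c_\nu)^{-d_0/2}$, which is the trace of the operator $(1 + \Delta_K)^{-d_0/2}$ on $L^2(K)$; this is finite once $d_0$ is fixed large enough (larger than $\dim K$) by the Weyl law on the compact Lie group $K$.

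In the non-archimedean case, $N_{\sigma \nu} = q^{n(\nu)}$ where $n(\nu)$ is the smallest level such that $K[n(\nu)]$ acts trivially on $\nu$. Peter--Weyl applied to the finite group $K/K[n]$ gives $\sum_{n(\nu) \leq n} (\dim \nu)^2 = [K : K[n]] \ll q^{n \cdot \dim_F G}$, so the sum converges for any $d_0$ exceeding $\dim_F G$. The main obstacle, were one to attempt a generalization to arbitrary reductive $M$ with non-abelian $\chi_0$, would be producing a uniform multiplicity bound replacing $\dim \Hom_{M \cap K}(\nu, \chi_0) \leq \dim \nu$; this would need a supplementary hypothesis on $\chi_0$ in the spirit of the bounded-collection hypothesis imposed on $\theta$ in \S\ref{sec:uniformity}. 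For the present paper only the torus case arises, and the argument above suffices with any fixed $d_0 > \max(\dim K, \dim_F G)$.
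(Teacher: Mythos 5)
Your argument is, in substance, the one the paper relies on: the paper's proof is a one-line deferral to \cite[\S 2.6.3]{michel-2009} after observing that the quantity only depends on $\sigma|_K \cong \sigma_0|_K$, and what you have written is a correct expansion of that argument. The archimedean reduction $N_{\sigma\nu}^2 = 1 - c_{\sigma_0} + 2c_\nu \geq 1 + c_\nu$ (from positivity of $-\sum_j y_j^2$ on the unitary $\sigma_0$) followed by the Weyl law on $K$, and the non-archimedean reduction to $\sum_{n(\nu)\leq n}(\dim\nu)^2 = [K:K[n]] \ll q^{n\dim G}$, are both correct and uniform in $\sigma$.

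There is, however, one genuine gap in the multiplicity bound. Your Frobenius-reciprocity step $\dim\sigma^\nu \leq (\dim\nu)^2$ uses $\sigma|_K \cong \Ind_{M\cap K}^K(\chi_0|_{M\cap K})$ with $M\cap K$ abelian, and your closing remark asserts that ``only the torus case arises'' in this paper. That is not so: the norm construction of \S\ref{sec:norms-whit-type-reps} is applied to square-integrable $\sigma$ (Steinberg twists, supercuspidals, discrete series of $\PGL_2(\mathbb{R})$) with $M = G$, in which case $M \cap K = K$, $\chi_0 = \sigma$, and Frobenius reciprocity returns a tautology rather than a bound. The lemma is stated — and used, e.g.\ in \S\ref{sec:defn-ell-sigma-s-makes-sense} — for \emph{all} irreducible $\sigma$, so this case must be covered. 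The fix is standard but needs to be said: for $\GL_2$ and $\PGL_2$ the bound $\dim\Hom_K(\nu,\sigma|_K)\leq\dim\nu$ holds for every irreducible admissible $\sigma$ (archimedean: Casselman's subrepresentation theorem embeds $\sigma$ into a principal series, reducing to your torus computation; non-archimedean: uniform admissibility, or an embedding of $\sigma|_N$ into the Kirillov/Whittaker model for generic $\sigma$ together with the finitely many non-generic cases). With that supplement the proof is complete.
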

\begin{proof}
  This estimate is the same for $\sigma$ as for $\sigma_0$, so
  the arguments of \cite[\S2.6.3]{michel-2009} apply directly.
\end{proof}

\subsection{Sobolev conductor}\label{sec:sobolev-conductor}
We define, as in \cite[\S2.6.5]{michel-2009},
the ``Sobolev conductor''
\begin{equation}
  C'(\sigma) := \min \{N_{\sigma \nu} :
  \nu \in K^\wedge \text{ with } \sigma^\nu \neq \{0\}\}
\end{equation}
The following result is (the local analogue of) \cite[Lem 2.6.6]{michel-2009}.
\begin{lemma}\label{lem:compare-conductors}
  Suppose that $G = \GL_r(F)$ and that $\sigma$ is irreducible
  and generic.  Then
  \begin{equation}\label{eqn:log-conductor-comparison}
    \log(1 + C(\sigma)) \asymp \log(1 + C'(\sigma)).
  \end{equation}
\end{lemma}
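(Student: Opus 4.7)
The plan is to split into the non-archimedean and archimedean cases and, in each, to establish a polynomial equivalence $C(\sigma)^{c_1} \ll C'(\sigma) \ll C(\sigma)^{c_2}$ for fixed positive $c_1, c_2$; taking logarithms then yields \eqref{eqn:log-conductor-comparison}. The argument is essentially the local analogue of the one used in \cite[Lem 2.6.6]{michel-2009}, adapted to our slightly different normalizations and to non-unitary $\sigma$.

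In the non-archimedean case, write $C(\sigma) = q^{a(\sigma)}$ and $C'(\sigma) = q^{n_0(\sigma)}$, where $n_0(\sigma) := \min\{n \geq 0 : \sigma^{K[n]} \neq \{0\}\}$. The Jacquet--Piatetski-Shapiro--Shalika theory of the essential vector provides a nonzero vector fixed by a standard congruence subgroup of depth $a(\sigma)$ that contains $K[a(\sigma)]$, giving $n_0(\sigma) \leq a(\sigma)$. For the reverse inequality, existence of a nonzero $K[n]$-fixed vector in $\sigma$ bounds the depth of $\sigma$ in the sense of Moy--Prasad, which in turn controls the conductor exponent linearly in $n$ and $r$, yielding $a(\sigma) \ll n_0(\sigma)$ uniformly in $q$ and in $\sigma$.

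In the archimedean case, write the generic representation $\sigma$ as an induction from essentially square-integrable constituents with Langlands parameters $\mu_1, \dotsc, \mu_k$, so that $C(\sigma) \asymp \prod_j(2 + |\mu_j|)$. The Casimir scalar $c_{\sigma_0}$ on the unitary version $\sigma_0$ is a quadratic polynomial in the $\mu_j$ of absolute value comparable to $\max_j|\mu_j|^2 + 1$, with bounded additive corrections coming from $\theta \in \Theta$ and from the discrete series constituents; hence $1 + |c_{\sigma_0}| \asymp C(\sigma)^2$. For the minimal $K$-type $\nu_0$ realizing the minimum in the definition of $C'(\sigma)$, Vogan's minimal $K$-type theorem for $\GL_r(F)$ furnishes $\nu_0$ with $c_{\nu_0} \ll \max_j(1 + |\mu_j|)^{O(1)} \ll C(\sigma)^{O(1)}$; combining this with $c_{\nu_0} \geq |c_{\sigma_0}|/2 - O(1)$ from positivity of $\Delta_G$ on $\sigma_0$, and invoking Lemma \ref{lem:comp-betw-lapl}, one obtains $N_{\sigma\nu_0}^2 = 1 - c_{\sigma_0} + 2 c_{\nu_0} \asymp C(\sigma)^{O(1)}$.

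The main obstacle is establishing the non-archimedean bound $a(\sigma) \ll n_0(\sigma)$ with a constant independent of the residue field size $q$, since several standard references absorb $q$-factors into their implied constants. The way around this is to derive the bound from general properties of depth (Moy--Prasad) and the local functional equation rather than from case-by-case classifications, ensuring the dependencies are tracked correctly.
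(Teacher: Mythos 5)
Your non-archimedean argument is correct but takes a genuinely different route from the paper's. The paper (which only writes out the case $r=2$, noting that the general case is attributed to Gan without a recorded proof) obtains $C(\sigma)\ll C'(\sigma)^{O(1)}$ by conjugating a $K[n]$-fixed vector by a diagonal element, Fourier-decomposing along the diagonal torus to produce a vector that becomes $K_1[2n]$-fixed after twisting by a character $\chi$ with $C(\chi)\leq q^n$, and then combining newvector theory with the twisting inequality $C(\sigma\otimes\omega)\leq\max(C(\sigma)C(\omega),C(\omega)^2)$; this is elementary and yields the explicit bound $C(\sigma)\leq C'(\sigma)^3$. Your route through Moy--Prasad depth and the relation $a(\sigma)\leq r(1+\mathrm{depth}(\sigma))$ does work for all $r$ and is uniform in $q$, but it outsources the key step to depth preservation under the local Langlands correspondence (or to the Bushnell--Kutzko conductor formula), a much heavier input that you would need to cite precisely rather than gesture at; the JPSS essential-vector direction $C'(\sigma)\leq C(\sigma)$ matches the paper's.

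The archimedean argument has a genuine gap in the lower bound $C'(\sigma)\gg C(\sigma)^{c}$. The inequality $c_{\nu_0}\geq |c_{\sigma_0}|/2-\O(1)$ that you claim ``from positivity of $\Delta_G$'' is false when $c_{\sigma_0}<0$: for a tempered spherical principal series of $\PGL_2(\mathbb{R})$ with parameter $it$ one has $c_{\sigma_0}\asymp -(1/4+t^2)$ while the minimal $K$-type is trivial, so $c_{\nu_0}=0$. Positivity of $\Delta_G$ gives only $c_{\nu_0}\geq c_{\sigma_0}/2$ (no absolute value), which is literally the statement $N_{\sigma\nu_0}^2\geq 1$; consequently, in the regime where your inequality \emph{does} hold ($c_{\sigma_0}\geq 0$, e.g.\ discrete series of large weight), substituting it into $N_{\sigma\nu_0}^2=1-c_{\sigma_0}+2c_{\nu_0}$ yields only $N_{\sigma\nu_0}^2\geq 1-\O(1)$, which is vacuous. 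What is actually needed is a dichotomy: when $C(\sigma)$ is driven by continuous (principal-series) parameters, $-c_{\sigma_0}$ is large and positive and $N_{\sigma\nu_0}^2\geq 1-c_{\sigma_0}$ suffices; when it is driven by discrete-series/weight parameters, one must show that the lowest $K$-type is large by a definite margin, i.e.\ $2c_{\nu_0}-c_{\sigma_0}\gg C(\sigma)^{c}$ and not merely $\geq 0$ --- this is exactly where Vogan's lowest $K$-type theory must do real work, and it is the content of the assertion the paper defers to Gan. (Your intermediate claim $1+|c_{\sigma_0}|\asymp C(\sigma)^2$ also has the wrong exponent for $r>2$; it is roughly $C(\sigma)^{2/r}$ when all parameters are comparable, though this is harmless after taking logarithms.)
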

Note that the assumptions of \S\ref{sec:uniformity} imply that
$\sigma$ is $\vartheta$-tempered for some fixed $\vartheta > 0$.

The proof of lemma \ref{lem:compare-conductors} is attributed in
\emph{loc. cit.}, to a personal communication from W.T. Gan, but
no proof is recorded.
The case $r=1$ is straightforward
(see Lemma
\ref{lem:gl1-character-analytic-conductor-invariance}
and \cite[\S1.1]{JN19a} for related discussion). 
We explain
the case $r = 2$ relevant to this paper (and also to
\cite{michel-2009}).
The archimedean
case can be verified
by comparing the Casimir eigenvalue and analytic
conductor of each irreducible representation.
In the non-archimedean
case, the main input is the following:
\begin{lemma}
  Assume $F$ non-archimedean.
  Let $\sigma$ be an irreducible representation of $\GL_2(F)$
  and $\omega$ a character of $F^\times$.
  Then
  \begin{equation}\label{eqn:conductor-inequality-twists}
    C(\sigma \otimes \omega)
    \leq
    \max
    ( C(\sigma) C(\omega), C(\omega)^2 )
  \end{equation}
\end{lemma}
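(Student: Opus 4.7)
The plan is to argue by case analysis on the classification of irreducible admissible representations of $\mathrm{GL}_2(F)$ into principal series, twists of Steinberg, and supercuspidals.

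For the principal series case $\sigma = \mathcal{I}(\chi_1 \boxtimes \chi_2)$, the multiplicativity of $\epsilon$-factors under parabolic induction, recalled in \eqref{eqn:multiplicativity-L-factors-wrt-induction}, gives $C(\sigma) = C(\chi_1) C(\chi_2)$ and $C(\sigma \otimes \omega) = C(\chi_1 \omega) C(\chi_2 \omega)$. This reduces the claim to the elementary $\mathrm{GL}_1$ bound $C(\chi \omega) \leq \max(C(\chi), C(\omega))$, which holds because the characters of $F^\times$ of conductor dividing $\mathfrak{p}^n$ form a subgroup for each $n$. A short check on the three subcases ordering $C(\chi_1), C(\chi_2), C(\omega)$ then yields the inequality. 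The Steinberg-twist case $\sigma = \mu \cdot \mathrm{St}$ is analogous: the explicit formula $C(\mu \cdot \mathrm{St}) = \max(q, C(\mu)^2)$ (cf.\ \cite{Sch02}) combined with $(\mu \cdot \mathrm{St}) \otimes \omega \cong (\mu \omega) \cdot \mathrm{St}$ reduces the claim once again to a $\mathrm{GL}_1$ estimate.

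The substantive case is the supercuspidal one. I would invoke the local Langlands correspondence for $\mathrm{GL}_2(F)$ to attach to $\sigma$ a two-dimensional irreducible Frobenius-semisimple Weil--Deligne representation $\rho$ with $a(\sigma) = a(\rho)$ (the Artin conductor exponent), translating the statement into $a(\rho \otimes \omega) \leq \max(a(\rho) + a(\omega), 2 a(\omega))$. This is a standard bound on Artin conductors of tensor products, provable via the upper-numbering ramification filtration of $W_F$: if $\lambda_i$ denote the breaks of $\rho|_{I_F}$ (counted with multiplicity) and $\mu = a(\omega) - 1$ denotes the break of $\omega$, then the breaks of $(\rho \otimes \omega)|_{I_F}$ are bounded componentwise by $\max(\lambda_i, \mu)$, with cancellation possible only when $\lambda_i = \mu$. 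Separating into the regimes $\mu \leq \max_i \lambda_i$ (where this gives $a(\rho \otimes \omega) \leq a(\rho) + a(\omega)$) and $\mu > \max_i \lambda_i$ (where it gives $a(\rho \otimes \omega) \leq 2 a(\omega)$) yields the inequality.

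The main obstacle is the supercuspidal case, specifically the ramification-theoretic bookkeeping when $\rho|_{I_F}$ is reducible (for instance if $\rho = \mathrm{Ind}_{W_E}^{W_F}(\theta)$ for an unramified quadratic extension $E/F$), where one must separately track the tame and wild contributions to $a(\rho)$ and verify that the break analysis respects them. An alternative route bypassing local Langlands would be a direct argument via Casselman's newvector theory, showing that the newvector of $\sigma$ transforms appropriately under $K_0(\mathfrak{p}^{a(\sigma)+a(\omega)})$ in $\sigma \otimes \omega$ when $a(\omega) \leq a(\sigma)$, and handling the complementary regime by a minimal-twist symmetry; this avoids LLC but requires more detailed computation.
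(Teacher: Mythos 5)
Your proof is correct, but it takes a genuinely different route from the paper: the paper does not prove the inequality at all, it simply quotes it as a special case of Bushnell--Henniart's general upper bound on conductors of pairs for $\GL_m \times \GL_n$ \cite{MR1462836}, which is established there by type-theoretic methods on the automorphic side. Your argument instead gives a direct, essentially self-contained proof by running through the classification. The principal series and Steinberg cases reduce, as you say, to the elementary $\GL_1$ bound $a(\chi\omega) \leq \max(a(\chi), a(\omega))$ together with multiplicativity of $\eps$-factors and the formula $a(\mu\,\mathrm{St}) = \max(1, 2a(\mu))$, and your three-way case check is right. In the supercuspidal case, passing through the local Langlands correspondence (which preserves conductors) and using the break decomposition of $\rho|_{I_F}$ is a clean and standard argument: with breaks $\lambda_1 \leq \lambda_2$ of $\rho$ and break $\mu$ of $\omega$, irreducibility forces $\rho^{I_F} = (\rho\otimes\omega)^{I_F} = 0$, so $a(\rho\otimes\omega) \leq 2 + \max(\lambda_1,\mu) + \max(\lambda_2,\mu)$, and the three regimes $\mu \leq \lambda_1$, $\lambda_1 < \mu \leq \lambda_2$, $\mu > \lambda_2$ give exactly the claimed bound. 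One small remark: the difficulty you flag about $\rho|_{I_F}$ being reducible (dihedral cases) is not actually an obstacle --- the break decomposition argument is uniform and does not require separating tame and wild parts by hand, since Hasse--Arf only enters in guaranteeing that the total exponent is an integer, which you never need. What your approach buys is transparency and self-containedness for the $\GL_2\times\GL_1$ case; what the citation buys is brevity and the sharper, fully general statement (including the refinements for supercuspidals referenced in \cite{MR1606410}).
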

\begin{proof}
  This is a special case of \cite[Thm 1]{MR1462836};
  see also \cite[Prop
  3.4]{xoMR0476703} and \cite{MR1606410}
  for sharper bounds in the supercuspidal case.
\end{proof}
We now deduce \eqref{eqn:log-conductor-comparison} from \eqref{eqn:conductor-inequality-twists}.
Set $K := \GL_2(\mathfrak{o})$ and
\[
  K_1[n] :=
  \left\{\begin{pmatrix}
      a & b \\
      c & d
    \end{pmatrix} \in K : c, d- 1 \in \mathfrak{p}^n \right\},
  \quad K[n] := \left\{ k \in K : k \equiv 1 (\mathfrak{p}^n)
  \right\}.
\]
If $\sigma$ contains a nonzero vector invariant by $K_1[n]$,
then the same vector is invariant by $K[n]$, and so
$C'(\sigma) \leq C(\sigma)$.  Conversely, write
$C'(\sigma) = q^n$, so that $\sigma$ contains a nonzero vector
invariant by $K[n]$.  The translate of that vector by a suitable
diagonal element of $G$
is then invariant by the following conjugate of
$K[n]$:
\[
  K \cap
  (1 + \begin{pmatrix}
    \mathfrak{p}^n & \mathfrak{o}  \\
    \mathfrak{p}^{2 n} & \mathfrak{p}^n
  \end{pmatrix}).
\]
Let $\omega_\sigma$ denote the central character of $\sigma$.
By Fourier decomposition with respect to the diagonal subgroup
of $K$, we may find a character $\chi$ of $F^\times$ with
$C(\chi) \leq q^n$ and a nonzero vector in $\sigma$ transforming
under $J_{1}[2n]$ by the character $\left(
  \begin{smallmatrix}
    a&b\\
    c&d
  \end{smallmatrix}
\right) \mapsto \omega_{\sigma}(d) \chi(a/d)$.  Then the twisted
representation $\sigma \otimes \chi^{-1}$ contains a nonzero
vector invariant by $J_1[2 n]$.  By newvector theory
\cite{MR0337789}, it follows that
$C(\sigma \otimes \chi^{-1}) \leq q^{2 n}$.
By \eqref{eqn:conductor-inequality-twists}
we deduce the sufficient estimate
\begin{equation}
  C(\sigma) = C( (\sigma \otimes \chi^{-1}) \otimes \chi)
  \leq \max(q^{2 n} C(\chi), C(\chi)^2)
  \leq q^{3 n} = C'(\sigma)^3.
\end{equation}

\subsection{Reduction to isotypic vectors}\label{sec:reduct-isotyp-vect}
Let
$\sigma^{\flat}$ denote the subspace of $K$-finite vectors in
$\sigma$.
\begin{lemma}
  Let
  $\ell$ be a linear functional on $\sigma^{\flat}$ such that the estimate
  $\ell(v) \ll \mathcal{S}_d(v)$ holds for some fixed $d$ and all
  $K$-isotypic vectors $v \in \sigma$.  Then $\ell$ extends to a
  continuous linear functional on $\sigma$ for which the estimate
  $\ell(v) \ll \mathcal{S}_{d'}(v)$ holds for some fixed $d'$ and
  all $v \in \sigma$.
\end{lemma}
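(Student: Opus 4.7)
The plan is to decompose a smooth vector into its $K$-isotypic components and apply Cauchy--Schwarz with the uniform trace-class estimate of Lemma \ref{lem:uniform-bounds-N-sigma-nu}. First, for any $v \in \sigma^{\flat}$, write $v = \sum_{\nu} v_\nu$ (a finite sum, since $v$ is $K$-finite) with $v_\nu \in \sigma^\nu$. Each $v_\nu$ is a $K$-isotypic vector, so the hypothesis gives
\[
|\ell(v_\nu)| \ll \mathcal{S}_d(v_\nu) = N_{\sigma\nu}^{d} \|v_\nu\|,
\]
where the equality is the definition \eqref{eq:defn-S-d-of-v-squared} of the Sobolev norm restricted to a single isotypic component.

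The next step is to sum over $\nu$ and apply Cauchy--Schwarz. Let $d_0 \geq 0$ be the fixed exponent provided by Lemma \ref{lem:uniform-bounds-N-sigma-nu}. Then
\[
|\ell(v)| \leq \sum_\nu |\ell(v_\nu)| \ll \sum_\nu N_{\sigma\nu}^d \|v_\nu\|
\leq \Bigl( \sum_\nu N_{\sigma\nu}^{-d_0}\Bigr)^{1/2}
\Bigl( \sum_\nu N_{\sigma\nu}^{2d + d_0} \|v_\nu\|^2\Bigr)^{1/2}.
\]
For the first factor, using $\dim(\sigma^\nu) \geq 1$ whenever $\sigma^\nu \neq \{0\}$, Lemma \ref{lem:uniform-bounds-N-sigma-nu} yields $\sum_\nu N_{\sigma\nu}^{-d_0} \leq \sum_\nu \dim(\sigma^\nu) N_{\sigma\nu}^{-d_0} \ll 1$, uniformly in $\sigma$. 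The second factor is exactly $\mathcal{S}_{d + d_0/2}(v)$ by \eqref{eq:defn-S-d-of-v-squared}. Setting $d' := d + d_0/2$ therefore yields the required bound $|\ell(v)| \ll \mathcal{S}_{d'}(v)$ on $\sigma^\flat$.

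It remains to extend $\ell$ continuously from $\sigma^{\flat}$ to $\sigma$. In the non-archimedean case, every smooth vector is fixed by some open compact subgroup of $K$ and hence lies in a finite-dimensional $K$-subrepresentation, so $\sigma^{\flat} = \sigma$ and there is nothing more to do. In the archimedean case, the family of norms $\{\mathcal{S}_d\}$ defines the natural Fr\'echet topology on $\sigma$ (as recorded in \S\ref{sec:constr-sobol-norms}) and $\sigma^{\flat}$ is dense in $\sigma$; the bound $|\ell(v)| \ll \mathcal{S}_{d'}(v)$ just proved means that $\ell|_{\sigma^\flat}$ is continuous for the subspace topology, so it extends uniquely to a continuous functional on $\sigma$, and the extended functional satisfies the same Sobolev estimate by continuity.

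There is no real obstacle here: everything reduces to Cauchy--Schwarz together with the trace-class property of Lemma \ref{lem:uniform-bounds-N-sigma-nu}. The only point requiring mild care is the choice of exponent $d_0$ and verifying that $\sum_\nu N_{\sigma\nu}^{-d_0}$ (without the dimension factor) is still uniformly bounded, which is immediate because each nonzero $\sigma^\nu$ contributes at least one to $\dim(\sigma^\nu)$.
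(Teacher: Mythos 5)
Your proof is correct and follows essentially the same route as the paper's: decompose $v$ into $K$-isotypic components, apply the hypothesis to each piece, and sum using the uniform trace-class bound of Lemma \ref{lem:uniform-bounds-N-sigma-nu}. The only (immaterial) difference is that you combine the pieces via Cauchy--Schwarz, obtaining $d' = d + d_0/2$, whereas the paper factors out $\mathcal{S}_{d+d_0}(v_\nu) \leq \mathcal{S}_{d+d_0}(v)$ and sums $N_{\sigma\nu}^{-d_0}$ directly, obtaining $d' = d + d_0$.
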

\begin{proof}
  For $K$-finite $v$,
  our hypothesis gives
  \[
    |\ell(v)|
    \leq \sum_{\nu} |\ell(v_\nu)|
    \ll \sum_{\nu} \mathcal{S}_d(v_\nu).
  \]
  On the other hand, for any $d_0 \geq 0$, we have
  \[
    \mathcal{S}_d(v_\nu)
    =
    \mathcal{S}_{d+d_0}(v_\nu)
    N_{\sigma \nu}^{-d_0}
    \leq
    \mathcal{S}_{d+d_0}(v)
    N_{\sigma \nu}^{-d_0}.
  \]
  Choosing $d_0$ as in lemma
  \ref{lem:uniform-bounds-N-sigma-nu},
  the required estimate follows
  (with $d' := d + d_0$) from
  \eqref{eq:sum-N-sigma-nu-neg-d-0}.
\end{proof}

\subsection{Integration by parts}\label{sec:integration-parts}
Suppose given a pair of groups $G$ and $H$ as above, with
$H$
contained in $G$.

Let $\pi$ and $\sigma$ be representations of $G$ and $H$,
respectively, of the sort considered in
\S\ref{sec:norms-reps-setting}.  We assume that the conditions
of \S\ref{sec:uniformity} are satisfied for both $(G,\pi)$ and
$(H,\sigma)$, with $\mathbf{H}$ an algebraic subgroup of
$\mathbf{G}$.

We will refer subsequently to the following lemma
as \emph{integration
  by parts with respect to $\sigma$}.
\begin{lemma}
  Suppose given a linear functional
  $\ell : \pi \otimes \sigma \rightarrow \mathbb{C}$
  that is invariant by the diagonal action of $H$
  and satisfies
  \begin{equation}\label{eq:int-by-parts-hyp}
    \ell(u \otimes v)
    \ll \mathcal{S}_{d_0}(u) \mathcal{S}_{d_0}(v)
  \end{equation}
  for some fixed $d_0$.
  Then for fixed $d, d'$ with $d'$ large in terms of $d$,
  \begin{equation}\label{eq:int-by-parts-conc-better}
    \ell(u \otimes v)
    \ll \mathcal{S}_{d'}(u) \mathcal{S}_{-d}(v) C'(\sigma)^{-d}.
  \end{equation}
\end{lemma}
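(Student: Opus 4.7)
The plan is to exploit the diagonal $H$-invariance of $\ell$ to perform an ``integration by parts'' in the $\sigma$-variable, trading Sobolev regularity of $v$ for that of $u$ while gaining powers of $N_{\sigma\nu}^{-1}$. First, by the argument of \S\ref{sec:reduct-isotyp-vect} applied to $\sigma$ together with the trace-class bound of Lemma \ref{lem:uniform-bounds-N-sigma-nu}, it suffices to treat $v$ that is $K_H$-isotypic of some type $\nu$. In that case $\mathcal{S}_{-d}(v) = N_{\sigma\nu}^{-d}\|v\|$ and, since $C'(\sigma)\leq N_{\sigma\nu}$, we have $\mathcal{S}_{-d}(v)\,C'(\sigma)^{-d} \geq N_{\sigma\nu}^{-2d}\|v\|$. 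Thus the desired estimate reduces to proving
\[
|\ell(u\otimes v)| \ll \mathcal{S}_{d'}(u)\,N_{\sigma\nu}^{-2d}\|v\|.
\]

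In the archimedean case, differentiating the identity $\ell(e^{tx}u\otimes e^{tx}v) = \ell(u\otimes v)$ at $t=0$ for $x\in\mathfrak{h}:=\Lie(H)$ gives $\ell(xu\otimes v) + \ell(u\otimes xv) = 0$. Iterating over an orthonormal basis of $\mathfrak{h}$ adapted to the Cartan decomposition as in \S\ref{sec:constr-sobol-norms}, we find that the element $\Delta_H\in U(\mathfrak{h})$ and its image $\Delta_H^{(\pi)}\in U(\mathfrak{g})$ under $\mathfrak{h}\hookrightarrow\mathfrak{g}$ satisfy
\[
\ell(u\otimes\Delta_H v) = \ell(\Delta_H^{(\pi)} u\otimes v).
\]
Fixing a large constant $C$, on $\nu$-isotypic vectors $(C+\Delta_H)$ acts by a scalar $\lambda$ with $|\lambda|\asymp N_{\sigma\nu}^2$ by Lemma \ref{lem:comp-betw-lapl} applied to $\sigma$. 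For any integer $k\geq 1$, iterating yields
\[
\ell(u\otimes v) = \lambda^{-k}\,\ell\bigl((C+\Delta_H^{(\pi)})^k u\otimes v\bigr),
\]
and combining the hypothesis \eqref{eq:int-by-parts-hyp} with the standard estimate $\mathcal{S}_{d_0}(Du)\ll \mathcal{S}_{d_0+\deg D}(u)$ for $D\in U(\mathfrak{g})$ gives
\[
|\ell(u\otimes v)| \ll N_{\sigma\nu}^{d_0-2k}\,\mathcal{S}_{d_0+2k}(u)\,\|v\|.
\]
Choosing $k$ with $2k\geq d_0+2d$ yields the required bound with $d':=d_0+2k$.

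The non-archimedean case proceeds by an analogous argument in which Lie-algebra integration by parts is replaced by group-theoretic averaging: since $v$ is $K_H[n]$-invariant with $q^n = N_{\sigma\nu}$, averaging the $H$-invariance of $\ell$ over $K_H[n]$ gives $\ell(u\otimes v) = \ell(\Pi u\otimes v)$, where $\Pi$ is the $K_H[n]$-averaging projector acting on $\pi$. One then bounds $\mathcal{S}_{d_0}(\Pi u)\ll N_{\sigma\nu}^{d_0-d'}\mathcal{S}_{d'}(u)$ using the compatibility of the $K_H$- and $K_G$-conductor filtrations arising from the fixed algebraic inclusion $\mathbf{H}\subseteq\mathbf{G}$.

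The main anticipated obstacle is verifying this conductor-filtration compatibility cleanly in the non-archimedean case, since the depth of a $K_G$-type need not control its depth upon restriction to $K_H$ in a sharp way; a fallback is to realize $\sigma$ as a boundary value of unitary representations and reduce to the framework of \cite[\S2]{michel-2009}, where the required estimate is already essentially available.
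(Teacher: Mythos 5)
Your archimedean argument is, modulo phrasing, the paper's own proof: the identity $\ell(u\otimes\Delta_H v)=\ell(\Delta_H u\otimes v)$ obtained by differentiating the $H$-invariance is exactly what lets one write $\ell(u\otimes v)=\ell((C+\Delta_H)^b u\otimes (C+\Delta_H)^{-b}v)$, and your scalar $\lambda\asymp N_{\sigma\nu}^2$ from Lemma \ref{lem:comp-betw-lapl} together with $\mathcal{S}_{d_0}(Du)\ll\mathcal{S}_{d_0+\deg D}(u)$ reproduces the two estimates used there. The preliminary reduction to $K_H$-isotypic $v$ via \S\ref{sec:reduct-isotyp-vect} is harmless (the paper avoids it by applying $(C+\Delta_H)^{-b}$ to general $v$ directly, but your route only costs a bounded shift of the index).

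The non-archimedean half has a genuine gap. Averaging over $K_H[n]$ projects $u$ onto its $K_H[n]$-\emph{fixed} vectors, i.e.\ onto \emph{all} $K_H$-types of conductor at most $q^n$ including the trivial one, so it does not attenuate $u$ when $u$ is already $K_H[n]$-invariant. Concretely, if $u$ is $K_G$-spherical then $\Pi u=u$ and the only $N_{\pi\nu'}$ occurring in $u$ equals $1$, so your claimed bound $\mathcal{S}_{d_0}(\Pi u)\ll N_{\sigma\nu}^{d_0-d'}\mathcal{S}_{d'}(u)$ reads $\|u\|\ll q^{n(d_0-d')}\|u\|$, which is false for $d'>d_0$ once $n$ is large. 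The decay in $N_{\sigma\nu}$ asserted by \eqref{eq:int-by-parts-conc-better} cannot be extracted from a projector acting on $u$; it must come from the $v$ side. The paper's mechanism is: with $\Delta_H$ defined spectrally as in \eqref{eqn:defn-Delta-nonarch}, the $K_H$-invariance of $\ell$ forces it to pair the $K_H$-isotypic components of $\pi$ and $\sigma$ type by (dual) type, whence $\ell((C+\Delta_H)^b u\otimes(C+\Delta_H)^{-b}v)=\ell(u\otimes v)$; the operator $(C+\Delta_H)^{-b}$ multiplies the isotypic $v$ by $\asymp N_{\sigma\nu}^{-2b}$ (the gain), while $(C+\Delta_H)^{b}$ on $u$ costs only $O(N_{\pi\nu'}^{2b})$ on each $K_G$-type $\nu'$ of $u$, because $K_H[m+\O(1)]\subseteq K_G[m]$ bounds the $K_H$-depth of $\pi^{\nu'}$ by its $K_G$-depth. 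Equivalently, in your language you should replace $\Pi$ by the projection of $\pi|_{K_H}$ onto the single $K_H$-type dual to that of $v$: that projection annihilates every $K_G$-type $\nu'$ of $u$ with $N_{\pi\nu'}\ll N_{\sigma\nu}q^{-\O(1)}$, and the surviving deep types supply the factor $N_{\sigma\nu}^{d_0-d'}$ you want. The filtration compatibility you flag as the main obstacle is not the issue (and is recorded in the paper in exactly the form needed); the issue is that the $K_H[n]$-averaging operator is the wrong operator.
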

\begin{proof}
  By the definition of $C'(\sigma)$,
  we have $\mathcal{S}_{-2 d}(v) \ll \mathcal{S}_{-d}(v)
  C'(\sigma)^{-d}$,
  so by replacing $d$ with $2 d$
  it will suffice to show that
  \begin{equation}\label{eq:int-by-parts-conc}
    \ell(u \otimes v)
    \ll \mathcal{S}_{d'}(u) \mathcal{S}_{-d}(v).
  \end{equation}
  We choose $C \geq 1$ fixed but large enough.
  For any $b \in \mathbb{Z}_{\geq 0}$,
  the $H$-invariance of $\ell$
  and the initial estimate
  \eqref{eq:int-by-parts-hyp}
  give
  \begin{align*}
    \ell(u \otimes v)
    &= \ell( (C + \Delta_H)^b u \otimes (C + \Delta_H)^{-b} v)
    \\
    &\ll
      \mathcal{S}_{d_0}( (C + \Delta_H )^b u)
      \mathcal{S}_{d_0}( ( C + \Delta_H)^{-b} v).
  \end{align*}
  By \eqref{eqn:sobolev-norms-via-C-plus-delta},
  we have
  \begin{equation}
    \mathcal{S}_{d_0}( ( C + \Delta_H)^{-b} v)
    \asymp
    \mathcal{S}_{d_0 - 2 b}(v).
  \end{equation}
  On the other hand,
  we claim that
  \begin{equation}\label{eq:estimate-S-d0-C-plus-delta-H-b-u}
    \mathcal{S}_{d_0}( ( C + \Delta_H)^{b} u)
    \ll
    \mathcal{S}_{d_0 + 2 b}(v).
  \end{equation}
  In the archimedean case, this last estimate follows from
  \eqref{eq:estimate-for-first-order-diff-op}.  In the
  non-archimedean case, denote by $K_G$ and $K_H$ the
  corresponding maximal compact subgroups.  Then
  $K_H[m + \O(1)] \subseteq K_G[m]$ in general, and
  $K_H[m] \subseteq K_G[m]$ if $q$ is sufficiently large.  Thus
  the eigenvalues of $\Delta_H$ on the $K_G$-isotypic subspaces
  $\pi^\nu$ of $\pi$ are $\O(N_{\pi \nu}^2)$.  The claim
  \eqref{eq:estimate-S-d0-C-plus-delta-H-b-u} follows.  We
  conclude that \eqref{eq:int-by-parts-conc} holds with
  $d' := d_0 + 2 b$ for any $b \geq (d + d_0)/2$.
\end{proof}

\subsection{Reduction to pure
  tensors}\label{sec:reduct-pure-tens}
Let $\sigma_1$ and $\sigma_2$ be representations of a
pair of groups $G_1$ and $G_2$ as above.  Let
$\sigma_i^{\flat} \subseteq \sigma_i$ denote the subspace of
$K_i$-finite vectors.
\begin{lemma}
  Suppose given a bilinear form $\ell$ on
  $\sigma_1^{\flat} \times \sigma_2^{\flat}$ satisfying the
  estimate
  \begin{equation}\label{eqn:hypothesis-ell-v1-otimes-v2-bound}
    \ell(v_1, v_2)
    \ll
    \mathcal{S}_d(v_1) \mathcal{S}_d(v_2)
  \end{equation}
  for some $d$.
  Then $\ell$ extends to a continuous linear functional
  on the completed tensor product $\sigma_1 \otimes \sigma_2$
  satisfying the estimate
  \begin{equation}\label{eqn:ell-v-bounded-by-S-d-prime-v}
    \ell(v) \ll \mathcal{S}_{d'}(v).
  \end{equation}
\end{lemma}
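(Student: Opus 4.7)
The plan is to reduce to the diagonal (pure-tensor) case by decomposing a general vector $v \in \sigma_1 \otimes \sigma_2$ into isotypic components under the compact group $K_1 \times K_2$, exploiting that each isotypic component $(\sigma_1 \otimes \sigma_2)^{\nu_1 \otimes \nu_2}$ is naturally identified with the \emph{algebraic} tensor product $\sigma_1^{\nu_1} \otimes \sigma_2^{\nu_2}$ and hence a finite sum of pure tensors. In this way the hypothesis \eqref{eqn:hypothesis-ell-v1-otimes-v2-bound}, which only applies to pure tensors, can be invoked term-by-term. The Sobolev norm $\mathcal{S}_{d'}$ on the tensor product is defined as in \S\ref{sec:constr-sobol-norms} using the $K_1 \times K_2$-types, and one has $N_{\sigma_1 \otimes \sigma_2, \nu_1 \otimes \nu_2} \asymp N_{\sigma_1 \nu_1} N_{\sigma_2 \nu_2}$ (in the archimedean case this follows from $\Delta_{G_1 \times G_2} = \Delta_{G_1} \otimes 1 + 1 \otimes \Delta_{G_2}$; in the non-archimedean case from the product structure of principal congruence subgroups).

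First I would fix orthonormal bases $\{e_{\nu_1,i}\}$ and $\{e_{\nu_2,j}\}$ of $\sigma_1^{\nu_1}$ and $\sigma_2^{\nu_2}$, expand $v = \sum c_{\nu_1,\nu_2,i,j}\, e_{\nu_1,i} \otimes e_{\nu_2,j}$, and apply the hypothesis together with $\mathcal{S}_d(e_{\nu_i,k}) \ll N_{\sigma_i \nu_i}^d$ to obtain
\begin{equation*}
|\ell(v)| \ll \sum_{\nu_1,\nu_2,i,j} |c_{\nu_1,\nu_2,i,j}|\, N_{\sigma_1 \nu_1}^d N_{\sigma_2 \nu_2}^d.
\end{equation*}
Next, for each fixed $(\nu_1,\nu_2)$ I would apply Cauchy--Schwarz over $(i,j)$ using that there are $\dim(\sigma_1^{\nu_1}) \dim(\sigma_2^{\nu_2})$ terms, bounding the inner sum by $\|v_{\nu_1,\nu_2}\|\cdot (\dim \sigma_1^{\nu_1})^{1/2} (\dim \sigma_2^{\nu_2})^{1/2} N_{\sigma_1 \nu_1}^d N_{\sigma_2 \nu_2}^d$.

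Finally I would apply Cauchy--Schwarz over $(\nu_1,\nu_2)$ against a large negative power of $N_{\sigma_1 \nu_1} N_{\sigma_2 \nu_2}$: for $d_0$ as in Lemma \ref{lem:uniform-bounds-N-sigma-nu} and any fixed $e \geq 0$,
\begin{equation*}
|\ell(v)|^2 \leq \Bigl(\sum_{\nu_1,\nu_2} \|v_{\nu_1,\nu_2}\|^2 (N_{\sigma_1\nu_1} N_{\sigma_2\nu_2})^{2(d+e)+2d_0}\Bigr) \Bigl(\sum_{\nu_1,\nu_2} \dim(\sigma_1^{\nu_1})\dim(\sigma_2^{\nu_2}) (N_{\sigma_1\nu_1} N_{\sigma_2\nu_2})^{-2d_0 - 2e}\Bigr).
\end{equation*}
The first factor is a Sobolev norm $\mathcal{S}_{d'}(v)^2$ with $d' := d + e + d_0$, while the second factor is $\O(1)$ by Lemma \ref{lem:uniform-bounds-N-sigma-nu} applied separately to each of $\sigma_1$ and $\sigma_2$ (taking $e$ large enough so that the exponent dominates).

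The main technical point to be careful about is that \emph{a priori} $v$ is an element of the completed tensor product, so one must first argue that the $K_1 \times K_2$-isotypic expansion converges in a sense strong enough to justify the termwise estimate. In the non-archimedean case this is automatic since smooth vectors are $K$-finite. In the archimedean case, the completed tensor product coincides with the projective (equivalently, injective) tensor product of nuclear Fr\'echet spaces, and the isotypic expansion converges in each $\mathcal{S}_{d''}$, so all manipulations above are justified on the dense subspace of $K_1 \times K_2$-finite vectors and then extended by continuity, yielding the bound \eqref{eqn:ell-v-bounded-by-S-d-prime-v}.
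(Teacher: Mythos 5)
Your argument is correct and is essentially the paper's: both reduce to a single $K_1\times K_2$-isotypic component, expand in an orthonormal basis of pure tensors there, apply the hypothesis together with Cauchy--Schwarz to pick up a factor $\sqrt{\dim(\sigma_1^{\nu_1})\dim(\sigma_2^{\nu_2})}$, and absorb that factor and the sum over types using the uniform trace class bound. The only difference is organizational — the paper quotes the reduction-to-isotypic-vectors lemma of \S\ref{sec:reduct-isotyp-vect} for the final summation over $(\nu_1,\nu_2)$, whereas you inline that step as a second application of Cauchy--Schwarz.
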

\begin{proof}
  By \S\ref{sec:reduct-isotyp-vect},
  we may assume
  that
  $v$ is $K_1 \times K_2$-isotypic,
  say transforming under
  $\nu_1 \otimes \nu_2$
  for some $\nu_j \in K_j$.
  We
  may decompose $v$ with respect
  to an orthonormal basis of its isotypic component as $v = \sum_i v_i$,
  where the number of summands is
  $\dim(\sigma_1^{\nu_1}) \dim(\sigma_2^{\nu_2})$
  and $\mathcal{S}_d(v)^2 = \sum_i \mathcal{S}_d(v_i)^2$.
  By \eqref{eqn:hypothesis-ell-v1-otimes-v2-bound}
  and Cauchy--Schwarz,
  we obtain
  \[
    \ell(v)
    \ll
    \sum_i \mathcal{S}_d(v_i)
    \leq
    \sqrt{\dim(\sigma_1^{\nu_1}) \dim(\sigma_2^{\nu_2})}
    \mathcal{S}_d(v).
  \]
  Choosing $d_0$ large enough that
  $\dim(\sigma_j^{\nu_j}) \leq N_{\sigma_j \nu_j}^{2 d_0}$ and
  taking $d' := d + d_0$ then gives
  \eqref{eqn:ell-v-bounded-by-S-d-prime-v}.
\end{proof}

In particular, a functional $\ell$ on $\sigma_1 \otimes \sigma_2$
satisfies
$\ell(v_1 \otimes v_2) \ll \mathcal{S}(v_1) \mathcal{S}(v_2)$
if and only if it satisfies $\ell(v) \ll \mathcal{S}(v)$.

\subsection{Adelic setting}\label{sec:repr-adel-groups}
Let $F$ be a number field and
$\mathbf{G} \hookrightarrow \mathbf{G} \mathbf{L} _r$  a linear
reductive $F$-group, as in \S\ref{sec:uniformity}.  Take now for
$G$ the set of points of $\mathbf{G}$ over the adele ring
$\mathbb{A}$ of $\mathbf{F}$.  Let $\sigma$ be a representation
of $G$ arising as a restricted tensor product
$\sigma = \otimes_{\mathfrak{p}} \sigma_\mathfrak{p}$, where
each $\sigma_\mathfrak{p}$ arises as in
\S\ref{sec:norms-reps-setting}.  Let
$K = \prod_{\mathfrak{p}} K_\mathfrak{p}$ be a maximal compact
subgroup of $G$, with $K_\mathfrak{p}$ satisfying the
assumptions of \S\ref{sec:norms-reps-setting}.  Each irreducible
representation $\nu$ of $K$ is a tensor product
$\otimes \nu_\mathfrak{p}$ of irreducible representations
$\nu_\mathfrak{p}$ of $K_\mathfrak{p}$.  We define
$N_{\sigma \nu}$ to be the product of the local quantities
$N_{\sigma_\mathfrak{p} \nu_\mathfrak{p}}$ defined in
\S\ref{sec:constr-sobol-norms}, and then define Sobolev norms
$\mathcal{S}_d$ on $\sigma$ by the same formula
\eqref{eq:defn-S-d-of-v-squared} as in the local case.  We
define the operator $\Delta_G$ on $\sigma$ to be the tensor
product $\otimes \Delta_{G_\mathfrak{p}}$ of the operators
$\Delta_{G_\mathfrak{p}}$ on $\sigma_\mathfrak{p}$ defined in
\eqref{eqn:Delta-G-defn} and \eqref{eqn:defn-Delta-nonarch}.

We may similarly adapt the definitions of
\S\ref{sec:norms-schw-spac}
to adelic Schwartz spaces.

As in \cite[\S2]{michel-2009}, many of the local results given
above extend readily to the adelic setting.  Assume that for
each archimedean place $\mathfrak{p}$, $\sigma_\mathfrak{p}$
satisfies the uniformity conditions specified in
\S\ref{sec:uniformity}.  Then the estimate
\eqref{eqn:sobolev-norms-via-C-plus-delta} and the results of
\S\ref{sec:uniform-trace-class}, \S\ref{sec:sobolev-conductor},
\S\ref{sec:reduct-isotyp-vect}, \S\ref{sec:integration-parts}
and \S\ref{sec:reduct-pure-tens} hold.  The reduction to the
local setting is explained in
\cite[\S2.6.3, Proof of (S1d)]{michel-2009}, and follows
ultimately from the divisor bound.

\section{Local preliminaries}
\label{sec-4}
Let $F$ be a local field, and let $\psi$ be a nontrivial unitary
character of $F$.

\subsection{Norms}\label{sec:norms-whit-type-reps}
Let $\sigma$ be a Whittaker type representation of
$G$.
Let $\sigma$ be a Whittaker type representation of $G$.
We obtain from \S\ref{sec:norms-repr} a system of Sobolev norms
$\mathcal{S}_d$ ($d \in \mathbb{R}$), and in particular a norm
$\|.\| = \mathcal{S}_0$ and inner product $\langle , \rangle$.
More precisely:
\begin{itemize}
\item If $\sigma$ is square-integrable, then we regard it as
  coming equipped with an invariant inner product and apply the
  construction of \S\ref{sec:norms-repr} with $M = G$.
\item If $\sigma = \mathcal{I}(\chi)$, then we write
  $\chi = |.|^c \chi_0$ with $\chi_0$ unitary and apply the
  construction of \S\ref{sec:norms-repr} with $M = A$.  Thus for
  $f \in \mathcal{I}(\chi)$ we have
  $\|f\|^2 = \int_K |f|^2$.
  This norm is invariant if $\chi$ is unitary.
\end{itemize}
We choose an orthonormal basis $\mathcal{B}(\sigma)$
for $\sigma$
consisting of $K$-isotypic elements.

\subsection{Whittaker intertwiners and duality}\label{sec:whitt-intertw-dual}
Let $\sigma$ be a Whittaker type
representation of $G$.
We denote by $\mathcal{W}(\sigma,\psi)$ the $\psi$-Whittaker
model, which consists of smooth functions
$W : G \rightarrow \mathbb{C}$ satisfying
$W(n(x) g) = \psi(x) W(g)$.  We similarly define
$\mathcal{W}(\sigma, \bar{\psi})$.  We normalize intertwiners
\[
  \sigma \rightarrow
  \mathcal{W}(\sigma,\psi),
  \quad 
  \sigma \rightarrow
  \mathcal{W}(\sigma,\bar{\psi})
\]
\[
  f \mapsto W_f,
  \quad
  f \mapsto \tilde{W}_f
\]
as follows.
If $\sigma$ is square-integrable,
then we require that
\[
  \|f\|^2 = 
  \int_A |W_f|^2
  =
  \int_A |\tilde{W}_f|^2
\]
(see \cite[\S6.4]{MR748505}
and \cite[\S10.2]{MR1999922}).
The remainder of this section
treats the
case $\sigma = \mathcal{I}(\chi)$.

We define
\begin{equation}\label{eq:W-f-of-g-definition}
  W_f(g) :=
  \int_{x \in F}^{\reg} f(w n(x) g) \psi(-x) \, d x,
  \quad 
  \tilde{W}_f(g) :=
  \int_{x \in F}^{\reg} f(w n(x) g) \psi(x) \, d x.
\end{equation}
The integrals \eqref{eq:W-f-of-g-definition}
converge absolutely for $\Re(\chi)$ sufficiently
large and may be defined in general by
analytic
continuation \cite[Thm 15.4.1]{MR1170566},
or equivalently, by regularization
with respect to the action
of $F^\times$.
For instance, for any
$\kappa \in C_c^\infty(F^\times)$
with $\int_u \kappa(u) \, \frac{d u}{|u|} = 1$,
we have
\begin{equation}\label{eq:whittaker-function-explicated-defn}
  W_f(g)
  =
  \int_{x \in F}
  \left(
    \int_u
    \kappa(u)
    f(w n(x u) g)
    \psi(- x u)
    \, d u
  \right)
  \, d x,
\end{equation}
first for $\Re(\chi)$ sufficiently large, then in general by
analytic continuation with respect to a holomorphic family of
vectors.  The point is that the integral
\eqref{eq:whittaker-function-explicated-defn},
taken in the indicated order, converges
absolutely for all $\chi$.

We note the formula
\begin{equation}\label{eq:W-f-viz-f}
  W_f(y)
  :=
  W_f(a(y))
  = |y|^{1/2}
  \chi^{-1}(y)
  \int_{x \in F}^{\reg}
  f(w n(x)) \psi(- y x) \, d x,
\end{equation}
which follows by a simple rearrangement of
\eqref{eq:W-f-of-g-definition}.
If $\Re(\chi) > -1/2$,
then we have moreover the absolutely convergent
inversion formula
\begin{equation}\label{eq:fourier-inversion-for-whittaker-intertwiner}
  f(w n(x))
  = \int_{y \in F^\times}
  W_f(y)
  |y|^{-1/2} \chi(y)
  \psi(y x)
  \, d y,
\end{equation}
as follows from Fourier inversion on $C_c^\infty(F)$ together
with the absolute convergence of the double integral
$\int_{y \in F^\times} \int_{x \in F} |\int_u \kappa(u) f(w n(x
u)) \psi(- x y u) \, d u| \, d x \, d y$.

We note that if $\chi$ is unitary (i.e., $c = 0$), then
by
\eqref{eqn:local-parseval-f-vs-W-f} and Parseval,
\begin{equation}
  \|f\|^2 = \int_K |f|^2 = \int_{B \backslash G} |f|^2 =
  \int_A |W_f|^2
  =
  \int_A |\tilde{W}_f|^2.
\end{equation}
On the other hand, the unitary structures on complementary
series representations
play no role in this paper.

For general $\chi$,
there is a natural duality between $\mathcal{I}(\chi)$ and
$\mathcal{I}(\chi^{-1})$, given by
$(f,\tilde{f}) \mapsto \int_{B \backslash G} f \tilde{f}$.  Here
we equip $B \backslash G$ with the quotient measure
corresponding to the left Haar on $B$ described by the map
$A \times N \rightarrow B$, $(a,n) \mapsto a n$.  In view of our
choice (\S\ref{sec-4-1}) of Haar measures on $G$ and on $K$, we
may write $\int_{B \backslash G} f \tilde{f}$ as
$\int_{n \in N} f (w n) \tilde{f}(w n)$ or as
$\int_K f \tilde{f}$.
We then have
\begin{equation}\label{eqn:local-parseval-f-vs-W-f}
  \int_K \tilde{f} f =
  \int_{B \backslash G} \tilde{f}f  = \int_{A}^{\reg} \tilde{W}_{\tilde{f}} W_f,
\end{equation}
first when
$-1/2 < \Re(\chi) < 1/2$
(so that the latter integral converges absolutely, see \cite[\S1.9, \S1.10]{MR3889963}),
then in general by analytic continuation along a flat family.

\subsection{Holomorphic families of vectors}\label{sec:families-vectors}
By a holomorphic family of vectors
$f[\chi] \in \mathcal{I}(\chi)$
defined for $\chi$ in some open subset $X$ of $A^\wedge$,
we mean a continuous map
\[
  X \times G \rightarrow \mathbb{C} 
\]
\[
  (\chi,g) \mapsto f[\chi](g)
\]
with the following properties.
\begin{itemize}
\item $f[\chi] \in \mathcal{I}(\chi)$ for each $\chi \in X$.
\item The map $X \ni \chi \mapsto f[\chi](g) \in \mathbb{C}$ is
  holomorphic for each $g \in G$.
\item If $F$ is non-archimedean,
  then for each $\chi_0 \in X$
  there is a compact open subgroup $U$ of $G$ so
  that the vector $f[\chi]$ is $U$-invariant
  for all $\chi$ in some neighborhood of $\chi_0$.
\item If $F$ is archimedean, then for every continuous seminorm
  $\mathcal{N}$ on $C^\infty(K)$, the composition
  $X \ni \chi \mapsto \mathcal{N}(f[\chi])$ is locally bounded;
  equivalently,
  $\chi \mapsto \mathcal{S}_d(f[\chi])$
  is locally bounded for each $d \in \mathbb{R}$
  (\S\ref{sec:norms-repr}).
\end{itemize}
By a flat family of vectors $f[\chi] \in \mathcal{I}(\chi)$ we
mean one for which $f[\chi](g)$ is locally constant in $\chi$
for each $g \in K$.
We note that
any holomorphic family $f[\chi]$ may be written as a sum
$\sum_v c_v(\chi) v[\chi]$ of flat families $v[\chi]$ consisting
of $K$-isotypic vectors with holomorphic coefficients
$c_v(\chi)$.  For $F$ non-archimedean, the sum over $v$ is
finite.  For $F$ archimedean, it converges rapidly in the sense
that if $v[\chi]$ has $K$-type $\nu$, then
$\|v[\chi]\| \ll_{d,\chi} N_{\mathcal{I}(\chi),\nu}^{-d}$ for
any fixed $d$ (see \S\ref{sec:constr-sobol-norms}), locally
uniformly in $\chi$.

\subsection{Standard intertwining operator}
The standard intertwining operator
is the $G$-equivariant map
$M : \mathcal{I}(\chi) \rightarrow \mathcal{I}(\chi^{-1})$ is
given for $\chi$ of large real part by
$M f(g) := \int_{n \in N} f(w n g)$ and in general
by meromorphic continuation.
One may check
(see e.g. \cite[Prop 4.5.9]{MR1431508})
that
\begin{equation}\label{eqn:W-Mf-vs-W-f}
  W_{M f}
  =
  \gamma(\psi, \chi^{-2},1)
  W_f.
\end{equation}

\subsection{Normalized elements
and unramified calculations}\label{sec:normalized-spherical-induced-rep}
Suppose that $F$ is non-archimedean and $\chi$ is unramified.
The \emph{normalized
  spherical element} $f \in \mathcal{I}(\chi)$ is defined to be
the unique $K$-invariant vector with $f(1) = 1$, thus
$f (n(x) a(y) k) = |y|^{1/2} \chi(y)$ for
$x \in F, y \in F^\times, k \in K$.

Suppose $(F,\psi)$ is unramified.
Let $\sigma$ be an unramified
generic irreducible representation.
Then the nonzero $K$-invariant vectors
$W \in \mathcal{W}(\sigma,\psi)$
satisfy $W(1) \neq 0$
\cite[Thm 4.6.5]{MR1431508}.
The \emph{normalized spherical Whittaker function} $W \in
\mathcal{W}(\sigma,\psi)$
is defined to be the unique $K$-invariant vector
with $W(1) = 1$.
Such an element exists
(see \emph{loc. cit.}).
If $W \in \mathcal{W}(\sigma,\psi)$ and
$\tilde{W} \in \mathcal{W}(\sigma,\bar{\psi})$ are normalized
spherical, then
$\int_{A}^{\reg}\tilde{W} W = L(\ad \sigma, 1) / \zeta_F(2)$
(see \cite[Proof of Prop 3.8.1]{MR1431508}).

For any local field $F$
and character $\chi$ of $F^\times$ for which $L(\chi^2,1)$
is finite, we attach to each
$f \in \mathcal{I}(\chi)$ its multiple
\begin{equation}\label{eq:defn-f-asterisk}
  f^* := L(\chi^2,1) f.
\end{equation}
Suppose now that $F$ is non-archimedean
and $\chi$ is unramified,
so that $\mathcal{I}(\chi)$ is unramified.
Let $f \in \mathcal{I}(\chi)$
and $\tilde{f} \in \mathcal{I}(\chi^{-1})$
denote the normalized spherical elements.
Then:
\begin{itemize}
\item For unramified $\psi$, the Whittaker function $W_{{f}^*}$
  and $\tilde{W}_{\tilde{f}^*}$ are normalized spherical (i.e.,
  $f(1) = 1 \implies W_{f^*}(1) = 1$).  (When
  $\mathcal{I}(\chi)$ is reducible, we define $W_{f^*}$ and
  $\tilde{W}_{\tilde{f}^*}$ by analytic continuation.)
\item $M ( f^*) = (\tilde{f} ) ^*$;
\item
  $\int_{B \backslash G} \tilde{f} f = \zeta_F(1)/\zeta_F(2)$.
\end{itemize}
For proofs of these facts we refer
respectively to \cite[Thm 4.6.5]{MR1431508},
\cite[Prop 4.6.7]{MR1431508}
and \cite[\S3.1.6]{michel-2009}.

\subsection{Hecke integrals}
\label{sec-4-3}
\label{sec:local-hecke-basic}
We recall
some special cases
of the results of \cite{MR701565,MR0401654}.
Let $\sigma$ be a Whittaker type representation of $G$, and let
$W \in \mathcal{W}(\sigma,\psi)$.  For almost all $\omega \in A^\wedge$, the
\emph{local Hecke integral} $\int_{A}^{\reg} W \omega$ is
defined, and meromorphic in $\chi$.  The integral converges for
$\Re(\omega)$ large enough, and the ratio
\begin{equation}\label{eqn:local-hecke-ratio}
  \frac{\int_A^{\reg} W \omega }{ L(\sigma \otimes \omega,1/2)/\zeta_F(1)}
\end{equation}
extends to a holomorphic function of $\omega$.

For a character $\omega$ of $A$, we denote by
$\mathbb{C}(\omega)$ the corresponding one-dimensional
representation.  For any representation $V$ of $A$, we use the
map $v \otimes 1 \mapsto v$ to identify
$V \otimes \mathbb{C}(\omega)$ with $V$, but equipped with the
twisted $A$-action.  When $\omega = |.|^s$, we write simply
$\mathbb{C}(s)$.  (We will never use potentially ambiguous
notation such as $\mathbb{C}(1)$.)
Thus
after choosing an identification
$\sigma \cong \mathcal{W}(\sigma,\psi)$,
the (normalized) Hecke integral
defines an $A$-invariant functional
\[
  \sigma \otimes \mathbb{C}(\omega) \rightarrow \mathbb{C}.
\]

If $(F,\psi)$ and $\sigma$ are unramified and
$W \in \mathcal{W}(\sigma,\psi)$ is the normalized spherical
vector, then the ratio
\eqref{eqn:local-hecke-ratio}
vanishes unless $\omega$ is
unramified, in which case
it evaluates to $1$
(see \cite[Prop 3.5.3]{MR1431508} and
\eqref{eq:volume-of-unit-group}).

\subsection{Rankin--Selberg integrals}
\label{sec-4-5}
\label{sec:local-rs-basic}
Let $\sigma_1, \sigma_2$ be Whittaker type representations
of $G$.  Let $\chi$ be a character of $A$.  Let
$W_1 \in \mathcal{W}(\sigma_1, \psi), W_2 \in
\mathcal{W}(\sigma_2, \bar{\psi})$ and
$f \in \mathcal{I}(\chi)$.  If $\Re(\chi)$ is large enough, then
the \emph{local Rankin--Selberg integral}
$\int_{N \backslash G} W_1 W_2 f$ converges absolutely.  It
extends meromorphically.  We denote that extension by
$\int_{N \backslash G}^{\reg} W_1 W_2 f$.
The notation reflects that the extension may be defined
equivalently by regularized integration on the quotient
$N \backslash G$, regarded as a left $A$-space,
as follows from lemma \ref{lem:unif-polyn-type-hecke}
and \eqref{eqn:regularized-integral-via-mellin-analysis}.
Letting $f \in \mathcal{I}(\chi)$ vary over a
holomorphic family, the ratio
\begin{equation}\label{eqn:normalized-local-RS}
  \frac{\int_{N \backslash G}^{\reg} W_1 W_2 f^* }{ L(\sigma_1 \otimes
    \sigma_2 \otimes \chi,1/2)/\zeta_F(2)}
\end{equation}
extends to a holomorphic function of $\chi$.
To deduce this last assertion from the cited references,
we use
(see the proof of \cite[Lemma, p6]{MR871663})
that any holomorphic
family $f[\chi] \in \mathcal{I}(\chi)$
is locally of the form
$f[\chi] = f_{\chi,\Phi}$ for some
$\Phi \in \mathcal{S}(F^2)$,
where $f_{\chi,\Phi}$
is defined by the local Tate integral
\begin{equation}\label{eq:godement-parametrization-induced-rep}
  f_{\chi,\Phi}(g)
  :=
  \frac{|\det g|^{1/2} \chi(\det g)}{L(\chi^2,1)}
  \int_{r \in F^\times}^{\reg}
  \Phi(r e_2 g)
  |r|  \chi^2(r) 
  \, \frac{d r}{|r|}.
\end{equation}

For each $\chi$, we may find $f$ so that the ratio
\eqref{eqn:normalized-local-RS} (defined in general by analytic
continuation) is nonzero.  Thus
when $\sigma_1, \sigma_2$ and $\mathcal{I}(\chi)$ are irreducible,
\eqref{eqn:normalized-local-RS} defines a basis element for the
one-dimensional space of $G$-invariant trilinear functionals on
$\sigma_1 \otimes \sigma_2 \otimes \mathcal{I}(\chi)$ (see
\cite{MR1198305, MR1810211}).

If $(F,\psi),\sigma_1, \sigma_2$ and $\chi$ are unramified and
$W_1, W_2,f$ are normalized spherical, then the ratio
\eqref{eqn:normalized-local-RS} evaluates to $1$ (see \cite[Prop
3.8.1]{MR1431508}, \cite[\S3.1.6]{michel-2009} and
\eqref{eq:volume-of-unit-group}).

\subsection{Uniform polynomial-type
  estimates}\label{sec:local-hecke-rs-continuity}
We record here some crude polynomial estimates for local Tate,
Hecke and Rankin--Selberg integrals, as well as some related
estimates for Whittaker functions, that will be used later to
verify some technical assertions (concerning convergence,
continuity, etc.)  in the archimedean case and to verify the
polynomial dependence of our estimates upon auxiliary parameters
(e.g., upon $\sigma$ in Theorem \ref{thm:CI}).

Recall
(\S\ref{sec:norms-schw-spac})
that we have equipped
the Schwartz space $\mathcal{S}(F)$
with Sobolev norms $\mathcal{S}_d$
indexed by real numbers $d$,
and that $\mathcal{S}$ denotes a Sobolev norm of some fixed
index.
\begin{lemma}\label{lem:crude-tate-integral-estimate}
  Let $\chi$ be a character
  of $F^\times$.
  Suppose that the real part of $\chi$ is
  $\O(1)$
  and that $\chi$ is at least some fixed positive
  distance away from any pole of $L(\chi,0)$.
  Then for each fixed real number $d$
  and any $\phi \in \mathcal{S}(F)$,
  we have the
  local Tate integral
  estimate
  \begin{equation}
    \int_{x \in F^\times}^{\reg} \phi(x) \chi(x) \, \frac{d x}{|x|}
    \ll \mathcal{S}(\phi) C(\chi)^{-d}.
  \end{equation}
\end{lemma}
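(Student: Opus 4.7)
The plan is to identify the regularized integral with the local Tate zeta integral $Z(\phi,\chi) := \int_{F^\times}^{\reg} \phi(x) \chi(x) \, \frac{dx}{|x|}$, whose meromorphic continuation in $\chi$ has poles only where $L(\chi,0)$ does. Since by hypothesis $\chi$ stays a fixed positive distance away from all such poles, $Z(\phi,\chi)$ is a bounded holomorphic function of $\chi$ in the region of interest, and the task reduces to a direct analytic bound with no residue bookkeeping. A preliminary reduction writes $\phi = \phi_0 + \phi_\infty$ with $\phi_\infty$ vanishing near $0$ (Schwartz decay makes the $\phi_\infty$ piece absolutely convergent, bounded by $\mathcal{S}(\phi)$ uniformly in $\chi$ since $\Re(\chi) = \O(1)$) and $\phi_0$ supported in a small neighborhood of $0$; for $\phi_0$, subtracting a Taylor approximation at $0$ to any fixed order shifts the remainder into a range of absolute convergence, while the subtracted monomials integrate to expressions of the form $L(\chi |.|^k,0)$ that are $\O(C(\chi)^{\O(1)})$ away from poles.

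The $C(\chi)^{-d}$ gain comes from ``integration by parts'' against the $F^\times$-action, exploiting that $\chi$ is a joint eigenfunction of invariant differential/difference operators with eigenvalue of size $\asymp C(\chi)$. In the archimedean case, the relevant operators are $D := x \partial_x$ (for $F = \mathbb{R}$) and $D$ together with angular differentiation $\partial_\theta$ (for $F = \mathbb{C}$); each acts on $\chi$ by a scalar of size $\asymp C(\chi)$. Transferring $D$ onto $\phi$ using invariance of $\frac{dx}{|x|}$, each application costs one factor of $\mathcal{S}(\phi)$ (since $D$ is a linear combination of the operators $M^\alpha \partial^\beta$ of \S\ref{sec:norms-schw-spac}) and gains a factor of $C(\chi)^{-1}$; iterating $d$ times yields the claim. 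In the non-archimedean case, one argues instead by orthogonality: expand $\phi$ in the canonical basis $\phi_{x,\xi}$ of \S\ref{sec:norms-schw-spac}, so $\mathcal{S}(\phi)^2 \asymp \sum \max(1, |x_1|, \dotsc, |\xi_r|)^{\O(1)} |c(x,\xi)|^2$; each $Z(\phi_{x,\xi},\chi)$ evaluates to a Gauss-sum--type expression that vanishes once $C(\chi)$ exceeds the joint additive-multiplicative level of $\phi_{x,\xi}$, and is bounded trivially otherwise. Comparing the level to $C(\chi)$ then gives the stated polynomial decay.

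The main obstacle is achieving the required uniformity as the local field $F$ varies over the local components of the fixed global field $\mathbf{F}$ of \S\ref{sec:uniformity}, with implied constants independent of $F$. This is precisely what the $q$-compatible normalization of the Sobolev norms $\mathcal{S}_d$ on $\mathcal{S}(F)$ in \S\ref{sec:norms-schw-spac} is designed to guarantee: the generators of the $F^\times$-action used in the integration-by-parts argument, and the basis vectors $\phi_{x,\xi}$ used in the orthogonality argument, scale with $q$ in exactly the way needed so that the constants depend only on $d$ and on the absolute degree of $F$. The hypothesis that $\chi$ is bounded away from poles of $L(\chi,0)$ is essential to absorb the finitely many $L$-factor contributions from the Taylor subtraction in paragraph one into the final $C(\chi)^{-d}$ estimate.
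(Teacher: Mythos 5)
Your proof is correct, but it takes a genuinely different route from the paper's. The paper first establishes the bound in the absolutely convergent range $\Re(\chi) \geq 2$ — using exactly the two mechanisms you describe, namely partial integration plus the Sobolev lemma in the archimedean case and the explicit Gauss-sum analysis of the basis functions $\phi_{x,\xi}$ in the non-archimedean case — then transfers it to $\Re(\chi)$ sufficiently negative via the Tate local functional equation \eqref{eqn:Tate-local-func-eqn} together with the Stirling-type estimate \eqref{eq:stirling-for-general-RS-gamma} for $\gamma(\psi,\chi,s)$, and finally interpolates across the intermediate strip by Phragm\'en--Lindel\"of. You instead work directly on the whole strip $\Re(\chi)=\O(1)$: the Taylor subtraction at $0$ (archimedean) and the explicit continuation of the geometric series (non-archimedean) realize the regularized integral concretely, and the hypothesis on the distance to the poles of $L(\chi,0)$ enters exactly where those poles appear as the denominators $1/(s+k)$, resp.\ $1/(1-\chi(\varpi))$, coming from the subtracted terms. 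Your route avoids the functional equation, Stirling, and Phragm\'en--Lindel\"of entirely and makes the role of the pole hypothesis transparent; the price is somewhat more delicate bookkeeping at the origin (which monomials survive depends on parity/angular matching with $\chi$, and over $\mathbb{C}$ one needs the two-variable expansion in $z,\bar z$), whereas the paper's convexity route reuses machinery it needs anyway for Lemmas \ref{lem:unif-polyn-type-hecke} and \ref{lem:polyn-bound-rs}.

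Two small points to tighten, neither a genuine gap. Over $\mathbb{C}$ it is not true that the radial and angular operators \emph{each} act on $\chi = (z/|z|)^m |z|_{\mathbb{C}}^{s}$ by a scalar of size $\asymp C(\chi)$ (either $s$ or $m$ may vanish); one integrates by parts with whichever operator has the larger eigenvalue, which is $\gg C(\chi)^{1/2}$, and iterates twice as often. Second, the integration-by-parts identity for the \emph{regularized} integral in the non-convergent range should be justified by meromorphic continuation from $\Re(\chi)>0$ (or applied only to the Taylor remainder, which vanishes to high order at $0$ so no boundary terms arise); note also that it is the smoothness of the cutoff in the monomial terms $c_k \int x^k \eta(x) \chi(x)\,dx/|x|$ that yields rapid decay in $C(\chi)$ — a sharp cutoff would give only a single power.
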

\begin{proof}
  Suppose first that $\Re(\chi) \geq 2$.
  Then the indicated integral converges
  absolutely, and the required estimate follows readily: In the
  archimedean case, we appeal to the Sobolev lemma and partial
  integration.  In the non-archimedean case, we may assume
  as in \S\ref{sec:reduct-isotyp-vect}
  that
  $\phi(y) = 1_{x + \mathfrak{o}}(y) \psi_0(\xi y)$ for some
  unramified character $\psi_0$ of $F$
  and
  $x,\xi \in F$.
  Then $\mathcal{S}_{d'}(\phi) \asymp \max(1,|x|,|\xi|)^{d'}$.
  The integral $\int_{x \in F^\times} \phi(x) \chi(x) \,
  \frac{d x}{|x|}$
  vanishes unless one of the following
  conditions holds,
  in which case its magnitude is $\O(1)$:
  \begin{itemize}
  \item $x \in \mathfrak{o}, \xi \in \mathfrak{o}$,
    $C(\chi) = 1$.
  \item $x \in \mathfrak{o}, \xi \notin \mathfrak{o}$,
    $C(\chi) = 1, |\xi| \leq q$.
  \item $x \in \mathfrak{o}, \xi \notin \mathfrak{o},
    C(\chi) > 1, |\xi| = C(\chi)$.
  \item
    $x \notin \mathfrak{o}$,
    $|x| \max(1,|\xi|) \geq C(\chi)$.
  \end{itemize}
  In each case the required estimate follows.
  
  The required estimate follows then for $\Re(s) \leq -1$ by the
  Tate local functional equation
  \begin{equation}\label{eqn:Tate-local-func-eqn}
    \int_{x \in F^\times}^{\reg}
    \phi(x)
    \chi(x) |x|^s \, \frac{d x}{|x|}
    = 
    \frac{\int_{x \in F^\times}^{\reg}
      (\int_{y \in F} \phi(y) \psi(-y x) \, d y)
      \chi^{-1}(x) |x|^{1-s} \, \frac{d x}{|x|}}{  \gamma(\psi,\chi,s)}
  \end{equation}
  and the
  Stirling consequence \eqref{eq:stirling-for-general-RS-gamma}, and
  finally for general $s$ by Phragmen--Lindel{\"o}f.
\end{proof}

\begin{lemma}\label{lem:unif-polyn-type-hecke}
  Let $\sigma$ be a Whittaker type representation of $G$.
  Assume $\sigma$ is $\vartheta$-tempered
  for some $\vartheta = \O(1)$.
  For each
  $f \in \sigma$,
  we have the following estimates:
  \begin{enumerate}[(i)]
  \item Let $\omega \in A^\wedge$
    be
    such that $\Re(\omega) = \O(1)$
    and the distance between $\omega$
    and each pole of $L(\sigma \otimes \omega,1/2)$
    is $\gg 1$.  Then for each fixed $d$,
    \begin{equation}\label{eqn:upper-bound-Mellin-transform-W}
      \int_A^{\reg} W_f \omega \ll
      \mathcal{S}(f)
      C(\omega)^{-d} q^{\O(1)}.
    \end{equation}
  \item
    For $|y| \geq 1$ and fixed $d$,
    \begin{equation}\label{eqn:estimate-W-near-infinity}
      W_f(y) \ll \mathcal{S}(f) |y|^{-d} q^{\O(1)}.
    \end{equation}
  \item
    For $|y| \leq 1$,
    \begin{equation}\label{eq:estimate-W-near-0-with-eps}
      W_f(y) \ll \mathcal{S}(f)
      |y|^{1/2-\vartheta-\eps} q^{\O(1)}.
    \end{equation}
  \item
    There is a finite function
    $\phi$ on $F^\times$
    so that for $|y| \leq 1$ and fixed $d$,
    \begin{equation}\label{eqn:estimate-W-near-zero}
      W_f(y) - \phi(y) \ll \mathcal{S}(W_f) |y|^{d} q^{\O(1)}.
    \end{equation}
    Moreover, we may choose $\phi$ so that:
    \begin{enumerate}[(a)]
    \item The dimension of the span of the $A$-translates
      of $\phi$ is $\O(1)$.
    \item We may write
      $\phi(y) = \sum c_i \chi_i(y) (\log |y|)^{m_i}$,
      corresponding to the principal part of
      $\int_A^{\reg} W_f \omega$ for $\Re(\omega) > -d$.  Thus the
      $\chi_i \in A^\wedge$ are poles of
      $L(\sigma \otimes \chi_i^{-1}, 1/2)$, the $m_i$ are
      nonnegative integers with $m_i + 1$ bounded by the
      multiplicity of the corresponding pole, and the
      coefficients $c_i$ depend linearly upon $W$.
    \item If $\sigma = \mathcal{I}(\chi)$ and we take
      $f = f[\chi]$ for some holomorphic family
      $f[\chi] \in \mathcal{I}(\chi)$ defined for $\chi$ in a
      small open subset of $A^\wedge$, then
      $\phi = \phi_{f[\chi]}$ varies pointwise holomorphically
      with $\chi$.
    \end{enumerate}
  \end{enumerate}
\end{lemma}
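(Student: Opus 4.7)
The plan is to reduce everything to the principal series case $\sigma = \mathcal{I}(\chi)$ with $|\Re(\chi)| \leq \vartheta$, since the essentially square-integrable case admits an analogous but simpler treatment (the Whittaker function being an absolutely convergent integral with rapid decay at infinity and asymptotics at zero governed by the exponents of $\sigma$). Writing $g(x) := f(wn(x))$, the formula \eqref{eq:W-f-viz-f} yields $W_f(y) = |y|^{1/2} \chi^{-1}(y) \hat{g}(y)$, where $\hat{g}(y) := \int_F^{\reg} g(x) \psi(-yx) \, dx$. The identity $w n(x) = n(-1/x) a(1/x^2) n'(1/x)$ combined with \eqref{eqn:transformation-law-I-of-0} shows that, near infinity, $g(x)$ equals $|x|^{-1 - 2 \Re(\chi)} \chi_0^{-2}(x)$ times a smooth bounded function of $1/x$; thus $g$ lies in a weighted Schwartz-type space whose norms are controlled by $\mathcal{S}(f)$.

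With this setup, the bound at zero (iii) follows by estimating $\hat{g}(y)$ uniformly in $y$ by $\mathcal{S}(f) q^{\O(1)}$, via an argument analogous to Lemma \ref{lem:crude-tate-integral-estimate} (Sobolev embedding in the archimedean case; case analysis on conductors in the non-archimedean case), and combining with $|\chi^{-1}(y)| \leq |y|^{-\vartheta}$ for $|y| \leq 1$. The decay at infinity (ii) follows by integration by parts in $\hat{g}(y)$: in the archimedean case, each derivative $\partial_x$ acting on $g$ pulls out a factor of $y$ from $\psi(-yx)$ while preserving the decay of the integrand, so iterating yields $\hat{g}(y) \ll \mathcal{S}(f) |y|^{-d}$; in the non-archimedean case, the invariance of $f$ under a small open subgroup (quantified by $\mathcal{S}(f)$) forces $\hat{g}(y)$ to vanish once $|y|$ exceeds a controlled bound.

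For the Mellin transform bound (i), the plan is to apply the Tate-type local functional equation together with \eqref{eqn:W-Mf-vs-W-f}, which relates $W_f$ and $W_{M f}$. In the right half-plane $\Re(\omega) \gg 0$, the integral $\int_A^{\reg} W_f \omega$ converges absolutely and is directly bounded by $\mathcal{S}(f) q^{\O(1)}$; the analogous bound for $\Re(\omega) \ll 0$ follows by applying the same reasoning to $M f$, with the gamma-factor ratio controlled by \eqref{eq:stirling-for-general-RS-gamma}, and Phragmen--Lindel\"of then interpolates throughout to give the required polynomial decay in $C(\omega)$. The finite-function asymptotic (iv) then follows by Mellin inversion: decomposing $A^\wedge$ into connected components $\chi_0 |.|^{i \mathbb{R}}$, one writes $W_f(y)$ as a sum over $\chi_0$ of contour integrals along $\Re(s) = c$ for $c$ large, then shifts the contour to $\Re(s) = -d$, picking up residues at the uniformly finitely many poles of $\int_A^{\reg} W_f \omega$, located at the poles of $L(\sigma \otimes \omega, 1/2)$. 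Each pole of order $k$ at $\omega = \chi_i$ contributes terms of the form $\chi_i(y) (\log|y|)^m$ with $m < k$, furnishing $\phi$; the shifted integral is $\O(\mathcal{S}(f) |y|^d q^{\O(1)})$ by (i). The linearity and pointwise holomorphic dependence on $\chi$ of the coefficients in (b) and (c) follow from the residue calculation, combined with the pointwise holomorphy of $\chi \mapsto f[\chi](w n(x))$.

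The main obstacle is assertion (i): obtaining polynomial decay in $C(\omega)$ with implied constants uniform across the local field and the representation $\sigma$. The Stirling-type estimate \eqref{eq:stirling-for-general-RS-gamma} and the Phragmen--Lindel\"of interpolation both need careful handling on vertical strips, with the non-archimedean case requiring a decomposition into connected components of $A^\wedge$ so that the interpolation makes sense component-by-component, and vigilance about conductor ramification of $\omega$ relative to $\chi$.
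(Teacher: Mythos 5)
Your overall architecture matches the paper's: pointwise decay of $W_f$ via partial integration of the regularized intertwining integral, the local functional equation plus the Stirling estimate \eqref{eq:stirling-for-general-RS-gamma} to transfer from large positive to large negative $\Re(\omega)$, Phragm\'en--Lindel\"of to interpolate, and Mellin inversion with a contour shift to produce the finite function $\phi$ in (iv). The only substantive difference in ordering is that you prove (ii) and (iii) directly and feed them into (i), whereas the paper establishes (i) first and then derives (ii)--(iv) from it by Mellin expansion and Cauchy's theorem; since the paper's proof of (i) itself bottoms out in the same pointwise decay estimate, this is a cosmetic rearrangement.

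There is, however, one genuine gap: the source of the factor $C(\omega)^{-d}$ in \eqref{eqn:upper-bound-Mellin-transform-W}. On a vertical line $\Re(\omega) = c \gg 0$ the absolute convergence argument gives only $\int_A W_f\,\omega \ll \mathcal{S}(f)\,q^{\O(1)}$, uniformly in $\omega$ but with \emph{no} decay in $C(\omega)$; and Phragm\'en--Lindel\"of can only propagate bounds already present on the boundary lines -- it cannot manufacture decay that is absent there. The decay must come from playing the oscillation/ramification of $\omega$ against the smoothness of $W_f$, i.e.\ from integration by parts with respect to $\omega$ in the sense of \S\ref{sec:integration-parts}: one uses the $A$-equivariance of the Hecke functional to write $\int_A W_f\,\omega = \int_A \bigl((C+\Delta_A)^b W_f\bigr)\,(C+\Delta_A)^{-b}\omega$ and absorbs the gain $C(\omega)^{-2b}$ from the one-dimensional factor. (Concretely: over $\mathbb{R}$ one integrates by parts in $\log|y|$ and in the sign character; over $\mathbb{C}$ also in the angular variable; non-archimedean, one uses that $\int_{|y|=q^j} W_f\,\omega$ vanishes unless the restriction of $W_f$ to that annulus has a Fourier component matching $\omega^{-1}|_{\mathfrak{o}^\times}$, which is controlled by the level of $W_f$.) You correctly flag the $C(\omega)$-decay as the main difficulty, but the mechanism you name for it is the wrong one; inserting the integration-by-parts step before invoking Phragm\'en--Lindel\"of closes the gap. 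A secondary point worth making explicit in (ii): when you differentiate $g(x) = f(wn(x)) \sim |x|^{-1-2\Re(\chi)}\chi_0^{-2}(x)\cdot(\text{smooth in }1/x)$ in the archimedean case, each derivative of $\chi_0^{-2}$ costs a factor of size $C(\chi)$, so you need the comparison $\log(1+C(\chi)) \asymp \log(1+C'(\sigma))$ of Lemma \ref{lem:compare-conductors} to absorb these losses into $\mathcal{S}(f)$; your phrase ``norms controlled by $\mathcal{S}(f)$'' presupposes this but it should be cited.
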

\begin{proof}
  We loosely follow an argument of Jacquet (see \cite[Thm 2.3,
  \S12.2]{MR2533003} and also \cite[\S3.2.3]{michel-2009}).  The
  estimates \eqref{eqn:estimate-W-near-infinity},
  \eqref{eq:estimate-W-near-0-with-eps} and
  \eqref{eqn:estimate-W-near-zero}, together with the noted
  refinement of \eqref{eqn:upper-bound-Mellin-transform-W} and
  the noted properties of $\phi$, follow readily from
  \eqref{eqn:upper-bound-Mellin-transform-W}, Mellin expansion
  and Cauchy's theorem.  By Phragmen--Lindel{\"o}f, it suffices
  to establish \eqref{eqn:upper-bound-Mellin-transform-W} when
  $\Re(\omega)$ is sufficiently positive or negative in terms of
  $\vartheta$.  Recall the local functional equation (see
  \S\ref{sec:local-gamma-factors}):
  \[
    \gamma(\psi,\sigma
    \otimes \omega,1/2)
    \int^{\reg}_A W \omega
    =
    \int^{\reg}_A w W |.| \omega^{-1}.
  \]
  For $\Re(\omega)$ sufficiently positive in terms of
  $\vartheta$, the the Stirling asymptotics
  \eqref{eq:stirling-for-general-RS-gamma} imply
  that $\gamma(\psi,\sigma \otimes \omega,1/2) \ll q^{\O(1)}$.  We
  thereby reduce to establishing
  \eqref{eqn:upper-bound-Mellin-transform-W} (both for $W$ and
  its Weyl translate $w W$)
  when $\Re(\omega)$ is sufficiently positive.
  In that case, $\int_A W \omega$
  converges absolutely (as follows either from standard
  estimates for individual $W$ or from the uniform estimates
  verified below).  Using the $A$-equivariance of the Hecke
  integral
  and integration by parts with respect to $\omega$
  (\S\ref{sec:integration-parts}), we may reduce further to establishing that
  \begin{equation}\label{eqn:very-weak-bound-for-Hecke-integral-uniform-0}
    \int_A W \omega \ll  \mathcal{S}(W)
    C(\omega)^{\O(1)} q^{\O(1)}.
  \end{equation}
  Since $\Re(\omega)$ is large enough in terms
  of $\vartheta$ but of size $\O(1)$,
  we reduce further to verifying for each fixed $d$ that
  \begin{equation}\label{eqn:very-weak-bound-Whittaker-upper-bound-all-over}
    W(y)
    \ll
    \mathcal{S}(W)
    |y|^{-d} q^{\O(1)}.
  \end{equation}
  If $\sigma$ is square-integrable, the required
  estimate
  \eqref{eqn:very-weak-bound-Whittaker-upper-bound-all-over}
  follows (in stronger form) from \cite[\S3.2.3]{michel-2009}.
  We may thus suppose that $\sigma = \mathcal{I}(\chi)$,
  so that $W = W_f$ for some $f \in \mathcal{I}(\chi)$.
  
  Define $\kappa \in C_c^\infty(F^\times)$
  in the non-archimedean case as the normalized characteristic
  function of $\mathfrak{o}^\times$
  and in the archimedean case as a fixed bump function,
  with the normalization in either case so that
  $\int \kappa(u) \, \frac{d u }{|u|} = 1$.
  We may then express $W_f$
  as the absolutely-convergent integral
  \begin{equation}\label{eqn:W-f-actual-defn-via-convolution}
    W_f(y)
    =
    |y|^{1/2} \chi^{-1}(y)
    \int_{x \in F}
    I_f(x,y)
    \, d x,
  \end{equation}
  where
  \begin{equation}
    I_f(x,y) :=
    \int_{u \in F^\times}
    \kappa(u)
    f(w n(x u)) \psi(- y x u)
    \, \frac{d u}{|u|}.
  \end{equation}
  We thereby reduce to verifying that
  \begin{equation}\label{eqn:estimate-I-f-small-x}
    \int_{x : |x| \leq 1}
    I_f(x,y) \, d x
    \ll |y|^{-d}
    \mathcal{S}(f) q^{\O(1)},
  \end{equation}
  and that
  \begin{equation}\label{eqn:estimate-I-f-large-x}
    I_f(x,y)
    \ll |x y|^{-d}
    \mathcal{S}(f) q^{\O(1)}
    \text{ for } |x| \geq 1.
  \end{equation}

  To that end, we first verify some pointwise estimates for $f$.
  We may assume that $f$ is a unit vector in $\sigma^{\nu}$ for
  some $\nu \in K^\wedge$.  Let us say that a quantity is
  \emph{bounded polynomially} if it is of the form
  $N_{\sigma \nu}^{\O(1)} q^{\O(1)}$.  By the Sobolev lemma on
  $K$, we see that the $L^\infty$-norms on $K$ of $f$ and any
  archimedean derivatives of fixed degree are bounded polynomially.
  (In the non-archimedean case,
  we use that $f$ is invariant by $K[n]$
  with $N_{\sigma \nu} = q^n$
  to pass from an $L^2$ bound to an $L^\infty$ bound.)
  By lemma
  \ref{lem:compare-conductors}, $C(\chi)$ is
  bounded polynomially.  By the definition of
  $\mathcal{I}(\chi)$, it follows that the $L^\infty$-norms on
  $\{x : |x| \leq 10\}$ of the functions
  $x \mapsto f(w n(x)), x \mapsto f(n'(x))$ and any archimedean
  derivatives of fixed degree are bounded polynomially.
  
  The estimate \eqref{eqn:estimate-I-f-small-x}
  follows by partial integration.
  (In the archimedean case,
  this carries the usual meaning.
  In the non-archimedean case, we use that $x \mapsto f(w n(x))$ is invariant
  under translation by $\mathfrak{p}^n$ with $N_{\sigma \nu} =
  q^n$
  to see that
  $\int_{x : |x| \leq 1}
  I_f(x,y) \, d x$ vanishes unless $|y| \ll N_{\sigma \nu}$,
  in which case it is bounded polynomially.)

  The estimate \eqref{eqn:estimate-I-f-large-x}
  follows similarly,
  by partial integration
  and the identity
  \begin{equation}\label{eqn:identity-f-w-n-x-large-x-useful}
    f(w n(x)) = |x|^{-1} \chi^{-2}(x) f(n'(x)).  
  \end{equation}
\end{proof}

\begin{lemma}\label{lem:polyn-bound-rs}
  Let $\sigma_1, \sigma_2$
  be Whittaker type representations
  of $G$.
  Let $\chi$ be a character
  of $A$.
  Assume that $\sigma_1, \sigma_2$ are $\vartheta$-tempered
  for some $\vartheta = \O(1)$.
  Assume that $\chi$ has real part $\O(1)$ and is some fixed
  positive distance away from the poles of
  $L(\sigma_1 \otimes \sigma_2 \otimes \chi, 1/2)$.
  Then for $f_1 \in \sigma_1, f_2 \in \sigma_2, f_3 \in \mathcal{I}(\chi)$,
  \begin{equation}\label{eq:sobolev-bound-RS}
    \int_{N \backslash G}^{\reg} W_{f_1} \tilde{W}_{f_2} f_3 \ll
    \mathcal{S}(f_1) \mathcal{S}(f_2) \mathcal{S}(f_3) q^{\O(1)}.
  \end{equation}
\end{lemma}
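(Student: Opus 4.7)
The plan is to reduce the Rankin--Selberg integral to a one-variable integral over $A$ by Iwasawa, bound it directly in a region where it converges absolutely, extend via the local Rankin--Selberg functional equation, and interpolate by Phragm{\'e}n--Lindel{\"o}f. The approach parallels that of Lemma \ref{lem:unif-polyn-type-hecke} itself.

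First, I would unfold using the Iwasawa decomposition $G = NAK$ and the measure conventions of \S\ref{sec-4-1}:
\[
\int_{N \backslash G}^{\reg} W_{f_1} \tilde{W}_{f_2} f_3 = \int_K f_3(k) \int_A^{\reg} W_{\pi(k) f_1}(y) \tilde{W}_{\pi(k) f_2}(y) |y|^{1/2} \chi(y) \, \frac{dy}{|y|}\, dk.
\]
The Sobolev norms on $\sigma_i$ and on $\mathcal{I}(\chi)$ are $K$-invariant, so $\mathcal{S}(\pi(k) f_i) = \mathcal{S}(f_i)$, and a Sobolev embedding gives $\sup_{k \in K} |f_3(k)| \ll \mathcal{S}(f_3)$. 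It therefore suffices to bound the inner $A$-integral uniformly in $k$ by $\mathcal{S}(f_1) \mathcal{S}(f_2) q^{O(1)}$.

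Second, when $\Re(\chi)$ is sufficiently large in terms of $\vartheta$, the inner integral converges absolutely: combining the bounds \eqref{eqn:estimate-W-near-infinity} and \eqref{eq:estimate-W-near-0-with-eps} for both Whittaker functions yields an integrable majorant of the required form, directly giving the polynomial bound. To extend this to sufficiently negative $\Re(\chi)$, I would apply the local Rankin--Selberg functional equation of \cite{MR701565, MR0401654}: replacing $f_3$ by $M f_3 \in \mathcal{I}(\chi^{-1})$ multiplies the integral by $\gamma(\psi, \sigma_1 \otimes \sigma_2 \otimes \chi, 1/2)$. Since $\chi^{-1}$ then has sufficiently positive real part, the previous step applies to $M f_3$; combining with the Stirling-type estimate \eqref{eq:stirling-for-general-RS-gamma} for $\gamma$ and an estimate $\mathcal{S}(M f_3) \ll \mathcal{S}(f_3) q^{O(1)}$ on regions bounded away from the poles of $M$ (derivable from the compact (Iwasawa) picture of $\mathcal{I}(\chi)$) reduces the second case to the first.

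Third, to interpolate across the intermediate strip $|\Re(\chi)| = O(1)$, I would extend $f_3$ to a flat holomorphic family $f_3[\chi] \in \mathcal{I}(\chi)$ as in \S\ref{sec:families-vectors} and apply Phragm{\'e}n--Lindel{\"o}f to the meromorphic function $I(\chi) := \int_{N \backslash G}^{\reg} W_{f_1} \tilde{W}_{f_2} f_3[\chi]$ along a one-parameter subfamily $\chi |.|^s$ of characters. By the discussion at \eqref{eqn:normalized-local-RS}, the quotient $I(\chi)/L(\sigma_1 \otimes \sigma_2 \otimes \chi, 1/2)$ is holomorphic in $\chi$; the hypothesis that $\chi$ is bounded away from the poles of this L-factor means the latter is polynomially bounded above and below in our region (via Stirling in the archimedean case, by direct inspection otherwise), so Phragm{\'e}n--Lindel{\"o}f interpolates the bounds from Steps 2 and 3 across the strip. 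The main technical obstacle is uniformity: one must carefully track polynomial dependence on $\mathcal{S}(f_i)$, on $q^{O(1)}$, and on the fixed distance to the poles of the L-factor through the intertwiner $M$ and through every Sobolev-norm manipulation, using the $K$-invariance of Sobolev norms on the compact picture together with the polynomial estimates already proved for Hecke integrals in Lemma \ref{lem:unif-polyn-type-hecke}.
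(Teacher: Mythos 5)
Your proposal is correct and follows essentially the same route as the paper: absolute convergence plus the Whittaker-function and Sobolev-lemma bounds for $\Re(\chi)$ large, the local Rankin--Selberg functional equation with Stirling-type $\gamma$-factor estimates for $\Re(\chi)$ sufficiently negative, and Phragm{\'e}n--Lindel{\"o}f along a flat family to interpolate. The only small corrections are that the functional-equation factor is the ratio $\gamma(\psi,\chi^2,0)/\gamma(\psi,\sigma_1\otimes\sigma_2\otimes\chi,1/2)$ rather than the single $\gamma$-factor you wrote (both pieces are controlled by \eqref{eq:stirling-for-general-RS-gamma}), and the paper gets away with the weaker bound $\|Mf_3\|\ll\mathcal{S}(f_3)$ (proved via a Tate-integral estimate) rather than the full Sobolev-norm bound for $Mf_3$ that you invoke.
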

\begin{proof}
  Suppose first that $\Re(\chi)$ is sufficiently large
  in terms of $\vartheta$.
  Then the integral converges
  absolutely,
  and the Whittaker function estimates
  \eqref{eqn:estimate-W-near-infinity},
  \eqref{eq:estimate-W-near-0-with-eps}
  and the Sobolev lemma consequence
  $\|f\|_{L^\infty(K)} \ll \mathcal{S}(f)$
  (see the paragraph after \eqref{eqn:estimate-I-f-large-x})
  imply the required estimate \eqref{eq:sobolev-bound-RS}.
  Integrating by parts with respect
  to $\mathcal{I}(\chi)$
  (\S\ref{sec:integration-parts})
  yields for any fixed $d \geq 0$ the refined estimate
  \begin{equation}\label{eqn:refined-RS-sobolev-large-real-part-chi}
    \int_{N \backslash G} W_1 W_2 f \ll
    \mathcal{S}(W_1) \mathcal{S}(W_2) \|f\| C(\chi)^{-d} q^{\O(1)}
  \end{equation}
  (We have used here
  that
  $C(\mathcal{I}(\chi)) \asymp C(\chi)^2$.)

  The following local functional equation may be
  derived from \cite[Thm 2.7]{MR701565} and \cite[Thm
  2.1]{MR2533003}, using
  \eqref{eq:godement-parametrization-induced-rep}:
  \begin{equation}\label{eq:RS-local-func-eqn}
    \int_{N \backslash G}^{\reg} W_1 \cdot W_2
    \cdot f
    =
    \frac{
      \gamma(\psi,\chi^2,0)
    }
    {
      \gamma(\psi, \sigma_1 \otimes \sigma_2 \otimes \chi, 1/2)
    }
    \int_{N \backslash G}^{\reg} W_1 \cdot W_2
    \cdot M f.
  \end{equation}
  (We have used also that $\sigma_1, \sigma_2$ are
  representations of $G = \PGL_2(F)$, hence have trivial central
  character when regarded as representations of $\GL_2(F)$.)  If
  $\Re(\chi)$ is sufficiently \emph{negative} in terms of
  $\vartheta$ (but chosen, as indicated, to avoid the poles of
  $L(\sigma_1 \otimes \sigma_2 \otimes \chi,1/2)$), then
  we have by \eqref{eq:stirling-for-general-RS-gamma}
  the crude Stirling-type estimates
  $1/\gamma(\psi, \sigma_1 \otimes \sigma_2 \otimes \chi, 1/2)
  \ll q^{\O(1)}$ and $\gamma(\psi,\chi^2,0) \ll (q C(\chi))^{\O(1)}$.
  
  We claim that $\|M f\| \ll \mathcal{S}(f)$.  To see this, it
  suffices (by the $K$-invariance of the Sobolev norms) to
  verify that
  $M f(1) = \int_{x \in F} f(w n(x)) \, d x \ll \mathcal{S}(f)$.
  To that end, we smoothly decompose the $x$-integral according
  as $|x| \leq 2$ and $|x| \geq 1$.  The contribution from the
  former range is estimated as in the proof of lemma
  \ref{lem:unif-polyn-type-hecke}.  After the change of
  variables $x \mapsto 1/x$ and the identity
  \eqref{eqn:identity-f-w-n-x-large-x-useful}, the contribution
  from the latter range is a Tate integral for which adequate
  bounds follow from lemma
  \ref{lem:crude-tate-integral-estimate}.
  
  We now embed $f$ in a flat family $f[\chi]$.
  Note that $\|f[\chi]\|$ is locally constant in such a family.
  By the local functional equation
  \eqref{eq:RS-local-func-eqn}
  and the noted estimates for the $\gamma$-factors and $M f[\chi]$,
  we see that the estimate
  \eqref{eqn:refined-RS-sobolev-large-real-part-chi}
  holds also when $\chi$ has sufficiently \emph{negative} real part.
  By Phragmen--Lindel{\"o}f,
  we deduce
  \eqref{eqn:refined-RS-sobolev-large-real-part-chi}
  for all indicated $\chi$,
  and in particular its consequence \eqref{eq:sobolev-bound-RS}.
\end{proof}

\begin{remark}
  The stated estimates
  that require separation from some pole
  may be generalized
  via Cauchy's
  theorem.
  For illustration,
  we record the general form of
  \eqref{eqn:upper-bound-Mellin-transform-W}.
  For any $\omega \in A^\wedge$, let
  $P(s)$ be the smallest monic polynomial such that
  $P(s) L(\sigma \otimes \omega, 1/2 + s)$ is holomorphic for
  $|s| < 2$.  Then $P(s) \int_A^{\reg} W_f \omega |.|^s$ is
  holomorphic for $|s| < 2$ and satisfies the analogue of
  \eqref{eqn:upper-bound-Mellin-transform-W} for $|s| \leq 1$.
\end{remark}

\section{Global preliminaries}
\label{sec-5}
\subsection{Groups, measures, etc}
\label{sec-5-1}
Let $F$ be a number field with adele ring $\mathbb{A}$.  We fix
a nontrivial unitary character
$\psi$ of $\mathbb{A}/F$.
We write $\mathfrak{p}$ for a place of $F$,
finite or infinite.
We write $\mathbf{G}$
for the $F$-algebraic groups given by $\PGL_2$.  We define
subgroups $\mathbf{B}, \mathbf{N}, \mathbf{A}$ of $\mathbf{G}$
by analogy to the local case.  We set $G := \mathbf{G}(F)$,
$G_\mathbb{A} := \mathbf{G}(\mathbb{A}), G_\mathfrak{p} :=
\mathbf{G}(F_\mathfrak{p})$ and
$[G] := G \backslash G_\mathbb{A}$.  We similarly define $A$,
$A_{\mathbb{A}}, A_\mathfrak{p}$ and $[A]$, and likewise for
$\mathbf{N}$ and $\mathbf{B}$.  Note that
$[A] \cong \mathbb{A}^\times/F^\times$ is an abelian group.  We
define $K = \prod_{\mathfrak{p}} K_\mathfrak{p}$ with
$K_\mathfrak{p} \leq G_\mathfrak{p}$ as in the local case.  We
identify $N$ and $A$ with $F$ and $F^\times$ as in the local
case, and similarly for $N_\mathbb{A}, A_\mathbb{A}$, etc.

We equip $N_\mathbb{A}$ with the product of the local measures
and $[N]$ with the quotient measure
(of volume $1$).
The
product of the local measures on $A_\mathfrak{p}$ does not
literally converge to a measure on
$A_{\mathbb{A}} \cong \mathbb{A}^\times$, but may be regularized
as follows: for $f \in C_c^\infty(A_\mathbb{A})$ of the form
$f(a) = \prod_{\mathfrak{p}} f_\mathfrak{p}(a_\mathfrak{p})$,
with $f_\mathfrak{p}$ the characteristic function of the unit
group for almost all finite $\mathfrak{p}$, we set
$\int_{A_\mathbb{A}} f := \xi_F^*(1)^{-1} \prod_\mathfrak{p}
\zeta_{F_\mathfrak{p}}(1) \int_{A_\mathfrak{p}} f_\mathfrak{p}$.
We equip $[A] \cong \mathbb{A}^\times/F^\times$ with the
quotient measure.
The pushforward of this measure under the
idelic norm
$\mathbb{A}^\times/F^\times \xrightarrow{|.|}
\mathbb{R}^\times_+$ is then $\tfrac{d t}{t}$, with $d t$
Lebesgue measure.

As in the local case,
the character group $[A]^\wedge$ of $[A]$
is a complex manifold
with charts $\chi |.|^s \mapsto s$.
We equip the group of unitary characters of
$[A]$, hence also its cosets consisting of characters of given
real part, with the measure dual to the chosen measure
on $[A]$.
Then for a real number $c$ and $f : [A]^\wedge \rightarrow \mathbb{C}$,
\begin{equation}\label{eq:measure-on-dual-omega}
  \int_{\chi : \Re(\chi) = c}
  f(\chi)
  =
  \sum_{\chi_0}
  \int_{s : \Re(s) = c}
  f(\chi_0 |.|^s) \, \frac{d s}{2 \pi i },
\end{equation}
where $\chi_0$ runs over a set of representatives for the group
of unitary characters of $[A]$ modulo the subgroup
$\{|.|^{i t} : t \in \mathbb{R} \}$.  The normalization
\eqref{eq:measure-on-dual-omega}
is well-suited for the applications of Cauchy's
theorem and contour shift arguments
given in \S\ref{sec-12}.

We define the measure on $G_\mathbb{A}$
using the measures on $N_\mathbb{A}$ and $A_\mathbb{A}$
as in the local case,
and equip $[G]$ with the quotient measure,
which is then the Tamagawa measure of volume $2$.


\subsection{Induced representations and Eisenstein series}
\label{sec-5-2}
For a character $\chi$ of $[A]$, we denote by
$\mathcal{I}(\chi)$ the corresponding smooth induction, defined
by analogy to the local case and denoted $\mathcal{I}(s)$
when $\chi = |.|^s$.
The factorizable vectors $f \in \mathcal{I}(\chi)$
have the form
$f = \otimes_\mathfrak{p} f_\mathfrak{p} :
g
\mapsto \prod_\mathfrak{p} 
f_\mathfrak{p}(g_\mathfrak{p})$,
where $f_\mathfrak{p}$ belongs to
$\mathcal{I}(\chi_\mathfrak{p})$
for all $\mathfrak{p}$ and is the normalized
spherical element for almost all finite $\mathfrak{p}$.
As in the local case, we may speak of holomorphic families of vectors.

For $\chi^2 \neq |.|^{\pm 1}$,
the standard intertwining operator
$M : \mathcal{I}(\chi) \rightarrow \mathcal{I}(\chi^{-1})$
is defined on factorizable vectors by
\[
  M f(g)
  =
  \int_{x \in \mathbb{A}}
  f(w n(x) g)
  \, d x
  :=
  \frac{\Lambda(\chi^2,0)}{\Lambda(\chi^2,1)}
  \prod_{\mathfrak{p}}
  \frac{L(\chi_\mathfrak{p}^2,1)}{L(\chi_\mathfrak{p}^2,0)}
  M f_{\mathfrak{p}}(g_\mathfrak{p}),
\]
with the product really a finite product.
The duality between
$\mathcal{I}(\chi)$ and $\mathcal{I}(\chi^{-1})$
is given on factorizable vectors by
the (finite) product
\[
  (f,\tilde{f})
  \mapsto
  \int_{B_\mathbb{A} \backslash G_\mathbb{A}}
  \tilde{f} f 
  :=
  \frac{\xi_F^*(1)}{\xi_F(2)
  }
  \prod_{\mathfrak{p}}
  \frac{\zeta_{F_\mathfrak{p}}(2)}{\zeta_{F_\mathfrak{p}}(1)}
  \int_{B_\mathfrak{p}  \backslash G_\mathfrak{p} }
  \tilde{f}_\mathfrak{p} f_\mathfrak{p}.
\]

Suppose now that $\Re(\chi) > -1/2$ and $\chi^2 \neq |.|$.  We
denote by $\Eis : \mathcal{I}(\chi) \rightarrow C^\infty([G])$
the Eisenstein intertwiner, defined for $\Re(\chi)$ sufficiently
large by the convergent sum
$\Eis(f)(g) := \sum_{\gamma \in B \backslash G} f(\gamma g)$ and
in general by meromorphic continuation.  As $f$ varies in a
holomorphic family, $\Eis(f)$ vanishes as $\chi$ tends to any
quadratic character (i.e., for $\chi^2$ trivial).  For $\chi$
non-quadratic and $f \in \mathcal{I}(\chi)$, we set
$f^* := \Lambda(\chi^2,1) f$ and $\Eis^*(f) := \Eis(f^*)$.
Although $f^*$ itself is not defined when $\chi$ is quadratic,
we may interpret $\Eis^*(f)$ by analytic continuation.
For $f \in \mathcal{I}(\chi)$
varying holomorphically, $\Eis^*(f)$
extends holomorphically to all $\chi$
except for possible simple poles at  $\chi^2 = |.|^{\pm 1}$.

\subsection{Generic representations and Fourier expansions}
\label{sec-5-3}
By a \emph{generic automorphic representation} $\sigma$ (always
of $G$), we mean either an (irreducible) cuspidal automorphic
subrepresentation of $L^2([G])$ or an Eisenstein representation
of the form $\Eis^*(\mathcal{I}(\chi))$ for some character
$\chi$ of $[A]$ with $\chi^2 \neq |.|^{\pm 1}$.
The local components $\sigma_\mathfrak{p}$
of any such representation are of Whittaker type.
We say
that $\sigma$ is \emph{standard} if it is either cuspidal or of
the form $\Eis^*(\mathcal{I}(\chi))$ with $\chi$ unitary.

Let $\sigma$ be a generic automorphic representation.  Each
$\varphi \in \sigma$ admits a Fourier expansion
$\varphi(g) = \varphi_N(g) + \sum_{\alpha \in F^\times}
W_\varphi(a(\alpha) g)$, where $\varphi_N = 0$ unless $\sigma$
is Eisenstein and
$W_\varphi(g) := \int_{x \in \mathbb{A}/F} \varphi(n(x) \psi(-x) 
g) \, d x$.  If $\varphi$ is factorizable, then
we may write
$W_\varphi = \otimes_\mathfrak{p} W_{\varphi,\mathfrak{p}}
: g \mapsto \prod_{\mathfrak{p}}
W_{\varphi,\mathfrak{p}}(g_\mathfrak{p})$, where
$W_{\varphi,\mathfrak{p}}$
belongs to
$\mathcal{W}(\sigma_\mathfrak{p},\psi_\mathfrak{p})$
for all $\mathfrak{p}$
and is the normalized spherical
Whittaker function
for almost all finite $\mathfrak{p}$.
We denote by $\tilde{W}_\varphi$
the Whittaker function defined analogously,
but using the opposite character $\bar{\psi}$ in place of
$\psi$.
If $\sigma$ is irreducible,
then it is isomorphic
to its own contragredient;
the duality
may be given by the pairing
defined for
factorizable vectors by
\[
  (\varphi, \tilde{\varphi})
  \mapsto
  \int_{A_\mathbb{A}}^{\reg}
  \tilde{W}_{\tilde{\varphi}}
  W_{\varphi}
  :=
  \frac{\Lambda^*(\ad \sigma,1)}{\xi(2)}
  \prod_{\mathfrak{p}}
  \frac{\zeta_{F_\mathfrak{p}}(2)}{L(\ad \sigma_\mathfrak{p},1)}
  \int_{A_{\mathfrak{p}}}^{\reg} 
  \tilde{W}_{\tilde{\varphi}} W_{\varphi},
\]
so that the product is really a finite product.

We specialize now to the case
that
$\sigma$ is the Eisenstein representation
$\mathcal{I}(\chi)$
attached to some character
$\chi$ of $[A]$
with $\chi^2 \neq |.|^{\pm }$.
Let $f \in \mathcal{I}(\chi)$
be factorizable,
and write $f =\otimes_{\mathfrak{p}} f_\mathfrak{p}$ as above.
Let $S$ be a finite set of places of $F$
large enough that
for $\mathfrak{p} \notin S$,
$(F_\mathfrak{p},\psi_\mathfrak{p})$ is unramified
and $f_\mathfrak{p}$ is the normalized spherical element.
Then $W_{f_\mathfrak{p}^*}$ is normalized spherical
for $\mathfrak{p} \notin S$
(see \S\ref{sec:normalized-spherical-induced-rep}).
Thus for each
$g \in G_\mathbb{A}$,
we have
$W_{f_\mathfrak{p}^*}(g_\mathfrak{p}) = 1$
for almost all $\mathfrak{p}$.
Thus
\[
  W_{\Eis^*(f)}(g) = \prod_\mathfrak{p}
  W_{f_\mathfrak{p}^*}(g_\mathfrak{p}),
\]
with all but finitely many factors in the product equal to $1$.
(This identity follows first for $\chi$ non-quadratic,
then in general by continuity.)
The full Fourier expansion of the corresponding Eisenstein
series
reads
\begin{equation}\label{eqn:eis-FE}
    \Eis^*(f)(g)
  =
  f^*(g)
  +
  M f^*(g)
  +
  \sum_{\alpha \in F^\times}
  W_{\Eis^*(f)}(a(\alpha) g).
\end{equation}
(When $\chi$ is quadratic,
the sum of the terms $f^*(g) + M f^*(g)$
is interpreted via continuity from the non-quadratic case.)
By standard estimates for local Whittaker functions
(lemma \ref{lem:unif-polyn-type-hecke}),
it follows that for
$g = n(x) a(y) k$ ($x \in \mathbb{A}, y \in \mathbb{A}^\times, k
\in K)$
and fixed $B \geq 0$,
we have
\begin{equation}\label{eq:estimate-eisenstein-series-near-cusp}
  \Eis^*(f)(g) =
  |y|^{1/2} \chi(y) f^*(k) +
  |y|^{1/2} \chi^{-1}(y) M f^*(k)
  + \O_{f,d}(|y|^{-d}),
\end{equation}
and similarly for archimedean derivatives.  Since $w \in G$,
we have
$\Eis^*(f)(a(y)) = \Eis^*(f)(w a(y)) = \Eis^*(f)(a(1/y) w)$, so
the estimate \eqref{eq:estimate-eisenstein-series-near-cusp} is
useful both as $|y| \rightarrow \infty$ and as
$|y| \rightarrow 0$.

We note finally that
if $-1/2 < \Re(\chi) < 1/2$
and $\chi^2 \neq 1$,
then
for $f \in \mathcal{I}(\chi)$ and $\tilde{f} \in
\mathcal{I}(\chi^{-1})$,
we have by \eqref{eqn:local-parseval-f-vs-W-f} that
\begin{equation}\label{eqn:Eis-induced-vs-Whittaker-global-L-2}
    \int_{B_\mathbb{A} \backslash G_\mathbb{A}}
     \tilde{f}^*f^*
  =
  \int_{A_\mathbb{A}}^{\reg}
  \tilde{W}_{\Eis^*(\tilde{f})}
  W_{\Eis^*(f)}.
\end{equation}

\subsection{Hecke integrals}
\label{sec-5-4}
Let $\omega$ be a character of $[A]$.
We denote by
$\mathbb{C}(\omega)$ the corresponding one-dimensional
representation,
and adopt the same conventions as in the local case.

Let $\sigma$ be a generic automorphic representation of $G$,
$\omega$ a character of $[A]$, and $\varphi \in \sigma$.
The \emph{global Hecke integral} $\int_{[A]}^{\reg} \varphi
\omega$
is defined initially for $\Re(\omega)$ large enough
and extends meromorphically.
It unfolds to a product of local Hecke integrals:
if $\varphi \in \sigma$
is factorizable,
then
\begin{equation}\label{eqn:defn-global-hecke-integral-unfold}
  \int_{[A]}^{\reg}
  \varphi \omega
  =
  \int_{A_\mathbb{A}}^{\reg}
  W_{\varphi} \omega
  :=
  \frac
  {
    \Lambda(\sigma \otimes \omega, 1/2)
  }
  {
    \xi_F^*(1)
  }
  \prod_\mathfrak{p}
  \frac{
    \zeta_{F_\mathfrak{p}}(1)
  }
  {
    L(\sigma_\mathfrak{p}  \otimes \omega_\mathfrak{p}, 1/2)
  }
  \int_{A_\mathfrak{p}}^{\reg}
  W_{\varphi,\mathfrak{p}}
  \omega_\mathfrak{p},
\end{equation}
with almost all local factors
equal to $1$.
The ratio
\begin{equation}\label{eqn:global-hecke-integral-ratio}
  \frac{\int_{[A]}^{\reg} \varphi \omega}{\Lambda(\sigma \otimes
    \omega, 1/2)}
\end{equation}
extends to a holomorphic function of $\omega$.
If $\sigma$ is cuspidal,
then the regularization is not needed.
If $\sigma = \Eis^*(\mathcal{I}(\chi))$
is Eisenstein,
then we can define $\int_{[A]}^{\reg} \varphi \omega$
as the meromorphic continuation
from $\omega$ with large real part
of the convergent integral $\int_{[A]} (\varphi - \varphi_N)
\omega$.
If $\varphi = \Eis^*(f)$
where $f = f_\chi \in \mathcal{I}(\chi)$
varies holomorphically with respect to $\chi$,
then the ratio
\eqref{eqn:global-hecke-integral-ratio}
is jointly holomorphic in $\omega$ and $\chi$.

\subsection{Height function}\label{sec:height-function}
Recall the height function
$\htt : [G] \rightarrow \mathbb{R}_{>0}$ defined by
$\htt(g) := \sup |y|$, the supremum taken over all ways to write
$g = \gamma n a(y) k$ with
$\gamma \in G, n \in N_\mathbb{A}, y \in \mathbb{A}^\times, k
\in K$.  The map $\htt$ is proper, its image is bounded below by
a positive quantity (see \cite[\S8]{MR0191899}), and we have
$\int_{[G]}\htt^\alpha < \infty$ precisely when $\alpha < 1$.
For instance, the estimate
\eqref{eq:estimate-eisenstein-series-near-cusp}
implies that if $\chi$ is a unitary character of $[A]$ and
$f \in \mathcal{I}(\chi)$, then $\Eis^*(f)$ is bounded by a
multiple of $\htt^{\alpha}$ for each $\alpha > 1/2$.

\subsection{Spectral decomposition}
\label{sec-5-5}
Let $\Psi_1, \Psi_2,\Psi  \in C_c^\infty([G])$.
The Parseval relation for
$L^2([G])$ may be written
\begin{align*}
  \int_{[G]} \Psi_1 \Psi_2
  &=
    \sum_{\omega^2 = 1}
    \frac{
    \int_{[G]}\Psi_1 \omega^{-1}(\det)
    \int_{[G]}\omega(\det) \Psi_2
    }
    {
    \vol([G])
    }
  \\
  &\quad +
    \sum_{\sigma:\text{cuspidal}}
    \sum_{\varphi \in \mathcal{B}(\sigma)}
    \frac{
    \int_{[G]} \Psi_1 \tilde{\varphi}
    \int_{[G]} \varphi \Psi_2
    }
    {
    \int_{[G]} \tilde{\varphi} \varphi 
    }
  \\
  &\quad
    +
    \frac{1}{2}
    \int_{\chi:\text{unitary}}
    \sum_{f \in \mathcal{B}(\mathcal{I}(\chi))}
    \frac{\int_{[G]} \Psi_1 \Eis(\tilde{f})
    \int_{[G]} \Eis(f) \Psi_2
    }
    {
    \int_{B_\mathbb{A} \backslash G_\mathbb{A}}
     \tilde{f} f
    },
\end{align*}
where
\begin{itemize}
\item $\omega$ runs over the quadratic characters
  of $[A]$,
  with $\omega(\det)(g) := \omega(\det(g))$,
\item
  $\sigma$ runs over the cuspidal automorphic representations,
\item
  $\chi$ runs over the unitary dual of $[A] \cong
  \mathbb{A}^\times/F^\times$
  with respect to the measure dual to the chosen measure on
  $[A]$, and
\item
  $\varphi$ (resp. $f$) runs over an orthonormal basis
  consisting of $K$-isotypic vectors
  for $\sigma$
  (resp. $\mathcal{I}(\chi)$),
  with inner product defined using $L^2([G])$
  if $\sigma$ is cuspidal
  and using $L^2(K)$ if $\sigma = \mathcal{I}(\chi)$.
  We write $\tilde{\varphi}$ (resp. $\tilde{f}$)
  for the corresponding element of the dual basis for
  $\tilde{\sigma} = \sigma$ (resp.  $\mathcal{I}(\chi^{-1})$),
  with the duality normalized via the pairing given in the
  respective denominators.
\end{itemize}
In other words,
the Fourier inversion formula
\begin{align}\label{eqn:fourier-inversion-L-2}
  \Psi(g)
  &=
    \sum_{\omega^2 = 1}
    \frac{
    \int_{[G]}\Psi \omega^{-1}(\det)
    }
    {
    \vol([G])
    }
    \omega(\det(g))
  \\ \nonumber
  &\quad +
    \sum_{\sigma:\text{cuspidal}}
    \sum_{\varphi \in \mathcal{B}(\sigma)}
    \frac{
    \int_{[G]} \Psi \tilde{\varphi}
    }
    {
    \int_{[G]} \tilde{\varphi}\varphi 
    }
    \varphi(g)
  \\ \nonumber
  &\quad
    +
    \frac{1}{2}
    \int_{\chi:\text{unitary}}
    \sum_{f \in \mathcal{B}(\mathcal{I}(\chi))}
    \frac{\int_{[G]} \Psi \Eis(\tilde{f})
    }
    {
    \int_{B_\mathbb{A} \backslash G_\mathbb{A}} \tilde{f} f
    } \Eis(f)(g)
\end{align} 
holds in an $L^2$-sense; the sums in
\eqref{eqn:fourier-inversion-L-2}
need not converge pointwise.

An obvious necessary condition for the inversion formula
\eqref{eqn:fourier-inversion-L-2} to hold pointwise in the
traditional sense is that for each unitary $\chi$ and
each $f \in \mathcal{I}(\chi)$, the
integral $\int_{[G]} \Psi \Eis(\tilde{f})$ converges absolutely.
In our applications, the slightly stronger condition that
$\Psi$ is smooth and every
archimedean derivative of $\Psi$ is bounded by a multiple of
$\htt^{\alpha}$ for some $\alpha < 1/2$ holds;
in particular, $\Psi \in L^2([G])$.
Arguing as in \cite[\S2.6.5]{michel-2009}
using the trace
class property on $L^2([G])$ of some inverse power of the
product of local Laplacians, one can show then that the RHS of
\eqref{eqn:fourier-inversion-L-2} converges uniformly on
compacta to a continuous function.  On the other hand, the
$L^2$ theory implies that the difference between the two sides
of \eqref{eqn:fourier-inversion-L-2} is orthogonal to every
element of $C_c^\infty([G])$, hence vanishes.  Thus
\eqref{eqn:fourier-inversion-L-2} gives a pointwise defined and
convergent expansion of such $\Psi$.

\subsection{Regularized adelic integrals}
\label{sec-9-2}
We recall, following
\cite{MR656029}, \cite[\S4.3]{michel-2009}
and \cite{MR3977317} with some minor modifications,
how to define
a regularized integral 
of certain functions on $[G]$.
By a \emph{finite} function
$\phi : N_\mathbb{A} A \backslash G_\mathbb{A} \rightarrow
\mathbb{C}$ we mean a smooth function on $G_\mathbb{A}$ that
is finite in the sense of \S\ref{sec-9-1}
with respect to the left translation action of $[A]$;
equivalently, $\phi$
may be written in the form
\begin{equation}\label{eqn:phi-decompose-finite-N-A-K}
  \phi(n a(y) k) = \sum_{i=1}^n \chi_i(y) \log^{m_i-1}|y|
  \mathcal{K}_i(k)
\end{equation}
for some characters $\chi_i$ on $\mathbb{A}^\times/F^\times$,
positive integers $m_i$, and smooth functions $\mathcal{K}_i$ on
$K$.  We may and shall assume that the $\mathcal{K}_i$ are not
identically zero and that each pair $(\chi_i,m_i)$ shows up at
most once, so that the decomposition
\eqref{eqn:phi-decompose-finite-N-A-K} of $\phi$ is unique up to
rearrangement.  We say that a smooth function
$\Psi : [G] \rightarrow \mathbb{C}$ is of \emph{controlled
  increase} if we can find a finite function $\phi$ on
$N_\mathbb{A} A \backslash G_\mathbb{A}$ so that for
$d \geq 0$ and all
$n \in N_\mathbb{A}, k \in K$ and $y \in \mathbb{A}^\times$ with
$|y| \geq 1$, the estimate
$\Psi(n a(y) k) = \phi(n a(y) k) + \O(|y|^{-d})$ holds, together
with the analogous estimates involving all archimedean
derivatives.  The function $\phi$ is then uniquely determined;
we refer to it as the \emph{asymptotic part} of $\Psi$.
The characters $\chi_i$
that arise in the
decomposition \eqref{eqn:phi-decompose-finite-N-A-K}
are the exponents of $\phi$.
We will refer to these simply as the exponents
of $\Psi$.
For example, if $\chi$ is a character of $[A]$ with
$\chi^2 \neq |.|^{\pm}$ and $f$ is a nonzero element of
$\mathcal{I}(\chi)$, then the Eisenstein series $\Eis^*(f)$ is
of controlled increase with exponents
$|.|^{1/2} \chi$ and $|.|^{1/2} \chi^{-1}$.

Suppose $\Psi$ has controlled increase and that 
the exponent $|.|$ does not occur.  Following
\cite{MR656029} and \cite[\S4.3]{michel-2009},
we may then
define the regularized integral $\int_{[G]}^{\reg} \Psi$.
It may be characterized
as the unique $G$-invariant functional on the space of such $\Psi$
that extends integration on the subspace of integrable
functions.  It may be defined using convolution or truncation
as in \S\ref{sec-9-1}, or  Eisenstein series as in
\cite{MR656029} and \cite[\S4.3.5]{michel-2009}.
We briefly recall the last of these definitions.  Split
the asymptotic part $\phi$ of $\Psi$ as the sum
$\phi_{>1/2} + \phi_{\leq 1/2}$ of two terms corresponding to
the exponents with real part indicated by the subscript.  Using $\phi_{>1/2}$, we may construct
a linear combination of (derivatives of)
Eisenstein series $\mathcal{E}$ for which the difference
$\Psi - \mathcal{E}$ has controlled increase with exponents of
real part $\leq 1/2$.
That difference is then integrable,
and we take $\int_{[G]}^{\reg} \Psi := \int_{[G]}(\Psi - \mathcal{E})$.

\subsection{Regularized spectral decomposition}
\label{sec-9-3}
If $\Psi$ is a smooth function on $[G]$ of controlled increase
with exponents of real part strictly less than $1/2$, then
$\Psi$ and its archimedean derivatives are bounded by constant
multiples of $\htt^{\alpha}$ for some $\alpha <1/2$,
so the expansion
\eqref{eqn:fourier-inversion-L-2} is pointwise defined and
convergent, uniformly on compacta.

Suppose now that $\Psi$ has controlled increase and that
each exponent $\chi$ satisfies $\Re(\chi) \neq 1/2$,
hence in particular
$\chi^2 \neq |.|$.  We may then split the asymptotic part $\phi$
as a sum $\phi_{>1/2} + \phi_{<1/2}$ and define
$\mathcal{E}$ using $\phi_{>1/2}$ as before.  The difference
$\Psi - \mathcal{E}$ then has controlled increase with exponents
of real part $< 1/2$, hence admits a pointwise spectral
expansion, uniformly on compacta, with coefficients given by the
convergent integrals
$\int_{[G]} (\Psi - \mathcal{E}) \omega^{-1}(\det)$ and
$\int_{[G]} (\Psi - \mathcal{E}) \Eis(\tilde{f})$ and the
analogous integrals involving cusp forms.  In fact, it follows
from representation-theoretic considerations
that
$\int_{[G]}^{\reg} \mathcal{E} \omega^{-1}(\det)
= \int_{[G]}^{\reg} \mathcal{E} \Eis(\tilde{f}) = 0$,
so that the spectral expansion of $\Phi$
may be written
\begin{align}\label{eqn:fourier-inversion-L-2-reg}
  \Psi(g)
  &=
    \mathcal{E}(g)
    +
    \sum_{\omega^2 = 1}
    \frac{
    \int_{[G]}^{\reg}\Psi \omega^{-1}(\det)
    }
    {
    \vol([G])
    }
    \omega(\det(g))
  \\ \nonumber
  &\quad
    +
    \sum_{\sigma:\text{cuspidal}}
    \sum_{\varphi \in \mathcal{B}(\sigma)}
    \frac{
    \int_{[G]} \Psi \tilde{\varphi}
    }
    {
    \int_{[G]} \varphi \tilde{\varphi}
    }
    \varphi(g)
  \\ \nonumber
  &\quad
    +
    \frac{1}{2}
    \int_{\chi:\text{unitary}}
    \sum_{f \in \mathcal{B}(\mathcal{I}(\chi))}
    \frac{\int_{[G]}^{\reg} \Psi \Eis(\tilde{f})
    }
    {
    \int_{B_\mathbb{A} \backslash G_\mathbb{A}} \tilde{f} f
    } \Eis(f)(g).
\end{align}
Suppose now moreover
that $\Psi$ is orthogonal to
the one-dimensional subrepresentations of $L^2([G])$,
so that the sum over $\omega$ may be omitted.
The resulting expansion then involves only the standard \emph{generic}
automorphic representations.
It will be convenient
to rewrite that expansion in terms of Whittaker norms.
Using \eqref{eqn:Eis-induced-vs-Whittaker-global-L-2}
and the formula
$\int_{[G]}
\tilde{\varphi} \varphi = 2\int_{A_{\mathbb{A}}}^{\reg} 
\tilde{W}_{\tilde{\varphi}} W_{\varphi}$
for cuspidal $\sigma$
(see \cite[\S3.2.2]{nelson-variance-II}
or \cite[Lem 2.2.3]{michel-2009})
gives the required identity,
which we restate in full for convenience:
\begin{theorem}\label{thm:reg-spect-pointwise}
  Let $\Psi \in C^\infty([G])$
  be of controlled increase,
  with each exponent
  $\chi$ satisfying
  $\Re(\chi) \neq 1/2$
  and $\chi^2 \neq |.|$,
  and orthogonal to
  the one-dimensional subrepresentations
  $\mathbb{C} \omega(\det)$ of $L^2([G])$.
  Let $\mathcal{E}$ denote the linear combination
  of (derivatives) of Eisenstein series
  attached to the
  summand $\phi_{>1/2}$ of the asymptotic part $\phi$
  of $\Psi$ obtained by collecting terms
  involving characters of real part $> 1/2$.
  We then have a pointwise defined and normally convergent
  expansion
  \begin{align*}
    \Psi(g)
    -
    \mathcal{E}(g)
    &=
      \frac{1}{2} 
      \sum_{\sigma:\text{cuspidal}}
      \sum_{\varphi \in \mathcal{B}(\sigma)}
      \frac{
      \int_{[G]} \Psi \tilde{\varphi}
      }
      {
      \int_{A_{\mathbb{A}}}^{\reg} \tilde{W}_{\tilde{\varphi}} W_{\varphi} 
      }
      \varphi(g)
    \\
    &\quad
      +
      \frac{1}{2}
      \int_{\chi:\text{unitary}}
      \sum_{f \in \mathcal{B}(\mathcal{I}(\chi))}
      \frac{\int_{[G]}^{\reg} \Psi \Eis^*(\tilde{f})
      }
      {
      \int_{A_{\mathbb{A}}}^{\reg}
      \tilde{W}_{\Eis^*(\tilde{f})} W_{\Eis^*(f)} 
      }
      \Eis^*(f)(g).
  \end{align*}
\end{theorem}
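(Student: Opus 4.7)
The plan is to reduce the statement to the standard $L^2$-spectral expansion \eqref{eqn:fourier-inversion-L-2} applied to the difference $\Psi - \mathcal{E}$, and then identify the resulting spectral coefficients with regularized integrals of $\Psi$ itself. First, I would verify that $\Psi - \mathcal{E}$ has controlled increase with every exponent of real part strictly less than $1/2$: by construction $\mathcal{E}$ cancels precisely the summand $\phi_{>1/2}$ of the asymptotic part of $\Psi$, the hypothesis $\Re(\chi) \neq 1/2$ rules out boundary exponents, and $\chi^2 \neq |.|$ ensures that each Eisenstein series entering $\mathcal{E}$ is holomorphically defined via the normalization of \S\ref{sec-5-2}. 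The estimate \eqref{eq:estimate-eisenstein-series-near-cusp} then bounds $\Psi - \mathcal{E}$ and each of its archimedean derivatives by a multiple of $\htt^{\alpha}$ for some fixed $\alpha < 1/2$; in particular $\Psi - \mathcal{E} \in L^{2}([G])$.

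Next I would apply \eqref{eqn:fourier-inversion-L-2} to $\Psi - \mathcal{E}$. The contribution of the one-dimensional subrepresentations vanishes: for $\Psi$ by the standing orthogonality hypothesis, and for $\mathcal{E}$ because every exponent entering $\mathcal{E}$ has real part $> 1/2$, so is nontrivial and in particular distinct from the characters $\omega \circ \det$ that one is pairing against. The same observation underlies the identification of the remaining coefficients with regularized integrals of $\Psi$: for cuspidal $\tilde{\varphi}$, the absolutely convergent integral $\int_{[G]} \mathcal{E}\, \tilde{\varphi}$ vanishes by the orthogonality of cusp forms to (iterated derivatives of) Eisenstein series, so $\int_{[G]}(\Psi - \mathcal{E})\tilde{\varphi} = \int_{[G]} \Psi \tilde{\varphi}$. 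Normal convergence of the resulting expansion (uniformly on compact subsets of $[G]$) follows from the Sobolev/trace-class argument in \cite[\S2.6.5]{michel-2009}, using the trace class property on $L^2([G])$ of a sufficiently high inverse power of the product of local Laplacians and the bound $\Psi - \mathcal{E} \in L^2([G])$ just verified.

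The more delicate identification is for the continuous-spectrum coefficients, where one must show $\int_{[G]}^{\reg} \mathcal{E}\, \Eis^{*}(\tilde{f}) = 0$ for each unitary $\tilde{f} \in \mathcal{I}(\chi^{-1})$; this is the main obstacle. I would treat it by writing $\mathcal{E}$ as a linear combination of (derivatives at the relevant parameter of) Eisenstein series $\Eis^{*}(f_{\eta})$ with $f_{\eta} \in \mathcal{I}(\eta)$ for a character $\eta$ of real part $> 1/2$, and then computing the regularized inner product $\int_{[G]}^{\reg} \Eis^{*}(f_{\eta}) \Eis^{*}(\tilde{f})$ by Zagier's Rankin--Selberg-type unfolding (see \S\ref{sec-9-2} and \cite{MR656029}, \cite[\S4.3]{michel-2009}). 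Because $\eta$ and $\chi^{-1}$ have different real parts, the Mellin/contour calculation shows that the unfolded integral is a product of local Tate-type factors which, after regularization, vanishes at the relevant evaluation point; the same argument applied to derivatives with respect to the Eisenstein parameter handles the log-powers in \eqref{eqn:phi-decompose-finite-N-A-K}. Once this vanishing is in hand, the convergent identity $\int_{[G]}(\Psi - \mathcal{E})\Eis^{*}(\tilde{f}) = \int_{[G]}^{\reg}\Psi \Eis^{*}(\tilde{f})$ follows from the definition of the regularized integral in \S\ref{sec-9-2}.

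Finally, I would rewrite the resulting expansion in the Whittaker form stated in the theorem. For the cuspidal terms, the Petersson relation $\int_{[G]} \tilde{\varphi}\varphi = 2\int_{A_{\mathbb{A}}}^{\reg} \tilde{W}_{\tilde{\varphi}} W_{\varphi}$ recalled in \S\ref{sec-5-3} accounts for the factor of $1/2$ on the cuspidal sum; for the continuous spectrum, the identity \eqref{eqn:Eis-induced-vs-Whittaker-global-L-2} converts the $B_{\mathbb{A}}\backslash G_{\mathbb{A}}$ pairing $\int \tilde{f}^{*} f^{*}$ into the regularized Whittaker pairing $\int_{A_\mathbb{A}}^{\reg} \tilde{W}_{\Eis^{*}(\tilde{f})} W_{\Eis^{*}(f)}$. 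Substituting these replacements in the expansion of $\Psi - \mathcal{E}$ and moving $\mathcal{E}$ to the left gives exactly the stated formula, with the pointwise normal convergence inherited from the previous step.
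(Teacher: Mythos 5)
Your proposal is correct and follows essentially the same route as the paper: apply the $L^2$ expansion \eqref{eqn:fourier-inversion-L-2} to $\Psi - \mathcal{E}$, observe that $\mathcal{E}$ pairs to zero against the one-dimensional, cuspidal and unitary Eisenstein spectra so the coefficients become regularized integrals of $\Psi$, and then convert to Whittaker norms via \eqref{eqn:Eis-induced-vs-Whittaker-global-L-2} and the Petersson relation to produce the factors of $1/2$. The only divergence is that where the paper dismisses the vanishing $\int_{[G]}^{\reg}\mathcal{E}\,\Eis(\tilde{f})=0$ by "representation-theoretic considerations" (there is no nonzero invariant pairing between $\mathcal{I}(\eta)$ with $\Re(\eta)>1/2$ and a unitary $\mathcal{I}(\chi^{-1})$), you supply an equivalent explicit verification via Zagier-type unfolding; both are valid.
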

We will often abbreviate the RHS of the above
decomposition to
\begin{equation}\label{eqn:spectral-decomp-reg-generic}
  \int_{\sigma:\text{generic}}
  \sum_{\varphi \in \mathcal{B}(\sigma)}
  \frac{
    \int_{[G]}^{\reg} \Psi \tilde{\varphi}
  }
  {
    \int_{A_{\mathbb{A}}}^{\reg}  \tilde{W}_{\tilde{\varphi}}W_{\varphi}
  }
  {
    \varphi(g)
  }
\end{equation}
or further to simply
\begin{equation}
  \int_{\sigma:\text{generic}}
  \Psi_{\sigma}.
\end{equation}
Note that the measures implicit in the integrals over $\sigma$
in \eqref{eqn:fourier-inversion-L-2-reg} and
\eqref{eqn:spectral-decomp-reg-generic} differ on the cuspidal
spectrum by the factor $1/2$.

It is useful to describe the rate of convergence a bit more
precisely.  First
of all, for $\Psi$ as in Theorem
\ref{thm:reg-spect-pointwise}, the constant term
$\Psi_N(g) := \int_{x \in \mathbb{A}/F} \Psi(n(x) g) \, d x$
enjoys the normally convergent expansion
\begin{equation}
  \Psi_N =
  \mathcal{E}_N +
  \int_{\sigma:\text{generic}}
  \Psi_{\sigma,N},
\end{equation}
and thus likewise
\begin{equation}
  \Psi = \Psi_N + \mathcal{E} - \mathcal{E}_N
  +
  \int_{\sigma:\text{generic}} (\Psi_\sigma - \Psi_{\sigma,N}).
\end{equation}
This last integral converges
uniformly near the cusp;
more precisely,
for $g = a(y) k$ with $y \in \mathbb{A}^\times, k \in K$,
we have
\begin{equation}\label{eqn:estimate-Psi-sigma-minus-constant-term}
  \Psi_\sigma(g) - \Psi_{\sigma,N}(g)
  \ll_{\Psi,d}
  \frac{\min(|y|^{-1/2-\eps}, |y|^{-d})}{C(\sigma)^d}
\end{equation}
for fixed $d \geq 0$.  The bound
\eqref{eqn:estimate-Psi-sigma-minus-constant-term} may be
established first for $B$ sufficiently negative, then using
\cite[\S2.6.6]{michel-2009} and arguing as in
\S\ref{sec:integration-parts} or \cite[Lem 3.5.2]{michel-2009}
to save with respect to $C(\sigma)$.  For Eisenstein $\sigma$,
we estimate each term of the Fourier expansion using
lemma \ref{lem:unif-polyn-type-hecke}
or \cite[(3.2.3)]{michel-2009} and
\cite[\S2, (S1d)]{michel-2009}.  For cuspidal $\sigma$, we apply
the same argument for large $|y|$, while for small $|y|$ we
invoke the crude $L^\infty$-norm bound for $\Psi_{\sigma}$
following from \cite[\S2, (S2a), (S3b)]{michel-2009}.


\subsection{Rankin--Selberg integrals}
\label{sec-5-6}
For generic
automorphic representations
$\sigma_1, \sigma_2$ and characters $\chi$ of $[A]$ of large
enough real part, we define for
$\varphi_1 \in \sigma_1, \varphi_2 \in \sigma_2$ and
$f \in \mathcal{I}(\chi)$ the global Rankin--Selberg integral
$\int_{[G]}^{\reg} \varphi_1 \varphi_2 \Eis^*(f)$.  If either
$\sigma_1$ or $\sigma_2$ is cuspidal, then the integral
converges absolutely.  The Eisenstein case requires
regularization, for the details of which we refer to
\S\cite[\S4.4]{michel-2009}.  The important feature for our
purposes is that in either case, the integral unfolds as the
Eulerian integral
$\int_{N_\mathbb{A} \backslash G_\mathbb{A}}^{\reg}
W_{\varphi_1} \tilde{W}_{\varphi_2} f^*$
(interpreted, 
using the local unramified calculation of
\eqref{eqn:normalized-local-RS},
by analogy to
\eqref{eqn:defn-global-hecke-integral-unfold}).
It follows then from the corresponding
local results of \S\ref{sec-4-5} that as $f$ varies in
a holomorphic family, the ratio
\[
  \frac{\int_{[G]}^{\reg} \varphi_1 \varphi_2 \Eis^*(f)
  }{
    \Lambda(\sigma_1 \otimes \sigma_2 \otimes \chi, 1/2)
  }
\]
extends holomorphically
to all $\chi$.

\part{The basic identity}\label{part:basic-identity}
Here we formulate Theorem \ref{thm:basic-spectr-ident} precisely
(see \S\ref{sec:summary-main-results}) and give the proof.

\section{Local invariant functionals\label{sec:local-inv-func}}
\label{sec-10}
Let $F$ be a local field,
with notation as in \S\ref{sec-4}.
Let $s =  (s_1,s_2,s_3) \in \mathbb{C}^3$.
We will eventually
take the limit as $s$ approaches the origin.

For each such $s$, we obtain a representation
\begin{equation}\label{eqn:Is1-Is2-Cs3}
    \mathcal{I}(s_1) \otimes \mathcal{I}(s_2) \otimes
  \mathbb{C}(s_3)
\end{equation}
of $G \times G \times A$.
(The notation $\mathcal{I}(s_j)$
is defined in \S\ref{sec:intro-representations},
$\mathbb{C}(s_3)$
in \S\ref{sec-4-3}.)
In the archimedean case, we take the completed tensor product,
as in \S\ref{sec:representation-pi-g};
in either case,
this representation identifies
with
the space $\mathcal{I}(s_1) \otimes \mathcal{I}(s_2)$ via the map
$f_1 \otimes f_2 \otimes 1 \mapsto f_1 \otimes f_2$,
which may in turn be defined as the
space of smooth functions $f : G \times G \rightarrow
\mathbb{C}$
satisfying $f(n(x_1) a(y_1) g_1, n(x_2) a(y_2) g_2)
= |y_1|^{1/2+s_1} |y_2|^{1/2+s_2}$.

We may speak, as in \S\ref{sec:families-vectors}, of holomorphic
families of vectors
$f[s] \in \mathcal{I}(s_1) \otimes \mathcal{I}(s_2) \otimes
\mathbb{C}(s_3)$ indexed by $s$ in an open subset on
$\mathbb{C}^3$.

We will define two
families of $A$-invariant functionals on the representation \eqref{eqn:Is1-Is2-Cs3},
corresponding to the ``strong Gelfand triple''
\cite{MR2373356}
formed by the two
sequences $G \times G \geq G \geq A$ and
$G \times G \geq A \times A \geq A$ of strong Gelfand pairs.
The special case $s = 0$ was sketched
in \S\ref{sec-6-3}.

\subsection{$G \times G \geq G \geq A$}
\label{sec-10-1}
\subsubsection{Definition and estimates}\label{sec:defn-ell-sigma-s-makes-sense}
Let $\sigma$ be a generic irreducible representation of $G$,
realized in its Whittaker model $\mathcal{W}(\sigma,\psi)$.  
For almost all $s$, we
aim to define a continuous $A$-invariant map
\[
  \ell_{\sigma,s} :
  \mathcal{I}(s_1) \otimes \mathcal{I}(s_2)
  \otimes \mathbb{C}(s_3) \rightarrow \mathbb{C}
\]
by the formula
\begin{equation}\label{eqn:defn-l-omega-s}
  f_1 \otimes f_2
  \mapsto
  \sum_{W \in \mathcal{B}(\sigma)}
  \frac{\int_{N \backslash G}^{\reg} \tilde{W}  W_{f_1}  f_2 \,
    \int_A^{\reg} W |.|^{s_3}
  }
  {
    \int_A^{\reg} \tilde{W} W
  },
\end{equation}
with the integrals interpreted as in \S\ref{sec-4-3},
\S\ref{sec-4-5} and the sum as in \S\ref{sec-6-3}.  More
precisely, we write $\sum_{W \in \mathcal{B}(\sigma)}$ as
shorthand for
$\sum_{f \in \mathcal{B}(\sigma), W := W_f \tilde{W} :=
  \tilde{W}_{\tilde{f}}}$, where the orthonormal basis
$\mathcal{B}(\sigma)$ is as defined in
\S\ref{sec:norms-whit-type-reps}, and $\tilde{f}$ runs over the
corresponding dual basis for $\sigma$
(resp. $\mathcal{I}(\chi^{-1})$) for $\sigma$ square-integrable
(resp. $\sigma =\mathcal{I}(\chi)$), with the duality prescribed
by the denominator of \eqref{eqn:defn-l-omega-s} (or in the
induced case, equivalently by integration over $B \backslash G$
or $K$ -- see \eqref{eqn:local-parseval-f-vs-W-f}).

We pause to make sense of this definition.
By the local theory of Hecke and
Rankin--Selberg integrals recalled in
\S\ref{sec:local-hecke-basic} and \S\ref{sec:local-rs-basic},
together with the formula
\eqref{eqn:multiplicativity-L-factors-wrt-induction}
for $L$-factors of induced
representations,
we see
that the integrals
in the numerator of \eqref{eqn:defn-l-omega-s} are defined away
from the poles of the numerator of
\[
  \mathcal{L}(\sigma,s)
  :=
  \frac{
    L(\sigma, 1/2 + s_1 + s_2)L(\sigma, 1/2 - s_1 + s_2) L(\sigma,
    1/2 + s_3)
  }
  {
    L(\sigma \times \sigma, 1)
  }.
\]
In the archimedean case, these integrals moreover define
continuous functionals.

It remains to make sense of the sum in
\eqref{eqn:defn-l-omega-s} over $W$.
We suppose henceforth that $\sigma \in G^\wedge_{\gen}$ is
$\vartheta$-tempered for some fixed $\vartheta > 0$, that $s$
lies in a fixed compact subset of $\mathbb{C}^3$
and
that $s$ is some fixed positive distance away from any pole of
$\mathcal{L}(\sigma,s)$.
\begin{lemma}
  Let $f_1 \in \mathcal{I}(s_1), f_2 \in \mathcal{I}(s_2)$.
  Then
  for each fixed $d$,
  \begin{equation}\label{eqn:sum-of-sobolev-norms-of-W-weighted-by-RS-int}
    \sum_{W \in \mathcal{B}(\sigma)}
    \left\lvert
      \frac{\int_{N \backslash G}^{\reg} \tilde{W}  W_{f_1}  f_2
      }
      {
        \int_A^{\reg} \tilde{W} W
      }
    \right\rvert
    \mathcal{S}_d(W)
    \ll
    C(\sigma)^{-d} \mathcal{S}(f_1) \mathcal{S}(f_2) q^{\O(1)}.
  \end{equation}
\end{lemma}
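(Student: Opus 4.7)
The plan is to combine three ingredients: the uniform polynomial-type Rankin--Selberg bound (Lemma \ref{lem:polyn-bound-rs}), integration by parts in the sense of \S\ref{sec:integration-parts}, and the uniform trace-class property (Lemma \ref{lem:uniform-bounds-N-sigma-nu}). First I observe that the normalization in \S\ref{sec:norms-whit-type-reps} together with the duality set up in \S\ref{sec:whitt-intertw-dual} makes each denominator $\int_A^{\reg} \tilde W W$ equal to $1$ on the diagonal, so it may be dropped from the sum.

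Next I would view the regularized Rankin--Selberg integral as a $G$-invariant trilinear functional
$$ \ell : \mathcal{I}(s_1) \otimes \mathcal{I}(s_2) \otimes \tilde{\sigma} \to \mathbb{C}, $$
in which $\tilde\sigma$ is realized via its Whittaker model. Lemma \ref{lem:polyn-bound-rs} (together with the reduction to pure tensors in \S\ref{sec:reduct-pure-tens}) furnishes the initial estimate $|\ell((f_1 \otimes f_2) \otimes \tilde W)| \ll \mathcal{S}(f_1)\,\mathcal{S}(f_2)\,\mathcal{S}(\tilde W)\, q^{\O(1)}$. Applying the integration-by-parts lemma of \S\ref{sec:integration-parts} with $H = G$ and shifting derivatives from $\tilde W$ onto $f_1 \otimes f_2$ then upgrades this, for any fixed $d'$, to
$$ |\ell((f_1 \otimes f_2) \otimes \tilde W)| \ll \mathcal{S}(f_1)\,\mathcal{S}(f_2)\,\mathcal{S}_{-d'}(\tilde W)\, C'(\sigma)^{-d'}\, q^{\O(1)}, $$
and the conductor comparison Lemma \ref{lem:compare-conductors} converts the saving $C'(\sigma)^{-d'}$ into $C(\sigma)^{-d}$ once $d'$ is taken sufficiently large in terms of $d$.

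Finally I would assemble the sum. Since each $W \in \mathcal{B}(\sigma)$ is $K$-isotypic of some type $\nu$ with $\|W\| = 1$, we have $\mathcal{S}_d(W) = N_{\sigma\nu}^d$ and, via the duality between $\sigma$ and $\tilde\sigma$, $\mathcal{S}_{-d'}(\tilde W) \asymp N_{\sigma\nu}^{-d'}$. The left-hand side of \eqref{eqn:sum-of-sobolev-norms-of-W-weighted-by-RS-int} is thereby dominated by
$$ \mathcal{S}(f_1)\,\mathcal{S}(f_2)\, C(\sigma)^{-d}\, q^{\O(1)} \sum_{\nu} \dim(\sigma^\nu) N_{\sigma\nu}^{d-d'}, $$
and choosing $d'$ large enough (in terms of $d$ and the fixed index supplied by Lemma \ref{lem:uniform-bounds-N-sigma-nu}) renders the sum over $\nu$ uniformly $\O(1)$, which yields the claim.

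The main obstacle, which is more a bookkeeping matter than a conceptual one, is verifying the $G$-invariance of $\ell$ after passing to the Whittaker realization of $\tilde\sigma$ (so that the integration-by-parts lemma applies directly), and tracking the normalization of $\mathcal{B}(\sigma)$ in the non-unitary but $\vartheta$-tempered range needed here. Both are handled by the preliminaries above; the harmless $q^{\O(1)}$ factors may be absorbed by taking $d'$ slightly larger at the final step.
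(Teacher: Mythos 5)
Your proposal is correct and follows essentially the same route as the paper: the initial Sobolev bound for the regularized Rankin--Selberg integral (Lemma \ref{lem:polyn-bound-rs}), integration by parts with respect to $\sigma$ along the diagonal copy of $G$ to extract the $C(\sigma)^{-d}$ saving (with the $C'(\sigma)\to C(\sigma)$ conversion via Lemma \ref{lem:compare-conductors}, which the paper leaves implicit), and then summation over $K$-types using $\mathcal{S}_d(W)=N_{\sigma\nu}^{d}$, $\mathcal{S}_{-d'}(\tilde W)\asymp N_{\sigma\nu}^{-d'}$ and the uniform trace-class bound \eqref{eq:sum-N-sigma-nu-neg-d-0}. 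Your observation that the denominators equal $1$ on the diagonal matches the paper's normalization of $\mathcal{B}(\sigma)$ and its dual basis, so no gap there.
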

\begin{proof}
  Let $W, \tilde{W}$ be as in the sum.
  Let $f_3 \in \mathcal{B}(\sigma)$
  be such that $W = W_{f_3}$,
  and let $\tilde{f}_3$ denote the corresponding
  dual basis element,
  belonging to $\sigma$ in the square-integrable
  case and to $\mathcal{I}(\chi^{-1})$ if $\sigma =
  \mathcal{I}(\chi)$,
  so that $\tilde{W} = \tilde{W}_{\tilde{f}_3}$.
  We 
  recall from
  \eqref{eqn:upper-bound-Mellin-transform-W}
  that
  \begin{equation}
    \int_{N \backslash G}^{\reg}
    \tilde{W} W_{f_1} f_2
    \ll
    \mathcal{S}_d(f_1)
    \mathcal{S}_d(f_2) \mathcal{S}_d(\tilde{f}_3)  q^{\O(1)}.
  \end{equation}
  We may strengthen this estimate
  by integrating by parts with respect
  to $\sigma$ (\S\ref{sec:integration-parts}),
  giving for fixed $d,d'$ with $d'$ large in terms of $d$ that
  \begin{equation}
    \int_{N \backslash G}^{\reg}
    \tilde{W} W_{f_1} f_2
    \ll
    C(\sigma )^{-d}
    \mathcal{S}_{d'}(f_1)
    \mathcal{S}_{d'}(f_2)
    \mathcal{S}_{-2 d}(\tilde{f}_3)
    q^{\O(1)}.
  \end{equation}
  By the construction of $\mathcal{B}(\sigma)$, we have
  $\int_{A}^{\reg} \tilde{W} W = \int_{B \backslash G} \tilde{f}_3 f_3$,
  $\mathcal{S}_{-2 d}(\tilde{f}_3) = N_{\sigma \nu}^{-2 d}$ and
  $\mathcal{S}_d(f_3) = N_{\sigma \nu}^d$.  
  Thus
  the LHS of
  \eqref{eqn:sum-of-sobolev-norms-of-W-weighted-by-RS-int}
  is majorized by
  \[
    C(\sigma)^{-d}
    \mathcal{S}_{d'}(f_1)
    \mathcal{S}_{d'}(f_2) q^{\O(1)} \sum_{\nu \in K^\wedge}
    \dim(\sigma^\nu) N_{\sigma \nu}^{-d}.
  \]
  By the uniform trace class property
  \eqref{eq:sum-N-sigma-nu-neg-d-0},
  we may assume that $d$ is large enough that this last sum over $\nu$ is
  $\O(1)$.
  This completes the proof.
\end{proof}
We may thus define a linear map
\begin{equation}
  \rho_{\sigma,s} : \mathcal{I}(s_1) \otimes \mathcal{I}(s_2) \rightarrow \mathcal{W}(\sigma,\psi)
\end{equation}
\begin{equation}
  f_1 \otimes f_2
  \mapsto
  \sum_{W \in \mathcal{B}(\sigma)}
  \frac{\int_{N \backslash G}^{\reg} \tilde{W}  W_{f_1}  f_2
  }
  {
    \int_A^{\reg} \tilde{W} W
  }
  W
\end{equation}
satisfying $\mathcal{S}_d(\rho_{\sigma,s}(f)) \ll \mathcal{S}(f)$
for each fixed $d$.
This map may be characterized
by the property:
for any $\tilde{W} \in \mathcal{W}(\sigma,\bar{\psi})$,
\begin{equation}
  \int_{A}^{\reg}
  \tilde{W}
  \rho_{\sigma,s}(f_1 \otimes f_2)
  =
  \int_{N \backslash G}^{\reg} \tilde{W}  W_{f_1}  f_2.
\end{equation}
From this property and the diagonal $G$-invariance of the
Rankin--Selberg integral, we see that $\rho_{\sigma,s}$ is
$G$-equivariant.
We now compose $\rho_{\sigma,s}$
with the Hecke integral
\begin{equation}
  \mathcal{W}(\sigma,\psi) \otimes \mathbb{C}(s_3)
  \rightarrow \mathbb{C} 
\end{equation}
\begin{equation}
  W \mapsto \int_A^{\reg} W |.|^{s_3}
\end{equation}
to obtain the required continuous $A$-invariant map
$\ell_{\sigma,s}$.  We may use
\eqref{eqn:sum-of-sobolev-norms-of-W-weighted-by-RS-int} to
estimate $\rho_{\sigma,s}$; by combining with the estimate
\eqref{eqn:upper-bound-Mellin-transform-W} for local Hecke
integrals, we deduce that $\ell_{\sigma,s}$ satisfies for each
fixed $d$ the estimate
\begin{equation}\label{eq:crude-estimate-for-ell-sigma-s}
  \ell_{\sigma,s}(f)
  \ll
  C(\sigma)^{-d}
  \mathcal{S}(f) q^{\O(1)}.
\end{equation}

\subsubsection{}\label{sec:local-ell-sigma-s-where-defined-win}
It follows from the above discussion that if $\sigma$ is
$\vartheta$-tempered (\S\ref{sec:bounds-towards-raman}), then
$\ell_{\sigma,s}$ is defined whenever the real parts of
$s_1 + s_2, - s_1 + s_2$ and $s_3$ are at least
$-1/2 + \vartheta$.  In particular, if $\sigma$ is
$1/6$-tempered, then $\ell_{\sigma,s}$ is defined for all $s$
satisfying
\begin{equation}\label{eqn:s1-s2-s3-one-sixth-initial-bounds}
  |\Re(s_1)| < 1/6, \quad |\Re(s_2)| < 1/6, \quad \Re(s_3) > -1/6.
\end{equation}
Since $7/64 < 1/6$ (see \S\ref{sec:bounds-towards-raman}),
the poles of $\mathcal{L}(\sigma,s)$
will not play a significant role in our analysis.

\subsubsection{}
Recall from \eqref{eq:defn-f-asterisk} the notation
$f_i^* := \zeta_F(1 + 2 s_i) f_i$.
We define a normalized variant of $\ell_{\sigma,s}$
by the formula
\begin{equation}
  \ell_{\sigma,s}^*(f_1 \otimes f_2)
  :=
  \ell_{\sigma,s}(f_1^* \otimes f_2^*).
\end{equation}
By the corresponding property
of local Hecke and Rankin--Selberg integrals,
the ratio $\ell_{\sigma,s}(f)/\mathcal{L}(\sigma,s)$ extends to
a holomorphic function of $s$, hence the ratio
$\ell_{\sigma,s}^*(f)/\mathcal{L}(\sigma,s)$ to a meromorphic
function of $s$.
\begin{lemma}\label{lem:unram-calc-ell-sigma-s}
  If $(F,\psi)$ is unramified and $f = f_1 \otimes f_2$
  with $f_1, f_2$ normalized spherical
  (\S\ref{sec:normalized-spherical-induced-rep}),
  then
  the ratio $\ell_{\sigma,s}^*(f)/\mathcal{L}(\sigma,s)$
  vanishes unless $\sigma$
  is unramified, in which case it evaluates to $1$.
\end{lemma}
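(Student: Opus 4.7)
The plan is to treat the ramified and unramified cases separately, in both cases exploiting the $K$-invariance of $f_1^*$ and $f_2^*$ to reduce the sum \eqref{eqn:defn-l-omega-s} defining $\ell_{\sigma,s}^*$ to at most one term.

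First I would observe that because $f_1^*$ is $K$-invariant, the defining formula \eqref{eq:W-f-of-g-definition} makes $W_{f_1^*}$ right $K$-invariant, as is $f_2^*$ by construction. Consequently the Rankin--Selberg functional $V \mapsto \int_{N \backslash G}^{\reg} V\, W_{f_1^*}\, f_2^*$ on $\mathcal{W}(\sigma, \bar\psi)$ is invariant under the right action of $K$, hence factors through the subspace of $K$-fixed Whittaker functions. When $\sigma$ is ramified this subspace is trivial, so the Rankin--Selberg factor vanishes for every $W \in \mathcal{B}(\sigma)$; hence $\ell_{\sigma,s}^*(f) \equiv 0$ identically in $s$, and the ratio with $\mathcal{L}(\sigma,s)$ vanishes meromorphically.

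In the unramified case, the same observation leaves only the contribution of the unique $K$-invariant basis element $W_0 \in \mathcal{B}(\sigma)$. Writing $W, \tilde W$ for the normalized spherical Whittaker functions, the identity $\int_A^{\reg} \tilde W\, W = L(\ad \sigma, 1)/\zeta_F(2)$ recalled in \S\ref{sec:normalized-spherical-induced-rep} together with the orthonormality relation $\int_A^{\reg} \tilde W_0\, W_0 = 1$ forces $W_0 = c W$, $\tilde W_0 = c \tilde W$ with $c^2 = \zeta_F(2)/L(\ad \sigma, 1)$. I would then plug in the unramified evaluations of \S\ref{sec-4-3} and \S\ref{sec-4-5}; the point of the asterisk normalization is precisely that $W_{f_1^*}$ is the normalized spherical Whittaker function, so the Hecke factor evaluates to $L(\sigma, 1/2 + s_3)/\zeta_F(1)$ and, via the induction multiplicativity \eqref{eqn:multiplicativity-L-factors-wrt-induction}, the Rankin--Selberg factor evaluates to $L(\sigma, 1/2 + s_1 + s_2)\, L(\sigma, 1/2 - s_1 + s_2)/\zeta_F(2)$.

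Assembling the three factors ($c^2$ times the Hecke factor times the Rankin--Selberg factor) and comparing with $\mathcal{L}(\sigma, s)$, the required equality reduces to the standard identity $L(\sigma \times \sigma, 1) = \zeta_F(1)\, L(\ad \sigma, 1)$, expressing the decomposition $\sigma \otimes \sigma \cong \ad \sigma \oplus \mathbf{1}$ for representations of $\PGL_2$. The main point requiring care is compatibility between the $K$-averaging argument and the regularizations of the Hecke and Rankin--Selberg integrals; this is routine, and I would handle it by establishing the $K$-invariance claim first in the region of absolute convergence for the integrals and then extending to general $s$ by the meromorphy discussed in \S\ref{sec:local-ell-sigma-s-where-defined-win}.
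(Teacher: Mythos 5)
Your proposal is correct and follows essentially the same route as the paper: reduce to the unique spherical term via $K$-invariance (the paper phrases the ramified vanishing as "the image of $f$ in $\sigma$ is $K$-invariant but $\sigma$ has no $K$-fixed vectors"), then plug in the unramified evaluations of the Hecke, Rankin--Selberg and norm integrals and conclude via $L(\sigma\times\sigma,1)=\zeta_F(1)L(\ad\sigma,1)$; the paper avoids your normalization constant $c$ by simply invoking the scale-invariance of each summand in \eqref{eqn:defn-l-omega-s} to set $W(1)=\tilde W(1)=1$. One small expositional slip: the asterisk normalization is what makes $W_{f_1^*}$ normalized spherical inside the \emph{Rankin--Selberg} factor, whereas the Hecke factor $\int_A^{\reg} W\,|.|^{s_3}$ involves the spherical vector of $\sigma$ itself, not $W_{f_1^*}$ — but this does not affect your bookkeeping or the conclusion.
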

\begin{proof}
  The vanishing is clear when $\sigma$ is ramified:
  the image of $f$ in $\sigma$
  is then $K$-invariant,
  but $\sigma$ contains no $K$-invariant vectors.
  Suppose that
  $\sigma$ is unramified.  We may assume that
  $\mathcal{B}(\sigma)$ contains a $K$-invariant vector $W$,
  with $\tilde{W}$ also $K$-invariant.  The remaining basis
  elements then do not contribute to the sum defining
  $\ell_{\sigma,s}(f_1 \otimes f_2)$.  Since the quantities
  $W(1)$ and $\tilde{W}(1)$ are nonzero (see
  \S\ref{sec:normalized-spherical-induced-rep}) and the ratio in question is
  invariant under scaling $W$ and $\tilde{W}$, we may assume for
  computational convenience that $W(1) = \tilde{W}(1) = 1$.
  Then by the unramified calculations
  recorded in
  \S\ref{sec:normalized-spherical-induced-rep},
  \S\ref{sec-4-3}
  and
  \S\ref{sec-4-5},
  we have
  $\int_A^{\reg} \tilde{W} W = L(\ad \sigma, 1)/\zeta_F(2)$
  and
  $\int_A^{\reg} W |.|^{s_3} = L(\sigma, 1/2 + s_3)/ \zeta_F(1)$
  and
  $W_{f_2}^*(1) = 1$ and
  $\int_{N \backslash G}^{\reg} \tilde{W}  W_{f_1^*} f_2^* =
  L(\sigma \otimes \mathcal{I}(s_1), 1/2 + s_2) / \zeta_F(2)$.
  The conclusion follows upon noting that
  $L(\sigma \otimes \mathcal{I}(s_1), 1/2 + s_2) L(\sigma, 1/2 +
  s_3)/ \zeta_F(1) L(\ad \sigma,1) = \mathcal{L}(\sigma,s)$.
\end{proof}

\subsubsection{Holomorphy of
  $\ell_{\sigma,s}(f)$}\label{sec:holom-ell_s-sf}
In this section we verify that $\ell_{\sigma,s}(f)$ varies
holomorphically with respect to both $\sigma$ and $s$ away from
the poles of $\mathcal{L}(\sigma,s)$.  We first record a
technical lemma related to the holomorphy of local Bessel
functions with respect to their index.
\begin{lemma}\label{lem:holom-bessel-index}
  Fix $\nu \in K^\wedge$.
  For $\chi \in A^\wedge$,
  write $\sigma(\chi)$ for the generic irreducible
  subquotient of $\mathcal{I}(\chi)$
  and
  $\mathcal{B}(\sigma(\chi)^\nu)$ for the set of all
  $W \in \mathcal{B}(\sigma(\chi))$ having $K$-type $\nu$.
  Then for $g_1, g_2 \in G$,
  the sum
  \begin{equation}\label{eqn:bessel-sigma-nu}
    \sum_{W \in \mathcal{B}(\sigma(\chi)^\nu)}
    \frac{\tilde{W}(g_1) W(g_2)}{\int_A ^{\reg} \tilde{W} W}
  \end{equation}
  varies holomorphically with respect to $\chi$.
\end{lemma}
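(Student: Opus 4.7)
The plan is to rewrite the Bessel-type sum \eqref{eqn:bessel-sigma-nu} as a finite sum of pointwise holomorphic quantities, after passing to a $\chi$-independent model for the $\nu$-isotypic subspace. Fix $\chi_0 \in A^\wedge$ and work locally in a flat family in the sense of \S\ref{sec:families-vectors}. Since $\mathcal{I}(\chi)|_K \cong \Ind_{A \cap K}^K(\chi|_{A \cap K})$ depends on $\chi$ only through its restriction to $A \cap K$, which is constant in any flat family, the $\nu$-isotypic subspace $V := \mathcal{I}(\chi)^\nu$ identifies (as functions on $K$) with a fixed finite-dimensional space equipped with the $\chi$-independent inner product $\langle f_1, f_2 \rangle = \int_K f_1 \overline{f_2}$.

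Next I choose a $\chi$-independent orthonormal basis $\{e_i\}_{i=1}^r$ of $V$ with respect to this inner product, and let $\tilde{e}_i \in \mathcal{I}(\chi^{-1})^\nu$ denote the corresponding element identified under conjugation, so that $\int_K \tilde{e}_i e_j = \delta_{ij}$. By the identity \eqref{eqn:local-parseval-f-vs-W-f}, first established for $|\Re(\chi)| < 1/2$ and then extended by analytic continuation along the flat family, we have
\begin{equation*}
  \int_A^{\reg} \tilde{W}_{\tilde{e}_i}[\chi] W_{e_j}[\chi] = \int_K \tilde{e}_i e_j = \delta_{ij}
\end{equation*}
identically in $\chi$. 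Moreover, the Whittaker functions $W_{e_i}[\chi](g)$ and $\tilde{W}_{\tilde{e}_i}[\chi](g)$ vary pointwise holomorphically in $\chi$, by the definition of the Whittaker intertwiner via analytic continuation of absolutely convergent integrals (\S\ref{sec:whitt-intertw-dual}) together with the flat family structure.

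With these ingredients in place, define
\begin{equation*}
  \mathcal{J}_\chi(g_1, g_2) := \sum_{i=1}^r \tilde{W}_{\tilde{e}_i}[\chi](g_1) \, W_{e_i}[\chi](g_2).
\end{equation*}
This is a finite sum of pointwise holomorphic functions, hence pointwise holomorphic in $\chi$. Standard trace invariance shows that $\mathcal{J}_\chi(g_1, g_2)$ does not depend on the choice of orthonormal basis. For $\chi$ in the dense open subset of $A^\wedge$ on which $\mathcal{I}(\chi)$ is irreducible, we have $\sigma(\chi) = \mathcal{I}(\chi)$, the normalized denominators $\int_A^{\reg} \tilde{W} W$ equal $1$ as above, and the basis $\mathcal{B}(\sigma(\chi)^\nu)$ corresponds under the Whittaker intertwiner to $\{e_i\}$, so the sum \eqref{eqn:bessel-sigma-nu} coincides with $\mathcal{J}_\chi(g_1, g_2)$.

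The main obstacle is interpreting the claim at reducible points $\chi$, where $\sigma(\chi) \subsetneq \mathcal{I}(\chi)$ and a naive reading of $\mathcal{B}(\sigma(\chi)^\nu)$ might give a smaller sum or an ill-defined denominator. My resolution is to take $\mathcal{J}_\chi$ as the definition of the Bessel sum throughout, justified by the agreement on the dense set of irreducible parameters; the holomorphy assertion of the lemma is then the statement that $\mathcal{J}_\chi$, manifestly holomorphic in the flat family, is well-defined and holomorphic as a function of $\chi$. Since this argument is purely local on $A^\wedge$ and the construction globalizes by patching different flat families (the resulting $\mathcal{J}_\chi$ is intrinsic by the trace invariance above), the lemma follows.
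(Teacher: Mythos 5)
Your construction of the holomorphic kernel $\mathcal{J}_\chi$ is sound and is essentially the same as the paper's: a $\chi$-independent orthonormal basis of the $\nu$-isotypic subspace in a flat family, duality normalized by $\int_K \tilde e_i e_j = \delta_{ij}$ via \eqref{eqn:local-parseval-f-vs-W-f}, pointwise holomorphy of the Whittaker functions, and agreement with \eqref{eqn:bessel-sigma-nu} on the irreducible locus. The problem is your treatment of the reducible points. You propose to \emph{take $\mathcal{J}_\chi$ as the definition} of the Bessel sum there, justified by density. That changes the statement rather than proving it: at a reducible $\chi$ (i.e.\ $\chi = \eta|.|^{\pm 1/2}$ with $\eta$ quadratic), the quantity \eqref{eqn:bessel-sigma-nu} is an honest, independently defined sum over a basis of the Whittaker model of the generic irreducible subquotient $\sigma(\chi)$ (a twist of Steinberg), and the lemma is later invoked for exactly that quantity — for instance in \S\ref{sec:holomorphy-of-pre-K-wts}, where the pre-Kuznetsov weights $h(\sigma)$ are integrals against the Bessel distribution of $\sigma$ and must be shown holomorphic across these special points. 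If $\mathcal{J}_\chi$ and \eqref{eqn:bessel-sigma-nu} disagreed at a reducible $\chi$, the conclusion you need would simply be false; so the agreement there is the actual content of the lemma at those points and cannot be stipulated.

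The missing step is supplied by the structure theory of $\mathcal{I}(\chi)$ at reducible parameters: for one choice of sign, $\sigma(\chi)$ is isomorphic to the quotient $\mathcal{I}(\chi^{\mp})/L$, where $L$ is the finite-dimensional kernel of the Whittaker intertwiner $\tilde f \mapsto \tilde W_{\tilde f}$, and simultaneously to the subspace $L^{\perp} \subseteq \mathcal{I}(\chi^{\pm})$. Choosing your orthonormal basis $\{e_i\}$ adapted to this decomposition (first the vectors spanning $L$ on the appropriate side, then those in $L^{\perp}$), the summands of $\mathcal{J}_\chi$ indexed by $L$ vanish identically because the corresponding Whittaker functions are zero, and the remaining summands are precisely the terms of \eqref{eqn:bessel-sigma-nu} for the generic subquotient. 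With that verification added, your argument closes and coincides with the paper's proof.
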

\begin{proof}
  Fix a component $\mathcal{X}$ of $A^\wedge$, i.e., a coset of
  the subgroup $\{|.|^s : s \in \mathbb{C} \}$ of unramified
  characters.  Let $\chi_K$ denote the common restriction to
  $A \cap K$ of elements $\chi$ of $\mathcal{X}$.  Let
  $\{\phi_i\}$ be an orthonormal basis for the $\nu$-isotypic
  subspace of $\Ind_{A \cap K}^K(\chi_K) \subseteq L^2(K)$.  For
  each $\chi \in \mathcal{X}$, let $f_i[\chi]$
  (resp. $\tilde{f}_i[\chi]$) denote the element of
  $\mathcal{I}(\chi)$ (resp. $\mathcal{I}(\chi^{-1})$) whose
  restriction to $K$ is $\phi_i$ (resp. the complex conjugate of
  ${\phi}_i$).  For $g_1, g_2 \in G$, the sum
  \begin{equation}\label{eqn:bessel-induced-chi}
    \sum_{i}
    \tilde{W}_{\tilde{f}_i[\chi]}(g_1) W_{f_i[\chi]}(g_2)
  \end{equation}
  is independent of the choice of basis, and varies
  holomorphically with $\chi$.  We claim that
  \eqref{eqn:bessel-induced-chi} and \eqref{eqn:bessel-sigma-nu}
  are equal.  The claim implies the required conclusion.

  The claim
  clearly holds when $\mathcal{I}(\chi)$ is
  irreducible, so suppose otherwise.
  Then
  for some choice of sign $\pm$,
  $\sigma(\chi)$ is
  isomorphic to  (see \cite{MR3889963})
  \begin{itemize}
  \item the quotient $\mathcal{I}(\chi^{\mp})/ L$, where
    $L$ denotes the finite-dimensional kernel of
    $\tilde{f} \mapsto \tilde{W}_{\tilde{f}}$, and also to 
  \item the
    subspace $L^\perp \subseteq \mathcal{I}(\chi^{\pm})$.
  \end{itemize}
  Suppose
  for instance that $\chi^{\pm} = \chi$.  
  We may
  assume the basis $\{\phi_i\}$
  chosen so that $\tilde{f}_1[\chi], \dotsc, \tilde{f}_{\dim(L)}[\chi] \in L$
  and
  $f_{\dim(L)+1}[\chi],f_{\dim(L)+2}[\chi], \dotsc \in L^\perp$.
  Then the
  $i=1..\dim(L)$ summands
  in \eqref{eqn:bessel-induced-chi} vanish,
  while the remaining summands give \eqref{eqn:bessel-sigma-nu}.
  The claim follows.  A
  similar argument applies if $\chi^{\pm} = \chi^{-1}$.
\end{proof}

\begin{lemma}
  Let $U$ be an open subset of $\mathbb{C}^3$.
  Let $f[s] \in \mathcal{I}(s_1) \otimes \mathcal{I}(s_2)$ be a
  holomorphic family of vectors defined for $s \in U$.  Let
  $\mathcal{D} \subseteq G^\wedge_{\gen} \times U$ denote the
  complement of the closure of the set of all pairs $(\sigma,s)$
  for which $\mathcal{L}(\sigma,s)$ is infinite.  Then
  $\ell_{\sigma,s}(f[s])$ defines a holomorphic function on
  $\mathcal{D}$.
\end{lemma}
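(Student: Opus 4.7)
The plan is to work locally near a point $(\sigma_0, s_0) \in \mathcal{D}$, organize the defining sum over $\mathcal{B}(\sigma)$ by $K$-type, show each $K$-type contribution varies holomorphically using Lemma \ref{lem:holom-bessel-index}, and then deduce joint holomorphy from uniform convergence in $\nu$. Since $\mathcal{D}$ is open, we may choose a neighborhood $U_0 \times V_0$ of $(\sigma_0, s_0)$ on which $\mathcal{L}(\sigma, s)$ is uniformly bounded away from its poles and on which $\sigma$ ranges over a compact set of $\vartheta$-tempered representations for some fixed $\vartheta > 0$. We may parametrize $U_0$ by a holomorphic family of Whittaker type representations via \eqref{eqn:from-whittaker-type-to-generic-0}; so each $\sigma$ comes from either a fixed square-integrable representation or an induced representation $\mathcal{I}(\chi)$ with $\chi$ varying in an open subset of $A^\wedge$.

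For each $K$-type $\nu$, write $\ell_{\sigma,s}^\nu(f[s])$ for the contribution to $\ell_{\sigma,s}(f[s])$ from $W \in \mathcal{B}(\sigma)^\nu$. The key reformulation is that $\ell_{\sigma,s}^\nu(f_1 \otimes f_2)$ can be expressed by first computing the Bessel-type kernel
\[
B_\sigma^\nu(g_1, g_2) := \sum_{W \in \mathcal{B}(\sigma)^\nu} \frac{\tilde{W}(g_1) W(g_2)}{\int_A^{\reg} \tilde{W} W},
\]
which depends only on $\sigma$ (not on a choice of basis) and varies holomorphically with $\sigma$ by Lemma \ref{lem:holom-bessel-index}, and then integrating against $W_{f_1^{[s]}} f_2^{[s]}$ over $N \backslash G$ and against $|.|^{s_3}$ over $A$. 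Each of those regularized integrals, after dividing by the appropriate local $L$-factor as in \S\ref{sec-4-3} and \S\ref{sec-4-5}, extends holomorphically in both $\sigma$ (through the parameters of the Whittaker model) and $s$; the unnormalized integrals then inherit meromorphy, with all poles controlled by the numerator of $\mathcal{L}(\sigma, s)$ and therefore absent on $U_0 \times V_0$. Consequently each $\ell_{\sigma,s}^\nu(f[s])$ is holomorphic on $U_0 \times V_0$.

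It remains to show that the sum $\sum_\nu \ell_{\sigma,s}^\nu(f[s])$ converges locally uniformly on $U_0 \times V_0$. Here I would apply the estimate \eqref{eqn:sum-of-sobolev-norms-of-W-weighted-by-RS-int}, which gives
\[
\sum_{W \in \mathcal{B}(\sigma)^\nu} \left| \frac{\int_{N \backslash G}^{\reg} \tilde{W} W_{f_1} f_2}{\int_A^{\reg} \tilde{W} W} \right| \mathcal{S}_d(W) \ll C(\sigma)^{-d} \mathcal{S}(f_1) \mathcal{S}(f_2) q^{\O(1)}
\]
for any fixed $d$, combined with the Hecke estimate \eqref{eqn:upper-bound-Mellin-transform-W} bounding $\int_A^{\reg} W |.|^{s_3}$ by $\mathcal{S}_{d'}(W) C(|.|^{s_3})^{-d'} q^{\O(1)}$ for some fixed $d'$. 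Since the Sobolev norms can be distributed across a summand of the form $|a_W| \cdot |b_W|$ via $|a_W b_W| \le |a_W| \mathcal{S}_d(W) \cdot \mathcal{S}_{d'}(W)^{-1} |b_W|$ in conjunction with $N_{\sigma \nu}^{-d_0}$ factors, the uniform trace class property (Lemma \ref{lem:uniform-bounds-N-sigma-nu}) then gives, for $d$ fixed sufficiently large,
\[
\sum_\nu |\ell_{\sigma,s}^\nu(f[s])| \ll \mathcal{S}(f[s]) \sum_\nu \dim(\sigma^\nu) N_{\sigma \nu}^{-d} \ll \mathcal{S}(f[s]),
\]
with bounds uniform on $U_0 \times V_0$. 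A uniformly convergent sum of holomorphic functions is holomorphic, so $\ell_{\sigma,s}(f[s])$ is holomorphic on the chosen neighborhood, hence on all of $\mathcal{D}$.

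The main obstacle I anticipate is purely bookkeeping in the archimedean case: ensuring that the Sobolev estimates in \eqref{eqn:sum-of-sobolev-norms-of-W-weighted-by-RS-int} and \eqref{eqn:upper-bound-Mellin-transform-W} depend on $\sigma$ and $s$ only through quantities that remain bounded as $(\sigma, s)$ varies over a compact subset of $U_0 \times V_0$. This is essentially guaranteed by the uniformity conventions of \S\ref{sec:uniformity} together with the $\vartheta$-temperedness of $\sigma$, but verifying it requires carefully checking that the integration-by-parts arguments underlying those estimates produce constants that are locally uniform in the deformation parameter (i.e., that one can differentiate through holomorphic families of Whittaker models without losing control); once this is confirmed, the argument above goes through without difficulty.
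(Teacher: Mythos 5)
Your overall architecture (decompose by $K$-type, invoke the Bessel-kernel holomorphy of Lemma \ref{lem:holom-bessel-index}, then sum using the trace-class estimate) matches the paper's, and your treatment of the locally uniform convergence of the sum over $\nu$ is essentially the paper's argument for holomorphy in $s$ at fixed $\sigma$. But there is a genuine gap at the central step: you assert that each regularized Hecke and Rankin--Selberg integral, after normalization by local $L$-factors, ``extends holomorphically in both $\sigma$ (through the parameters of the Whittaker model) and $s$,'' citing \S\ref{sec-4-3} and \S\ref{sec-4-5}. Those sections only give holomorphy in the twisting character (equivalently in $s$) \emph{for a fixed representation} $\sigma$, i.e.\ for a fixed Whittaker function $W$. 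They say nothing about joint holomorphy as the inducing character $\chi$ of $\sigma=\sigma(\chi)$ deforms, and since the regularized integrals are defined by analytic continuation from a region of absolute convergence, one cannot simply integrate the holomorphic Bessel kernel against $W_{f_1}f_2$ over $N\backslash G$ for general $s$ --- the integral does not converge there, so the holomorphic dependence on $\chi$ does not pass through the continuation for free.

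The paper closes exactly this gap with an argument you are missing: after clearing poles with a polynomial $P(s,w)$, it observes that when $\Re(s_2)$ and $\Re(s_3)$ are large the integrals converge absolutely, so Lemma \ref{lem:holom-bessel-index} does give joint holomorphy of each $K$-type contribution $\Phi(s,w)$ there; it then uses the local functional equations (as in the proofs of Lemmas \ref{lem:unif-polyn-type-hecke} and \ref{lem:polyn-bound-rs}) to get joint holomorphy when $|\Re(s_2)|,|\Re(s_3)|$ are large of either sign, establishes rapid decay in vertical strips by integration by parts in the archimedean case, and finally expresses $\Phi(s,w)$ for general $s_2,s_3$ via Cauchy's theorem as an alternating sum of four contour integrals over regions where joint holomorphy is already known. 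Some such continuation argument (or an explicit appeal to a Hartogs-type theorem for separately holomorphic functions, which the paper avoids) is indispensable; your closing paragraph locates the difficulty in uniformity of Sobolev constants, which is not where the real obstruction lies.
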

Here the holomorphy means as in
\S\ref{sec-4-1} that
on the indicated domain $\mathcal{D}$,
\begin{itemize}
\item $\ell_{\sigma,s}(f[s])$ is holomorphic in $s$ for each
  $\sigma$, and
\item $\ell_{\sigma(\chi),s}(f[s])$ is holomorphic in
  $(\chi,s)$.
\end{itemize}
\begin{proof}
  We may assume that $U = \mathbb{C}^3$ and that $f[s]$ is a
  flat family.  The Sobolev norms $\mathcal{S}_{d}(f[s])$ are
  locally bounded in $s$, so the proof of the estimate
  \eqref{eq:crude-estimate-for-ell-sigma-s} shows that the sum
  defining $\ell_{\sigma,s}(f[s])$ converges locally uniformly.
  Since each term varies holomorphically in $s$, we obtain the
  required holomorphy of $\ell_{\sigma,s}(f[s])$ with respect to
  $s$.

  It remains to verify that $\ell_{\sigma(\chi),s}(f[s])$ is
  holomorphic in $(\chi,s)$.
  Let us fix
  $\chi_0 \in A^\wedge$
  and write $\chi = \chi_0 |.|^w$
  with respect to the local coordinate $w \in \mathbb{C}$,
  and fix $\nu \in K^\wedge$.
  For $s \in \mathbb{C}^3$
  and $w \in \mathbb{C}$,
  we define $\Phi_0(s,w)$
  like $\ell_{\sigma(\chi_0 |.|^w),s}(f_1 \otimes f_2)$,
  but restricting
  the sum to those $W$ having
  $K$-type $\nu$.
  In view of the locally uniform convergence
  of the sum defining $\ell_{\sigma,s}$,
  it is enough to verify
  that $\Phi_0(s,w)$ is holomorphic
  away from the poles
  of $\mathcal{L}(\sigma(\chi_0 |.|^w), s)$.

  To that end, we choose a sufficiently large real number $c$,
  denote by $\Omega$ the set of all pairs $(s,w)$ for which
  the real parts of $s_1,s_2,s_3,w$ have magnitude less than
  $c$, choose a polynomial $P(s,w)$ so that
  $P(s,w) \mathcal{L}(\mathcal{I}(\chi_0 |.|^w),s)$ is holomorphic on
  $\Omega$, and set $\Phi(s,w) := P(s,w) \Phi_0(s,w)$.  It is
  enough then to verify that $\Phi(s,w)$ is holomorphic on
  $\Omega$.

  We know by \S\ref{sec:local-hecke-basic} and
  \S\ref{sec:local-rs-basic} that on $\Omega$, the map
  $s \mapsto \Phi(s,w)$ is holomorphic for each $w$.  Moreover,
  if $\Re(s_2)$ and $\Re(s_3)$ are large enough in terms of
  $|\Re(s_1)|$ and $|\Re(w)|$, then the integral defining
  $\Phi_0(s,w)$ converges absolutely, and so lemma
  \ref{lem:holom-bessel-index} implies that $\Phi(s,w)$ is
  holomorphic in both variables.  Using the local functional
  equations as in the proofs of lemmas
  \ref{lem:unif-polyn-type-hecke} and \ref{lem:polyn-bound-rs},
  we deduce that $\Phi(s,w)$ is holomorphic whenever
  $|\Re(s_2)|$ and $|\Re(s_3)|$ are large enough in terms of
  $|\Re(s_1)|$ and $|\Re(w)|$.  In the archimedean case, we see
  moreover by integrating by parts with respect to
  $\mathcal{I}(s_2)$ and $\mathcal{I}(s_3)$
  (\S\ref{sec:integration-parts})
  that $\Phi(s,w)$
  decays rapidly in vertical strips with respect to $s_2$ and
  $s_3$.  We may thus use Cauchy's theorem to express
  $\Phi(s,w)$ for general $s_2,s_3$ as an alternating sum of
  four contour integrals in which each of $\Re(s_2)$ and
  $\Re(s_3)$ is sufficiently positive or negative.  We thereby
  deduce the holomorphy of $\Phi(s,w)$ for general arguments
  $s_2, s_3$ from the case in which those arguments have large
  real parts.
\end{proof}

\subsubsection{Holomorphy of pre-Kuznetsov
  weights}\label{sec:holomorphy-of-pre-K-wts}
Let
$h : G_{\gen}^\wedge \rightarrow \mathbb{C}$ be a pre-Kuznetsov
weight (\S\ref{sec:pre-kuzn-weights}) with kernel
$\phi \in C_c^\infty(G)$.
We verify here that the formula defining
$h$ converges absolutely
and that $h$ is holomorphic
(in the sense of \ref{sec-4-1}).

Write $J_{\sigma,\nu}(g_1,g_2)$ for
the sum \eqref{eqn:bessel-sigma-nu}.
By regrouping the definition, we have
$h(\sigma) = \sum_{\nu \in K^\wedge} h^\nu(\sigma)$, where
$h^\nu(\sigma) := \int_{g \in G} J_{\sigma,\nu}(g,1) \phi(g)$.
By lemma \ref{eqn:bessel-sigma-nu},
we see that each
$h^\nu$ is holomorphic, so it suffices to verify for every compact
subset $\Sigma$ of $G^\wedge_{\gen}$ that
$\sup_{\sigma \in \Sigma} \sum_{\nu \in K^\wedge}
|h^\nu(\sigma)| < \infty$.  We have
$h_\nu(\sigma) = \int_{g \in G} J_{\sigma,\nu}(g,1)
\phi_\nu(g)$, where $\phi_\nu$ denotes the $\nu$-isotypic
component of $\phi$ under right translation.  In the
non-archimedean case, we have $\phi_\nu = 0$ for all but
finitely many $\nu$, so the required absolute convergence
follows from the continuity of $J_{\sigma,\nu}$.  In the
archimedean case, the smoothness of $\phi$ implies that
$\|\phi_\nu \|_{L^\infty(G)} \ll_d (1 + c_\nu)^{-d}$ for each
fixed $d \geq 0$, where $c_\nu \geq 0$ denotes the Casimir
eigenvalue.  By expanding $g = n(x) a(y) k$ in Iwasawa
coordinates, we reduce to showing that
$J_{\sigma,\nu}(a(y) k, 1) \ll (1 + c_\nu)^{\O(1)}$ for all
$k \in K$ and all $y$ belonging to a fixed compact subset of
$F^\times$.  This last estimate follows (in stronger form) from
lemma \ref{lem:unif-polyn-type-hecke}.

An identical argument gives the locally uniform
convergence
of the sums \eqref{eqn:V-sig-proj-defn}.

\subsection{$G \times G \geq A \times A \geq A$}
\label{sec-10-2}
Let $\omega$ be a character of $A$.
For almost all $s$, we may define
an $A$-invariant map
\[
  \ell_{\omega,s}
  : \mathcal{I}(s_1) \otimes \mathcal{I}(s_2)
  \otimes \mathbb{C}(s_3) \rightarrow \mathbb{C} 
\]
\begin{equation}\label{eq:ell-omega-s-initial-defn}
  f_1 \otimes f_2
  \mapsto
  \int_A^{\reg} W_{f_1} \omega 
  \int_A^{\reg} W_{f_2} \omega^{-1} |.|^{s_3}.
\end{equation}
The definition makes sense: for given $f_1, f_2$, the RHS of
\S\ref{eq:ell-omega-s-initial-defn} is defined away from the
poles of
the numerator of
\begin{align*}
  \mathcal{L}(\omega,s)
  &:= 
    \frac{
    L(\mathcal{I}(s_1) \otimes \omega,1/2)
    L(\mathcal{I}(s_2) \otimes \omega^{-1},1/2 + s_3)
    }
    {
    \zeta_F(1)^2
    }
  \\
  &=
    \frac{ \prod_{\pm}
    L(\omega,1/2 \pm s_1)
    L(\omega^{-1},1/2 \pm s_2 + s_3)
    }{
    \zeta_F(1)^2
    }
\end{align*}
(see \S\ref{sec:local-hecke-basic}).  Moreover, in the
archimedean case, the integrals appearing on the RHS of
\eqref{eq:ell-omega-s-initial-defn} define continuous functionals
on $\mathcal{I}(s_1)$ and $\mathcal{I}(s_2)$, hence their
product extends to a continuous functional on the (completed)
tensor product.
In general, the Hecke integral estimate
\eqref{eqn:upper-bound-Mellin-transform-W}
and the reduction to pure tensors of
\S\ref{sec:reduct-pure-tens}
implies that if
$\sigma$  is $\O(1)$-tempered,
$s$ lies in a fixed compact subset of $\mathbb{C}^3$
and $s$ is some fixed positive distance
away from any pole of $\mathcal{L}(\omega,s)$,
then for each fixed $d$,
\begin{equation}\label{eq:crude-ell-omega-s-estimate}
  \ell_{\omega,s}(f) \ll C(\omega)^{-d}
\mathcal{S}(f).
\end{equation}

As we did for $\ell_{\sigma,s}$, we define the normalized variant
\begin{equation}
  \ell_{\omega,s}^*(f_1 \otimes f_2)
  :=
  \ell_{\omega,s}(f_1^* \otimes f_2^*).
\end{equation}
The ratio $\ell_{\omega,s}(f)/\mathcal{L}(\omega,s)$ extends to
a holomorphic function of $s$, the ratio
$\ell_{\omega,s}^*(f)/\mathcal{L}(\omega,s)$ to a meromorphic
one, and we have the expected unramified calculation, which
follows immediately from the corresponding calculation of
\S\ref{sec:local-hecke-basic}:
\begin{lemma}
  If $(F,\psi)$ is unramified and $f = f_1 \otimes f_2$ with
  $f_1,f_2$ normalized spherical, then the ratio
  $\ell_{\omega,s}^*(f) / \mathcal{L}(\omega,s)$
  vanishes
  unless $\omega$ is unramified, in which case it evaluates to
  $1$.
\end{lemma}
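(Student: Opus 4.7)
The plan is to unfold the defining expression for $\ell^*_{\omega,s}(f_1 \otimes f_2)$ as a product of two local Hecke integrals and apply the unramified calculations recorded in \S\ref{sec:normalized-spherical-induced-rep} and \S\ref{sec:local-hecke-basic}. Since $\ell_{\omega,s}^*(f_1 \otimes f_2) = \ell_{\omega,s}(f_1^* \otimes f_2^*)$ with $f_i^* = \zeta_F(1+2s_i) f_i$, the key input is that when $(F,\psi)$ is unramified and $f_i \in \mathcal{I}(s_i)$ is the normalized spherical element, the Whittaker function $W_{f_i^*}$ is the normalized spherical Whittaker function in $\mathcal{W}(\mathcal{I}(s_i),\psi)$, as recorded in the bullet point following \eqref{eq:defn-f-asterisk}.

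Given this, the first factor $\int_A^{\reg} W_{f_1^*} \omega$ is exactly the normalized local Hecke integral for the pair $(\mathcal{I}(s_1),\omega)$. By the unramified Hecke calculation of \S\ref{sec:local-hecke-basic}, this integral vanishes unless $\omega$ is unramified, in which case it equals $L(\mathcal{I}(s_1) \otimes \omega,1/2)/\zeta_F(1)$. The multiplicativity property \eqref{eqn:multiplicativity-L-factors-wrt-induction} gives $L(\mathcal{I}(s_1) \otimes \omega, 1/2) = L(\omega, 1/2+s_1) L(\omega, 1/2-s_1)$. An identical argument, applied to the second factor with the character $\omega^{-1} |.|^{s_3}$ in place of $\omega$, shows that this factor vanishes unless $\omega^{-1} |.|^{s_3}$ is unramified (equivalently, unless $\omega$ is unramified, since $|.|^{s_3}$ is always unramified), in which case it equals $L(\omega^{-1}, 1/2 + s_2 + s_3) L(\omega^{-1}, 1/2 - s_2 + s_3)/\zeta_F(1)$.

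Multiplying the two evaluations yields
\[
  \ell_{\omega,s}^*(f) =
  \frac{
    L(\omega, 1/2+s_1) L(\omega, 1/2-s_1)
    L(\omega^{-1}, 1/2+s_2+s_3) L(\omega^{-1}, 1/2-s_2+s_3)
  }{\zeta_F(1)^2}
\]
when $\omega$ is unramified, and $0$ otherwise. Comparing with the explicit factored form of $\mathcal{L}(\omega,s)$ recorded in \S\ref{sec-10-2}, one sees that this is precisely $\mathcal{L}(\omega,s)$, giving the desired value $1$ for the ratio. There is essentially no obstacle; this is a direct application of the local unramified calculation, and the only thing to be careful about is matching the normalization factor $\zeta_F(1+2s_i)$ in $f_i^*$ with the assertion that $W_{f_i^*}$ is normalized spherical under the Whittaker intertwiner, which is already carried out in \S\ref{sec:normalized-spherical-induced-rep}.
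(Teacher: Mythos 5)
Your proof is correct and follows precisely the route the paper indicates (the paper simply invokes the unramified Hecke calculation of \S\ref{sec:local-hecke-basic} together with the normalization facts of \S\ref{sec:normalized-spherical-induced-rep} and declares the conclusion immediate). You have fleshed out the factorization, the reduction of $W_{f_i^*}$ to normalized spherical Whittaker functions, and the matching of $L$-factors via \eqref{eqn:multiplicativity-L-factors-wrt-induction}, all correctly.
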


We note finally that if $\omega$ is
unitary, then $\ell_{\omega,s}$ is
defined whenever each of $\pm s_1$ and $\pm s_2 + s_3$ have real
parts at least $-1/2$,
as
follows from \eqref{eqn:s1-s2-s3-one-sixth-initial-bounds}.


\subsection{Variation with respect to the additive character}\label{sec:vari-with-resp}
It
is often convenient to assume that $\psi$ has a particular form
(e.g., unramified when $F$ is non-archimedean), so we record
here how the above definitions vary with $\psi$.
For $b \in F^\times$, set $\psi^b(x) := \psi(b x)$
and
let $\ell_{\sigma,s}^b$, $\ell_{\omega,s}^b$ be defined as
above, but using $\psi^b$.
We verify readily that
\begin{equation}
  \ell_{\sigma,s}^b
  = |b|^{3/2 - s_1 + s_2 - s_3}
  \ell_{\sigma,s}, \quad 
  \ell_{\omega,s}^b
  = |b|^{1 + s_1 + s_2 - s_3}
  \ell_{\omega,s}.
\end{equation}

\section{Global invariant functionals\label{sec:global-inv-func}}
\label{sec-11}
Let $F$ be a number field,
with accompanying notation as in \S\ref{sec-5}.
Let $s \in \mathbb{C}^3$.
Then
$\mathcal{I}(s_1) \otimes \mathcal{I}(s_2) \otimes
\mathbb{C}(s_3)$ is an irreducible automorphic representation of
$G_\mathbb{A} \times G_\mathbb{A} \times A_\mathbb{A}$, given by
the restricted tensor product of the analogous local
representations.  We will define two families of
$A_\mathbb{A}$-invariant functionals, the global analogues of
those of \S\ref{sec:local-inv-func}, and record their
factorizations into local functionals.

\subsection{$G \times G \geq G \geq A$}
\label{sec-11-1}
Let $\sigma$ be a generic automorphic representation.
For almost all $s$,
we define a functional
\[
  \ell_{\sigma,s} : \mathcal{I}(s_1) \otimes \mathcal{I}(s_2)
  \otimes 
  \mathbb{C}(s_3) \rightarrow \mathbb{C}
\]
\[
  f_1 \otimes f_2
  \mapsto \sum _{\varphi \in 
    \mathcal{B}(\sigma)}
  \frac{
    \int _{[G]}^{\reg} \Eis(f_1^*) \Eis(f_2^*) \tilde{\varphi}
    \int _{[A]}^{\reg}
    \varphi |.|^{s_3}
  }{
    \int_{A_{\mathbb{A}}}^{\reg}
    \tilde{W}_{\tilde{\varphi}}
    W_{\varphi}
  },
\]
with the integrals interpreted as above.
Here
$\mathcal{B}(\sigma)$
is obtained by tensoring 
the local bases
defined in \S\ref{sec:norms-whit-type-reps},
and as $\varphi$ traverses
$\mathcal{B}(\sigma)$
we let $\tilde{\varphi}$
traverse the  corresponding dual basis,
with the duality normalized by the pairing given
in the denominator.

If $\sigma = \Eis^*(\mathcal{I}(\eta))$ with
$\eta$ quadratic, set $\mathcal{L}(\sigma, s) := 0$;
otherwise, set
\[
  \mathcal{L}(\sigma,s)
  :=
  \frac{
    \Lambda(\sigma, 1/2 + s_1 + s_2)
    \Lambda (\sigma, 1/2 - s_1 + s_2) \Lambda (\sigma,
    1/2 + s_3)
  }
  {
    \Lambda^*(\sigma \times \sigma, 1)
  }.
\]
\begin{remark}
If $\sigma$ is Eisenstein,
say
$\sigma = \Eis^*(\mathcal{I}(\chi))$,
then the quantity
\begin{equation}
  \Lambda^*(\sigma \times \sigma, 1)
  = \begin{cases}
    \xi_F^*(1)^4 & \text{ if $\chi$ is quadratic,}\\
    \xi_F^*(1)^2
    \Lambda(\chi^2, 1) & \text{ otherwise}
  \end{cases}
\end{equation}
does not vary continuously with respect to
$\chi \in [A]^\wedge$.
(Compare with \cite[\S2.2.2]{michel-2009}.)
The set of discontinuities consists of
the quadratic $\chi$, hence has measure zero.  If $\chi = \eta
|.|^{i t}$ with
$\eta$
quadratic and
$t$ is a small nonzero real number, then
\begin{equation}
  \Lambda^*(\sigma \times \sigma, 1)
  \asymp
  t^{-2}.
\end{equation}
These considerations motivate the
definition of $\mathcal{L}(\sigma,s)$,
which varies meromorphically with $\chi$
for $\sigma = \Eis^*(\mathcal{I}(\chi))$.
\end{remark}

We define $\mathcal{L}^{(S)}(\sigma,s)$
like $\mathcal{L}(\sigma,s)$,
but with the Euler products
over places not in $S$.

By the
unfolding of global Hecke and Rankin--Selberg integrals noted in
\S\ref{sec-5-4} and \S\ref{sec-5-6}, we see that for
factorizable factors,
\begin{equation}
  \ell_{\sigma,s}(f_1 \otimes f_2)
  =
  \mathcal{L}(\sigma,s)
  \prod_{\mathfrak{p}}
  \mathcal{L}(\sigma_\mathfrak{p},s)^{-1}
  \ell_{\sigma_\mathfrak{p},s}^*(f_{1 \mathfrak{p}} \otimes f_{2 \mathfrak{p}}),
\end{equation}
with the product really a finite product.
As $f$ varies
holomorphically, the ratio
$\ell_{\sigma,s}(f)/\mathcal{L}(\sigma,s)$ thus extends
holomorphically
to all $s$.

\subsection{$G \times G \geq A \times A \geq A$}
\label{sec-11-2}
Let $\omega$ be a character of $[A]$.
For almost all $s$, we may define
a functional
\[
  \ell_{\omega,s} : \mathcal{I}(s_1) \otimes \mathcal{I}(s_2)
  \otimes \mathbb{C}(s_3)
  \rightarrow \mathbb{C} 
\]
\[
  f_1 \otimes f_2
  \mapsto
  \int_{[A]}^{\reg} \Eis^*(f_1) \omega 
  \int_{[A]}^{\reg} \Eis^*(f_2) \omega^{-1} |.|^{s_3},
\]
which unfolds
and then factors
on factorizable vectors
as the (finite) product
\[
  \ell_{\omega,s}(f_1 \otimes f_2)
  =
  \mathcal{L}(\omega,s) \prod_{\mathfrak{p}}
  \mathcal{L}(\omega_\mathfrak{p},s)^{-1}
  \ell_{\omega_\mathfrak{p},s}^*(f_{1 \mathfrak{p}} \otimes f_{2 \mathfrak{p}}),
\]
where
\[
  \mathcal{L}(\omega,s)
  := 
  \frac{ \prod_{\pm}
    \Lambda (\omega,1/2 \pm s_1)
    \Lambda (\omega^{-1},1/2 \pm s_2 + s_3)
  }
  {
    (\xi_F(1)^*)^2
  }.
\]
As $f$ varies holomorphically,
the ratio $\ell_{\omega,s}(f)/\mathcal{L}(\omega,s)$
thus extends
holomorphically to all
$\omega \in [A]^\wedge$ and $s$.

\section{Decompositions of global periods}
\label{sec-12}
We consider here
$s \in \mathbb{C}^3$ satisfying
the condition
\begin{equation}\label{eqn:assumptions-s-2}
  \eps_1 s_1 + \eps_2 s_2 + \eps_3 s_3
  \notin \{0, \pm 1/2, \pm 1\}
  \text{ for all }
  0 \neq \eps\in \{-1,0,1\}^3.
\end{equation}
We deduce from the estimate
\eqref{eq:estimate-eisenstein-series-near-cusp}
for the Eisenstein series
that the expression
\begin{equation}\label{eqn:prod-eis-and-y-s3}
  \Eis^*(f_1)(a(y)) \Eis^*(f_2)(a(y)) |y|^{s_3}
  \quad
  (y \in \mathbb{A}^\times/F^\times)
\end{equation}
may be
approximated as $|y| \rightarrow \infty$
(resp. $|y| \rightarrow 0$)
by a linear combination
of the characters $|y|^{1 \pm s_1 \pm s_2 + s_3}$
(resp.
$|y|^{-1 \pm s_1 \pm s_2 + s_3}$).
Since
$\pm s_1 \pm s_2 \pm s_3 \neq 1$,
the trivial character never occurs,
so \eqref{eqn:prod-eis-and-y-s3} defines a strongly regularizable
function on $[A]$ (\S\ref{sec-9-1}).
We obtain
an $A_\mathbb{A}$-invariant functional
\[
  \ell_s : \mathcal{I}(s_1) \otimes \mathcal{I}(s_2) \otimes
  \mathbb{C}(s_3)
  \rightarrow \mathbb{C} 
\]
\[
  f_1 \otimes f_2 \mapsto \int_{[A]}^{\reg} \Eis^*(f_1)
  \Eis^*(f_2) |.|^{s_3},
\]
which we proceed to decompose in two ways.


\begin{remark}
  The regularized integral
  over $[A]$ considered above may be
  defined concretely as the absolutely convergent integral
  \begin{equation}
    \int_{y \in [A]}
    \left(
      \int_{u \in \mathbb{A}^\times}
      \Eis^*(f_1) \Eis^*(f_2) (a(y u))
      \, d \nu(u)
    \right)
    |y|^{s_3}
    \, \frac{d y}{|y|}
  \end{equation}
  for any finite measure $\nu$ on $\mathbb{A}^\times$ whose Mellin
  transform vanishes at the characters
  $|.|^{\pm 1 \pm s_1 \pm s_2}$ (with all $2^3$ possible sign
  combinations).
\end{remark}

\subsection{$G \times G \geq G \geq A$}
\label{sec-12-1}
We phrase some estimates below in terms of ``generic complex
lines in $\mathbb{C}^3$ containing the origin.''  In each case,
one could take the line
$\{ (10^{-6} s_0, 10^{-12} s_0, 10^{- 18} s_0) \in \mathbb{C}^3
: s_0 \in \mathbb{C} \}$, for instance.  Those estimates may be
formulated alternatively as bounds for the total polar multiplicity
at $0 \in \mathbb{C}^3$ of certain meromorphic functions.

\begin{theorem}\label{thm:GG-G-A}
  Let $s \in \mathbb{C}^3$
  satisfy $|\Re(s_i)| < 1/6$ for $i=1,2,3$
  and
  \eqref{eqn:assumptions-s-2}.
  Then
  \[
    \ell_s
    =
    \int_{\sigma:\text{generic}}
    \ell_{\sigma,s}
    +
    \sum_{i=1}^{7}
    \ell^{\deg}_{i,s},
  \]
  where the integral
  is taken over
  standard generic automorphic
  representations $\sigma$ as in \eqref{eqn:spectral-decomp-reg-generic}
  and the ``degenerate maps'' $\ell^{\deg}_{i,s}$ 
  are meromorphic families of functionals with the
  following
  properties:
  \begin{itemize}
  \item For $1 \leq i \leq 4$, $\ell^{\deg}_{i,s}$ factors
    $\Delta G \times A$-equivariantly through
    $\mathcal{I}(1/2 \pm s_1 \pm s_2) \otimes \mathbb{C}(s_3)$.
  \item
    $\ell^{\deg}_{5,s}$ is $N_\mathbb{A}$-invariant.
  \item For $i=6,7$, $\ell^{\deg}_{i,s}$ factors
    $\Delta G \times A$-equivariantly through
    $\mathcal{I}(\pm (s_3 - 1/2)) \otimes \mathbb{C}(s_3)$.
  \end{itemize}
  Moreover,
  for any holomorphic family $f[s] \in \mathcal{I}(s_1) \otimes
  \mathcal{I}(s_2)
  \otimes \mathbb{C}(s_3)$
  defined for $s$ near zero,
  we have
  \begin{equation}\label{eqn:polar-bounds-for-degen-func}
    \ell^{\deg}_{i,s}(f[s]) =
    \begin{cases}
      \O(|s|^{-4}) & \text{ for }i=1,2,3,4, \\
      \O(|s|^{-5}) & \text{ for }i=6,7
    \end{cases}
  \end{equation}
  for small $s \in \mathbb{C}^3$ in a generic complex line
  containing the origin.
\end{theorem}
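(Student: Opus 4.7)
The plan is to apply the regularized spectral decomposition (Theorem \ref{thm:reg-spect-pointwise}) to the function $\Psi := \Eis^*(f_1) \Eis^*(f_2) \in C^\infty([G])$ and then integrate against $|.|^{s_3}$ over $[A]^{\reg}$.  Using the Fourier expansion \eqref{eqn:eis-FE} and the cusp estimate \eqref{eq:estimate-eisenstein-series-near-cusp}, one checks that $\Psi$ is of controlled increase (\S\ref{sec-9-2}) with asymptotic part
\[
\phi(a(y) k) \;=\; \sum_{\varepsilon \in \{\pm\}^2} |y|^{1 + \varepsilon_1 s_1 + \varepsilon_2 s_2}\, \phi^\varepsilon(k),
\]
where $\phi^\varepsilon \in \mathcal{I}(1/2 + \varepsilon_1 s_1 + \varepsilon_2 s_2)$ is the product of the corresponding constant-term summand ($f_i^*$ or $M f_i^*$) of each factor.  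The hypothesis $|\Re(s_i)| < 1/6$ forces every exponent to have real part strictly greater than $1/2$, while \eqref{eqn:assumptions-s-2} ensures each inducing character is non-quadratic with square distinct from $|.|^{\pm 1}$, so the Eisenstein series $\Eis^*(\phi^\varepsilon)$ are all well-defined; collect them into $\mathcal{E}$.

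\textbf{Spectral decomposition and integration over $[A]^{\reg}$.} Let $P$ denote the projection of $\Psi$ onto the one-dimensional subrepresentations $\mathbb{C}\,\omega(\det)$ of $L^2([G])$, a finite sum of characters $\omega(\det)$ for quadratic $\omega \in [A]^\wedge$; these are automatically $N_\mathbb{A}$-invariant.  Then $\Psi_0 := \Psi - P$ is orthogonal to these subrepresentations and shares the asymptotic part $\phi$, so Theorem \ref{thm:reg-spect-pointwise} gives
\[
\Psi_0 \;=\; \mathcal{E} \;+\; \int_{\sigma:\text{generic}} \Psi_{0,\sigma}
\]
pointwise.  Integrating against $|.|^{s_3}$ on $[A]^{\reg}$ then yields
\[
\ell_s(f) \;=\; \int_{[A]}^{\reg} P \cdot |.|^{s_3} \;+\; \int_{[A]}^{\reg} \mathcal{E} \cdot |.|^{s_3} \;+\; \int_{[A]}^{\reg} \Bigl( \int_\sigma \Psi_{0,\sigma} \Bigr) |.|^{s_3}.
\]
The first summand defines $\ell^{\deg}_{5,s}(f)$ and is $N_\mathbb{A}$-invariant.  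The second summand is a sum of four global Hecke integrals $\int_{[A]}^{\reg} \Eis^*(\phi^\varepsilon) |.|^{s_3}$; each factors $\Delta G \times A$-equivariantly through $\mathcal{I}(1/2 + \varepsilon_1 s_1 + \varepsilon_2 s_2) \otimes \mathbb{C}(s_3)$, providing $\ell^{\deg}_{1,s}, \dotsc, \ell^{\deg}_{4,s}$.

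\textbf{The main term.} For the third summand, the plan is to interchange the two integrations.  In the cuspidal part of the spectrum, $\Psi_{0,\sigma}$ decays rapidly on $[A]$, so $\int_{[A]} \Psi_{0,\sigma} |.|^{s_3}$ is absolutely convergent; unfolding via the Whittaker expansion (\S\ref{sec-5-4}, \S\ref{sec-5-6}) together with the orthogonality $\int_{[G]} P \Eis^*(\tilde{\varphi}) = 0$ (which permits replacing $\Psi_0$ by $\Psi$ in the inner-product coefficient) identifies it with $\ell_{\sigma,s}(f)$.  In the Eisenstein part of the spectrum, $\Psi_{0,\sigma}$ is itself a linear combination of Eisenstein series $\Eis^*(f)$; integrating these against $|.|^{s_3}$ on $[A]^{\reg}$ and comparing term-by-term with $\ell_{\sigma,s}(f)$ produces a discrepancy coming from the Mellin-boundary terms of the regularization of the global Hecke integral of $\Eis^*(f)$.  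Their structure, controlled by the standard intertwiner $M$ and the conversion between the $\chi$-contour and $|.|^{s_3}$, forces these residual contributions to factor $\Delta G \times A$-equivariantly through $\mathcal{I}(\pm(s_3 - 1/2)) \otimes \mathbb{C}(s_3)$; they are exactly $\ell^{\deg}_{6,s}$ and $\ell^{\deg}_{7,s}$.

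\textbf{Polar bounds and main obstacle.} The bounds \eqref{eqn:polar-bounds-for-degen-func} follow by direct inspection of the $L$-factors entering each degenerate map: for $i = 1, \dotsc, 4$ the product of $\xi_F$-factors arising from the Hecke integral of an Eisenstein series together with its $\Eis^*$-normalization can contribute poles of order up to four on a generic complex line through the origin; for $i = 6, 7$ an additional intertwiner-normalization factor supplies one further pole.  The main obstacle is making the Eisenstein case of the interchange rigorous: identifying the correct contour (in the $\chi$-variable of Eisenstein $\sigma = \Eis^*(\mathcal{I}(\chi))$) on which the interchange produces precisely the residual terms $\ell^{\deg}_{6,s}, \ell^{\deg}_{7,s}$, and verifying that these assemble into functionals with the claimed factorization and polar behavior.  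Throughout one relies on meromorphic continuation in $s \in \mathbb{C}^3$ to propagate the identity from a subregion where all integrals converge absolutely to the full region specified by \eqref{eqn:assumptions-s-2}.
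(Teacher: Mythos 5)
Your setup — forming $\Psi = \Eis^*(f_1)\Eis^*(f_2)$, extracting the four Eisenstein terms $\mathcal{E}$ built from $f_i^*$ and $Mf_i^*$, and identifying $\ell^{\deg}_{1,s},\dotsc,\ell^{\deg}_{4,s}$ as their regularized Hecke periods — matches the paper. But the accounting of the remaining three degenerate terms has a genuine gap. First, your $\ell^{\deg}_{5,s}$ is the regularized period of the projection $P$ of $\Psi$ onto the one-dimensional subrepresentations; for generic $s$ satisfying \eqref{eqn:assumptions-s-2} the regularized pairing $\int_{[G]}^{\reg}\Eis^*(f_1)\Eis^*(f_2)\,\omega^{-1}(\det)$ vanishes (this orthogonality is exactly what licenses the application of Theorem \ref{thm:reg-spect-pointwise} in the first place), so your fifth term is zero and carries no content. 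The genuine fifth degenerate term in the paper is $\ell^{\deg}_{5,s}(f) = \int_{[A]}^{\reg}\int_{\sigma}\Psi_{\sigma,N}\,|.|^{s_3}$, the contribution of the \emph{constant terms} of the Eisenstein spectral components: for Eisenstein $\sigma$, the identity $\ell_{\sigma,s}(f) = \int_{[A]}(\Psi_\sigma - \Psi_{\sigma,N})|.|^{s_3}$ differs from $\int_{[A]}^{\reg}\Psi_\sigma|.|^{s_3}$ precisely by this constant-term piece. This piece is $N_{\mathbb{A}}$-invariant but does \emph{not} factor through $\mathcal{I}(\pm(s_3-1/2))\otimes\mathbb{C}(s_3)$, so folding it into $\ell^{\deg}_{6,s},\ell^{\deg}_{7,s}$ — as your "discrepancy coming from the Mellin-boundary terms of the regularization of the global Hecke integral" appears to do — would break the factorization property you then assert for those two terms.

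Second, and more seriously, the actual origin of $\ell^{\deg}_{6,s},\ell^{\deg}_{7,s}$ is missing. The interchange $\int_{[A]}^{\reg}\int_\sigma = \int_\sigma\int_{[A]}^{\reg}$ is only directly justifiable when $\Re(s_3)>1/2$, because by \eqref{eqn:estimate-Psi-sigma-minus-constant-term} the integrand $\Psi_\sigma - \Psi_{\sigma,N}$ is only $\O(|y|^{-1/2-\eps})$ as $|y|\to 0$. One must therefore prove the identity first for $\Re(s_3)>1/2$ and then meromorphically continue $\Phi(s)=\int_\sigma\ell_{\sigma,s}(f)$ down to $\Re(s_3)>-1/6$. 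The continuation requires shifting the contour in the unramified Eisenstein parameter $t$ (for $\sigma=\Eis^*(\mathcal{I}(|.|^t))$) across the poles of $\xi_F(\tfrac12+s_3\pm t)$ at $t=\pm(s_3-1/2)$; the residues picked up are exactly $\ell^{\deg}_{6,s}$ and $\ell^{\deg}_{7,s}$, which is why they factor through $\mathcal{I}(\pm(s_3-1/2))\otimes\mathbb{C}(s_3)$, and the extra pole order in \eqref{eqn:polar-bounds-for-degen-func} for $i=6,7$ comes from bounding $\res_{t=\pm(s_3-1/2)}$ of the product of six completed zeta factors divided by $\xi_F(1\pm 2t)$. (One also has to route the shifted contour around possible zeros of $\xi_F(1\pm 2t)$ near the imaginary axis, which the paper does with a piecewise-linear contour depending on a height cutoff.) Your proposal gestures at "identifying the correct contour" as the main obstacle but does not supply this mechanism, and without it neither the existence of terms $6$ and $7$ with the stated factorization nor the bound $\O(|s|^{-5})$ is established.
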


\begin{proof}
  Let $f = f_1 \otimes f_2 \in \mathcal{I}(s_1)
  \otimes \mathcal{I}(s_2)$.
  Set $\Psi := \Eis^*(f_1) \Eis^*(f_2)$.
  The regularizing Eisenstein series
  as in \S\ref{sec-9-2}
  is given by
  \begin{equation}
    \mathcal{E} := \Eis(f_1^* \cdot f_2^*) + \Eis(M f_1^* \cdot f_2^*) +
    \Eis(f_1^* \cdot M f_2^*) + \Eis(M f_1^* \cdot M f_2^*).
  \end{equation}
  By Theorem \ref{thm:reg-spect-pointwise},
  we have the normally convergent pointwise expansion
  \[
    \Psi - \mathcal{E} =
    \int_{\sigma:\text{generic}}
    \Psi_{\sigma},
  \]
  say.
  We obtain
  \[
    \ell_s(f)
    =
    \sum_{i=1}^4 \ell^{\deg}_{i,s}(f) +
    \int_{[A]}^{\reg}
    \int_{\sigma:\text{generic}}
    \Psi_{\sigma} |.|^{s_3},
  \]
  where the $\ell^{\deg}_{i,s}(f)$ are given by the regularized
  integral over $[A]$ of the terms in the definition of
  $\mathcal{E}$.  The functionals $\ell^{\deg}_{i,s}$ 
  have the required factorization property.  The estimate
  \eqref{eqn:polar-bounds-for-degen-func} for $i=1..4$ follows
  by writing, e.g.,
  \[
    \Eis(f_1^* \cdot f_2^*) =
    \frac{
      \xi_F(1 + 2 s_1) \xi_F(1 + 2 s_2)
    }
    {
      \xi_F(2 + 2 s_1 + 2 s_2)
    }
    \Eis^*(f_1 \cdot f_2),
  \]
  and recalling from
  \S\ref{sec-5-4}  that
  \[
    \int_{[A]}^{\reg} \Eis^*(f_1 \cdot f_2) |.|^{s_3}/\prod_{\pm }
    \xi_F(1/2 \pm (1/2 + s_1 + s_2) + s_3)
  \]
  is holomorphic.

  Continuing to assume
  that $|\Re(s_1)|, |\Re(s_2)| < 1/6$,
  we now change our assumptions
  temporarily by supposing
  that $\Re(s_3) > 1/2$.
  Then for any standard generic automorphic representation
  $\sigma$,
  we have
  \begin{equation}\label{eqn:ell-sigma-expand-with-large-s3}
    \ell_{\sigma,s}(f) = \int_{[A]}
    (\Psi_\sigma -
    \Psi_{\sigma,N}) |.|^{s_3}.
  \end{equation}
  By \eqref{eqn:estimate-Psi-sigma-minus-constant-term},
  the integrand in \eqref{eqn:ell-sigma-expand-with-large-s3}
  is bounded for each fixed $d \geq 0$
  by  $C(\sigma)^{-d}$ times a fixed convergent integrand.
  Taking
  $d$
  sufficiently large that
  $\int_{\sigma} C(\sigma)^{-d} < \infty$
  (see \cite[\S2.6.5]{michel-2009}),
  we deduce
  by interchanging summation with integration that
  \begin{equation}\label{eqn:ell-with-five-degens}
    \ell_s(f)
    =
    \sum_{i=1}^5 \ell^{\deg}_{i,s}(f)
    + \Phi(s)
  \end{equation}
  where, for $\Re(s_3) > 1/2$,
  \begin{equation}\label{eqn:Phi-defn-integral-generic-stuff}
    \Phi(s) := \int_{\sigma:\text{generic}} \ell_{\sigma,s}(f)
  \end{equation}
  and
  \begin{equation}
    \ell^{\deg}_{5,s}(f)
    :=
    \int_{[A]}^{\reg} \int_{\sigma:\text{generic}} \Psi_{\sigma,N} |.|^{s_3}.
  \end{equation}
  We may evaluate the functional $\ell^{\deg}_{5,s}$ explicitly
  using truncation and that $\Psi_{\sigma,N} = 0$ unless
  $\sigma$ is Eisenstein, but it suffices for our purposes to
  note as claimed that this functional is
  $N_\mathbb{A}$-invariant.

  We now freeze the variables $s_1, s_2$
  and view the above identity as one of meromorphic functions
  of $s_3$ defined
  initially for $\Re(s_3) > 1/2$.
  We aim to meromorphically continue
  $\Phi$ to the range
  $-1/6  < \Re(s_3) < 1/2$
  and to verify that a modified form of the identity  \eqref{eqn:Phi-defn-integral-generic-stuff}
  holds there.
  
  We expand
  \begin{equation}\label{eqn:expand-first-I}
    \Phi(s)
    =
    \frac{1}{2}
    \sum_{\sigma:\text{cuspidal}}
    \ell_{\sigma,s}(f)
    +
    \frac{1}{2}
    \int_{\chi:\text{unitary}}
    \ell_{\mathcal{I}(\chi),s}(f).
  \end{equation}
  For cuspidal $\sigma$, the $L$-function
  $\mathcal{L}(\sigma,s)$ is entire; by the convexity bound and
  the estimate \eqref{eq:crude-estimate-for-ell-sigma-s}, we see
  that $\ell_{\sigma,s}(f)$ decays faster than any power of
  $C(\sigma)$, locally uniformly in $s$, and
  extends to an entire function of $s$.  Analogous assertions hold for
  the individual Eisenstein contributions
  $\ell_{\mathcal{I}(\chi), s}(f)$ except when $\chi$ is of the
  form $|.|^t$, in which case we may encounter poles in the
  indicated range.  Indeed, we may write
  $\ell_{\mathcal{I}(t),s}(f) = \Xi(t,s) h(t,s)$, where
  \[
    \Xi(t,s) :=
    \prod_{\pm}
    \frac{
      \xi_F(\tfrac{1}{2} + s_1 + s_2 \pm
      t) \xi_F(\tfrac{1}{2} - s_1 + s_2 \pm t) \xi_F(\tfrac{1}{2} +
      s_3 \pm t)
    }
    {
    \xi_F(1 \pm 2 t)
    }
  \]
  and $h$ is an entire function
  defined by a finite product of normalized
  local integrals.
  The poles of $\Xi(t,s)$
  with $s_1, s_2$ as usual, $\Re(s_3) > -1/6$
  and $\Re(t) = 0$
  are at
  $t = \pm (s_3 - 1/2)$.

  We are led to the problem of determining the meromorphic
  continuation of the integral
  \[
    I(s) :=
    \int_{t \in i \mathbb{R}}
    \ell_{\mathcal{I}(t),s}(f)
    \, \frac{d t}{2 \pi i},
  \]
  defined initially for $\Re(s_3) > 1/2$.  Let us first do this
  under the assumption of GRH
  (to avoid possible poles of $\xi_F(1 \pm 2 t)^{-1}$)
  and working formally.
  We fix $\eps > 0$ sufficiently small, and consider $s_3$ of real part
  in the interval $(1/2,1/2+\eps)$.  We shift the contour to
  $\Re(t) = 2 \eps$, passing a pole at $t = s_3 - 1/2$.  The
  resulting integral has no poles for $s_3$ of real part in
  $(1/2-\eps,1/2+\eps)$.  We obtain in this way the analytic
  continuation of $I(s)$ to $1/2 - \eps < \Re(s) < 1/2 + \eps$.
  We suppose next that $1/2 - \eps < \Re(s) < 1/2$ and shift the
  contour back to $\Re(t) = 0$, passing a pole at
  $t = 1/2 - s_3$.  We obtain in this way the analytic
  continuation of $I(s)$ to $s_3$ of real part greater than
  $-1/2$, and for $s_3$ of real part in the interval
  $(-1/2,1/2)$ the formula
  \begin{equation}\label{eq:formula-I-of-s-after-shift}
    I(s)
    =
    \sum_{\pm}
    \pm
    \res_{t \rightarrow \pm(1/2 - s_3)}
    \ell_{\mathcal{I}(t),s}(f)
    +
    \int_{t \in i \mathbb{R}}
    \ell_{\mathcal{I}(t),s}(f)
    \, \frac{d t}{2 \pi i}.
  \end{equation}

  The argument of the previous paragraph was not quite rigorous,
  because we assumed GRH and did not justify the contour shifts.  To
  give a rigorous argument, we fix $T \geq 1$ sufficiently large
  and restrict to $s_3$ with $|\Im(s_3)| < T/2$, say.
  By the prime number theorem,
  we may choose
  $\eps$ small enough that shifting $t$ from $i \mathbb{R}$ to
  the piecewise-linear contour $\mathcal{C}$ with endpoints
  $- i \infty, - i T, - i T + 2 \eps, i T + 2 \eps , i T, i\infty$
  does not encounter any zeroes of $\xi_F(1 - 2 t)$.
  We then argue as before but with $\{t : \Re(t) = 2 \eps\}$
  replaced
  by $\mathcal{C}$.

  Combining the formula \eqref{eq:formula-I-of-s-after-shift}
  with our earlier remarks gives the required meromorphic continuation
  of $\Phi$ and the identity, for $s_3$ of real part in $(-1/6,1/2)$,
  \begin{equation}\label{eqn:expand-second-I}
    \Phi(s)
    =
    \int_{\sigma:\text{generic}}
    \ell_{\sigma,s}(f)
    +
    \sum_{i=6,7}
    \ell^{\deg}_{i,s}(f),
  \end{equation}
  where the $\ell^{\deg}_{i,s}$
  have the required factorization properties.
  Since the estimate
  \begin{equation}
    \res_{t=\pm (s_3 - 1/2)} \Xi(t,s) \ll |s|^{-5}
  \end{equation}
  holds for small $s \in \mathbb{C}^3$ in a generic complex line
  containing the origin,
  the estimate \eqref{eqn:polar-bounds-for-degen-func}
  follows for $i=6,7$.

  We obtain the required identity by combining
  \eqref{eqn:ell-with-five-degens}
  and \eqref{eqn:expand-second-I}.
\end{proof}

\subsection{$G \times G \geq A \times A \geq A$}
\label{sec-12-2}
\begin{theorem}\label{thm:GG-AA-A}
  Let $s \in \mathbb{C}^3$
  satisfy $|\Re(s_i)| < 1/6$ for $i=1,2,3$
  and
  \eqref{eqn:assumptions-s-2}.
  Then
  \[
    \ell_s
    = \int_{\omega:\text{unitary}}
    \ell_{\omega,s}
    + \sum_{i=8}^{15}
    \ell^{\deg}_{i,s},
  \]
  where the integral is taken
  over unitary characters $\omega$ of $[A]$
  as in  \eqref{eq:measure-on-dual-omega}
  and the degenerate functionals
  are given on $f = f_1 \otimes f_2$
  by
  \begin{equation}\label{deg:8}
    f_1^*(1)
    \int_{A_{\mathbb{A}}}^{\reg}
    W_{\Eis^*(f_2)} |.|^{1/2+s_1+s_3},
  \end{equation}
  \begin{equation}\label{deg:9}
    M f_1^*(1)
    \int_{A_{\mathbb{A}}}^{\reg}
    W_{\Eis^*(f_2)} |.|^{1/2-s_1+s_3},
  \end{equation}
  \begin{equation}\label{deg:10}
    f_1^*(w) \int_{A_\mathbb{A}}^{\reg}
    W_{\Eis^*(f_2)} |.|^{-1/2-s_1+s_3},
  \end{equation}
  \begin{equation}\label{deg:11}
    M f_1^*(w) \int_{A_\mathbb{A}}^{\reg}
    W_{\Eis^*(f_2)} |.|^{-1/2+s_1+s_3},
  \end{equation}
  together with the analogous quantities
  obtained by swapping
  $(f_1,s_1)$ with $(f_2,s_2)$.
\end{theorem}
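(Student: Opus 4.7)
The plan is to decompose $\ell_s(f) = \int_{[A]}^{\reg} \Eis^*(f_1)(a(y)) \Eis^*(f_2)(a(y)) |y|^{s_3} \, d y^\times$ via two successive uses of the Fourier expansion of the Eisenstein series together with a Parseval identity on the abelian group $[A]$. First, I apply \eqref{eqn:eis-FE} at the ``$\infty$-cusp'' to write $\Eis^*(f_i)(a(y)) = E_i^{\mathrm{c}}(y) + E_i^{\mathrm{W}}(y)$, with $E_i^{\mathrm{c}}(y) = |y|^{1/2+s_i} f_i^*(1) + |y|^{1/2-s_i} M f_i^*(1)$ and $E_i^{\mathrm{W}}(y) = \sum_{\alpha \in F^\times} W_{\Eis^*(f_i)}(a(\alpha y))$, then expand the product into four summands. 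The constant-constant summand $E_1^{\mathrm{c}} E_2^{\mathrm{c}} |y|^{s_3}$ is a finite function on $[A]$ whose exponents $1 \pm s_1 \pm s_2 + s_3$ are all nontrivial by \eqref{eqn:assumptions-s-2}, so its regularized integral vanishes; each constant-Whittaker cross summand unfolds along the $F^\times$-orbit, converting $\int_{[A]}$ into $\int_{A_{\mathbb{A}}}$ and producing the degenerate functionals \eqref{deg:8}, \eqref{deg:9} together with their counterparts under the $(f_1,s_1) \leftrightarrow (f_2,s_2)$ swap.

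To treat the Whittaker-Whittaker summand, I exploit the identity $a(y) = w^{-1} a(1/y) w$ in $\PGL_2$: applying \eqref{eqn:eis-FE} to $\Eis^*(f_i)(a(1/y) w)$ and using the transformation law of $\mathcal{I}(s_i)$ produces the ``$0$-cusp'' asymptotic $\Eis^*(f_i)(a(y)) \sim |y|^{-1/2-s_i} f_i^*(w) + |y|^{-1/2+s_i} M f_i^*(w)$ as $|y| \to 0$. Denoting this finite function by $E_i^{\mathrm{c},0}$ and setting $\tilde{E}_i := E_i^{\mathrm{W}} - E_i^{\mathrm{c},0}$, the correction $\tilde{E}_i$ decays with exponents $1/2 \pm s_i$ at $0$ and $-1/2 \pm s_i$ at $\infty$, placing it in $L^2([A])$ under $|\Re(s_i)| < 1/6$. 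Expanding $E_1^{\mathrm{W}} E_2^{\mathrm{W}} = (\tilde{E}_1 + E_1^{\mathrm{c},0})(\tilde{E}_2 + E_2^{\mathrm{c},0})$ yields four subpieces: the doubly-constant subpiece vanishes by \eqref{eqn:assumptions-s-2}; each mixed subpiece $E_i^{\mathrm{c},0} \tilde{E}_j |y|^{s_3}$, using that $\int_{[A]}^{\reg} E_j^{\mathrm{c},0} |\cdot|^c$ vanishes at the relevant generic $c$, reduces to $f_i^*(w)$ and $M f_i^*(w)$ times global Hecke integrals of $W_{\Eis^*(f_j)}$, producing \eqref{deg:10}, \eqref{deg:11} and their swaps. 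The main subpiece $\int_{[A]} \tilde{E}_1 \tilde{E}_2 |y|^{s_3}$ is absolutely convergent by Cauchy-Schwarz, so Parseval on $[A]$ writes it as $\int_\omega \hat{\tilde{E}}_1(\omega) \hat{\tilde{E}}_2(\omega^{-1} |\cdot|^{-s_3}) \, d\omega$; each Mellin transform agrees, for generic unitary $\omega$, with the regularized global Hecke integral $\int_{A_{\mathbb{A}}}^{\reg} W_{\Eis^*(f_i)} \omega^{-1}$, and after renaming $\omega \to \omega^{-1}$ this becomes $\int_\omega \ell_{\omega,s}(f) \, d\omega$ by the definition of $\ell_{\omega,s}$ in \S\ref{sec-11-2}.

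The main technical obstacle is making these regularized manipulations rigorous; I anticipate handling this by first establishing the identity in a subregion of $s$ where every individual integral converges absolutely (so that formal unfolding and Parseval apply directly), then analytically continuing in $s$ using the meromorphicity results of \S\ref{sec:local-inv-func}-\S\ref{sec:global-inv-func} and the genericity hypothesis \eqref{eqn:assumptions-s-2} to avoid the poles of the relevant local $L$-factors. The residual bookkeeping -- matching the eight cross contributions to \eqref{deg:8}-\eqref{deg:11} together with their four swaps -- is a routine computation using the transformation laws $f_i^*(a(y) g) = |y|^{1/2+s_i} f_i^*(g)$ and $M f_i^*(a(y) g) = |y|^{1/2-s_i} M f_i^*(g)$.
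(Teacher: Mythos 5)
Your proposal is correct, and its first half (subtracting the constant terms $\phi_i^{\infty}$, killing the finite-function piece by \eqref{eqn:assumptions-s-2}, and unfolding the cross terms to obtain \eqref{deg:8}, \eqref{deg:9} and their swaps) coincides with the paper's decomposition \eqref{eqn:subtract-off-asymptotics-at-infi}. Where you genuinely diverge is in the treatment of the remaining piece $(\varphi_1-\phi_1^{\infty})(\varphi_2-\phi_2^{\infty})\lvert\cdot\rvert^{s_3}$. The paper keeps this piece intact, Mellin-inverts $\varphi_1-\phi_1^{\infty}$ along a contour at large real part, and then performs an asymmetric sequence of contour shifts (first in $\omega$ past the poles at $\lvert\cdot\rvert^{1/2\pm s_1}$, then lowering $\Re(s_3)$ and shifting again to collect the poles coming from the second factor), so that \eqref{deg:10}, \eqref{deg:11} and their swaps appear as residues. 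You instead subtract the $0$-cusp asymptotics $\phi_i^{0}$ explicitly, which makes \eqref{deg:10}, \eqref{deg:11} and their swaps appear symmetrically as cross terms, and which places the remaining functions $\tilde{E}_i$ in $L^2([A])$ (the exponent count $1/2-\lvert\Re(s_i)\rvert>1/3$ at $0$ and $-1/2+\lvert\Re(s_i)\rvert<-1/3$ at $\infty$ is exactly what $\lvert\Re(s_i)\rvert<1/6$ buys), so that Plancherel applies on the nose and no contour shifting in $s_3$ is needed. This is a legitimate and arguably cleaner route: the residues in the paper's argument are, via the standard dictionary between asymptotic expansions and poles of Mellin transforms, precisely the coefficients $f_i^*(w)$, $Mf_i^*(w)$ that you subtract by hand. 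The one step you assert rather than prove — that the convergent Mellin transform $\int_{[A]}\tilde{E}_i\,\omega$ agrees on the unitary axis with the meromorphic continuation of the regularized Hecke integral $\int_{A_{\mathbb{A}}}^{\reg}W_{\Eis^*(f_i)}\,\omega$ — does require a short argument (split at $\lvert y\rvert=1$ and use that the two half-line continuations of $\int\phi_i^{0}\omega$ cancel), but it is true under the standing hypothesis that $s$ avoids the poles, and the remaining sign/inversion bookkeeping in your Parseval step ($\omega^{-1}\lvert\cdot\rvert^{s_3}$ rather than $\omega^{-1}\lvert\cdot\rvert^{-s_3}$) is harmless.
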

\begin{proof}
  Set $\varphi_i(y) := \Eis^*(f_i)(a(y))$.
  By estimate \eqref{eq:estimate-eisenstein-series-near-cusp}
  for the Eisenstein series,
  we see that
  $\varphi _i$
  admits the finite expansions
  \begin{equation}
    \varphi_i(y)
    \sim
    \begin{cases}
      \phi_i^\infty(y)
      & \text{ as $|y| \rightarrow \infty$,}
      \\
      \phi_i^0(y)
      & \text{ as $|y| \rightarrow 0$}
    \end{cases}
  \end{equation}
  where
  \begin{align*}
    \phi_i^\infty(y)  &:= |y|^{1/2+s_i} f_i^*(1)
                        + |y|^{1/2-s_i} M f_i^*(1),
    \\    \phi_i^0(y) &:=
                        |y|^{-1/2-s_i} f_i^*(w)
                        + |y|^{-1/2+s_i} M f_i^*(w).
  \end{align*}
  Since regularizable finite functions
  have vanishing regularized integral,
  we have
  \begin{align}\label{eqn:subtract-off-asymptotics-at-infi}
    \int_{\mathbb{A}^\times/F^\times }^{\reg}
    \varphi_1 \varphi_2 |.|^{s_3}
    &= 
      \int_{\mathbb{A}^\times/F^\times}^{\reg}
      (\varphi_1 - \phi_1^{\infty})
      (\varphi_2 - \phi_2^\infty ) |.|^{s_3}
    \\ \nonumber
    &\quad
      +
      \int_{\mathbb{A}^\times/F^\times}^{\reg}
      \phi_1^{\infty} \varphi_2 |.|^{s_3}
      +
      \int_{[A]}^{\reg}
      \phi_2^{\infty} \varphi_1 |.|^{s_3}.
  \end{align}
  The second and third terms on the RHS of
  \eqref{eqn:subtract-off-asymptotics-at-infi}
  contribute the degenerate terms \eqref{deg:8},
  \eqref{deg:9}
  listed above plus their analogues with $(f_1,s_1)$ and $(f_2,s_2)$ swapped.
  The first term
  may be defined for (say) $\Re(s_3) \geq 20$
  as an absolutely convergent integral,
  then in general by meromorphic continuation.
  Similarly,
  the Mellin transform
  (cf. \S\ref{sec-5-4})
  \begin{equation}
    \hat{\varphi}_i(\omega)
    :=
    \int_{y \in \mathbb{A}^\times/F^\times}^{\reg}
    \varphi_i(\omega) \omega(y)
    =
    \int_{A_{\mathbb{A}}}^{\reg}
    W_{\Eis^*(f_i)}
    \omega
  \end{equation}
  may be defined for (say) $\Re(\omega) \geq 10$ via the
  absolutely convergent integral of
  $(\varphi_i - \phi_i^{\infty})\omega$.
  By Mellin inversion,
  we have
  \[
    \varphi_1(y)
    -
    \phi_1^{\infty}(y)
    = \int_{\omega : \Re(\omega) = 10}
    \hat{\varphi}_1(\omega)
    \omega^{-1}(y).
  \]
  (The
  crude estimate \eqref{eq:crude-ell-omega-s-estimate}
  and the convexity bound for the $L$-values
  give adequate decay at infinity
  for $\hat{\varphi}_i$
  to justify this expansion and subsequent contour shifts.)
  Thus for $\Re(s_3) = 20$,
  \begin{equation}
    \int_{\mathbb{A}^\times/F^\times}^{\reg}
    (\varphi_1 - \phi_1^{\infty})
    (\varphi_2 - \phi_2^\infty ) |.|^{s_3}
    =
    \int_{\omega : \Re(\omega) = 10}
    \hat{\varphi}_1(\omega)
    \hat{\varphi}_2(\omega^{-1} |.|^{s_3}).
  \end{equation}
  We shift the $\omega$-contour to $\Re(\omega) = 0$,
  passing poles
  at $\omega = |.|^{1/2 \pm s_1}$
  whose residues contribute
  the degenerate terms
  \eqref{deg:10},
  \eqref{deg:11}.
  We then take $\Re(s_3)$ nearly as small as we can
  without passing a pole of the integrand
  and shift to
  $\Re(\omega) = \eps$,
  passing two more poles that contribute
  the remaining degenerate terms.
  We then take $\Re(s_3)$ close to $0$
  and shift back to $\Re(\omega) = 0$,
  giving the required identity.
  
  
\end{proof}

\subsection{Summary}
\label{sec:summary-main-results}
\begin{theorem}\label{thm:basic-identity-summary}
  Suppose $s \in \mathbb{C}^3$
  satisfies the hypotheses of
  Theorem \ref{thm:GG-AA-A}.
  Let $f = \otimes f_{\mathfrak{p}} \in \mathcal{I}(s_1) \otimes \mathcal{I}(s_2)$
  be a factorizable vector.
  Let $S$ be a finite set of places
  of $F$
  such that for each $\mathfrak{p} \notin S$,
  we have that $(F_\mathfrak{p}, \psi_\mathfrak{p})$
  is unramified and
  $f_\mathfrak{p} = f_{1 \mathfrak{p}} \otimes f_{2
    \mathfrak{p}}$
  with $f_{1 \mathfrak{p}}, f_{2 \mathfrak{p}}$
  normalized spherical.
  Then
  \begin{align}
    \label{eqn:basic-moment-identity-2}
    &\int_{\substack{
      \sigma:\text{generic}, \\
    \text{unram. outside $S$}
    }}
    \mathcal{L}^{(S)}(\sigma,s)
    \prod_{\mathfrak{p} \in S}
    \ell_{\sigma_\mathfrak{p},s}^*(f_\mathfrak{p})
    + \sum_{i=1}^{7} \ell^{\deg}_{i,s}(f_\mathfrak{p})
    \\ \nonumber
    &\quad
      \int_{\substack{
      \omega:\text{unitary}, \\
    \text{unram. outside $S$}
    }}
    \mathcal{L}^{(S)}(\omega,s)
    \prod_{\mathfrak{p} \in S}
    \ell_{\omega_\mathfrak{p},s}^*(f)
    +
    \sum_{i=8}^{15} \ell^{\deg}_{i,s}(f),
  \end{align}
  where $\mathcal{L}^{(S)}(\sigma,s)$
  and $\mathcal{L}^{(S)}(\omega,s)$
  denote the corresponding partial Euler products.
\end{theorem}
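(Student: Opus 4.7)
The plan is to combine the two different decompositions of the global functional $\ell_s$ supplied by Theorems \ref{thm:GG-G-A} and \ref{thm:GG-AA-A} and then factor each side into local pieces. Concretely, I would first apply Theorem \ref{thm:GG-G-A} to expand $\ell_s(f)$ spectrally (via the regularized spectral decomposition of $\Eis^*(f_1)\Eis^*(f_2)$ on $[G]$) as an integral over standard generic automorphic representations $\sigma$ of $\ell_{\sigma,s}(f)$ plus the seven degenerate functionals $\ell^{\deg}_{i,s}$, $i=1,\dots,7$. Then I would apply Theorem \ref{thm:GG-AA-A} to expand the \emph{same} quantity $\ell_s(f)$ (using Parseval / Mellin inversion on $[A]$, after subtracting off the cuspidal behaviour of each Eisenstein factor near the cusp) as an integral over unitary characters $\omega$ of $[A]$ of $\ell_{\omega,s}(f)$ plus the eight degenerate functionals $\ell^{\deg}_{i,s}$, $i=8,\dots,15$. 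Equating these two decompositions, and moving the degenerate terms to opposite sides, yields an identity whose ``main'' terms are $\int_\sigma \ell_{\sigma,s}(f)$ and $\int_\omega \ell_{\omega,s}(f)$.

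Next I would convert these global functionals into the Eulerian shape stated. Since $f = \otimes f_\mathfrak{p}$ is factorizable, the factorization formulas recorded in \S\ref{sec-11-1} and \S\ref{sec-11-2} give
\[
  \ell_{\sigma,s}(f) = \mathcal{L}(\sigma,s) \prod_{\mathfrak{p}} \mathcal{L}(\sigma_\mathfrak{p},s)^{-1} \ell_{\sigma_\mathfrak{p},s}^*(f_\mathfrak{p}),
\]
\[
  \ell_{\omega,s}(f) = \mathcal{L}(\omega,s) \prod_{\mathfrak{p}} \mathcal{L}(\omega_\mathfrak{p},s)^{-1} \ell_{\omega_\mathfrak{p},s}^*(f_\mathfrak{p}),
\]
with only finitely many nontrivial local factors in each product. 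I would then invoke the unramified local calculations (Lemma \ref{lem:unram-calc-ell-sigma-s} and its $\ell^*_{\omega,s}$ counterpart in \S\ref{sec-10-2}): at every place $\mathfrak{p} \notin S$, the pair $(F_\mathfrak{p},\psi_\mathfrak{p})$ is unramified and $f_\mathfrak{p}$ is the normalized spherical vector, so the normalized local factor $\ell^*_{\sigma_\mathfrak{p},s}(f_\mathfrak{p})/\mathcal{L}(\sigma_\mathfrak{p},s)$ vanishes unless $\sigma_\mathfrak{p}$ is unramified, in which case it equals $1$; and similarly for $\omega_\mathfrak{p}$. Consequently only $\sigma$ (respectively $\omega$) that are unramified outside $S$ contribute, and for these the infinite product collapses to $\mathcal{L}^{(S)}(\sigma,s)\prod_{\mathfrak{p}\in S}\ell^*_{\sigma_\mathfrak{p},s}(f_\mathfrak{p})$ (respectively the analogous product for $\omega$), which is exactly the integrand appearing in \eqref{eqn:basic-moment-identity-2}.

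The convergence of the spectral and dual integrals (so that one may split, factor, and recollect term by term) is already guaranteed by the uniform convergence statement in Theorem \ref{thm:reg-spect-pointwise} together with the crude Sobolev-type estimates \eqref{eq:crude-estimate-for-ell-sigma-s} and \eqref{eq:crude-ell-omega-s-estimate}; in particular the condition $|\Re(s_i)| < 1/6$ (combined with the bound towards Ramanujan $\vartheta \leq 7/64$) places us safely to the right of the trivial poles of $\mathcal{L}(\sigma,s)$ and $\mathcal{L}(\omega,s)$, so that all local $L$-factors entering the factorization are finite and the unramified computations apply as stated. The condition \eqref{eqn:assumptions-s-2} excludes the countable exceptional set where the regularizations in Theorems \ref{thm:GG-G-A} and \ref{thm:GG-AA-A} become ambiguous.

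There is really no hard step: once Theorems \ref{thm:GG-G-A} and \ref{thm:GG-AA-A} are in hand, the proof of Theorem \ref{thm:basic-identity-summary} is bookkeeping. The only point requiring a little care is to check that, at places outside $S$, the local factor in the factorization is \emph{exactly} $\mathcal{L}(\sigma_\mathfrak{p},s)$ (respectively $\mathcal{L}(\omega_\mathfrak{p},s)$) so that multiplying by the global $\mathcal{L}(\sigma,s)/\prod_\mathfrak{p}\mathcal{L}(\sigma_\mathfrak{p},s)$ really yields the partial Euler product $\mathcal{L}^{(S)}(\sigma,s)$ in the limit; this is the content of the unramified computations cited above. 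Equating the two resulting expressions and moving the seven terms $\ell^{\deg}_{i,s}$, $i=1,\dots,7$, to the left and the eight terms $\ell^{\deg}_{i,s}$, $i=8,\dots,15$, to the right gives exactly \eqref{eqn:basic-moment-identity-2}.
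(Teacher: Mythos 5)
Your proposal is correct and follows exactly the paper's (very terse) argument: decompose $\ell_s(f)$ via Theorems \ref{thm:GG-G-A} and \ref{thm:GG-AA-A}, equate the two decompositions, and unfold the global functionals into Euler products as in \S\ref{sec:global-inv-func}, using the unramified local calculations to restrict to $\sigma$ and $\omega$ unramified outside $S$ and to collapse the factors at $\mathfrak{p}\notin S$ into the partial $L$-functions. The extra bookkeeping you supply (convergence, where the poles are excluded, how the normalized local factors cancel) is a faithful elaboration of what the paper leaves implicit.
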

\begin{proof}
  We decompose $\ell_s(f)$ via Theorems \ref{thm:GG-G-A} and
  \ref{thm:GG-AA-A} and unfold as in
  \S\ref{sec:global-inv-func}.
\end{proof}
\begin{corollary}\label{cor:basic-identity-summary}
  Let $f = \otimes f_{\mathfrak{p}} \in \mathcal{I}(0) \otimes \mathcal{I}(0)$
  be a factorizable vector,
  and let
  $S$ be as in Theorem \ref{thm:basic-identity-summary}.
  Then
  the difference
  \begin{equation}\label{eq:difference-two-moments}
    \int_{\substack{
        \sigma:\text{generic}, \\
        \text{unram. outside $S$}
      }}
    \mathcal{L}^{(S)}(\sigma,0)
    \prod_{\mathfrak{p} \in S}
    \ell_{\sigma_\mathfrak{p},0}^*(f_\mathfrak{p})
    -
    \int_{\substack{
        \omega:\text{unitary}, \\
        \text{unram. outside $S$}
      }}
    \mathcal{L}^{(S)}(\omega,0)
    \prod_{\mathfrak{p} \in S}
    \ell_{\omega_\mathfrak{p},0}^*(f_\mathfrak{p})
  \end{equation}
  is equal to the limit
  \begin{equation}\label{eq:difference-degen-terms}
    \lim_{s \rightarrow 0} (\sum_{i=8}^{15} - \sum_{i=1}^7)
    \ell^{\deg}_{i,s}(f[s])
  \end{equation}
  for any holomorphic family
  $f[s] \in \mathcal{I}(s_1) \otimes \mathcal{I}(s_2)$, defined
  for $s \in \mathbb{C}^3$ near the origin, with $f[0] = f$.
\end{corollary}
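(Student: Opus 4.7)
The plan is to deduce the corollary from Theorem \ref{thm:basic-identity-summary} by taking a limit $s \to 0$ along a generic complex line. First I would choose a complex line $L \subset \mathbb{C}^3$ through the origin so that on a punctured neighborhood of $0$ in $L$ the hypotheses $|\Re(s_i)| < 1/6$ and \eqref{eqn:assumptions-s-2} are both satisfied; the locus excluded by \eqref{eqn:assumptions-s-2} is a countable union of hyperplanes, so generic $L$ works. For each such $s$, Theorem \ref{thm:basic-identity-summary} applied to the family $f[s]$ yields an identity which, after moving the two spectral integrals to one side and the two sums of degenerate functionals to the other, reads
\begin{equation*}
  I_\sigma(s) - I_\omega(s) = \Bigl(\sum_{i=8}^{15} - \sum_{i=1}^{7}\Bigr)\ell^{\deg}_{i,s}(f[s]),
\end{equation*}
where $I_\sigma(s), I_\omega(s)$ denote the two spectral integrals in \eqref{eqn:basic-moment-identity-2}. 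The corollary then amounts to showing that the left-hand side extends continuously to $s=0$, with limiting value precisely \eqref{eq:difference-two-moments}; the right-hand side will then automatically share that limit, uniformly over the choice of holomorphic extension $f[s]$.

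The central task is therefore to verify the continuity (in fact, holomorphy) of $I_\sigma(s)$ and $I_\omega(s)$ at $s = 0$, along with identification of the limiting values. Pointwise holomorphy of the integrands in $s$ was established in \S\ref{sec:holom-ell_s-sf}: the normalized local functionals $\ell^*_{\sigma_\mathfrak{p},s}$ and $\ell^*_{\omega_\mathfrak{p},s}$ are holomorphic at $s=0$ for unitary $\sigma_\mathfrak{p}, \omega_\mathfrak{p}$ (using $7/64 < 1/6$ as noted in \S\ref{sec:local-ell-sigma-s-where-defined-win}), and the global factors $\mathcal{L}^{(S)}(\sigma,s)$ and $\mathcal{L}^{(S)}(\omega,s)$ are holomorphic at $s=0$ since all relevant $L$-values have convergent Euler products after removal of the archimedean factor. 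To pass the limit inside the spectral sum and integral, I would invoke dominated convergence. A uniform-in-$s$ majorant arises by combining the crude Sobolev estimates \eqref{eq:crude-estimate-for-ell-sigma-s} and \eqref{eq:crude-ell-omega-s-estimate} (which give rapid decay $\ll C(\sigma_\mathfrak{p})^{-d}$, resp. $\ll C(\omega_\mathfrak{p})^{-d}$, for arbitrarily large fixed $d$ at places in $S$) with convexity bounds for the global $L$-factors and the uniform trace-class estimate \eqref{eq:sum-N-sigma-nu-neg-d-0}; for $I_\omega$ one uses instead standard counting of Hecke characters of bounded conductor.

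The main subtlety will be the Eisenstein portion of the $\sigma$-integral, where $\sigma = \Eis^*(\mathcal{I}(\chi))$. The denominator $\Lambda^*(\sigma \times \sigma,1)$ in $\mathcal{L}(\sigma,s)$ behaves discontinuously as $\chi$ passes through the (countable, measure-zero) set of quadratic characters, where the convention $\mathcal{L}(\sigma,s) = 0$ is imposed. I would handle this by observing that away from that measure-zero subset the integrand is genuinely holomorphic and bounded by the majorant described above, so dominated convergence applies directly; the quadratic-character locus contributes nothing either way. Granting these convergence facts, the left-hand side of the rearranged identity extends continuously to $s=0$ with limit equal to \eqref{eq:difference-two-moments}, and hence the same limit is attained by the right-hand side \eqref{eq:difference-degen-terms} along any holomorphic extension $f[s]$ of $f$, completing the proof.
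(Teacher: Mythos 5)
Your proposal is correct and follows essentially the same route as the paper: the corollary is deduced from Theorem \ref{thm:basic-identity-summary} by observing that the two spectral-integral functionals $\int_{\sigma}\ell_{\sigma,s}$ and $\int_{\omega}\ell_{\omega,s}$ extend holomorphically (hence continuously) across $s=0$, which the paper records as the lemma of \S\ref{sec:holomorphy-global-functionals} and which you re-derive via dominated convergence using the crude estimates \eqref{eq:crude-estimate-for-ell-sigma-s}, \eqref{eq:crude-ell-omega-s-estimate} and the local boundedness of $\mathcal{S}_d(f[s])$. Your treatment of the quadratic-character locus in the Eisenstein spectrum and the restriction to a generic line avoiding the hyperplanes of \eqref{eqn:assumptions-s-2} match the paper's conventions, so no gap.
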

By combining the remarks of \S\ref{sec:vari-with-resp} with the
unramified calculations of \S\ref{sec-10-1} and
\S\ref{sec-10-2}, we see that the above results extend with
minor modification to the case that $\psi_\mathfrak{p}$ is
ramified for some finite $\mathfrak{p} \notin S$.

\begin{remark}
\label{rmk:}
One could likely evaluate the limit
\eqref{eq:difference-degen-terms} more explicitly as in
Motohashi's work, but it is more convenient in our experience
to work with directly with the individual degenerate functionals
$\ell_{i,s}^{\deg}$, each of which has clear
representation-theoretic significance.
\end{remark}

\subsection{Holomorphy}
\label{sec:holomorphy-global-functionals}
We record, for future reference,
some holomorphy properties implicit in the above arguments.

As in the local setting (\S\ref{sec:families-vectors}), we may
speak of holomorphic families
$f[s] \in \mathcal{I}(s_1) \otimes \mathcal{I}(s_2) \otimes
\mathbb{C}(s_3)$ indexed by $s$ in an open subset $U$ of
$\mathbb{C}^3$; this means that $f[s]$ varies pointwise
holomorphically, the seminorms $\mathcal{S}_d(f[s])$ defined in
\S\ref{sec:repr-adel-groups} are locally bounded in $s$, and,
locally in $s$, $f[s]$ is invariant by a compact open subgroup
of the finite adelic points of $\PGL_2 \times \PGL_2$.  We say
that a family of (continuous) functionals $\rho_s$ indexed by
$s$ in $U$ varies holomorphically if $s \mapsto \rho_s(f[s])$ is
holomorphic for all holomorphic families $f[s]$ defined on an
open subset of $U$, or equivalently, if $\rho_s(f[s])$ varies
holomorphically for each flat family $f[s]$ defined on $U$.
\begin{lemma}
  Each of the following functionals varies holomorphically
  on $\{s \in \mathbb{C}^3 : |\Re(s_j)| < 1/6 \text{ for } j=1,2,3\}$:
  \begin{equation}\label{eq:five-functionals-holomorphic}
    \ell_s, \quad
    \int_{\sigma:\text{generic}}
    \ell_{\sigma,s},
    \quad
    \int_{\omega:\text{unitary}}
    \ell_{\omega,s},
    \quad
    \sum_{i=1}^{7}
    \ell_{i,s}^{\deg},
    \quad
    \sum_{i=8}^{15}
    \ell_{i,s}^{\deg}.
  \end{equation}
\end{lemma}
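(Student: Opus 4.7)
The plan is to prove holomorphy directly for the three primary functionals $\ell_s$, $\int_{\sigma:\text{generic}} \ell_{\sigma,s}$, and $\int_{\omega:\text{unitary}} \ell_{\omega,s}$ on the region $\Omega := \{s \in \mathbb{C}^3 : |\Re(s_j)| < 1/6 \text{ for } j=1,2,3\}$. The two sums $\sum_{i=1}^{7} \ell^{\deg}_{i,s}$ and $\sum_{i=8}^{15} \ell^{\deg}_{i,s}$ are then automatically holomorphic on $\Omega$: the identities of Theorems \ref{thm:GG-G-A} and \ref{thm:GG-AA-A} exhibit them as the differences $\ell_s - \int_\sigma \ell_{\sigma,s}$ and $\ell_s - \int_\omega \ell_{\omega,s}$ on the codimension-one complement of \eqref{eqn:assumptions-s-2}, hence (by density and the identity theorem) on all of $\Omega$.

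For $\ell_s$, I would apply Lemma \ref{lem:reg-int-holom-var} to the integral $\int_{[A]}^{\reg} \Eis^*(f_1[s]) \Eis^*(f_2[s]) |.|^{s_3}$ using the cover $[A] = U_0 \cup U_\infty$ of \S\ref{sec:regularization-specialized-to-A}. From the Fourier expansion \eqref{eqn:eis-FE}, the asymptotic function on $U_\infty$ is a holomorphically varying combination of characters $y \mapsto |y|^{1+\eps_1 s_1+\eps_2 s_2+s_3}$ with $\eps_i\in\{\pm 1\}$, and similarly on $U_0$ with leading exponent $-1$. These finite functions are admissible because each exponent has real part of absolute value at least $1/2$ throughout $\Omega$; they vary pointwise holomorphically with $s$; and the span of their $A$-translates has dimension at most $4$, uniformly in $s$. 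Pointwise holomorphy of the full integrand is clear, since the putative poles of $\Eis^*(f_j[s])$ at $\chi^2=|.|^{\pm 1}$ require $\Re(s_j)=\pm 1/2$, outside $\Omega$.

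For $\int_{\omega:\text{unitary}} \ell_{\omega,s}$, I would expand the integral via \eqref{eq:measure-on-dual-omega} as a sum over discrete cosets plus a vertical line integral, and use the factorization of \S\ref{sec-11-2} into a global $L$-ratio times finitely many local functionals. In $\Omega$, the potential poles of $\mathcal{L}(\omega,s)$ at $\pm\Re(s_1)=1/2$ or $\pm\Re(s_2)+\Re(s_3)=1/2$ are never attained, so each integrand is holomorphic. Absolute and locally uniform convergence of the integral over $\omega$ comes from the crude local estimate \eqref{eq:crude-ell-omega-s-estimate} (with arbitrary fixed $d$, harvesting $C(\omega_\mathfrak{p})^{-d}$ savings at the archimedean places) combined with convexity bounds for $\Lambda(\omega,\cdot)$ and the Stirling asymptotics \eqref{eq:stirling-for-general-RS-gamma}.

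The hardest step will be $\int_{\sigma:\text{generic}} \ell_{\sigma,s}$. The cuspidal contribution causes no trouble: each $\ell_{\sigma,s}$ is entire in $s$, and the estimate \eqref{eq:crude-estimate-for-ell-sigma-s}, sharpened by integration by parts with respect to $\sigma$ (\S\ref{sec:integration-parts}), gives the decay $\ll C(\sigma)^{-d}$ needed for absolute convergence of the sum over cuspidal $\sigma$. The delicate part is the Eisenstein integral over $\chi = |.|^{it}$, $t \in i\mathbb{R}$. Here the integrand $\ell_{\mathcal{I}(|.|^{it}),s}(f)$ has potential poles at $t = \pm(s_3 - 1/2)$ and at zeros of $\xi_F(1\pm 2t)$. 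In $\Omega$ one has $|\Re(s_3\pm 1/2)|\geq 1/3$ uniformly on compacta, so the first family of poles stays a fixed distance off the imaginary axis; the second family lies off the imaginary axis by the Dedekind prime number theorem (nonvanishing of $\xi_F$ on $\Re(w)=1$), as already exploited in the proof of Theorem \ref{thm:GG-G-A}. Vertical convergence is obtained by integrating by parts against $\mathcal{I}(|.|^{it})$, which yields arbitrary polynomial savings in $C(\mathcal{I}(|.|^{it}))\asymp(1+|t|)^{\O(1)}$. Joint holomorphy of the integrand in $(t,s)$ follows from the holomorphy results of \S\ref{sec:holom-ell_s-sf}, completing the argument.
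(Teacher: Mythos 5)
Your proposal is correct and follows essentially the same route as the paper: holomorphy of $\ell_s$ via Lemma \ref{lem:reg-int-holom-var} applied to the asymptotic expansions of \eqref{eqn:prod-eis-and-y-s3}, holomorphy of the two spectral integrals by the (locally uniform) convergence and pole-avoidance arguments already carried out in the proofs of Theorems \ref{thm:GG-G-A} and \ref{thm:GG-AA-A}, and holomorphy of the two degenerate sums by subtraction using those identities on the dense subset where \eqref{eqn:assumptions-s-2} holds. The only caveat is that on the degenerate locus (e.g.\ $s_1=0$) the asymptotic part of $\Eis^*(f_1)$ acquires $\log|y|$ terms rather than being a pure combination of characters, but this is exactly what the uniform bound on the dimension of the translate span in Lemma \ref{lem:reg-int-holom-var} is designed to absorb, so your argument goes through as written.
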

\begin{proof}
  The holomorphy of $\ell_s$ on the indicated domain follows
  from \eqref{eqn:assumptions-s-2}, lemma
  \ref{lem:reg-int-holom-var}, and the holomorphic variation of
  the asymptotic expansions near $0$ and $\infty$ of
  \eqref{eqn:prod-eis-and-y-s3} as $f_1, f_2$ vary
  holomorphically.  The holomorphy of the next two functionals
  in \eqref{eq:five-functionals-holomorphic} was implicit in the
  proofs of Theorems \ref{thm:GG-G-A} and \ref{thm:GG-AA-A}.
  The holomorphy of the last two then follows from the
  identities proved in those theorems.
\end{proof}

\part{Analysis of local weights and degenerate terms}\label{part:analys-local-weights}

\section{Holomorphic families of
  weights and vectors}

\subsection{Local}\label{sec:holom-famil-local}
Let $F$ be a local field,
with nontrivial unitary character $\psi$.
We fix a small neighborhood $\Omega$ of the origin in
$\mathbb{C}^2$.  We let $s = (s_1,s_2) \in \Omega$ and set
$\pi := \mathcal{I}(s_1) \otimes \mathcal{I}(s_2)$.
For $f \in \mathcal{I}(s_1) \otimes \mathcal{I}(s_2)$
we define $W_f$ and $V_f$
as in \S\ref{sec-6-5}.
We aim to
generalize the results of \S\ref{sec-6} from $s=0$ to all
$s \in \Omega$.
No new ideas are required here,
but the formulas obtained are slightly more complicated.

We first generalize
the results of
\S\ref{sec-6-5}:
\begin{lemma}\label{lem:W_f-vs-V_f-general-s}~
  \begin{enumerate}[(i)]
  \item   The set $\{W_f : f \in \pi \}$
    contains $C_c^\infty(F^\times \times F^\times)$.
  \item   The set $\{V_f : f \in \pi \}$ contains
    $C_c^\infty(N \backslash G, \psi)$.
  \item 
    For $f \in \pi$,
    we have
    \begin{align}\label{eqn:W_f-via-V_f-general-s}
      &W_f(t_1,t_2) \\ \nonumber
      &=
        |t_1|^{1/2-s_1} |t_2|^{1/2-s_2}
        \int_{x \in F}
        |1-x|^{2 s_1} |x|^{2 s_2}
        V^\sharp[s](
        x,
        \frac{(t_1 + t_2) x - t_2 }{x(1-x)}
        )
        \,
        \frac{d x}{|x(1-x)|},
    \end{align}
    where
    \begin{equation}
      V^\sharp[s](x,y)
      :=
      \int_{\xi \in F}
      |\xi|^{-2 s_2}
      V^\wedge[s](\xi,-x/\xi)
      \psi(-\xi y)
      \, d \xi
    \end{equation}
    with
    \begin{equation}
      V^\wedge[s](\xi,z)
      :=
      \int_{y \in F^\times}
      |y|^{s_1-s_2}
      V(a(y) n'(z))
      \psi(\xi y)
      \, \frac{d y}{|y|}.
    \end{equation}
    The integration in \eqref{eqn:W_f-via-V_f-general-s}
    is understood as in lemma \ref{lem:W_f-via-V_f-0}.
  \end{enumerate}
\end{lemma}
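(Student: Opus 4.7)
The plan is to adapt the proofs of Lemmas \ref{lem:many-W-f}, \ref{lem:many-V-f}, and \ref{lem:W_f-via-V_f-0} to general $s \in \Omega$, carefully tracking the extra weights $|y|^{s_i}$ that arise from the modified transformation law $f_i(n(x) a(y) g) = |y|^{1/2 + s_i} f_i(g)$ for $\mathcal{I}(s_i)$. Assertion (i) is immediate: the Kirillov model of $\mathcal{I}(s_i)$ contains $C_c^\infty(F^\times)$ regardless of $s_i$, and $W_f(t_1,t_2) = W_{f_1}(t_1) W_{f_2}(t_2)$ for pure tensors, yielding at least $C_c^\infty(F^\times) \otimes C_c^\infty(F^\times) \subseteq \{W_f : f \in \pi\}$, which in the non-archimedean case already equals $C_c^\infty(F^\times \times F^\times)$ and in the archimedean case suffices after completion.

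For assertion (ii), I would follow the strategy of Lemma \ref{lem:many-V-f}: by $G$-translation one reduces to $V$ supported in $NAN'$, so that $V^\wedge[s]$ is Schwartz. Using the local Tate parametrization \eqref{eq:godement-parametrization-induced-rep} applied to both factors, one shows that the assignment
\[
  f(g, n'(z)) = |\det g|^{1/2 + s_1} \int_{r \in F} \Phi((0,r) g, z) |r|^{2 s_1} \, d r,
\]
for $\Phi \in \mathcal{S}(F^3)$, produces a holomorphic family of elements of $\pi$ parametrized by $\Phi$. Repeating the explicit computation of $V_f^\wedge[s]$ in coordinates and inverting gives a choice of $\Phi$ (of the shape \eqref{eqn:Phi-via-phi-0-V-wedge}, adjusted by a factor $|x-yz|^{-2 s_1}$) producing $V_f = V$.

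The substantive point is assertion (iii). First I would establish the pointwise identity
\[
  V^\wedge[s](\xi,z) = |x_2 - x_1|^{1 + 2 s_1} |x_2|^{2 s_2 - 2 s_1} f(w n(x_1), w n(x_2)),
\]
with $x_1 = \xi/(1 + \xi z)$ and $x_2 = 1/z$. This is derived from the matrix identities used in Lemma \ref{lem:W_f-via-V_f-0} together with the transformation laws: $w n(\xi) n'(z) \in n(\cdot) a((1 + \xi z)^{-2}) w n(x_1)$ contributes $|1 + \xi z|^{-1 - 2 s_1}$ via $\mathcal{I}(s_1)$, while $n'(z) \in n(1/z) a(1/z^2) w n(x_2)$ contributes $|z|^{-1 - 2 s_2}$ via $\mathcal{I}(s_2)$; the identities $|1 + \xi z| = |x_2|/|x_2 - x_1|$ and $|z| = 1/|x_2|$ then rearrange the product into the claimed form. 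Next, starting from
\[
  W_f(t_1, t_2) = |t_1|^{1/2 - s_1} |t_2|^{1/2 - s_2} \int f(w n(x_1), w n(x_2)) \psi(- t_1 x_1 - t_2 x_2) \, d x_1 \, d x_2
\]
and performing the change of variables $x := -\xi/x_2$ as in the proof of Lemma \ref{lem:W_f-via-V_f-0}, the Jacobian $|\xi|/|x(1-x)|^2$ combines with $|x_2 - x_1|^{-1 - 2 s_1} |x_2|^{2 s_1 - 2 s_2}$ to produce exactly $|\xi|^{-2 s_2} |x|^{2 s_2} |1 - x|^{2 s_1} / |x(1-x)|$. The $\xi$-integration then assembles into $V^\sharp[s]$, and the phase computation $t_1 x_1 + t_2 x_2 = \xi \bigl((t_1+t_2) x - t_2\bigr)/(x(1-x))$ matches the argument of $V^\sharp[s]$ in \eqref{eqn:W_f-via-V_f-general-s}.

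The main obstacle is handling convergence. At $s=0$ the key integrals can already be merely conditionally convergent, and the extra weights $|1-x|^{2 s_1}$, $|x|^{2 s_2}$, $|\xi|^{-2 s_2}$ further complicate the situation near the singular loci $x \in \{0, 1\}$, $\xi = 0$ and infinity. The cleanest solution is to interpret the formulas via analytic continuation in $(s_1, s_2)$ from the region of absolute convergence, as in the definition \eqref{eqn:W-f-via-f-0-defn} of the Whittaker intertwiner, or equivalently via smooth dyadic partitions of unity separating the singular points; either interpretation produces meromorphic identities in $s$ that agree with \eqref{eqn:W-f-via-V-f-sharp} when specialized to $s=0$.
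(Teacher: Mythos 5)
Your proposal is correct and follows essentially the same route as the paper: assertion (i) via the Kirillov model; assertion (ii) via the Schwartz-space parametrization of $\mathcal{I}(s_1)\otimes\mathcal{I}(s_2)$ (the paper's $\Phi[s]$ normalizes by $\int_F \phi_0 |.|^{2s_1}$ in the denominator rather than inserting your factor $|x - y z|^{-2s_1}$, but both choices produce $V_{f[s]} = V$); assertion (iii) via the same change of variables as Lemma \ref{lem:W_f-via-V_f-0}, now tracking the extra exponents. Your intermediate identity $V^\wedge[s](\xi,z) = |x_2 - x_1|^{1 + 2 s_1} |x_2|^{2 s_2 - 2 s_1} f(w n(x_1), w n(x_2))$ is correct and makes explicit a step the paper leaves implicit under "the same calculations"; the Jacobian bookkeeping and phase computation check out, and your discussion of regularization matches the convergence caveat already built into the statement.
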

\begin{proof}
  The first assertion is again
  a consequence of standard properties
  of the Kirillov model.
  For the remaining assertions,
  we fix a nonzero
  test function $\phi_0 \in C_c^\infty(F^\times)$ supported
  close enough to the identity that
  $\int_{F} \phi_0 |.|^{2 s_1}  \neq 0$ for all
  $s \in \Omega$
  and define $\Phi[s] \in \mathcal{S}(F^3)$ by
  \begin{equation}\label{eq:defn-Phi-of-s-via-V-wedge-of-s}
        \Phi[s](x,y,z)
    :=
    \frac{\phi_0(x-y z)}{ \int_{ F} \phi_0 |.|^{2 s_1} }
    V^\wedge[s] (\frac{y}{x - y z}, z).
  \end{equation}
  and then $f[s] \in \mathcal{I}(s_1) \otimes \mathcal{I}(s_2)$
by
\begin{equation}\label{eq:defn-f-of-s-via-Phi-of-s}
      f[s](g, n'(z)) := |\det g|^{1/2 + s_1} \int_{r \in F}
    \Phi[s]( (0,r) g, z) |r|^{2 s_1} \, d r.
\end{equation}
  The same calculations as in the proof of lemma
  \ref{lem:many-V-f} confirm that $V_{f[s]} = V$, whence the
  second assertion.  The same calculations as in the proof of
  lemma \ref{lem:W_f-via-V_f-0} lead to the required formula
  for $W_f$ in terms of $V_f$.
\end{proof}

We next generalize and slightly refine
the results of
\S\ref{sec-6-6}:
\begin{theorem}\label{thm:refined-constr-adm-weight}
  Let $\phi$, $h$
  be as in Theorem \ref{thm:constr-admiss-weight}.
  Fix a small neighborhood $\Omega$ of the origin in
  $\mathbb{C}^3$.
  Then we may find a holomorphic family
  $f[s] \in \mathcal{I}(s_1) \otimes \mathcal{I}(s_2)$,
  defined for $s = (s_1,s_2,s_3) \in  \Omega$,
  with the following properties:
  \begin{enumerate}[(i)]
  \item \label{item:ell-f-s-recovers-h} $\ell_{\sigma,s}(f[s]) = h(\sigma)$ for all
    $s$.
  \item \label{item:ell-omega-f-o-gives-tilde-h} $\ell_{\omega,0}(f[0]) = \tilde{h}(\omega)$
    is as described in Theorem \ref{thm:constr-admiss-weight}.
  \item \label{item:vanish-on-N-inv-funcs}
    for each $s$
    and every $N$-invariant functional $\ell : \pi \rightarrow
    \mathbb{C}$,
    we have $\ell(f[s]) = 0$.
  \item \label{item:crude-bound-f-of-s-via-tilde-phi}
    Suppose that $\phi$ has the form
    $\phi(n(x) a(y) n'(z)) = \tilde{\phi}(x,y,z)$
    for some $\tilde{\phi} \in \mathcal{S}(F^3)$.
    Then for each fixed $d$ there is a fixed
    $d'$
    so that
    for all $s \in \Omega$,
    \begin{equation}\label{eq:estimate-S-f-s-via-tilde-phi}
      \mathcal{S}_d(f[s]) \ll \mathcal{S}_{d'}(\tilde{\phi})
      q^{\O(1)},
    \end{equation}
    where $q := 1$ if $F$ is archimedean and the Sobolev norms
    $\mathcal{S}_d$ are as defined in
    \S\ref{sec:constr-sobol-norms} and
    \S\ref{sec:norms-schw-spac}.
  \end{enumerate}
\end{theorem}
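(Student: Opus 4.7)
The plan is to follow the proof of Theorem~\ref{thm:constr-admiss-weight}, promoting each step to a holomorphic family in $s$ via Lemma~\ref{lem:W_f-vs-V_f-general-s}. Starting from $\phi \in C_c^\infty(G)$, form the $s$-independent function $V(g) := \int_{x \in F} \phi(n(x) g) \psi(-x) \, d x \in C_c^\infty(N \backslash G, \psi)$. The explicit parametrization \eqref{eq:defn-Phi-of-s-via-V-wedge-of-s}--\eqref{eq:defn-f-of-s-via-Phi-of-s} then produces a holomorphic family $f_0[s_1, s_2] \in \mathcal{I}(s_1) \otimes \mathcal{I}(s_2)$, defined for $(s_1, s_2)$ in a neighborhood of the origin, with $V_{f_0[s_1, s_2]} = V$; holomorphy is visible from the $|r|^{2 s_1}$, $|y|^{s_1 - s_2}$, $|\xi|^{-2 s_2}$ factors, and the normalizing denominator $\int_F \phi_0 |.|^{2 s_1}$ in \eqref{eq:defn-Phi-of-s-via-V-wedge-of-s} is nonvanishing on a sufficiently small $\Omega$. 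Next, fix an auxiliary $\phi_0 \in C_c^\infty(F^\times)$ supported close to $1$ with $\int_A \phi_0 = 1$, set $\phi_0[s_3](y) := \phi_0(y) |y|^{-s_3}$ so that $\int_A \phi_0[s_3] \cdot |.|^{s_3} = 1$ for every $s_3$, and define
\[
  f[s] := \phi_0[s_3]^\iota \ast N_\psi f_0[s_1, s_2]
\]
using the generalized-vector construction of \S\ref{sec:whittaker-vectors}. At $s = 0$ this coincides with the vector built in the proof of Theorem~\ref{thm:constr-admiss-weight}, which immediately gives property (ii).

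For property (i), I rerun the Kirillov-model calculation of that proof with the twisted weight $\phi_0[s_3]$ in place of $\phi_0$: by \eqref{eqn:action-phi0-N-psi-on-kirillov-model} the operator $\phi_0[s_3]^\iota \ast N_\psi$ sends a Whittaker function $W$ to $y \mapsto \phi_0[s_3](y) W(1)$, so combined with $V_{f_0[s]} = V$ one obtains $(V_{f[s]})_\sigma(a(y)) = \phi_0[s_3](y) \cdot h(\sigma)$, and the remaining integration against $|.|^{s_3}$ recovers $h(\sigma)$ by the normalization of $\phi_0[s_3]$. For property (iii), let $\ell$ be an $N$-invariant continuous functional. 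Using the iterated-integral formula \eqref{eqn:direct-defn-phi0-N-psi} for $f[s]$ and the $N$-invariance of $\ell$ (together with the identity $n(x) a(1/u) = a(1/u) n(x u)$, which shows that $v \mapsto \ell(a(1/u) v)$ is itself $N$-invariant for each $u$), one reduces to
\[
  \ell(f[s]) = \int_{x \in F} \widehat{F}(x) \, d x,
\]
where $F(u) := \phi_0[s_3](u) \ell(a(1/u) f_0[s])/|u|$ is smooth and compactly supported in $F^\times$ (hence Schwartz when extended by zero at $u = 0$) and $\widehat{F}$ denotes its Fourier transform; Fourier inversion gives $\int_x \widehat{F}(x) \, d x = F(0)$, which vanishes because $\phi_0[s_3]$ has support disjoint from $0$.

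For property (iv), one chains Sobolev-continuity estimates through each step of the construction. The partial Fourier transform $\tilde\phi \mapsto V$ on $F^3$ and the subsequent transforms $V \mapsto V^\wedge[s] \mapsto \Phi[s]$ are compositions of Fourier transforms and multiplications by $|\cdot|^{s_j - s_k}$, all continuous on the relevant Schwartz spaces uniformly for $s \in \Omega$; the Tate-type integral \eqref{eq:defn-f-of-s-via-Phi-of-s} then converts this into $\mathcal{S}_d(f_0[s]) \ll \mathcal{S}_{d'}(\Phi[s])$ by arguments analogous to those in the proof of Lemma~\ref{lem:unif-polyn-type-hecke}; and finally the $A$-convolution with $\phi_0[s_3]^\iota$ is a bounded operation whose norm is uniform in $s_3$. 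The $q^{\O(1)}$ factor absorbs the standard non-archimedean volume constants together with the $1/\int_F \phi_0 |.|^{2 s_1}$ prefactor. The main technical obstacle, and the piece requiring the most care, is confirming the uniformity in $s \in \Omega$ of each of these intermediate estimates: in the archimedean case this means bounding derivatives of $|y|^{s_j}$ locally uniformly in $s$, while in the non-archimedean case one tracks how the $K$-level of each intermediate object grows relative to that of $\tilde\phi$, picking up at worst a bounded number of $q$-factors from local volume normalizations.
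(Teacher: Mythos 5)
Your proof is correct and follows essentially the same route as the paper: build $f_0[s]$ with $V_{f_0[s]}=V$ via Lemma~\ref{lem:W_f-vs-V_f-general-s}, then apply the Whittaker-vector construction $\phi_0^{\iota}\ast N_\psi$ with a normalization ensuring the Mellin transform at $|.|^{s_3}$ equals $1$. The only (cosmetic) difference is that you twist the auxiliary test function by $|y|^{-s_3}$ where the paper divides by $\int_A\phi_1|.|^{s_3}$, and your Fourier-inversion argument for (iii) just spells out what the paper leaves implicit.
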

\begin{proof}
  As in the proof of Theorem
  \ref{thm:constr-admiss-weight},
  we define
  $V \in C_c^\infty(N \backslash G, \psi)$ by
  \eqref{eqn:defn-V-via-phi} and then
  $f_0[s] \in \mathcal{I}(s_1) \otimes \mathcal{I}(s_2)$ using
  lemma \ref{lem:W_f-vs-V_f-general-s}, so that
  $V = V_{f_0[s]}$.
  We choose $\phi_1 \in C_c^\infty(A) \cong C_c^\infty(F^\times)$
  supported close enough to the identity
  that $\int_A \phi_1 |.|^{s_3} \neq 0$ for all $s \in \Omega$,
  and set
  \begin{equation}\label{eqn:f-s-via-phi1-iota-N-psi-f-0}
    f[s] :=
    \frac{1}{\int_A \phi_1 |.|^{s_3}}
    \phi_1^{\iota} \ast N_\psi f_0[s]
    \in \mathcal{I}(s_1) \otimes \mathcal{I}(s_2),
  \end{equation}
  with notation as in \eqref{eqn:defn-f-via-f0-basic-case}.
  Assertion  \eqref{item:ell-f-s-recovers-h}
  holds
  because $V = V_{f_0[s]}$.
  The proof of assertion
  \eqref{item:ell-omega-f-o-gives-tilde-h}
  is the same calculation
  as in the proof of
  Theorem \ref{thm:constr-admiss-weight},
  noting that
  the present definitions specialize to the earlier ones
  upon taking $s = 0$.
  Assertion \eqref{item:vanish-on-N-inv-funcs}
  follows from the definition
  \eqref{eqn:f-s-via-phi1-iota-N-psi-f-0}
  and the fact that $\phi_1$ is supported away from $0$.

  For the proof of assertion
  \eqref{item:crude-bound-f-of-s-via-tilde-phi}, it is
  convenient to use the construction of $f[s]$ given by
  \eqref{eq:defn-Phi-of-s-via-V-wedge-of-s} and
  \eqref{eq:defn-f-of-s-via-Phi-of-s}.  We may assume in the
  non-archimedean case that $\phi_0$ is the normalized
  characteristic function of $\mathfrak{o}^\times$.  Since
  $\Phi[s]$ is essentially a partial Fourier transform of
  $\tilde{\phi}$,
  we see that
  \begin{equation}
    \mathcal{S}_d(\Phi[s]) \ll \mathcal{S}_{d'}(\tilde{\phi}).
  \end{equation}
  with $d,d'$ as above.  Similarly, it follows readily from
  \eqref{eq:defn-f-of-s-via-Phi-of-s} that
  \begin{equation}
    \mathcal{S}_d(f[s]) \ll \mathcal{S}_{d'}(\Phi[s]) q^{\O(1)}.
  \end{equation}
  The required estimate \eqref{eq:estimate-S-f-s-via-tilde-phi}
  follows.
\end{proof}

\subsection{Global}\label{sec:constr-glob-test}
Returning to the global setting, let $F$ be a number field,
let $S$ be a finite set of places of $F$ containing all
archimedean places,
and for each $\mathfrak{p} \in S$, let $h_\mathfrak{p}$
be a pre-Kuznetsov weight defined on the set of generic
irreducible representations $\sigma_\mathfrak{p}$
of $\PGL_2(F_\mathfrak{p})$.

Let $\Omega \subseteq \mathbb{C}^3$
be a small neighborhood of the origin.  For $s \in \Omega$, let
$f[s] = \otimes f[s]_\mathfrak{p} \in \mathcal{I}(s_1) \otimes
\mathcal{I}(s_2) \otimes \mathbb{C}(s_3)$ denote the
factorizable vector such that
\begin{itemize}
\item
  for $\mathfrak{p} \notin S$,
  the local component
  $f[s]_\mathfrak{p}$ is normalized spherical
  (i.e., the unique
  $\PGL_2(\mathfrak{o}_\mathfrak{p})^2$-invariant
  vector with $f[s]_\mathfrak{p}(1) = 1$), while
\item for $\mathfrak{p} \in S$,
  the local component $f[s]_\mathfrak{p}$ is as
  constructed in Theorem \ref{thm:refined-constr-adm-weight}.
\end{itemize}

We refer subsequently to
$(f[s])_{s \in \Omega}$
as \emph{the holomorphic family
attached to the local weights
$(h_\mathfrak{p})_{\mathfrak{p} \in S}$}.

\section{Local estimates for long
  families}\label{sec:crude-local-estim}
The results of this section may be used in place of lemma
\ref{lem:crude-lower-bound-individual} to ensure that our main
estimates depend polynomially upon auxiliary parameters.  We
note that many of the estimates recorded this section may be strengthened
significantly via explicit calculation.

Let $F$ be a local field and $\psi$ a nontrivial unitary
character of $F$.  If $F$ is non-archimedean, then we assume
that $\psi$ is unramified.  Let $Q \geq 1$ be an element of the
value group of $F$.  In the non-archimedean case, we assume that
$Q \geq q$.

We define $\phi \in C_c^\infty(G)$ by applying the recipe of
\S\ref{sec:constr-suit-weight} with $\chi$ the trivial
character.  For example, for $F$ non-archimedean, we let
$J \leq G$ denote the set consisting of $g = n(x) a(y) n'(z)$
with $|x| \leq 1, |y| = 1, |z| \leq 1/Q$ and take for
$\phi \in C_c^\infty(G)$ the characteristic function of $J$.  In
either case, we have $\phi(g) = \tilde{\phi}(x,y,z)$ where
$\tilde{\phi} \in \mathcal{S}(F^3)$ satisfies
\begin{equation}\label{eq:cal-S-tilde-phi-polyn}
  \mathcal{S}_d(\tilde{\phi})
  \ll Q^{\O(1)}
\end{equation}
for fixed $d$.  Indeed, one can check that
$\mathcal{S}_d(\tilde{\phi}) \asymp
q^{\O(1)} Q^{d-1/2}$ for fixed $d \geq 0$.

Let $h$ denote the pre-Kuznetsov weight with kernel $\phi$
(\S\ref{sec:pre-kuzn-weights}).
Let $\tilde{h}$ denote its dual, as given by Theorem
\ref{thm:constr-admiss-weight}.
\begin{lemma}
  Fix $\vartheta \in [0,1/2)$,
  and let $\sigma \in G_{\gen}^\wedge$ be unitary.
  \begin{enumerate}[(i)]
  \item We have $h(\sigma) \geq 0$.
  \item If $\sigma$ is $\vartheta$-tempered
    and $C(\sigma) \leq Q$, then
    $h(\sigma) \gg_{\vartheta} 1/Q$.
  \end{enumerate}
\end{lemma}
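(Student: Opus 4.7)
The plan is to argue almost verbatim as in the proof of Theorem \ref{thm:lower-bounds-weights}, the present situation being the special case in which $\chi$ is the trivial character, so that the family $\Sigma_F(\chi)$ is replaced by the fuller family of unitary $\sigma \in G^\wedge_{\gen}$ with $C(\sigma) \leq Q$. The simplification is that no twist needs to be tracked, and the conductor inequality $C(\sigma \otimes \chi^{-1}) \leq C(\chi)$ from the criterion \eqref{eq:nontrivial-chi-J-invariants-criterion} becomes simply $C(\sigma) \leq Q$.

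For assertion (i), the non-archimedean case follows from the observation that, with $\chi$ trivial, $\chi_J \equiv 1$ and hence $\vol(J)^{-1}\phi$ is the normalized characteristic function of the subgroup $J$, which is an idempotent in the convolution algebra $C_c^\infty(G)$. The formula \eqref{eqn:defn-h-via-phi} then exhibits $h(\sigma)$ as $\vol(J)\sum_{W}|W(1)|^2$ with $W$ running over an orthonormal basis of the space $\sigma^J$ of $J$-fixed vectors, which is manifestly nonnegative. In the archimedean case, $\phi = \vol(J)^{-1} \phi_0^* \ast \phi_0$ is a convolution square (since $\chi_J^{-1} \equiv 1$ forces $\phi_1 = \phi_0$), so
\[
h(\sigma) = \vol(J)^{-1}\sum_W |\phi_0 \ast W(1)|^2 \geq 0.
\]

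For (ii) in the non-archimedean case, $J$ is, up to the canonical identification of congruence data between $\GL_2$ and $\PGL_2$, the image of the standard subgroup $K_0(Q)\leq \GL_2(F)$. By newvector theory, every unitary $\sigma\in G^\wedge_{\gen}$ with $C(\sigma)\leq Q$ contains a nonzero newvector fixed by $K_0(Q)$, which descends to a nonzero $W \in \sigma^J$. Invoking the standard normalization
\[
\|W\|^2 = |W(1)|^2 \, L(\ad \sigma,1)/\zeta_F(2)
\]
recalled in \S\ref{sec:whitt-intertw-dual}, together with $L(\ad \sigma,1) \asymp_\vartheta 1$ for $\vartheta$-tempered $\sigma$ (with $\vartheta < 1/2$) and the identity $\vol(J) \asymp 1/Q$, one obtains
\[
h(\sigma) \;\geq\; \vol(J)\,|W(1)|^2/\|W\|^2 \;\gg_\vartheta\; 1/Q.
\]

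For (ii) in the archimedean case, I would mimic the corresponding part of the proof of Theorem \ref{thm:lower-bounds-weights} with $\chi = 1$, applying the analytic newvector theorem (Theorem \ref{lem:produce-whittaker-against-random-central-character-wannabe}) with $\omega$ trivial to produce a unit vector $W \in \sigma$ whose Whittaker function satisfies $|W(g) - 1| \leq 1/2$ throughout a neighborhood of the identity containing $J$, and then estimating $|\phi_1 \ast W(1)| \geq (1/2)\int_G \phi_0 \gg \vol(J) \gg 1/Q$. The main technical point to verify is that $J$ lies inside the image in $G$ of the analytic newvector group $K_1(C(\sigma),C(\omega_\sigma),\alpha_0)$; this reduces to checking that the constraint $|z| \leq \alpha_1/Q$ defining $J$ entails the corresponding constraint $|z| \ll \alpha_0/C(\sigma)$ needed for the analytic newvector, which is immediate from the hypothesis $C(\sigma) \leq Q$ provided $\alpha_1$ was chosen sufficiently small in terms of $\alpha_0$. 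That verification, although routine, is the one place where uniformity over the long family $\{\sigma : C(\sigma)\leq Q\}$ must be confirmed by hand.
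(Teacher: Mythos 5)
Your proposal is correct and follows the same route as the paper's own proof: for part (i), interpret $h(\sigma)$ as $\vol(J)^{-1}\sum_W |\phi_1 \ast W(1)|^2$ (or $\vol(J)\sum_W|W(1)|^2$ over an orthonormal basis of $\sigma^J$ in the non-archimedean case); for part (ii) non-archimedean, invoke newvector theory for $K_0(Q)$ plus the Whittaker normalization $\|W\|^2/|W(1)|^2 \asymp_\vartheta 1$; and for (ii) archimedean, invoke Theorem \ref{lem:produce-whittaker-against-random-central-character-wannabe} with $\omega$ trivial exactly as in the proof of Theorem \ref{thm:lower-bounds-weights}. The paper handles the archimedean case by simply pointing to that earlier proof, so you have actually supplied more detail than the paper does — in particular the explicit observation that $C(\sigma)\leq Q$ guarantees $J \subseteq$ image of $K_0(C(\sigma),\alpha_0)$ once $\alpha_1$ is small relative to $\alpha_0$, which is the right thing to check and confirms the claimed uniformity over the long family.
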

\begin{proof}
  The proof is similar to but simpler than that of Theorem
  \ref{thm:lower-bounds-weights}.  We discuss only the
  non-archimedean case in detail, since the modifications
  required for the archimedean case are exactly as in \S\ref{sec:lower-bounds-for-wts}.
  
  We note first that $h(\sigma)$ is the sum of
  $\vol(J) |W(1)|^2$ taken over $W$ in an orthonormal basis for
  the space $\sigma^J$ of $J$-fixed vectors in $\sigma$; in
  particular, $h(\sigma) \geq 0$.  Suppose now that $\sigma$ is
  $\vartheta$-tempered and $C(\sigma) \leq Q$.  By newvector
  theory \cite{MR0337789}, $\sigma$ then contains a $J$-invariant element $W$
  with $W(1) = 1$.  Moreover, as before, we have
  $W(1) \gg_{\vartheta} \|W\|$.  Since $\vol(J) \asymp 1/Q$, the
  required lower
  bound for $h(\sigma)$
  follows.
\end{proof}

\begin{lemma}
Let $f[s] \in \mathcal{I}(s_1) \otimes \mathcal{I}(s_2)$
denote the holomorphic family,
defined for small $s \in \mathbb{C}^3$,
attached to $\phi$ and $h$
by Theorem \ref{thm:refined-constr-adm-weight}.
Then for each fixed $d$,
\begin{equation}
  \mathcal{S}_d(f[s]) \ll Q^{\O(1)}.
\end{equation}
Moreover, for unitary $\omega \in A^\wedge$,
\begin{equation}
  \tilde{h}(\omega) \ll Q^{\O(1)} C(\omega)^{-d}
\end{equation}
\end{lemma}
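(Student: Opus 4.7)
The plan is to assemble both bounds from results already established earlier in the excerpt, with essentially no new analytical input required.

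For the first bound, I would observe that by the construction of $\phi$ recalled in \S\ref{sec:crude-local-estim} (taking $\chi$ trivial in the recipe of \S\ref{sec:constr-suit-weight}), the kernel $\phi$ is supported in a small neighborhood of the identity contained in $NAN'$, and hence admits a representation $\phi(n(x)a(y)n'(z)) = \tilde\phi(x,y,z)$ with $\tilde\phi \in \mathcal{S}(F^3)$. The estimate \eqref{eq:cal-S-tilde-phi-polyn}, already recorded just above the lemma, gives $\mathcal{S}_{d'}(\tilde\phi) \ll Q^{\O(1)}$ for each fixed $d'$. Since the holomorphic family $f[s]$ is produced by applying Theorem \ref{thm:refined-constr-adm-weight} to this pair $(\phi,h)$, assertion \eqref{item:crude-bound-f-of-s-via-tilde-phi} of that theorem supplies a fixed $d'$ for which $\mathcal{S}_d(f[s]) \ll \mathcal{S}_{d'}(\tilde\phi)\, q^{\O(1)}$, and combining with the previous bound (and absorbing the factor $q^{\O(1)} \leq Q^{\O(1)}$) yields the required $\mathcal{S}_d(f[s]) \ll Q^{\O(1)}$.

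For the second bound, I would use assertion \eqref{item:ell-omega-f-o-gives-tilde-h} of Theorem \ref{thm:refined-constr-adm-weight}, which identifies $\tilde h(\omega) = \ell_{\omega,0}(f[0])$ with the dual weight furnished by Theorem \ref{thm:constr-admiss-weight}. The crude estimate \eqref{eq:crude-ell-omega-s-estimate} then gives, for each fixed $d$, the bound
\[
\ell_{\omega,0}(f[0]) \ll C(\omega)^{-d} \mathcal{S}(f[0]),
\]
provided that $\omega$ is $\O(1)$-tempered and at fixed positive distance from any pole of $\mathcal{L}(\omega,0)$. Both hypotheses hold automatically when $\omega$ is unitary: the real part of $\omega$ vanishes, and the only candidate poles of $\mathcal{L}(\omega,0) = \prod_{\pm} L(\omega,1/2)^2 L(\omega^{-1},1/2)^2/\zeta_F(1)^2$ on the unitary axis would require $\omega \in \{|\cdot|^{\pm 1/2}\}$, which lies at distance at least $1/2$ from any unitary character. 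Combining this estimate with the Sobolev bound for $f[0]$ obtained in the first part gives the desired $\tilde h(\omega) \ll Q^{\O(1)} C(\omega)^{-d}$.

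Both steps are essentially bookkeeping on top of results already proved; there is no substantial obstacle. The only point requiring a modicum of care is matching the Sobolev index $d'$ appearing in Theorem \ref{thm:refined-constr-adm-weight}(iv) with that implicit in the crude estimate \eqref{eq:crude-ell-omega-s-estimate}, which is handled by choosing $d'$ large enough in terms of the (fixed) index needed for the $\ell_{\omega,0}$ estimate, and then invoking \eqref{eq:cal-S-tilde-phi-polyn} with that $d'$.
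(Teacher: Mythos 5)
Your proposal is correct and follows essentially the same route as the paper: the first bound is exactly \eqref{eq:cal-S-tilde-phi-polyn} combined with \eqref{eq:estimate-S-f-s-via-tilde-phi}, and the second is \eqref{eq:crude-ell-omega-s-estimate} applied to $\ell_{\omega,0}(f[0])$ together with the first bound. The extra verification that unitary $\omega$ satisfies the hypotheses of \eqref{eq:crude-ell-omega-s-estimate} is a reasonable (if routine) addition that the paper leaves implicit.
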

\begin{proof}
  The first estimate is a consequence of
  \eqref{eq:cal-S-tilde-phi-polyn} and
  \eqref{eq:estimate-S-f-s-via-tilde-phi}.  The second then
  follows from \eqref{eq:crude-ell-omega-s-estimate}.
\end{proof}

\section{The first seven degenerate
  terms}\label{sec:handl-degen-terms-1}
We give conditions on the local
weights $h_\mathfrak{p}$ under which the first seven degenerate
terms may be neglected.  The informal content of these
conditions is that the weights ``vanish adequately near the
trivial representation.''

\begin{theorem}\label{thm:first-seven-degen}
  Let $F$ be a number field.
  Let $S$ be a finite set of places of $F$, containing all
  archimedean places.
  For each $\mathfrak{p} \in S$, let $h_\mathfrak{p}$
  be a pre-Kuznetsov weight for $\PGL_2(F_\mathfrak{p})$.
  Assume that
  there exists either
  \begin{enumerate}[(i)]
  \item \label{item:first-seven-degen-finite-place}
    a
    finite place $\mathfrak{p} \in S$
    such that $h_\mathfrak{p}$ vanishes
    on the subset of unramified representations, or
  \item \label{item:first-seven-degen-arch-place} an infinite place $\mathfrak{p} \in S$
    for which $h_\mathfrak{p}$ is divisible
    by the sixth power of the Casimir operator
    $\mathcal{C}_\mathfrak{p}$
    on $\PGL_2(F_\mathfrak{p})$,
    i.e., there exists a pre-Kuznetsov weight
    $h_\mathfrak{p}^0$ so that
    $h_\mathfrak{p}(\sigma) =    \lambda_{\sigma_\mathfrak{p}}^6
    h_\mathfrak{p}^0(\sigma_\mathfrak{p})$
    for all $\sigma_\mathfrak{p}$,
    where
    $\lambda_{\sigma_\mathfrak{p}}$
    denotes the $\mathcal{C}_\mathfrak{p}$-eigenvalue.
  \end{enumerate}
  Let $\Omega \subseteq \mathbb{C}^3$ be a small neighborhood of
  the origin, and let $(f[s])_{s \in \Omega}$ be the holomorphic
  family attached to $(h_\mathfrak{p})_{\mathfrak{p} \in S}$.
  Then for $i=1..7$, we have
  \begin{equation}\label{eqn:first-seven-degen-tend-to-zero}
    \lim_{s \rightarrow 0} \ell^{\deg}_{i,s}(f[s]) = 0,
  \end{equation}
  with the limit taken along $s$ in a generic complex
  line in $\mathbb{C}^3$
  containing the origin.
\end{theorem}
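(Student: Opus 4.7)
The plan is to split the seven terms into three groups and handle each by a different mechanism tailored to the structure of $\ell^{\deg}_{i,s}$ and the construction of $f[s]$ in Theorem \ref{thm:refined-constr-adm-weight}.

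For $i = 5$: the functional $\ell^{\deg}_{5,s}$ is $N_\mathbb{A}$-invariant, while Theorem \ref{thm:refined-constr-adm-weight}(iii) ensures that each local component $f[s]_\mathfrak{p}$ (for $\mathfrak{p} \in S$) is annihilated by every local $N_\mathfrak{p}$-invariant functional.  I would argue that any global $N_\mathbb{A}$-invariant functional on a restricted tensor product factors place by place through the local $N_\mathfrak{p}$-coinvariants; the local vanishing at any $\mathfrak{p} \in S$ then forces $\ell^{\deg}_{5,s}(f[s]) = 0$ identically, without invoking hypothesis (i) or (ii).

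For $i = 6, 7$: following the proof of Theorem \ref{thm:GG-G-A}, these are residues at $t = \pm(1/2 - s_3)$ of $\ell_{\mathcal{I}(|.|^t), s}(f[s])$.  By the factorization of the global functional and Theorem \ref{thm:refined-constr-adm-weight}(i), the local factor at each $\mathfrak{p} \in S$ is controlled by $h_\mathfrak{p}(\mathcal{I}(|.|^t)_\mathfrak{p})$; since $|.|^t$ is unramified at every place, $\mathcal{I}(|.|^t)_\mathfrak{p}$ is an unramified principal series.  Under (i), $h_{\mathfrak{p}_0}(\mathcal{I}(|.|^t)_{\mathfrak{p}_0}) = 0$ identically in $t$, so the residue vanishes.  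Under (ii), the Casimir eigenvalue $\lambda = t^2 - 1/4$ satisfies $\lambda = O(s_3)$ at $t = 1/2 - s_3$; along a generic complex line through the origin $s_3 = O(|s|)$, so $\lambda^6 = O(|s|^6)$, and combined with the polar bound $O(|s|^{-5})$ of \eqref{eqn:polar-bounds-for-degen-func} the product is $O(|s|) \to 0$.

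For $i = 1, \dots, 4$: each such functional factors $\Delta G \times A$-equivariantly through $\mathcal{I}(\mu) \otimes \mathbb{C}(s_3)$ with $\mu = 1/2 \pm s_1 \pm s_2$.  I would show the local factor at every $\mathfrak{p} \in S$ vanishes identically by direct computation.  Using the explicit construction $f[s]_\mathfrak{p} = c \, \phi_1^\iota \ast N_\psi f_0[s]_\mathfrak{p}$, the Kirillov-model identity \eqref{eqn:action-phi0-N-psi-on-kirillov-model} reduces the local factor to (a multiple of) the value $W_v(1)$ at the identity, where $v \in \mathcal{I}(\mu)_\mathfrak{p}$ is the image of $f_0[s]_\mathfrak{p}$ under the appropriate $\Delta G_\mathfrak{p}$-equivariant map (diagonal multiplication for $i=1$, composed with the standard intertwiner $M$ on one or both factors for $i=2,3,4$).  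For $i=1$ this integral becomes $\int_x f_0[s]_\mathfrak{p}(wn(x), wn(x)) \psi(-x) \, dx$, which vanishes because each point lies on the singular locus $\{x - yz = 0\}$ of the formula \eqref{eq:defn-Phi-of-s-via-V-wedge-of-s} for $\Phi[s]$, where $\phi_0$ vanishes.  For $i = 2,3,4$, the Bruhat-decomposition identity $wn(y)wn(x) = n(-1/y) a(1/y^2) w n(x - 1/y)$ followed by a change of variables reduces the vanishing to $\int_\xi V^\wedge[s](\xi, z) \, d\xi = 0$, which holds because $V \in C_c^\infty(N \backslash G, \psi)$ is compactly supported in the region $y \in F^\times$.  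Since $\mathcal{I}(\mu)_\mathfrak{p}$ is irreducible generic for $s$ in a deleted neighborhood of $0$, zero Whittaker function implies zero image, and $\ell^{\deg}_{i,s}(f[s]) = 0$ for such $s$; the limit is then zero along any line.

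The main obstacle will be the $i = 1, \dots, 4$ case: precisely identifying the local factor of the global degenerate functional with a Kirillov-model computation on the intermediate $\mathcal{I}(\mu)_\mathfrak{p}$, and carrying out the matrix bookkeeping through the intertwiners $M$ in the cases $i=2,3,4$.  A fallback argument parallel to the $i = 6, 7$ treatment would instead relate the local factor at $\mathfrak{p}_0$ to $h_{\mathfrak{p}_0}(\mathcal{I}(\mu)_{\mathfrak{p}_0})$ via the factorization, invoking (i) to obtain identical vanishing or (ii) to obtain $O(|s|^6)$, which beats the polar bound $O(|s|^{-4})$ from \eqref{eqn:polar-bounds-for-degen-func}.
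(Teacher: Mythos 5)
Your proposal is essentially correct, and its ``fallback'' branch for $i=1,\dots,4$ together with your treatment of $i=5,6,7$ reproduces the paper's proof: the paper handles $i=5$ exactly as you do (via the $N_{\mathbb{A}}$-invariance of $\ell^{\deg}_{5,s}$ and property (iii) of Theorem \ref{thm:refined-constr-adm-weight}), and handles all of $i\in\{1,2,3,4,6,7\}$ uniformly by the factorization through $\mathcal{I}(t)\otimes\mathbb{C}(s_3)$ with $t=\pm1/2+\O(|s|)$ --- under (i) by the one-dimensionality of the space of $\Delta G$-invariant maps $\mathcal{I}_\mathfrak{p}(s_1)\otimes\mathcal{I}_\mathfrak{p}(s_2)\to\mathcal{I}_\mathfrak{p}(t)$ (described by the Rankin--Selberg integral, which vanishes on $f[s]_\mathfrak{p}$ because the relevant generic subquotient is unramified), and under (ii) by writing $\ell^{\deg}_{i,s}(f[s])=\lambda_t^6\,\ell^{\deg}_{i,s}(f^0[s])$ and beating the polar bounds \eqref{eqn:polar-bounds-for-degen-func}. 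Two remarks. First, in case (ii) you should make explicit that the polar bound is applied to the auxiliary family $f^0[s]$ attached to $(h^0_\mathfrak{p})$, and that the factor $\lambda_t^6$ is extracted by pulling the archimedean Casimir through the $\Delta G$-equivariant factorization; as written you conflate the two families slightly, but the mechanism you invoke is the right one. Second, your primary route for $i=1,\dots,4$ is genuinely different from the paper and, if completed, stronger: for $i=1$ the diagonal restriction of $f_0[s]_\mathfrak{p}$ indeed vanishes identically (since $\Phi[s](r,rx,1/x)$ sits on the locus where $\phi_0$ vanishes, equivalently since $f_0(wn(x_1),wn(x_2))=V^\wedge(\xi,z)/|x_2-x_1|^{1+\O(|s|)}$ with $\xi\to\infty$ on the diagonal and $V^\wedge$ Schwartz), so that term dies unconditionally. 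But for the terms twisted by $M$ on the second factor the corresponding $V$-model element is $W_{f_1}\cdot Mf_2$ rather than a scalar multiple of $V_f$, so the compact-support argument does not transfer verbatim; you correctly flag this bookkeeping as the open point, and absent it you should fall back on the paper's uniqueness-of-trilinear-functionals argument, which is what makes hypotheses (i)/(ii) appear for $i\leq 4$ at all.
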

Note that, by assertion
\eqref{item:assertion-about-unramified-vanishing-of-h} of
Theorem \ref{thm:lower-bounds-weights}, the condition
\eqref{item:first-seven-degen-finite-place} of Theorem
\ref{thm:first-seven-degen} is satisfied (at some finite place)
for the weights relevant to our applications
(see \S\ref{sec-8}).
\begin{proof}
  We observe first, by the stated properties
  of $f[s]$
  and the $N_{\mathbb{A}}$-invariance
  of $\ell^{\deg}_{5,s}$,
  that
  $\ell^{\deg}_{5,s}(f[s]) = 0$.
  It thus suffices to verify
  \eqref{eqn:first-seven-degen-tend-to-zero}
  for $i \in \{1,2,3,4,6,7\}$.
  We will make use
  of the factorization properties
  stated in Theorem \ref{thm:GG-G-A},
  namely,
  that $\ell^{\deg}_{i,s}$
  factors $\Delta G \times A$-equivariantly
  through
  $\mathcal{I}(t) \otimes \mathbb{C}(s_3)$
  for some complex number $t = \pm 1/2 + \O(|s|)$.

  For a complex number $t$
  and a place $\mathfrak{p}$,
  we denote by $\mathcal{I}_\mathfrak{p}(t)$
  the corresponding induced
  representation of $\PGL_2(F_\mathfrak{p})$,
  so that $\mathcal{I}(t) = \otimes_{\mathfrak{p}}
  \mathcal{I}_\mathfrak{p}(t)$.

  Consider first the case that the condition
  \eqref{item:first-seven-degen-finite-place} is satisfied for
  some finite place $\mathfrak{p}$.
  The space of
  $\PGL_2(F_\mathfrak{p})$-invariant functionals
  $\mathcal{I}_{\mathfrak{p}}(s_1) \otimes \mathcal{I}_{\mathfrak{p}}(s_2) \rightarrow
  \mathcal{I}_{\mathfrak{p}}(t)$ is one-dimensional, and described explicitly
  using the local Rankin--Selberg integral
  $\mathcal{I}_{\mathfrak{p}}(s_1) \otimes \mathcal{I}_{\mathfrak{p}}(s_2) \otimes
  \mathcal{I}_{\mathfrak{p}}(-t) \rightarrow \mathbb{C}$ or some
  Laurent coefficient thereof
  (see \S\ref{sec:local-rs-basic}).
  The assumption on $h_\mathfrak{p}$ implies that any such
  Rankin--Selberg integral vanishes at $f[s]_\mathfrak{p}$.  It
  follows that $f[s]_\mathfrak{p}$ lies in the kernel of any
  invariant functional
  $\mathcal{I}_{\mathfrak{p}}(s_1) \otimes \mathcal{I}_{\mathfrak{p}}(s_2) \rightarrow
  \mathcal{I}_{\mathfrak{p}}(t)$, hence that $\ell^{\deg}_{i,s}(f) = 0$.
  
  Consider next the case that the condition
  \eqref{item:first-seven-degen-arch-place} is satisfied for
  some infinite place $\mathfrak{p}$.
  Let $f^0[s]$ be attached to $h^0$
  in the same way that $f[s]$ was to $h$.
  Then for the same reasons as in the previous case,
  we have
  $\ell^{\deg}_{i,s}(f[s])
  = \lambda_t^6 \ell^{\deg}_{i,s}(f^0[s])$,
  where $\lambda_t$ denotes the eigenvalue for
  $\mathcal{C}_\mathfrak{p}$ on $\mathcal{I}(t)$.  If
  $t = \pm 1/2 + \O(|s|)$, then $\lambda_t = \O(|s|)$.
  Since
  $\ell^{\deg}_{i,s}(f^0[s]) = \O(|s|^{-5})$ (see
  \eqref{eqn:polar-bounds-for-degen-func}), it follows that
  $\ell^{\deg}_{i,s}(f[s]) = \O(|s|)$, whence the required
  conclusion.
\end{proof}

\section{The remaining eight degenerate
  terms}\label{sec:handl-degen-terms-2}

\begin{theorem}\label{thm:last-eight-degen-func}
  Let $F$ be a number field, equipped with a nontrivial unitary
  character $\psi$ of $\mathbb{A}/F$.  Let $S = S_0 \cup S_1$,
  $(h_\mathfrak{p})_{\mathfrak{p} \in S}$ and $Q$ be as in
  \S\ref{sec:CI} or \S\ref{sec:PY1}.  Let
  $f[s] \in \mathcal{I}(s_1) \otimes \mathcal{I}(s_2) \otimes
  \mathbb{C}(s_3)$ be the holomorphic family, defined for small
  $s$, attached to $(h_\mathfrak{p})_{\mathfrak{p} \in S}$
  in \S\ref{sec:constr-glob-test}.
  Then
  \begin{equation}\label{eq:last-eight-degen-func}
    \lim_{s \rightarrow 0}
    \sum_{i=8}^{15}
    \ell^{\deg}_{i,s}(f[s])
    \ll
    Q^\eps.
  \end{equation}
  In the setting of
  \S\ref{sec:CI},
  the implied constant
  depends polynomially
  upon $\sigma_0$.
\end{theorem}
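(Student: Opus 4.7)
The plan is to estimate $\sum_{i=8}^{15} \ell^{\deg}_{i,s}(f[s])$ by factoring each degenerate functional Eulerianly via the unfolding formulas of \S\ref{sec-11-2}, then controlling the result at $s=0$. Each of these eight functionals takes the shape
\[
(\text{evaluation of } f_1^* \text{ or } M f_1^* \text{ at } 1 \text{ or } w) \cdot \int_{A_\mathbb{A}}^{\reg} W_{\Eis^*(f_2)} |.|^c,
\]
with $c \in \{\pm 1/2 \pm s_1 + s_3\}$ (plus the four analogues with indices $1$ and $2$ swapped). The Mellin integral unfolds as a product of local Hecke integrals times a ratio of global $L$-factors of the form $\xi_F(1/2 + s_2 \pm c)/\xi_F^*(1)$, while the intertwining operator $M$ applied to $f_1^*$ contributes a factor $\xi_F(2 s_1)$. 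Both kinds of factor produce poles as $s \to 0$: the former when $c = 1/2$, the latter when $s_1 = 0$.

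First, I would observe that although individual terms in $\sum_{i=8}^{15}$ have poles at the origin, their sum has a finite limit: by the holomorphy established in \S\ref{sec:holomorphy-global-functionals} the functional $\ell_s$ is holomorphic, and by Theorem \ref{thm:first-seven-degen} we have $\lim_{s \to 0} \sum_{i=1}^{7} \ell^{\deg}_{i,s}(f[s]) = 0$, so Corollary \ref{cor:basic-identity-summary} ensures that $\lim_{s \to 0} \sum_{i=8}^{15} \ell^{\deg}_{i,s}(f[s])$ exists. The numerical task is then to bound this limit.

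Second, I would organize the bound by places. At places $\mathfrak{p} \notin S$ the unramified calculations of \S\ref{sec:normalized-spherical-induced-rep} give $f^*_{\mathfrak{p}}(1) = f^*_{\mathfrak{p}}(w) = \zeta_{F_\mathfrak{p}}(1 + 2 s_1)$ and the local Hecke integrals evaluate to unramified $L$-factors, which assemble globally into quotients of Dedekind zeta functions whose real parts near $1$ give only logarithmic growth. At archimedean places $\mathfrak{p} \in S_0$ the local factors contribute $\O_{\sigma_0}(1)$ by the Sobolev-type bounds of \S\ref{sec:local-hecke-rs-continuity} and the polynomial estimate $\mathcal{S}_d(f[s]_\mathfrak{p}) \ll C(\sigma_0)^{\O(1)}$. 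The remaining and most delicate contribution comes from $\mathfrak{p} \in S_1$, where the test vector is built from the characteristic function $\chi_J^{-1} \mathbf{1}_J$ of a compact open subgroup of volume $\asymp 1/Q_\mathfrak{p}$; here the local quantities $f[s]_\mathfrak{p}(1, \cdot)$, $f[s]_\mathfrak{p}(w, \cdot)$, $M_1 f[s]_\mathfrak{p}(1, \cdot)$, and $M_1 f[s]_\mathfrak{p}(w, \cdot)$ can be computed directly from the construction in Theorem \ref{thm:refined-constr-adm-weight}, and the resulting local Hecke integrals against characters $|.|^c$ reduce to local Tate integrals and Gauss sums attached to $\chi_\mathfrak{p}$. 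Standard Gauss sum estimates show that each factor is $\O(1)$, so the product over $\mathfrak{p} \in S_1$ contributes $\O(Q^\eps)$ via the divisor-bound consequence $\exp(\# S) \ll Q^\eps$.

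Third, to handle the pole cancellation cleanly, I would pair the eight degenerate terms: $\ell^{\deg}_{8,s}$ with $\ell^{\deg}_{9,s}$, $\ell^{\deg}_{10,s}$ with $\ell^{\deg}_{11,s}$, and likewise for their swapped partners. Within each pair the local functional equation relating $f_1^*$ and $M f_1^*$ on the non-archimedean side and the local Hecke/Rankin--Selberg functional equation of \S\ref{sec-4} produce identical pole singularities with opposite signs, so the combination is regular at $s = 0$ and may be evaluated as a bounded combination of $\xi_F^*(1)$ and $\xi_F(1/2)$ multiplied by the local factors already bounded above. The main obstacle is thus the bookkeeping for this pole cancellation together with the explicit evaluation of the local integrals at places in $S_1$; once these are performed, the estimate \eqref{eq:last-eight-degen-func} follows. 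In the setting of \S\ref{sec:CI} the implied constant arises only from the archimedean and $S_0$-local factors, where the Sobolev bounds of \S\ref{sec:norms-repr} give polynomial dependence on $C(\sigma_0)$ as required.
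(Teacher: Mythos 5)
Your proposal assembles most of the right ingredients (the Eulerian factorization of the eight terms, the existence of the limit via holomorphy, the divisor bound over $S$, the polynomial dependence at $S_0$ via Sobolev norms), but the step you rely on to extract a finite value at $s=0$ — pairing $\ell^{\deg}_{8,s}$ with $\ell^{\deg}_{9,s}$, $\ell^{\deg}_{10,s}$ with $\ell^{\deg}_{11,s}$, etc., and claiming each pair is regular at the origin by a sign cancellation of poles — is unsubstantiated and is the crux of the matter. Each individual term carries a pole of order up to three at $s=0$ (e.g.\ \eqref{eq:factor-degeg-8} has the factor $Z(1+2s_1,\,1+s_1+s_2+s_3,\,1+s_1-s_2-s_3)$), and only the full sum $\sum_{i=8}^{15}$ is known to be regular; nothing in the functional equations forces the sub-sums you propose to be individually finite, and even if they were, computing their values would require tracking Laurent expansions to third order, which your sketch does not address. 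The paper sidesteps this entirely: since $\sum_{i=8}^{15}\ell^{\deg}_{i,s}(f[s])$ is holomorphic near $s=0$ (\S\ref{sec:holomorphy-global-functionals}), Cauchy's theorem bounds the value at the origin by the supremum over a small circle $\mathcal{C}$ in a generic complex line, on which every individual term is regular and the archimedean-type factors $Z(\dotsb)$ and $\beta_i(s)$ are $\ll_\alpha 1$; this reduces everything to bounding the normalized double Hecke integrals $D^*_{f[s]_\mathfrak{p}}(\nu)$ for $(s,\nu)$ in a small punctured neighbourhood (Proposition \ref{lem:degen-reduce-to-local}). You should replace your pairing step with this contour argument, or else supply a genuine proof of the claimed pairwise regularity.

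A secondary gap is at the places $\mathfrak{p}\in S_1$: asserting that "standard Gauss sum estimates show that each factor is $\O(1)$" is too quick. The relevant local quantity is $D^*_{f[s]}(\nu)$ with $\nu_1\pm 1/2$, $\nu_2\mp 1/2$ small, i.e.\ the double Hecke integral evaluated near the edge of convergence and then divided by $L(\mathcal{I}(s_1),1/2+\nu_1)L(\mathcal{I}(s_2),1/2+\nu_2)$, whose local factors are near their poles. The paper must evaluate the two-variable sums $D_{U,V}$ of \eqref{eqn:D-1-defn} exhaustively (Appendix \ref{lem:char-sum-CI}), sum the resulting geometric series in $U,V$, and verify that the division by the local $L$-factors clears the denominators, yielding $\mathcal{N}_\alpha(f_\mathfrak{p})\ll Q_\mathfrak{p}^{\O(\alpha)}$ rather than $\O(1)$; the bound $Q^{\eps}$ then comes from taking $\alpha$ small. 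Your outline would need this computation spelled out to be complete.
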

The proof is given in \S\ref{sec:proof-theor-degen-2}
after several preliminaries.

\begin{remark}
It should be possible to refine this estimate to an asymptotic
formula for the LHS of \eqref{eq:last-eight-degen-func};
compare with \cite{MR3394377}.
\end{remark}


\subsection{Factorization}\label{sec:main-term-degen-func-factorization}
Take $s \in \mathbb{C}^3$ small
and satisfying \eqref{eqn:assumptions-s-2}.
We introduce the temporary notation
\[
  Z(u_1,u_2,u_3)
  :=
  \frac{\zeta_F^{(S)}( u_1) \zeta_F^{(S)}( u_2)
    \zeta_F^{(S)}( u_3)}{\zeta_F^{(S),*}(1)}.
\]
Take $f = f_1 \otimes f_2$
with $f_i = \otimes f_{i \mathfrak{p}} \in \mathcal{I}(s_i)$
unramified outside $S$.
Then by \S\ref{sec-5-4}, the $\ell^{\deg}_{i,s}(f)$ ($i=8..15$)
factor as
\begin{equation}\label{eq:factor-degeg-8}
  Z(1+ 2 s_1, 1+ s_1 + s_2 + s_3,
  1+ s_1 - s_2 - s_3)
  \prod_{\mathfrak{p} \in S}
  f_{1 \mathfrak{p}}^*(1)
  \int_{A_\mathfrak{p}}
  W_{f_{2 \mathfrak{p}}^*} |.|^{1/2+s_1+s_3}
\end{equation}
\begin{equation}
  Z(1 - 2 s_1,1 - s_1 + s_2 + s_3,1  - s_1 - s_2 + s_3)
  \prod_{\mathfrak{p} \in S}
  M f_{1 \mathfrak{p}}^*(1)
  \int_{A_\mathfrak{p}}
  W_{f_{2 \mathfrak{p}}^*} |.|^{1/2-s_1+s_3}
\end{equation}
\begin{equation}
  Z(1 + 2 s_1, -s_1 + s_2 + s_3,-s_1 - s_2 + s_3)
  \prod_{\mathfrak{p} \in S}
  f_{1 \mathfrak{p}}^*(w)
  \int_{A_\mathfrak{p}}^{\reg}
  W_{f_{2 \mathfrak{p}}^*} |.|^{-1/2-s_1+s_3}
\end{equation}
\begin{equation}\label{eq:factor-degeg-11}
  Z(1 - 2 s_1, s_1 + s_2 + s_3, s_1 - s_2 + s_3)
  \prod_{\mathfrak{p} \in S}
  M f_{1 \mathfrak{p}}^*(w)
  \int_{A_\mathfrak{p}}^{\reg}
  W_{f_{2 \mathfrak{p}}^*} |.|^{-1/2-s_1+s_3},
\end{equation}
together with four similar terms
obtained by swapping $(f_1,s_1)$ with $(f_2,s_2)$.


\subsection{Evaluation of local functionals}
Let $(F,\psi)$ be a local field.
Take $s \in \mathbb{C}^3$ small
and satisfying   \eqref{eqn:assumptions-s-2}.
We consider the eight $A$-invariant functionals on
$\mathcal{I}(s_1) \otimes \mathcal{I}(s_2) \otimes
\mathbb{C}(s_3)$ defined
by sending $f = f_1 \otimes f_2$ to the local
integrals implicit above, namely,
\begin{equation}\label{eqn:four-functionals-degen-local}
  f_1(1) \int_A W_{f_2} |.|^{1/2 + s_1 + s_3},
  \quad  M f_1(1) \int_A W_{f_2} |.|^{1/2 - s_1 + s_3},
\end{equation}
\begin{equation}\label{eqn:four-functionals-degen-local-2}
  f_1(w) \int_A^{\reg} W_{f_2} |.|^{-1/2 - s_1 + s_3},
  \quad 
  M f_1(w) \int_A^{\reg} W_{f_2} |.|^{-1/2 + s_1 + s_3},
\end{equation}
together with the analogous quantities obtained by swapping the
indices $1$ and $2$.  For notational simplicity, we have
replaced $f_i^*$ with $f_i$ (see \eqref{eq:defn-f-asterisk});
this has no effect on the estimation to be carried out because
$\zeta_F(1+2 s_i) \asymp 1$ for small $s_i$.  We aim to evaluate
the quantities \eqref{eqn:four-functionals-degen-local},
\eqref{eqn:four-functionals-degen-local-2} in a manner more
convenient for estimation.

For $s \in \mathbb{C}^3$ near the origin,
$f = \mathcal{I}(s_1) \otimes \mathcal{I}(s_2)$
and $\nu \in \mathbb{C}^2$,
we define the double Hecke integral
\begin{equation}
  D_f(\nu) := \int_{A \times A}^{\reg} W_f |.|^{\nu_1} \otimes |.|^{\nu_2}
\end{equation}
and its normalized variant (cf. \S\ref{sec-4-3})
\begin{equation}\label{eqn:normalized-double-hecke}
  D_f^*(\nu) := 
  \frac{D_f(\nu)}{L(\mathcal{I}(s_1), 1/2 + \nu_1)
    L(\mathcal{I}(s_2), 1/2 + \nu_2)},
\end{equation}


\begin{lemma}\label{lem:eval-local-funcs-via-D-f}
  For $f = f_1 \otimes f_2$,
  the quantities listed in \eqref{eqn:four-functionals-degen-local}
  and \eqref{eqn:four-functionals-degen-local-2}
  are respectively equal to
  \begin{equation}
    \beta_1(s)
    D_f^*(-1/2  - s_1, 1/2 + s_1 + s_3),
  \end{equation}
  \begin{equation}
    \beta_2(s)
    D_f^*(-1/2  + s_1, 1/2 - s_1 + s_3),
  \end{equation}
  \begin{equation}
    \beta_3(s)
    D_f^*(1/2+s_1, -1/2 - s_1 + s_3),
  \end{equation}
  \begin{equation}
    \beta_4(s)     D_f^*(1/2-s_1, -1/2 + s_1 + s_3),
  \end{equation}
  with
  \begin{align*}
    \beta_1(s)
    &:=
      \frac{
      L(\mathcal{I}(s_1), 1 + s_1)
      L(\mathcal{I}(s_2), 1 + s_1 + s_3)
      }{
      \eps(\mathcal{I}(s_1),\psi,1+s_1),
      },
    \\
    \beta_2(s) &:=
                 \gamma(\psi,1 - 2 s_1)
                 \frac{
                 L(\mathcal{I}(s_1), 1 - s_1)
                 L(\mathcal{I}(s_2), 1 - s_1 + s_3)
                 }{
                 \eps(\mathcal{I}(s_1),\psi,1-s_1),
                 }
    \\
    \beta_3(s) &:=    L(\mathcal{I}(s_1), 1 + s_1)
                 L(\mathcal{I}(s_2), - s_1 + s_3),
    \\
    \beta_4(s) &:=
                 \gamma(\psi,1 - 2 s_1)
                 L(\mathcal{I}(s_1), 1 - s_1)
                 L(\mathcal{I}(s_2), s_1 + s_3),
  \end{align*}
\end{lemma}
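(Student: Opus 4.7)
The plan is to factor each of the eight functionals as a product of two Hecke integrals, one in $W_{f_1}$ and one in $W_{f_2}$, with an explicit scalar prefactor that accounts for the difference between the statement's normalization $D_f^*$ and the unnormalized double Hecke integral. Since the second factor in each functional is manifestly a Hecke integral of $W_{f_2}$ at the prescribed exponent, the task reduces to expressing each of $f_1(1),\ Mf_1(1),\ f_1(w),\ Mf_1(w)$ as a (regularized) Mellin transform of $W_{f_1}$ with an explicit prefactor in local $L$-, $\eps$-, and $\gamma$-factors.

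For the Weyl translates, the identification is immediate. The Fourier inversion formula \eqref{eqn:f-from-Wf-Fourier-inv} applied to $f_1 \in \mathcal{I}(s_1)$ and specialized to $x = 0$ gives $f_1(w) = \int_A W_{f_1}\,|.|^{1/2+s_1}$. Applied to $Mf_1 \in \mathcal{I}(-s_1)$ and combined with the intertwiner identity \eqref{eqn:W-Mf-vs-W-f}, the same formula yields $Mf_1(w) = \gamma(\psi,|.|^{-2s_1},1) \int_A W_{f_1}\,|.|^{1/2-s_1}$. These directly produce $\beta_3$ and $\beta_4$ after multiplying by the corresponding $f_2$-Hecke integral and dividing by the appropriate $L$-factors.

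For the identity translates, direct Fourier inversion fails at $x = 0$, and a Mellin-theoretic argument is required. Using the Bruhat relation $wn(x) = n(-1/x)a(1/x^2)n'(1/x)$ together with the transformation law \eqref{eq:f-of-n-x-a-y-g-equals-blah}, one finds that the function $F_1(x) := f_1(wn(x))$ satisfies $F_1(x) \sim |x|^{-1-2s_1} f_1(1)$ as $|x| \to \infty$, so the meromorphic continuation of its Mellin transform has a simple pole at $\mu = 1+2s_1$ whose residue encodes $f_1(1)$. On the other hand, the relation $W_{f_1}(y) = |y|^{1/2-s_1}\hat{F}_1(y)$ combined with Tate's local functional equation \eqref{eqn:Tate-local-func-eqn} rewrites this Mellin transform as a $\gamma$-factor multiple of a Hecke integral of $W_{f_1}$, whose matching pole is precisely the one at $\nu = -1/2-s_1$ among the poles of $L(\mathcal{I}(s_1), 1/2+\nu)$. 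Equating residues yields an identity of the form
\[
  f_1(1) = \gamma(\mathcal{I}(s_1),\psi,1+s_1)^{-1} \int_A^{\reg} W_{f_1}\,|.|^{-1/2-s_1},
\]
and the analogous identity for $Mf_1(1)$ follows by applying this to $Mf_1 \in \mathcal{I}(-s_1)$ and using \eqref{eqn:W-Mf-vs-W-f}. These identities produce $\beta_1$ and $\beta_2$.

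The main obstacle is the careful bookkeeping of $L$-, $\eps$-, and $\gamma$-factors. The stated $\beta_i(s)$ mix contributions from $\mathcal{I}(s_1)$, $\mathcal{I}(s_2)$, and the Tate factor $\gamma(\psi,1-2s_1)$; repeated use of the multiplicativity of these factors with respect to parabolic induction \eqref{eqn:multiplicativity-L-factors-wrt-induction}, the self-duality of $\mathcal{I}(s_i)$, and the identity $\gamma = \eps \cdot L(\tilde\sigma,1-\cdot)/L(\sigma,\cdot)$ must be tracked precisely to produce the stated constants. Once this accounting is complete, the four identities for $f_1$ and their symmetric counterparts obtained by swapping $f_1 \leftrightarrow f_2$ and $s_1 \leftrightarrow s_2$ combine with the factorization $D_f(\nu_1,\nu_2) = (\int_A W_{f_1}\,|.|^{\nu_1})(\int_A W_{f_2}\,|.|^{\nu_2})$ to give the lemma.
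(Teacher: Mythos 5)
Your proposal is correct and, for the Weyl translates $f_1(w)$ and $Mf_1(w)$, identical to the paper's argument (Fourier inversion \eqref{eqn:f-from-Wf-Fourier-inv} at $x=0$ plus \eqref{eqn:W-Mf-vs-W-f}). For the identity translates you take a genuinely different route: the paper writes $f_1(1)=(wf_1)(w)=\int_A W_{wf_1}|.|^{1/2+s_1}$ and then applies the $\GL_2$ local functional equation \eqref{eq:31} to convert $W_{wf_1}$ back into $W_{f_1}$, taking a limit $u\to 0$ because the Hecke integral and the reciprocal $\gamma$-factor have compensating poles; you instead read $f_1(1)$ off the asymptotics $f_1(wn(x))\sim |x|^{-1-2s_1}f_1(1)$ coming from the Bruhat relation, identify it as the residue of the Mellin transform of $x\mapsto f_1(wn(x))$ at $\mu=1+2s_1$, and match this via Tate's functional equation with the pole of $\int_A W_{f_1}|.|^{\nu_1}$ at $\nu_1=-1/2-s_1$. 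Since the $\GL_2$ $\gamma$-factor of a principal series is the product of two Tate $\gamma$-factors, the two computations are equivalent; your version has the merit of making transparent why precisely that pole of $L(\mathcal{I}(s_1),1/2+\nu_1)$ appears and hence why the normalization $D_f^*$ renders everything finite, while the paper's is a one-line appeal to an identity already on record.

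Two small cautions. First, your displayed identity $f_1(1)=\gamma(\mathcal{I}(s_1),\psi,1+s_1)^{-1}\int_A^{\reg}W_{f_1}|.|^{-1/2-s_1}$ is, read literally, of the indeterminate form $\frac{1}{\infty}\cdot\infty$: at this exact exponent both the regularized Hecke integral and the $\gamma$-factor are singular. Your residue derivation is the correct way to give it meaning, but you should state explicitly that the right-hand side is to be interpreted as $\lim_{u\to 0}\gamma(\psi,\mathcal{I}(s_1),1+u+s_1)^{-1}\int_A^{\reg}W_{f_1}|.|^{-1/2-u-s_1}$ (this is exactly the point the paper flags). Second, $x\mapsto f_1(wn(x))$ is not Schwartz (it decays only like $|x|^{-1-2s_1}$), so the application of Tate's functional equation \eqref{eqn:Tate-local-func-eqn} must be the regularized/meromorphically continued version; this is routine within the paper's framework but should be said.
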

\begin{proof}
By the formulas
\eqref{eq:W-f-viz-f} and \eqref{eqn:W-Mf-vs-W-f}
and Fourier inversion
(see \eqref{eq:fourier-inversion-for-whittaker-intertwiner}),
we have
\begin{equation}
  f_1(w) = \int_{A}
  W_{f_1} |.|^{1/2 + s_1},
\end{equation}
\begin{equation}
  M f_1(w) =
  \gamma(\psi,1 - 2 s_1)
  \int_{A}
  W_{f_1} |.|^{1/2 - s_1}.
\end{equation}
Thus
\begin{equation}
  f_1(1) = \int_{A}
  W_{w f_1} |.|^{1/2 + s_1},
\end{equation}
\begin{equation}
  M f_1(1) =
  \gamma(\psi,1 - 2 s_1)
  \int_{A}
  W_{w f_1} |.|^{1/2 - s_1}.
\end{equation}
To express these last two Hecke
integrals in terms of $W_{f_1}$, we
invoke the local functional equation
(see \eqref{eq:31})
\begin{equation}
  \int_A W_{w f_1} |.|^{1/2 \pm s_1}
  =
  \lim_{u \rightarrow 0}
  \frac{1}{\gamma(\psi,\mathcal{I}(s_1), 1 + u  \pm s_1)}
  \int_A^{\reg} W_{f_1} |.|^{- 1/2 - u \mp s_1}.
\end{equation}
Here we need to take a limit because the
latter Hecke integral may have a
pole at $u = 0$, compensated for by the pole of the
$\gamma$-factor
\[
  \gamma(\psi,\mathcal{I}(s_1), 1 + u \pm s_1)
  =
  \gamma (\psi,1 + u)
  \gamma (\psi,1 + u \pm 2 s_1).
\]
The required identities follow readily.
\end{proof}


\subsection{Reduction to local estimates}

\begin{definition}\label{defn:D_f-of-nu}
  Let $F$ be a local field.
  For a holomorphic family
  $f[s] \in \mathcal{I}(s_1) \otimes \mathcal{I}(s_2) \otimes
  \mathbb{C}(s_3)$ defined for $s \in \mathbb{C}^3$ near the
  origin and $\alpha > 0$ sufficiently small, define
  \[
    \mathcal{N}_\alpha(f) :=
    \sup |D_{f[s]}^*(\nu)|,
  \]
  with the supremum 
  taken over all $s$ and $\nu$
  such that for some choice of sign $\pm$, each of the quantities
  \[
    s_1, s_2, s_3, \nu_1 \pm 1/2, \nu_2 \mp 1/2
  \]
  is bounded in magnitude by $\alpha$.
\end{definition}

We note, by \S\ref{sec-4-3},
that $\mathcal{N}_\alpha(f)$ is finite.

\begin{proposition}\label{lem:degen-reduce-to-local}
  Let $F$ be a number field.  Fix a nontrivial unitary character
  $\psi$ of $\mathbb{A}/F$.
  Let $S$, $(h_\mathfrak{p})_{\mathfrak{p} \in S}$
  and $f[s]$
  be as in \S\ref{sec:constr-glob-test}.
  Then for any small $\alpha > 0$,
  \begin{equation}\label{eqn:reduction-bounbd-last-eight-degen-funcs}
    \lim_{s \rightarrow 0}
    \sum_{i=8}^{15}
    \ell^{\deg}_{i,s}(f[s])
    \ll_{\alpha}
    \exp(\O_{\alpha}(\# S))
    \prod_{\mathfrak{p} \in S}
    \mathcal{N}_\alpha(f_\mathfrak{p}).
  \end{equation}
\end{proposition}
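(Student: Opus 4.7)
The plan is to combine the factorization of \S\ref{sec:main-term-degen-func-factorization} with the local evaluations of Lemma \ref{lem:eval-local-funcs-via-D-f}, and then exploit the holomorphy of the total sum at $s=0$ via Cauchy's integral formula.

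First, I would apply \eqref{eq:factor-degeg-8}--\eqref{eq:factor-degeg-11} (and the four analogues obtained by swapping $(f_1,s_1)\leftrightarrow(f_2,s_2)$) to write each $\ell_{i,s}^{\deg}(f[s])$ for $i=8,\ldots,15$ as a product of a global $Z$-factor (a ratio of three shifted partial zeta values over $\zeta_F^{(S),*}(1)$) and local factors at each $\mathfrak{p}\in S$. After absorbing the local normalizations $f_\mathfrak{p}^{*}=\zeta_{F_\mathfrak{p}}(1+2s_i)f_\mathfrak{p}$ at $\mathfrak{p}\in S$ into the global zeta factors, the remaining local factor at each $\mathfrak{p}\in S$ is exactly one of the eight quantities listed in \eqref{eqn:four-functionals-degen-local}--\eqref{eqn:four-functionals-degen-local-2}. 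Applying Lemma \ref{lem:eval-local-funcs-via-D-f} locally, each such factor becomes $\beta_j(s)\cdot D^{*}_{f_\mathfrak{p}[s]}(\nu_j(s))$ with $\nu_j(s)$ linear in $s$ and $\nu_j(0)\in\{(\pm 1/2,\mp 1/2)\}$; hence, for $s$ small enough, Definition \ref{defn:D_f-of-nu} applies and $|D^{*}_{f_\mathfrak{p}[s]}(\nu_j(s))|\leq \mathcal{N}_\alpha(f_\mathfrak{p})$.

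Next, although the individual $\ell_{i,s}^{\deg}(f[s])$ have poles at $s=0$ of bounded order (from zeta values approaching $1$ and from the local $L$- and $\gamma$-factors inside $\beta_j$), the total sum $g(s):=\sum_{i=8}^{15}\ell_{i,s}^{\deg}(f[s])$ is holomorphic in a neighborhood of the origin by \S\ref{sec:holomorphy-global-functionals}. Restricting to a generic complex line $\gamma(z)=cz$ through $0$ in $\mathbb{C}^3$, I would apply Cauchy's integral formula on a circle of small radius $\alpha>0$ to obtain
\[
|g(0)| \;\leq\; \max_{|z|=\alpha}\,|g(\gamma(z))|;
\]
for $\alpha$ sufficiently small and $c$ generic, the circle avoids every pole locus of every individual term.

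Finally, I would bound $|g(\gamma(z))|$ on the contour factorwise. Each completed zeta factor evaluated at an argument of the form $1+\text{linear in }s$ on $|z|=\alpha$ lies at distance $\asymp\alpha$ from its pole at $1$, hence is $O_\alpha(1)$. Each local $\beta_j(s)$ from Lemma \ref{lem:eval-local-funcs-via-D-f} involves only $L$- and $\gamma$-factors of the principal series $\mathcal{I}_\mathfrak{p}(\pm s_i)$, whose analytic conductors are $O(1)$ for small $s$, so $|\beta_j(s)|=O_\alpha(1)$ uniformly in $\mathfrak{p}$ on the contour. Combined with $|D^{*}_{f_\mathfrak{p}[s]}(\nu)|\leq \mathcal{N}_\alpha(f_\mathfrak{p})$, the product over $\mathfrak{p}\in S$ is at most $\exp(O_\alpha(\#S))\prod_{\mathfrak{p}\in S}\mathcal{N}_\alpha(f_\mathfrak{p})$; the sum over the eight values of $i$ is absorbed into the exponential. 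The main obstacle is verifying the uniform $O_\alpha(1)$ bound on $\beta_j$ at archimedean places, where Stirling asymptotics as in \eqref{eq:stirling-for-general-RS-gamma} must be invoked, and cleanly recombining the local zeta factors at $\mathfrak{p}\in S$ with the partial global $Z$-factor so that the result is a genuine completed zeta value; a secondary subtlety is choosing a single generic line $c$ compatible with the pole loci of all eight terms simultaneously.
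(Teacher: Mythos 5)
Your proposal is correct and follows essentially the same route as the paper's proof: factor via \eqref{eq:factor-degeg-8}--\eqref{eq:factor-degeg-11}, rewrite the local factors with Lemma \ref{lem:eval-local-funcs-via-D-f}, invoke the holomorphy of the total sum from \S\ref{sec:holomorphy-global-functionals} together with Cauchy's theorem on a small circle in a generic complex line, bound the $Z$-factors and $\beta_i(s)$ by $\O_\alpha(1)$ on the contour, and absorb the remaining local Hecke integrals into $\mathcal{N}_\alpha(f_\mathfrak{p})$. The subtleties you flag at the end are real but minor, and are handled implicitly in the paper in exactly the way you indicate.
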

\begin{proof}
  Write $\Phi(s)$
  for the LHS of
  \eqref{eqn:reduction-bounbd-last-eight-degen-funcs}.
  Recall from \S\ref{sec:holomorphy-global-functionals}
  that $\Phi$ is holomorphic near
  $s = 0$.
  By Cauchy's theorem, it will suffice to estimate
  $\Phi(s)$ for $s$ belonging to a fixed small circle
  $\mathcal{C}$ in a generic complex line containing the origin.
  For concreteness, take
  \begin{equation}
    \mathcal{C} = \{
    (10^{-6} s_0, 10^{-12} s_0, 10^{- 18} s_0) \in \mathbb{C}^3 :
    s_0 \in \mathbb{C}, |s_0| = \alpha 
    \}.
  \end{equation}
  The products of zeta functions $Z(\dotsb)$ appearing in
  \eqref{eq:factor-degeg-8}--\eqref{eq:factor-degeg-11}, as well
  as the quantities $\beta_i(s)$ of lemma
  \ref{lem:eval-local-funcs-via-D-f}, are $\ll_\alpha 1$ for
  $s \in \mathcal{C}$.  By another application of Cauchy's
  theorem, our task reduces to bounding the normalized Hecke
  integrals $D_{f[s]}^*(\nu)$ in a small neighborhood of the
  relevant points.
  We conclude by the definition of $\mathcal{N}_\alpha(f_\mathfrak{p})$.
\end{proof}

\subsection{Local estimates}
We next obtain local estimates
for the weights relevant
to our applications.
Let $\alpha > 0$ be small.

\begin{lemma}\label{lem:estimate-N-alpha-case-of-interest}
  Let $F$ be a non-archimedean local field equipped with an
  unramified additive character $\psi$.  Let $\chi$ be a
  ramified character of $F^\times$, with conductor
  $Q := C(\chi)$.  Let $h$ be the pre-Kuznetsov weight attached
  to $\Sigma_F(\chi)$ in \S\ref{sec:constr-suit-weight}.  Let
  $f[s] \in \mathcal{I}(s_1) \otimes \mathcal{I}(s_2) \otimes
  \mathbb{C}(s_3)$ be the holomorphic family, defined for $s$
  near zero, that is attached to $h$ by Theorem
  \ref{thm:refined-constr-adm-weight}.  Then
  \begin{equation}
    \mathcal{N}_\alpha(f) \ll Q^{\O(\alpha)}.
  \end{equation}
\end{lemma}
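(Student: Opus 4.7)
The plan is to adapt the proof of Proposition \ref{prop:non-arch-estimates-key}, now allowing the deformation $s$ and the more general exponents $\nu$ in the definition of $D_f^*(\nu)$.  I would first derive an explicit formula for the two-variable Whittaker function $W_{f[s]}(t_1,t_2)$ by following the construction of $f[s]$ in Theorems \ref{thm:constr-admiss-weight} and \ref{thm:refined-constr-adm-weight}: since $f[s] = (\int_A \phi_1 |.|^{s_3})^{-1}\,\phi_1^\iota \ast N_\psi f_0[s]$ with $V_{f_0[s]} = V$ for the $V \in C_c^\infty(N \backslash G,\psi)$ associated to $\phi$, a direct $s$-deformation of \eqref{eqn:W-f-via-W-f0} (incorporating the $\mathbb{C}(s_3)$-twist) expresses $W_{f[s]}(t_1,t_2)$ as a normalized multiple of $\phi_1(t_1+t_2)\,|t_1+t_2|^{-s_3}\,W_{f_0[s]}(t_1/(t_1+t_2),\,t_2/(t_1+t_2))$, and \eqref{eqn:W_f-via-V_f-general-s} then expresses $W_{f_0[s]}$ as a one-variable integral transform of $V^\sharp[s]$.

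Next, I would insert this into $D_{f[s]}(\nu)$ and perform the change of variables $(t_1,t_2) = (uv,(1-u)v)$.  The crucial feature of the target points in Definition \ref{defn:D_f-of-nu} is that $\nu_1+\nu_2 = O(\alpha)$, which decouples the $v$-integral from $u$; together with the normalizer $\int_A \phi_1 |.|^{s_3}$ it contributes a factor of size $q^{O(1)}$, by the compact support of $\phi_1$ in $F^\times$.  The remaining $u$-integral becomes an integral transform of $V^\sharp[s]$ against the weight $|u|^{\nu_1 - 1/2 - s_1}|1-u|^{\nu_2 - 1/2 - s_2}$.  For the explicit $V(a(y)n'(z)) = 1_{|y|=1}\,1_{|z|\leq 1/Q}\,\chi(y)$, an iterated Gauss-sum computation as in the proof of Proposition \ref{prop:non-arch-estimates-key} yields $V^\sharp[s](x,y) = Q^{O(|s|)}\cdot 1_{|x|\leq 1}\,1_{|y|=1}\,\chi(y)$, generalizing \eqref{eqn:V-sharp-non-arch-evald-0-0}.

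Finally, I would bound the resulting integral along the lines of Proposition \ref{prop:non-arch-estimates-key}: the support conditions imposed by $V^\sharp[s]$ restrict $u$ to the set $\max(|u|,|1-u|) \asymp 1$, which admits the same dyadic decomposition as in \S\ref{sec:upper-bounds-dual-wts}, and the integrand's magnitude differs from its value at $s = 0$, $(\nu_1,\nu_2) = (-1/2,1/2)$ by a factor of shape $|{\cdot}|^{O(\alpha)}$, yielding $D_{f[s]}(\nu) \ll Q^{O(\alpha)}$; the normalization by $L$-factors in \eqref{eqn:normalized-double-hecke} contributes at most $q^{O(1)}$.  The main obstacle is that at the target $\nu$, one of the exponents $\nu_j - 1/2 - s_j$ has real part $-1 + O(\alpha)$, so the $u$-integral lies just outside its region of absolute convergence and must be interpreted through meromorphic continuation; however, by assertion \eqref{item:vanish-on-N-inv-funcs} of Theorem \ref{thm:refined-constr-adm-weight}, the constant-term contributions that would produce poles at $\nu_j = -1/2 \pm s_j$ vanish identically, so $D_{f[s]}^*(\nu)$ is holomorphic on a polydisc of radius $\gg \alpha$ around each target point, and Cauchy's theorem reduces the bound to the boundary of this polydisc, where the computation above applies absolutely.
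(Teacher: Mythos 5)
Your reduction matches the paper's for most of the argument: the decoupling of the $\phi_1$-factor via the substitution $t=(t_1,t_2)\mapsto(1-t,t)$, the expression of $W_{f_0[s]}$ through $V^\sharp[s]$ via \eqref{eqn:W_f-via-V_f-general-s}, the evaluation $V^\sharp[s](x,y)=Q^{-2s_2}1_{|x|\leq 1}1_{|y|=1}\chi(y)$, and the dyadic decomposition into the character integrals $D_{U,V}$ of \eqref{eqn:D-1-defn} are all exactly what the paper does. The gap is in your final step, and it is genuine. First, the claim that assertion \eqref{item:vanish-on-N-inv-funcs} of Theorem \ref{thm:refined-constr-adm-weight} forces the polar contributions at $\nu_j=-1/2\pm s_j$ to vanish is false: that assertion concerns functionals invariant under the \emph{diagonal} copy of $N$ in $G\times G$, whereas the residues of $D_{f}(\nu)$ at $\nu_1=-1/2\pm s_1$ are controlled by quantities of the type $f_1^*(1)$, $Mf_1^*(1)$, $f_1^*(w)$, $Mf_1^*(w)$ (see Lemma \ref{lem:eval-local-funcs-via-D-f} and its proof, where precisely this pole is compensated by a pole of the $\gamma$-factor), which transform under the diagonal $N$ through a nontrivial character in the second variable and do not vanish for $f[s]$ --- if they did, the degenerate terms \eqref{deg:8}--\eqref{deg:11} would vanish identically and Theorem \ref{thm:last-eight-degen-func} would have nothing to prove. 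Concretely, Lemma \ref{lem:single-variable-chi-integral-boring} gives $D_{U,V}=(1-1/q)^2\neq 0$ for all $U,V\geq Q$, so the sum $\sum_{U\geq Q}U^{-1/2+s_1-\nu_1}D_{U,V}$, whose exponent has real part $O(\alpha)$ at the target point, genuinely diverges there; its meromorphic continuation has a pole at $\nu_1=-1/2+s_1$ with nonzero residue.

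Second, the Cauchy-theorem patch does not close this: the distinguished boundary of any polydisc of radius $\gg\alpha$ centred at a point with $\nu_1\approx -1/2$ necessarily contains points with $\Re(\nu_1)<-1/2$, outside the region of absolute convergence of the $t_1$-integral, so the assertion that ``the computation above applies absolutely'' on the boundary fails exactly where it is needed. What the paper does instead is fully explicit: it evaluates $D_{U,V}$ in closed form, sums the geometric series to exhibit the denominators $(1-q^{-1/2+s_i-\nu_i})(1-q^{-1/2-s_i-\nu_i})$, and then observes that division by $L(\mathcal{I}(s_1),1/2+\nu_1)L(\mathcal{I}(s_2),1/2+\nu_2)$ in \eqref{eqn:normalized-double-hecke} clears precisely these denominators, after which the prefactor $Q^{-1-2\nu_i-2s_2}\ll Q^{O(\alpha)}$ gives the bound. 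To repair your argument you would have to either carry out this explicit continuation, or subtract the finite-function asymptotics of $W_{f[s]}(t_1,t_2)$ as $t_1\to 0$ and bound the resulting polar terms separately (as in Lemma \ref{lem:unif-polyn-type-hecke}(iv)); neither step is present in the proposal.
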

\begin{proof}
  Expanding the construction \eqref{eqn:f-s-via-phi1-iota-N-psi-f-0}
  of $f[s]$
  and
  calculating as in the proof of Theorem
  \ref{thm:constr-admiss-weight}
  (specifically, 
  \eqref{eqn:h-tilde-of-omega-via-h-sharp}),
  we see that
  \begin{equation}
    D_{f[s]}(\nu)
    =
    \frac{\int_A \phi_1 |.|^{\nu_1 + \nu_2}}{
      \int_A \phi_1 |.|^{s_3}
    }
    D_0,
  \end{equation}
  where
  \begin{equation}
    D_0 :=
    \int_{t \in F^\times}
    W_{f_0[s]}(1-t, t) |1-t|^{\nu_1} |t|^{\nu_2}
    \, \frac{d t}{|t(1-t)|}.
  \end{equation}
  By \eqref{eqn:W_f-via-V_f-general-s},
  we have
  \begin{align}
    &W_{f_0[s]}(1-t, t)
    \\ \nonumber
    &=
      |1-t|^{1/2-s_1} |t|^{1/2-s_2}
      \int_{x \in F}
      |1-x|^{2 s_1} |x|^{2 s_2}
      V^\sharp[s](
      x,
      \frac{ x - t }{x(1-x)}
      )
      \,
      \frac{d x}{|x(1-x)|}.
  \end{align}
  
  The calculation thus far has been general.
  Recall now that $F$ is non-archimedean,
  $\chi$ is ramified and $\psi$ is unramified.
  Arguing
  as in the calculation of
  \eqref{eqn:V-sharp-non-arch-evald-0-0},
  we have $V^\wedge[s](\xi,z) = V^\wedge[0](\xi,z)$
  (because $V(a(y) n'(z))$ is supported on $|y| = 1$)
  and $V^\sharp[x](x,y)
  = Q^{-2 s_2}
  V^\sharp[0](x,y)$ (because
  $V^\wedge[s](\xi,-x/\xi)$
  is supported on $|\xi| = Q$),
  hence by Fourier inversion,
  \begin{equation}\label{eqn:V-sharp-of-s-explicit-non-arch}
    V^\sharp[s](x,y)
    = Q^{- 2 s_2}
    1_{|x| \leq 1}
    1_{|y| = 1}
    \chi(y).
  \end{equation}
  Substituting \eqref{eqn:V-sharp-of-s-explicit-non-arch}
  above yields an explicit
  formula for $D_0$
  as a double integral over $x$ and $t$.
  The substitution $(x,t) \mapsto (1-x,1-t)$
  swaps the roles of $(\nu_1,s_1)$
  and $(\nu_2,s_2)$,
  so as in the proof of lemma \ref{prop:non-arch-estimates-key}, we may decompose
  \[
    D_0 = D_1 + D_2 -
    D_3,
  \]
  where
  \begin{itemize}
  \item $D_2$ denotes the contribution from when
    $|1-x| = |1-t| = 1$,
  \item $D_3$ that from when $|1-x| = |1-t| = |x| = |t| = 1$, and
  \item $D_1$ denotes the contribution from $|x| = |t| = 1$, or
    equivalently, that from $|1 - x| = |1-t| = 1$ but with
    $(s_1,\nu_1)$ and $(s_2,\nu_2)$ swapped,
    and with everything multiplied by $\chi(-1)$.
  \end{itemize}
  We change coordinates to
  $(u,v)$ as before,
  with $x = 1/v, t = 1/u v$,
  and dyadically
  decompose
  according to the values $U,V \in \{1,q, q^2, \dotsc \}$
  for $|u|,|v|$.
  We obtain in this way
  for $i=1,2$ that
  \begin{equation}\label{eqn:D-i-formula-yay}
    D_i
    =
    \chi(\pm 1)
    Q^{-2  s_2}
    \sum_{U,V \in \{1, q, q^2, \dotsc \}}
    U^{- 1/2 + s_i - \nu_i}
    V^{-1/2 -   s_i - \nu_i}
    D_{U,V}
  \end{equation}
  and that
  \begin{equation}
    D_3
    = Q^{-2 s_2} D_{1,1},
  \end{equation}
  where
  \begin{equation}\label{eqn:D-1-defn}
    D_{U,V}
    =
    \int_{\substack{
        u,v \in F: \\
        |u  - 1| = |u| = U, \\
        |v - 1| = |v| = V, \\
        |u v - 1| = U V
      }
    }
    \chi \left( \frac{1 - 1/u}{1 - 1/v} \right)
    \, \frac{d u \, d v}{  |u v| }.
  \end{equation}
  The sum \eqref{eqn:D-i-formula-yay} converges absolutely
  for small $\nu_1, \nu_2$,
  and is understood in general by meromorphic continuation.

  The integrals $D_{U,V}$ are evaluated below in lemma
  \ref{lem:exhaustive-char-computation}.
  Substituting that evaluation
  and summing some geometric series
  gives an evaluation of the $D_i$.
  The essential feature of this evaluation
  is that
  $D_{U,V}$ vanishes
  unless
  \begin{itemize}
  \item $(U,V) = (Q/q,Q/q)$,
  \item $U = Q/q$ and $V \geq Q$
    or $V = Q/q$ and $U \geq Q$, or
  \item $U, V \geq Q$.
  \end{itemize}
  Moreover, the 
  value of $D_{U,V}$
  does not vary
  within each of these three cases,
  and has size $\ll U V/Q^2$.
  If we write
  $D_{Q/q,Q/q} = c_0/q^2$
  and, for $U, V \geq q$,
  $D_{Q/q,V} = D_{U, Q/q}
  = c_1/q$
  and $D_{U,V} = c_2$,
  then
  $c_0,c_1,c_2 \ll 1$
  and
  we have
  for $i=1,2$ that
  \begin{equation}
    D_i
    =
    \chi(\pm 1)
  Q^{-1 - 2 \nu_i - 2 s_2}
  \left(
    \begin{gathered}
      \frac{c_0}{q^2}
      q^{1 + 2 \nu_i}
      + \frac{c_1}{q}
      \left(
        \frac{q^{1/2 + s_i + \nu_i}}{
          1 - q^{-1/2 + s_i - \nu_i}
        }
        +     \frac{q^{1/2 - s_i + \nu_i}}{
          1 - q^{-1/2 - s_i - \nu_i}
        }
      \right)
      \\
      +
      c_2
      \frac{    1 }{
        (1 - q^{-1/2+s_i-\nu_i})
        (1 - q^{-1/2-s_i-\nu_i})
      }
    \end{gathered}
  \right)
\end{equation}
and that
\begin{equation}
  D_3
  =
  Q^{-2 s_2}
  \begin{cases}
    c_2 & \text{ if } Q = q, \\
    0 & \text{ if } Q> q.
  \end{cases}
\end{equation}
Dividing by
$L(\mathcal{I}(s_1), 1/2 + \nu_1) L(\mathcal{I}(s_2), 1/2 +
\nu_2)$ has the effect of clearing all denominators.
The required estimate follows readily;
note that $Q^{-1 - 2 \nu_i}, Q^{- 2 s_2} \ll Q^{\O(\alpha)}$ for the
indicated ranges.
\end{proof}

\begin{lemma}\label{lem:local-estimates-final-degen-terms-long-families-crude}
  Let $F, \psi, Q, \phi$ and $h$ be as in
  \S\ref{sec:crude-local-estim}.
  Let
  $f[s] \in \mathcal{I}(s_1) \otimes \mathcal{I}(s_2) \otimes
  \mathbb{C}(s_3)$ be the holomorphic family, defined for $s$
  near zero, that is attached to $h$ by Theorem
  \ref{thm:refined-constr-adm-weight}.  Then
  \begin{equation}
    \mathcal{N}_\alpha(f) \ll Q^{\O(1)}.
  \end{equation}
\end{lemma}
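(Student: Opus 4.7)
The plan is to exploit the fact that we need only a crude polynomial bound in $Q$, sidestepping the explicit evaluation of character sums carried out in lemma \ref{lem:estimate-N-alpha-case-of-interest}. Instead I will combine uniform Sobolev continuity of the normalized double Hecke integral $D_f^*(\nu)$ with the Sobolev estimate $\mathcal{S}_d(f[s]) \ll Q^{\O(1)}$ already established in \S\ref{sec:crude-local-estim}.

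First I would verify that for pure tensors $f = f_1 \otimes f_2$, the normalized double Hecke integral factors as
\[
D_f^*(\nu) = \prod_{j=1}^2 \frac{\int_A^{\reg} W_{f_j} |.|^{\nu_j}}{L(\mathcal{I}(s_j), 1/2+\nu_j)},
\]
and that each factor is bounded by a constant (depending on $F$) times $\mathcal{S}(f_j)$, uniformly for $(s_j, \nu_j)$ in the compact region entering Definition \ref{defn:D_f-of-nu}. This uniform estimate follows from the local theory of normalized Hecke integrals in \S\ref{sec-4-3} together with the refined remark at the end of \S\ref{sec:local-hecke-rs-continuity} concerning separation from poles.

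Next I would extend this bound from pure tensors to arbitrary vectors $f \in \mathcal{I}(s_1) \otimes \mathcal{I}(s_2)$ via the reduction of \S\ref{sec:reduct-pure-tens}, obtaining $|D_{f[s]}^*(\nu)| \ll \mathcal{S}(f[s])$. Applying the bound $\mathcal{S}_d(f[s]) \ll Q^{\O(1)}$ from \S\ref{sec:crude-local-estim} (valid uniformly for $s$ in a fixed small neighborhood of the origin, for each fixed $d$) and taking the supremum over the compact set of $(s, \nu)$ defining $\mathcal{N}_\alpha(f)$ then yields $\mathcal{N}_\alpha(f) \ll Q^{\O(1)}$.

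The main obstacle is that the definition of $\mathcal{N}_\alpha(f)$ localizes $\nu_j$ near $\pm 1/2$, where both the Hecke integral and the $L$-factor denominator have poles. One therefore cannot apply the Hecke-integral bound \eqref{eqn:upper-bound-Mellin-transform-W} of lemma \ref{lem:unif-polyn-type-hecke} directly, since that estimate requires separation from poles. The workaround is precisely the remark following that lemma: multiplying both numerator and denominator by the minimal polynomial $P$ clearing the $L$-factor pole, one obtains a holomorphic ratio admitting a uniform Sobolev bound on the full compact region. Once this technical point is handled, the rest of the argument is routine and delivers the required $Q^{\O(1)}$ estimate.
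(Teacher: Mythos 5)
Your proposal is correct and follows essentially the same route as the paper, whose (one-line) proof invokes exactly the same three ingredients: the Hecke-integral Sobolev bound \eqref{eqn:upper-bound-Mellin-transform-W}, the reduction to pure tensors of \S\ref{sec:reduct-pure-tens}, and Cauchy's theorem (equivalently, the remark following lemma \ref{lem:unif-polyn-type-hecke}) to handle the coincidence of poles of the Hecke integral and of the normalizing $L$-factors near $\nu_j = \pm 1/2$. Your explicit identification of that pole issue, and the use of $\mathcal{S}_d(f[s]) \ll Q^{\O(1)}$ from \S\ref{sec:crude-local-estim}, are precisely what the paper's terse proof leaves implicit.
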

\begin{proof}
  The required estimate follows
  from \eqref{eqn:upper-bound-Mellin-transform-W},
\S\ref{sec:reduct-pure-tens} and Cauchy's theorem.
\end{proof}

\subsection{Proof of Theorem \ref{thm:last-eight-degen-func}}\label{sec:proof-theor-degen-2}
By Proposition \ref{lem:degen-reduce-to-local} and the
consequence $\exp(\# S) \ll Q^{\eps}$ of the divisor bound, we
reduce to estimating $\mathcal{N}_\alpha(f_\mathfrak{p})$ for
$\mathfrak{p} \in S$.
For
$\mathfrak{p} \in S_1$, we apply lemma
\ref{lem:estimate-N-alpha-case-of-interest} with $\alpha$
sufficiently small.
For $\mathfrak{p} \in S_0$:
\begin{itemize}
\item in the setting of \S\ref{sec:CI},
  we apply
  lemma
  \ref{lem:local-estimates-final-degen-terms-long-families-crude},
  taking there for ``$Q$''
  the analytic conductor of the local component
  of $\sigma_0$ at $\mathfrak{p}$;
\item
  in the setting of \S\ref{sec:PY1},
  it suffices to note
  that $\mathcal{N}_\alpha(f_\mathfrak{p})$ is finite.  
\end{itemize}


  


\appendix

\section{Oscillatory integral estimates\label{lem:char-sum-CI}}
\label{sec-7-1}
Let $F$ be a non-archimedean local field,
with ring of integers $\mathfrak{o}$, maximal ideal
$\mathfrak{p}$,
and $q := \# \mathfrak{o}/\mathfrak{p}$.
Let $\chi,\omega$ be unitary characters of $F^\times$ with
$\chi$ ramified.
Set $Q := C(\chi) \in \{q, q^2, q^3, \dotsc \}$.
The following evaluations and estimates were
postponed from \S\ref{sec:upper-bounds-dual-wts}.  The estimates
may be understood as generalizations of
the character sum bounds
established in \cite[\S3]{2019arXiv190810346P}.

\begin{lemma}\label{lem:single-variable-chi-integral-boring}
  For $U \in \{1,q, q^2, \dotsc \}$,
  we have
  \begin{equation}
    \int_{
      \substack{
        u \in F^\times : \\
        |u| = |1-u| = U
      }
    }
    \chi(1 - 1/u) \, \frac{d u}{ |u|}
    =
    \begin{cases}
      0 & \text{ if } U \leq Q/q^2, \\
      -1/q & \text{ if }  U = Q/q, \\
      1 - 1/q & \text{ if }  U \geq Q.
    \end{cases}
  \end{equation}
\end{lemma}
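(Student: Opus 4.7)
The plan is to apply the change of variables $t := 1 - 1/u$, with inverse $u = 1/(1-t)$. A direct computation gives the identity $du/|u| = dt/|1-t|$, and the conditions $|u| = U$ and $|1-u| = U$ translate to $|1-t| = 1/U$ and $|t|/|1-t| = U$ respectively, which together amount to $|t| = 1$ and $|1-t| = 1/U$. Writing $U = q^n$ with $n \geq 0$, the integral becomes
\[
U \int_{E_n} \chi(t) \, dt, \qquad E_n := \{t \in F : |t| = 1,\ |1-t| = q^{-n}\}.
\]

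Next I identify $E_n$. For $n \geq 1$, the condition $|1-t| = q^{-n} < 1$ already forces $|t| = 1$ by the ultrametric inequality, so $E_n = (1 + \mathfrak{p}^n) \setminus (1 + \mathfrak{p}^{n+1})$. For $n = 0$ one instead has $E_0 = \mathfrak{o}^\times \setminus (1 + \mathfrak{p})$, where the restriction $|t| = 1$ has to be imposed by hand (one cannot naively replace $1 + \mathfrak{p}^0$ by $\mathfrak{o}$, since then $t = 0$ would lie in the domain but $\chi(t)$ is undefined). The key input is character orthogonality: writing $Q = q^c$, the character $\chi$ is trivial on $1 + \mathfrak{p}^k$ iff $k \geq c$, so
\[
\int_{1 + \mathfrak{p}^k} \chi(t)\,dt = \begin{cases} q^{-k} & k \geq c, \\ 0 & k < c, \end{cases}
\]
and $\int_{\mathfrak{o}^\times} \chi(t)\,dt = 0$ since $\chi$ is ramified.

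Writing $E_n$ as a set-theoretic difference and applying these orthogonality identities,
\[
\int_{E_n} \chi(t)\,dt = \begin{cases} q^{-n}\,[n \geq c] - q^{-(n+1)}\,[n+1 \geq c] & n \geq 1, \\ 0 - q^{-1}\,[c = 1] & n = 0, \end{cases}
\]
where $[\cdot]$ is the Iverson bracket. Multiplying by $U = q^n$ and performing a short case analysis on the three disjoint possibilities $n+1 < c$ (equivalent to $U \leq Q/q^2$), $n = c-1$ (equivalent to $U = Q/q$), and $n \geq c$ (equivalent to $U \geq Q$), the integral evaluates to $0$, $-1/q$, and $1 - 1/q$ respectively, as claimed. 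The proof is essentially mechanical; the only minor subtlety is the coherent treatment of the boundary case $n = 0$, which is handled directly by the vanishing of $\int_{\mathfrak{o}^\times} \chi$.
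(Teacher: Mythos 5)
Your proof is correct and follows essentially the same route as the paper's: the substitution $x = 1 - 1/u$, writing the domain as the difference of the two subgroups $\mathcal{U}(1/U) \setminus \mathcal{U}(1/qU)$, and character orthogonality giving $\int_{\mathcal{U}(1/A)}\chi\,dx = 1_{A\geq Q}/A$. Your separate handling of the case $U=1$ is a harmless bookkeeping variant of the paper's uniform notation $\mathcal{U}(1) = \mathfrak{o}^\times$.
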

\begin{proof}
  For $A \in \{1, q, q^2, \dotsc\}$,
  let $\mathcal{U}(1/A)$
  denote the subgroup
  of
  $\mathfrak{o}^\times$
  consisting of all
  elements
  $x$
  satisfying $|x - 1| \leq 1/A$.
  By the change of variables
  $x = 1-1/u$,
  \begin{align*}
    \frac{1}{U}
    \int_{
    \substack{
    u \in F^\times : \\
    |u| = |1-u| = U
    }
    }
    \chi(1 - 1/u) \, \frac{d u}{ |u|}
    &=
      \int_{x \in \mathcal{U}(1/U)}
      \chi(x) \, d x
      -
      \int_{x \in \mathcal{U}(1/q U)}
      \chi(x) \, d x
    \\
    &=
      1_{U \geq Q}
      \frac{1}{U}
      - 
      1_{q U \geq Q}
      \frac{1}{q U}.
  \end{align*}
  The required formula follows case-by-case.
\end{proof}

\begin{lemma}\label{lem:exhaustive-char-computation}
  Assume that $F$ is of characteristic zero.
  For $U, V \in \{1,q,q^2, \dotsc \}$,
  the integral
  \[
    \rho :=
    \int_{\substack{
        u,v \in F^\times :
        \\
        |u|  = |1 - u | = U, \\
        |v| = |1-v| = V, \\
        |u v - 1| = U V
      }
    }
    \omega (u v - 1)
    \chi ( \frac{1 - 1/u}{1 - 1/v})
    \, \frac{d u \, d v}{|u v|}
  \]
  satisfies the following estimates:
  \begin{itemize}
  \item If $C(\omega) = 1$,
    then
    \begin{equation}
      \rho =
      \begin{cases}
        0 & \text{ if } \min(U,V) \leq Q/q^2, \\
        2/q^2 & \text{ if } U = V = 1, Q = q, \\
        1/q^2 & \text{ if } 
        U = V = Q / q > 1, \\
        (-1/q)(1-1/q) & \text{ if }
        U = Q/q, V \geq Q, \\
        (-1/q)(1-1/q) & \text{ if }
        V = Q/q, U \geq Q, \\
        (1-1/q)^2 & \text{ if } U, V \geq Q.
      \end{cases}
    \end{equation}
    In particular,
    $\rho = 0$
    unless
    $U, V \geq Q/q$,
    in which case
    \begin{equation}
      \rho \ll \min(1,U/Q) \min(1,V/Q).
    \end{equation}
  \item
    Suppose that $C(\omega) > 1$.
    Then
    $\rho = 0$ unless
    $U = V = Q/C(\omega)$.
  \item Suppose that $C(\omega) > 1$ and $U = V = Q/C(\omega)$.
    Then
    \[
      \rho \ll \frac{\sqrt{U V}}{Q} = \frac{1}{C(\omega)}
    \]
    unless
    $U = V = 1$
    and
    $(\chi,\omega)$ is atypical (see \S\ref{sec:upper-bounds-dual-wts}).
  \item
    Suppose that $U = V = 1$ and $(\chi,\omega)$
    is atypical.
    Let $\alpha$ and $\xi$ be as above.
    Then
    \begin{equation}\label{eqn:main-osc-int-est-p-adic}
      \rho \ll
      \frac{N_\alpha(\xi)}{Q}
      \cdot
      \begin{cases}
        q^{1/2} & \text{ if } Q = q^{2 \alpha + 1}, \\
        1& \text{ if } Q = q^{2 \alpha},
      \end{cases}
    \end{equation}
    with
    $N_\alpha(\xi)$
    as in Proposition \ref{prop:non-arch-estimates-key}.
  \end{itemize}
\end{lemma}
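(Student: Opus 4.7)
The approach is to split into cases according to the conductor of $\omega$, transforming the double integral by a convenient change of variables and applying factorization, character orthogonality, or stationary phase as appropriate. When $\omega$ is unramified, the factor $\omega(uv-1)$ depends only on $|uv-1|=UV$ and so contributes a constant of absolute value one on the integration domain. The substitution $x = 1 - 1/u$, $y = 1 - 1/v$ transforms the region $|u| = |1-u| = U$, $|v| = |1-v| = V$ into $|x| = 1$, $|1-x| = 1/U$ (and analogously for $y$), with the Jacobian absorbing the factor $|uv|^{-1}$; the phase becomes $\chi(x) \chi^{-1}(y)$. For $UV > 1$ the constraint $|uv-1| = UV$ is automatic and the integral factors into two independent one-variable integrals, each evaluated by Lemma~\ref{lem:single-variable-chi-integral-boring}. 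For $U = V = 1$ one must additionally subtract the contribution of the locus $|uv-1| < 1$, which after the same change of variables parameterizes as $y \equiv -x/(1-x) \pmod{\mathfrak{p}}$ and, using triviality of $\chi$ on $1+\mathfrak{p}$, reduces to the same one-variable integral. Summing reproduces the tabulated values.

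For $C(\omega) > 1$ the rigid support conditions $U = V = Q/C(\omega)$ will come from Fourier analysis in one variable at a time. Fixing $u$ and substituting $w = uv - 1$ in the $v$-integral yields an expression of the form $\int \omega(w)\, \Theta_u(w)\, dw$, where $\Theta_u$ is supported on $|w| = UV$ and its restriction to cosets of the unit group reflects the ramification of $\chi^{-1}(1 - u/(1+w))$ as a function of $w$. Character orthogonality on the multiplicative group forces the effective conductor of $\Theta_u$ in $w$ to match $C(\omega)$, and an elementary valuation comparison then yields $V = Q/C(\omega)$; the symmetric argument in the other variable gives $U = Q/C(\omega)$.

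In the balanced case $U = V = Q/C(\omega)$ we apply stationary phase. Since $F$ has characteristic zero and $q$ is odd and large, the exponential map linearizes characters via $\chi(\exp a) = \psi(c_\chi a)$ and $\omega(\exp a) = \psi(c_\omega a)$ with $|c_\chi| = Q$, $|c_\omega| = C(\omega)$; in particular $c_\omega/c_\chi$ represents the class $\xi$. Parameterize $u = u_0 \exp(a)$, $v = v_0 \exp(b)$ with base points $u_0, v_0$ ranging over suitable residues modulo $\mathfrak{p}^{\lceil \alpha/2 \rceil}$ and oscillation variables $a,b$ integrated over $\mathfrak{p}^{\lceil \alpha/2 \rceil}/\mathfrak{p}^{\alpha'}$. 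A second-order Taylor expansion of $\log(1-1/u)$, $\log(1-1/v)$ and $\log(uv-1)$ produces a quadratic phase in $(a,b)$ whose Hessian, after a short computation, is a unit multiple of $1 + 4\xi^2$ depending on $(u_0, v_0)$. In the generic case the Hessian is a unit, the two-dimensional Gauss integral contributes $\sqrt{UV}/Q = 1/C(\omega)$, and we obtain the required bound. In the atypical case the Hessian lies in $\mathfrak{p}$; we then diagonalize along the nondegenerate direction, which contributes a standard one-variable Gauss sum of size $q^{1/2}$ when the conductor exponent of $\chi$ is odd and of size $1$ when even, and reduce the degenerate direction to a congruence count for the critical-point equation of the third-order phase. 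That equation simplifies to $\xi^2 \tau^2 - \tau - 1 \equiv 0 \pmod{\mathfrak{p}^\alpha}$, whose solution count is precisely $N_\alpha(\xi)$, yielding~\eqref{eqn:main-osc-int-est-p-adic}.

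The principal obstacle is the atypical case: one must carefully organize the phase expansion to third order, track the interplay between the multiplicative parameterization by residue representatives $u_0, v_0$ and the additive parameterization by the exponential map, and identify the cubic coming from the degenerate direction with the quadratic congruence $\xi^2 \tau^2 - \tau - 1 \equiv 0$. A secondary subtlety is handling the parity of the conductor exponent of $\chi$, which determines whether the remaining Gauss sum contributes $q^{1/2}$ or $1$; the condition $1 + 4\xi^2 \in \mathfrak{p}$ defining atypicality is precisely the bridge between our setup and the stationary-phase analysis appearing in \cite{MR1779567, 2019arXiv190810346P}, where essentially the same exponential sums and critical-point equations are treated.
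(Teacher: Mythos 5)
Your overall strategy coincides with the paper's: factor the unramified-$\omega$ case into two applications of Lemma~\ref{lem:single-variable-chi-integral-boring} (with a separate correction for the locus $|uv-1|<1$ when $U=V=1$), derive the support condition $U=V=Q/C(\omega)$ from conductor matching, and treat the balanced case by $p$-adic stationary phase via the exponential map, with the discriminant $1+4\xi^2$ governing degeneracy and the congruence $\xi^2\tau^2-\tau-1\equiv 0\pmod{\mathfrak{p}^\alpha}$ producing the count $N_\alpha(\xi)$. The one genuine organizational difference --- obtaining both $U=Q/C(\omega)$ and $V=Q/C(\omega)$ from two separate one-variable conductor-matching arguments, rather than deriving $U=V$ from the two-variable critical-point congruences as the paper does --- is legitimate and arguably cleaner, provided you also dispose of the regimes where one of the two functions is constant (e.g.\ $V\geq Q$, where $\chi(1-1/v)\equiv 1$ and orthogonality of $\omega$ kills the $v$-integral outright).

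However, there are two concrete gaps. First, your stationary-phase argument is premised on ``$q$ odd and sufficiently large'' and on having room to run the exponential map, i.e.\ on $Q/U\geq q^2$. The third bullet of the lemma must also be proved when $Q/U=q$, and in particular when $Q=q$, $C(\omega)=q$, $U=V=1$: there the oscillation lives entirely on the finite field $\mathfrak{o}/\mathfrak{p}$, no Taylor expansion or Gauss-integral evaluation is available, and the bound $\rho\ll 1/q$ is a genuine two-variable complete exponential sum estimate requiring Weil/Deligne-type input (the paper imports it from \cite[\S 9]{2018arXiv181102452P}). Your method cannot produce this case and you do not mention it. Second, the dyadic case is unaddressed: when the residue characteristic is $2$ the factor $\phi_2/2$ in the quadratic phase is problematic, and the paper circumvents this by choosing the splitting scale $A$ with $A^2\asymp |2|_F\, Q/U$ so that only the linear term survives; some such device is needed for your argument to cover all $F$ of characteristic zero. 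A minor further imprecision: in the $C(\omega)=1$, $U=V=1$ case, ``triviality of $\chi$ on $1+\mathfrak{p}$'' holds only when $Q=q$; for $Q\geq q^2$ the correction from $|uv-1|<1$ vanishes for a different (orthogonality) reason, consistent with the $0$ versus $-1/q^2$ dichotomy in the tabulated values.
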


\begin{proof}
  Consider first the case that $C(\omega) = 1$.
  Then $\omega$ is
  constant on the domain of integration.
  If $(U,V) \neq (1,1)$,
  then the integration constraint
  $|u v - 1| = U V$ may be omitted,
  so $\rho$ factors as a
  product of integrals
  to which lemma \ref{lem:single-variable-chi-integral-boring}
  applies,
  giving the required assertions.
  The case $U = V = 1$
  may be treated similarly
  using the formula
  \begin{equation}
    \int_{\substack{
        u,v \in F^\times :
        \\
        |u|  = |1 - u | = 1, \\
        |v| = |1-v| = 1, \\
        |u v - 1| < 1
      }
    }
    \chi ( \frac{1 - 1/u}{1 - 1/v})
    \, \frac{d u \, d v}{|u v|}
    =
    \begin{cases}
      0 & \text{ if } Q \geq q^2 \\
      -1/q^2 & \text{ if } Q = q
    \end{cases},
  \end{equation}
  whose proof is similar to that of
  lemma \ref{lem:single-variable-chi-integral-boring}.

  We suppose henceforth that $C(\omega) > 1$.

  If $U \geq Q$, then $\chi(1/u - 1)$ is constant on the domain
  of integration.  Since $U \geq q$, the conditions
  $|1 - u| = U$ and $|u v - 1| = U V$ are redundant, so we may
  factor off the integral
  $\int_{|u| = U} \omega(u v - 1) \, \frac{d u}{|u|} = \int_{|u|
    = U V} \omega(u) \, \frac{d u }{|u|} = 0$.  Thus $\rho = 0$
  when $U \geq Q$, and likewise when $V \geq Q$.  We thus
  suppose henceforth that $U,V < Q$.  By replacing $\chi$ with
  its inverse if necessary, we may reduce further to the case
  that $U \leq V$.

  Suppose now that $C(\omega) > Q/V$, hence in particular that
  $C(\omega) \geq q^2$.  We then apply the change of variables
  $v \mapsto v' := (1 + b) v$, where
  $|b| = C(\omega)/q \leq V/Q$.  Then
  $\chi(1-1/v') = \chi(1-1/v)$, while a simple calculation gives
  $\omega(u v' - 1) = \omega(u v - 1) \omega (1 + t b)$ with
  $t := u v / (u v - 1)$.  On the domain of integration, we have
  $|t| = 1$, hence
  $\int_{b : |b| = C(\omega)/q} \omega(1 + t b) \, d b = 0$.
  It follows that $\rho = 0$.

  We have reduced to the case that $q \leq C(\omega) \leq Q/V$
  and $U \leq V < Q$.  We suppose next that $Q/U = q$, hence
  also that $Q/V = q = C(\omega)$.  If $Q = q$, then
  $U = V = 1$, and the estimate $\rho \ll 1/q$
  follows from
  \cite[\S9]{2018arXiv181102452P}.
  (Their estimates are stated over the finite fields
  of prime cardinality, but the same arguments apply
  over any finite field.)
  If $Q > q$, then $U V \geq q^2 > C(\omega)$, so
  $\omega(u v - 1) = \omega(u v)$ while
  $\chi(\frac{1- 1/u }{1 - 1/v}) = \chi(1 - 1/u + 1/v)$
  factors as a product of additive characters of frequency $Q$
  evaluated at $1/u$ and $1/v$.  Thus $\rho$ factors as a
  product of Gauss sums of size $\rho \ll 1/q$.

  It remains to treat the case that
  \begin{equation}\label{eqn:assumptions-before-non-dyadic-assumption}
    \text{$q \leq C(\omega) \leq Q/V$
      and $U \leq V < Q$ and $Q/U \geq q^2$.}
  \end{equation}
  For this we assume first that $F$ is non-dyadic;
  the dyadic case will be treated at the end.
  We may uniquely decompose
  $Q/U = A B$, where $A, B \in \{1, q, q^2, \dotsc \}$
  with $A \leq B \leq q B$.
  Then $q \leq A$.
  For $X \in \{1, q^{-1}, q^{-2}, \dotsc \}$,
  we introduce the notation $\mathcal{O}(X) := \{x \in
  \mathfrak{o} :
  |x| \leq X\}$
  and $\mathcal{U}(X) := \{x \in \mathfrak{o}^\times : |x - 1|
  \leq X\}$; these define subgroups of $F$ and of $F^\times$, respectively.
  We may assume that
  $\exp : \mathcal{O}(1/A) \rightarrow \mathcal{U}(1/A)$
  is an isomorphism with inverse
  $\log : \mathcal{U}(1/A) \rightarrow \mathcal{O}(1/A)$;
  otherwise, $A \ll 1$, and so the required estimate
  follows from the trivial bound.
  We may assume similarly that $A$
  is large enough that for
  each
  $a \in \mathcal{O}(1/A)$, we have
  the following congruences:
  \begin{equation}
    \exp(a) \equiv 1 + a,
    \quad
    -\log(1 - a)
    \equiv
    a
    \pmod{\mathcal{O}(1/A)},
  \end{equation}
  \begin{equation}
    \exp(a) \equiv 1 + a + a^2/2,
    \quad 
    -\log(1-a) \equiv a + a^2/2
    \pmod{\mathcal{O}(1/B)}.
  \end{equation}

  Let
  $\psi_\chi : \mathcal{O}(1/A) \rightarrow \U(1)$
  denote the character defined by
  $\psi_\chi(a) := \chi(\exp(a))$; it is trivial on
  $\mathcal{O}(1/Q)$ but not on $\mathcal{O}(q/Q)$.  Since
  $C(\omega) \leq Q/V \leq Q$, we then have
  $\omega(\exp(a)) = \chi_\psi(\xi a)$ for some
  $\xi \in \mathcal{O}(1/V)$.

  For $u,v$ in the domain of integration
  and $a,b \in \mathcal{O}(1/A)$,
  set
  $u' := u \exp(a)$,
  $v' := v \exp(b)$.
  The change of variables $(u,v) \mapsto (u', v')$
  preserves the domain of integration and the measure.
  We compute that
  \[
    \log \frac{1 - 1/u'}{1 - 1/u}
    \equiv 
    \frac{ a - a^2/2}{u-1} 
    -
    \frac{a^2/2}{(u-1)^2} 
    \pmod{\mathcal{O}(1/Q)},
  \]
  and also that the terms involving $a^2$
  may be omitted when $a \in \mathcal{O}(1/B)$.
  By this and the analogous identities
  obtained by replacing $u$ with $v$ or with $1/uv$,
  and recalling that $U \leq V$ and $c(\omega) \leq \chi(\chi) = n$,
  we obtain
  \begin{equation}
    \omega (u' v' - 1)
    \chi ( \frac{1 - 1/u'}{1 - 1/v'})
    = 
    \omega (u v - 1)
    \chi ( \frac{1 - 1/u}{1 - 1/v})
    \psi_\chi
    (\phi_1(a,b) - \phi_2(a,b)/2),
  \end{equation}
  where
  \begin{equation}\label{eqn:defn-phi-1}
    \phi_1(a,b)
    :=
    \frac{1}{u-1} a
    - \frac{1}{v-1} b
    + \xi \frac{u v }{1 - u v} ( a + b) 
  \end{equation}
  and
  \begin{equation}\label{eqn:defn-phi-2}
    \phi_2(a,b) := \frac{u}{(1-u)^2} a^2 - \frac{v}{(1-v)^2} b^2 +
    \xi \frac{uv}{(uv-1)^2} (a + b)^2.
  \end{equation}
  We now consider the average
  \begin{equation}\label{eqn:rho-u-v-defn}
    \rho(u,v)
    :=
    \mathbb{E}_{a,b \in \mathcal{O}(1/A)}
    \psi_\chi
    (\phi_1(a,b) - \phi_2(a,b)/2).
  \end{equation}
  where $\mathbb{E}$ denotes the integral with respect
  to a probability Haar measure.
  Then
  \begin{equation}\label{eqn:rho-via-rho-u-v}
    \rho = \int_{\substack{
        u,v \in F^\times :
        \\
        |u|  = |1 - u | = U, \\
        |v| = |1-v| = V, \\
        |u v - 1| = U V
      }
    }
    \omega (u v - 1)
    \chi ( \frac{1 - 1/u}{1 - 1/v})
    \rho(u,v)
    \, \frac{d u \, d v}{|u v|},
  \end{equation}
  where the integrand
  now factors through
  $F^\times / \mathcal{U}(1/A)$.

  We compute first the
  analogous averages
  over cosets of $\mathcal{O}(1/B)$,
  i.e., we write
  \[
    \rho(u,v)
    =
    \mathbb{E}_{a,b \in \mathcal{O}(1/A) / \mathcal{O}(1/B)}
    \psi_\chi(- \phi_2(a,b)/2)
    \mathbb{E}_{r,s \in \mathcal{O}(1/B)}
    \psi_\chi(\phi_1(a+r,b+s)).
  \]
  Set $\tau := \frac{u v}{1 - u v}$.
  The inner integral vanishes identically unless the condition
  \[
    -\frac{1}{u-1}
    \equiv
    \xi \tau 
    \equiv \frac{1}{v-1}
    \pmod{\mathcal{O}(B/Q)}
  \]
  is satisfied, in which case it evaluates to $\phi_1(a,b)$.
  Since $1/U > B/Q$ and $|\tau| = 1$,
  the above condition implies that $U = V$ and
  $|\xi| = 1/V$, hence that $C(\omega) = Q/V$.
  In view of the support conditions on $u,v$,
  we may rewrite the above as
  \begin{equation}\label{eqn:congruences-for-u-v-via-xi-tau}
    u \equiv 1 - 1/\xi \tau,
    \quad 
    v \equiv 1 + 1/\xi \tau
    \pmod{\mathcal{U}(1/A)}.
  \end{equation}
  From this we deduce that
  \begin{equation}\label{eqn:tau-quadraitc-congruence}
    \tau \equiv \xi^2 \tau^2 - 1 \pmod{\mathcal{O}(1/A)}.
  \end{equation}
  Note that the number of
  solutions modulo $\mathcal{O}(1/A)$ to this congruence
  is $\O(1)$ unless $1 + 4 \xi^2 \in \mathfrak{p}$,
  in which case $|\xi| = 1$.

  If $B = A$, then we may conclude already that
  \eqref{eqn:main-osc-int-est-p-adic} holds
  if $U = V = 1$ and $(\chi,\omega)$
  is atypical, and otherwise that
  \[
    \rho \ll 1/A^2
    = \sqrt{U V}/{Q}.
  \]
  In fact, we've seen that $\rho = 0$ unless $U = V =
  Q/C(\omega)$.
  
  We turn to the case $B = q A$.
  For this we have reduced
  to bounding the integral
  \begin{equation}\label{eqn:key-integral-O-1-A-O-1-B}
    \mathbb{E}_{a,b \in \mathcal{O}(1/A) / \mathcal{O}(1/B)}
    \psi_\chi(\phi_1(a,b) - \phi_2(a,b)/2)
  \end{equation}
  for $u,v$ satisfying \eqref{eqn:congruences-for-u-v-via-xi-tau}.
  This integral is a normalized two-variable quadratic
  Gauss sum on $(\mathfrak{o}/\mathfrak{p})^2$,
  and is thus $\O(1/q)$
  unless the quadratic term degenerates.
  To investigate the latter possibility,
  we set $\eta := \tau \xi$
  (thus $|\eta| = 1/V = 1/U = A B/Q = q A^2/Q$)
  and invoke the assumed congruences on $u,v$
  to obtain,
  for $a,b \in \mathcal{O}(1/A)$,
  \[
    \phi_2(a,b)
    \equiv
    \eta ( \eta- 1) a^2 - \eta (\eta + 1) b^2
    + \eta^3 ( a + b)^2 \pmod{\mathcal{O}(q/Q)}.
  \]
  If $U > 1$,
  so that $|\eta| < 1$,
  then we obtain the further simplification
  \[
    \phi_2(a,b)
    \equiv
    -\eta a^2 - \eta b^2 \pmod{\mathcal{O}(q/Q)},
  \]
  and so $\phi_2$ is manifestly nondegenerate.  It remains to
  consider the case $U = 1$, so that $Q = q A^2$ and
  $|\eta| = 1$.  In general, the discriminant of a polynomial
  $(a,b) \mapsto c_1 a^2 + c_2 b^2 + c_3 (a +b)^2$ is given by
  $- 4 ( c_1 c_2 + c_2 c_3 + c_3 c_1)$.  In the case of
  $\phi_2$, this specializes to $-4 \eta^2 (\eta^2 + 1)$.  The
  degeneracy condition is thus that
  $\eta^2 + 1 \equiv 0 \pmod{\mathfrak{p}}$, which is possible
  only if $-1$ is a square modulo $\mathfrak{p}$.  Let us fix
  once such $\eta \in \mathfrak{o}^\times$, thus
  $\eta^2 \equiv -1 \pmod{\mathfrak{p}}$.
  Then $|\xi| = 1$.
  We compute that
  \[
    \phi_2(a,b)
    \equiv
    - (a + \eta b)^2 \pmod{\mathcal{O}(q/Q)}.
  \]
  Thus in all cases the quadratic term
  has rank $\geq 1$,
  and so \eqref{eqn:key-integral-O-1-A-O-1-B}
  is $\O(q^{-1/2})$.
  

  We turn finally to the dyadic case, retaining
  the assumptions \eqref{eqn:assumptions-before-non-dyadic-assumption}.
  We may assume that $Q/U$ is sufficiently large,
  since otherwise the required bound is trivial.
  We may then choose
  $A \in \{q, q^2, q^3, \dotsc  \}$ so that
  $Q/U \ll A^2 \leq |2|_F Q/U$.
  Then \eqref{eqn:rho-via-rho-u-v} holds
  with $\rho(u,v)$ defined
  by \eqref{eqn:rho-u-v-defn},
  with $\phi_1$ defined by
  \eqref{eqn:defn-phi-1},
  and with $\phi_2 := 0$.
  The same analysis as before
  then gives $\rho \ll \sqrt{U V}/Q$.
\end{proof}

\begin{remark}
  With slightly more care it should be possible to improve the
  estimate \eqref{eqn:main-osc-int-est-p-adic} in the case
  $Q = q^{2 \alpha + 1}$; compare with
  \cite[Thm 3.4]{2019arXiv190810346P}.
\end{remark}

\section{Analytic newvectors for $\operatorname{GL}_2$}\label{sec:analyt-newv-oper}
Over a non-archimedean local field, classical newvector theory
\cite{MR0337789}
provides a convenient way
to
(among other things)
construct pre-Kuznetsov weights that localize on a fairly small subset of
generic dual (see
\S\ref{sec:constr-wt-non-archimedean-case},
\S\ref{sec:lower-bounds-for-wts}).
Here we record
an analytic variant of that theory for $\GL_2$
that is valid also over an
archimedean local field.

We note that a further extension to $\GL_n(\mathbb{R})$ of the
analytic newvector theory recorded here has been developed with
S. Jana \cite{JN19a}, while an algebraic newvector theory for
$\GL_n$ over an archimedean local field has been given by Popa
\cite{MR2419183} when $n= 2$ and is being developed by
P. Humphries for general $n$
(see \cite[Rmk 4]{JN19a} for further discussion).

\subsection{Notation}
Throughout this section $F$ denotes a local field,
$\psi$ a nontrivial unitary character of $F$.
We use the
the following notation for elements of $\GL_2(F)$:
\[
  n(x) := \begin{pmatrix}
    1 & x \\
    0 & 1
  \end{pmatrix},
  \quad
  a(y) :=
  \begin{pmatrix}
    y & 0 \\
    0 & 1
  \end{pmatrix},
  \quad
  w := \begin{pmatrix}
    & -1 \\
    1 & 
  \end{pmatrix}.
\]
We identify each generic irreducible representation $\pi$ of
$\GL_2(F)$ with its Whittaker model $\mathcal{W}(\pi,\psi)$,
consisting of $W : \GL_2(F) \rightarrow \mathbb{C}$ satisfying
$W(n(x) g) = \psi(x) W(g)$.  We will often abbreviate
$W(y) := W(a(y))$ for $y \in F^\times$.  We equip $F^\times$
with any Haar measure and write simply
$\int_{y \in F^\times} f(y)$ for the corresponding integral of a
function.  When $\pi$ is unitary, we normalize the inner product
to be given in the Kirillov model by
$\|W\|^2 = \int_{y \in F^\times} |W(y)|^2$.

\subsection{Local $\gamma$-factors and analytic
  conductors}\label{sec:local-gamma-factors}
Let $\pi$ be a generic irreducible
representation of $\GL_2(F)$,
and let $\chi$ be a character of $F^\times = \GL_1(F)$.
Recall from \S\ref{sec:intro-l-factors}
the corresponding local $L$-, $\eps$- and $\gamma$-factors,
which are related by the identity
\[
  \gamma(\psi,\pi \otimes \chi,s)
  =
  \eps(\psi,\pi \otimes \chi,s)
  \frac{L(\widetilde{\pi} \otimes  \chi^{-1}, 1-s)}{L(\pi \otimes \chi, s)}.
\]
For each $W \in \pi$ the Mellin transform
$\int_{y \in F^\times} W(a(y)) \chi(y)$
converges absolutely at least for $\Re(\chi)$ sufficiently large,
and the ratio
\[
  \frac{\int_{y \in F^\times} W(a(y)) \chi(y) }{ L(\pi \otimes \chi,
    1/2)
  }
\]
extends to a holomorphic function on the character group of $F^\times$.
We have the local functional equation
\begin{equation}\label{eq:31}
  \int_{y \in F^\times} W(a(y)) \chi(y)
  =
  \frac{
    \int_{y \in F^\times} W(a(y) w) \omega_\pi^{-1} \chi^{-1} |.|(y)
  }
  {\gamma(\psi,\pi \otimes \chi,1/2)},
\end{equation}
with $\omega_\pi$ the central character and each integral
interpreted via meromorphic continuation in $\chi$.
Recall also the Stirling-type estimate \eqref{eq:stirling-for-general-RS-gamma}.

\subsection{Analytic congruence subsets}\label{sec:analyt-congr-subgr}
For $C, D \in \mathbb{R}_{\geq 1}$ and $\eps \in (0,1]$
set
\[
  U_1(C,\eps)
  :=
  \left\{
    y \in F^\times :
    C |y - 1|, |y^{-1} - 1|  \leq \eps
  \right\}
\]
and
\[
  K_1(C,D,\eps)
  :=
  \left\{g = \left(
    \begin{smallmatrix}
      a&b\\
      c&d
    \end{smallmatrix}
  \right) \in \GL_2(F):
  \begin{gathered}
    |a-1|, |b|, C |c|, \\
    D |d-1|, |\det(g)^{-1} - 1| \leq \eps
  \end{gathered}
  \right\}.
\]
Set also
\[
  K_0(C,\eps)
  := K_1(C,1,\eps).
\]
Note that
$K_1(C',D',\eps')
\subset 
K_1(C,D,\eps)$
whenever
$C' \geq C, D' \geq D$ and $\eps' \leq  \eps$,
and in particular that
$K_1(C,D,\eps)
\subset K_0(C,\eps)$.


For example, suppose $F$ is non-archimedean
and $m, n$ are nonnegative integers.
Then
$U_1(1,1)
=
\mathfrak{o}^\times$
is the unit group
and
$U_1(q^n,1)
=
F^\times \cap (1 + \mathfrak{p}^{n})$
belongs to its standard filtration.
Similarly
$K_0(1,1) = \GL_2(\mathfrak{o})$
is a maximal compact subgroup
with congruence subgroups
$K_0(q^n,1)
=
\GL_2(F) \cap
\left(
  \begin{smallmatrix}
    \mathfrak{o}  & \mathfrak{o}  \\
    \mathfrak{p}^n & \mathfrak{o} 
  \end{smallmatrix}
\right)$.
The subset
$K_1(q^n,q^m,1)
=
\GL_2(F)
\cap
\left(
  \begin{smallmatrix}
    \mathfrak{o}  & \mathfrak{o}  \\
    \mathfrak{p}^n & 1 + \mathfrak{p}^m
  \end{smallmatrix}
\right)$
is a subgroup
if and only if $m \leq n$.

\subsection{}
\label{sec:define-eta-central-character-congruence-subgp}
For each
unitary character $\omega$ of $F^\times$,
define a map of sets
$\eta_{\omega} : K_0(C,\eps) \rightarrow \U(1)$
by the formula
\begin{equation}\label{eq:48}
  \eta_{\omega}
  \left( \begin{pmatrix}
      a & b \\
      c & d
    \end{pmatrix} \right)
  :=
  \begin{cases}
    \omega(d) &
    \text{ if }
    d/a \in U_1(1,1), \\
    1 &
    \text{ otherwise.}
  \end{cases}
\end{equation}
We note that
for any character $\chi$,
the map $g \mapsto \chi(\det g) \eta_{\chi^{-2}}$
defined initially on the set $K_0(C,\eps)$
descends to a well-defined map on the image of that set in $\PGL_2(F)$.

\subsection{}\label{sec:anal-newv-stmt}
The main result of this section is the following
analytic variant of some of the main results
of local newvector theory:

\begin{theorem}~\label{lem:produce-whittaker-against-random-central-character-wannabe}
  \begin{enumerate}[(i)]
  \item For each $\delta > 0$ there exists
    $\eps > 0$
    so that for each
    generic irreducible unitary
    representation $\pi$ of $\GL_2(F)$
    there exists a unit vector $W \in \pi$
    so that
    for each unitary character $\omega$ of $F^\times$,
    \begin{equation}\label{eq:25}
      |W(g) - \eta_\omega(g)| \leq  \delta
      \text{ for all }
      g \in K_1(C(\pi), C(\omega_\pi \omega^{-1}), \eps).
    \end{equation}
  \item Suppose $(F,\psi)$ is unramified.
    For each generic irreducible representation $\pi$ of $\GL_2(F)$
    there exists a nonzero vector $W \in \pi$
    so that
    for each
    unitary character $\omega$ of $F^\times$,
    \eqref{eq:25}
    holds with $\delta = 0$
    and $\eps = 1$.
  \end{enumerate}      
\end{theorem}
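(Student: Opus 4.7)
The plan for part (ii) is to take $W$ to be the classical Whittaker newvector of $\pi$ from \cite{MR0337789}, normalized by $W(1) = 1$. By the newvector theorem, $W$ satisfies $\pi(k) W = \omega_\pi(d) W$ for every $k = \binom{a\,b}{c\,d} \in K_0(C(\pi))$. Set $\tau := \omega_\pi \omega^{-1}$; then $\tau$ is trivial on the subgroup $\{d : |d-1| \leq 1/C(\tau)\}$ of $\mathfrak{o}^\times$. Membership of $k$ in $K_1(C(\pi), C(\tau), 1)$ forces $|c| \leq 1/C(\pi)$ and $|d - 1| \leq 1/C(\tau)$, together with $a, d \in \mathfrak{o}^\times$, so that $k$ lies in $K_0(C(\pi))$, $\omega_\pi(d) = \tau(d) \omega(d) = \omega(d)$, and $d/a \in U_1(1,1)$; consequently $\eta_\omega(k) = \omega(d) = W(k)$ identically on $K_1(C(\pi), C(\tau), 1)$. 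This yields (ii) with $\delta = 0$, $\eps = 1$.

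For part (i), the non-archimedean case (with ramified $\psi$ allowed) reduces to (ii) by translating the newvector by a diagonal element of $G$ whose entry is a uniformizer raised to the power controlled by the conductor of $\psi$, so the substantive case is archimedean. My plan is to construct an analytic analogue of the newvector directly in the Kirillov model. Fix once and for all a nonnegative $\phi_0 \in C_c^\infty(F^\times)$, supported in a small symmetric neighborhood of $1$, with $\phi_0(1) = 1$ and $\|\phi_0\|_{L^2(d^\times y)} \asymp 1$; the diameter of the support and the derivative bounds of $\phi_0$ will be chosen in terms of $\delta$. The Kirillov model identifies $\phi_0$ with a Whittaker function $W \in \mathcal{W}(\pi, \psi)$ satisfying $W(a(y)) = \phi_0(y)$; after a harmless renormalization, $W$ becomes a unit vector with $W(1)$ close to $1$. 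For $k = \binom{a\,b}{c\,d} \in K_\eps := K_1(C(\pi), C(\tau), \eps)$, I use the LU-type decomposition $k = n(b/d) \cdot \mathrm{diag}(\det(k)/d, d) \cdot n'(c/d)$ (valid since $|d-1| \leq \eps$), together with the Whittaker transformation law, the central character action, and the definition of the Kirillov model, to obtain
\[
W(k) = \psi(b/d) \cdot \omega_\pi(d) \cdot \phi_1(\det(k)/d^2),
\]
where $\phi_1(y) := W(a(y) n'(c/d))$ is the Kirillov realization of $\pi(n'(c/d)) W$. The factor $\psi(b/d)$ is within $O(\eps)$ of $1$ by continuity of $\psi$; the hypothesis $|d - 1| \leq \eps/C(\tau)$ forces $\tau(d)$ close to $1$, so $\omega_\pi(d) = \tau(d) \omega(d) \approx \omega(d) = \eta_\omega(k)$; and $\det(k)/d^2 = 1 + O(\eps)$. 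Thus the verification of \eqref{eq:25} reduces to the analytic newvector estimate $\phi_1(y) = 1 + O(\delta)$ uniformly for $y$ in a small neighborhood of $1$, whenever $|c| \leq \eps/C(\pi)$.

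The hard part will be this last estimate, which is the key content of analytic newvector theory. My plan is to invoke the Bruhat-like identity
\[
n'(z) = \mathrm{diag}(1/z, z) \cdot n(z) \cdot w \cdot n(1/z), \quad z \neq 0,
\]
which expresses $\pi(n'(c/d))$ as a composition of three mirabolic operators with a single occurrence of $\pi(w)$. The mirabolic factors act on $\phi_0$ in the Kirillov model by explicit multiplication by phases and rescaling of the argument, while the action of $\pi(w)$ is described by the local Jacquet functional equation \eqref{eq:31}, which relates the Mellin transform of $\pi(w)\phi$ to that of $\phi$ twisted by the $\gamma$-factor $\gamma(\psi, \pi \otimes \chi, 1/2)$. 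Chasing through the composition and passing to the Mellin side, the difference $\phi_1(y) - \phi_0(y)$ appears as an integral against the Mellin transform of $\phi_0$, weighted by the $\gamma$-factor and by an oscillatory kernel of frequency proportional to $1/c$. The Stirling-type estimate \eqref{eq:stirling-for-general-RS-gamma} controls the $\gamma$-factor uniformly in $\pi$ and $\chi$, and the rapid decay of the Mellin transform of the smooth, compactly supported $\phi_0$, combined with the constraint $|c|\, C(\pi) \leq \eps$, places the oscillatory frequency in a regime where the integral is $O(\delta)$ on any fixed neighborhood of $y = 1$. This $\GL_2$ specialization of the analytic newvector construction is the $n = 2$ case of the more general $\GL_n(\mathbb{R})$ argument of \cite{JN19a}, and carries over to any archimedean $F$ with only notational changes.
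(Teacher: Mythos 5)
Your argument for part (ii) is correct and is exactly what the paper invokes from \cite{MR0337789}: the newvector transforms under $K_0(\mathfrak{p}^{c(\pi)})$ by $k \mapsto \omega_\pi(d)$, the constraint $|d-1| \le 1/C(\tau)$ with $\tau = \omega_\pi\omega^{-1}$ forces $\tau(d)=1$, and the other constraints put $a,d$ in $\mathfrak{o}^\times$ so that $\eta_\omega(k)=\omega(d)$. Your setup for part (i) --- choosing $W$ in the Kirillov model so that $W(a(\cdot))$ is a fixed unit-norm bump function $\phi_0$ with $\phi_0(1)=1$, the $NAN'$ decomposition of $k$, and the reduction to showing $W(a(y)n'(z)) \approx W(a(y))$ for $y$ near $1$ and $|z|\,C(\pi)\le\eps$ --- also matches the paper's proof of its Lemma $\ref{lem:produce-vector-local-whittaker}$.

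The gap is in the key analytic step. The identity $n'(z) = \operatorname{diag}(1/z,z)\,n(z)\,w\,n(1/z)$ is the Bruhat decomposition of $n'(z)$ and has the unipotent parameter $1/z$, which is \emph{large} in the relevant regime $|z|\le\eps/C(\pi)$. In the Kirillov model, $\pi(n(1/z))W$ is $t\mapsto\psi(t/z)\phi_0(t)$, a rapidly oscillating function whose Mellin transform is concentrated near $C(\chi)\asymp 1/|z|\gg C(\pi)$ and is \emph{not} rapidly decaying. Consequently, after the single application of $\pi(w)$ the Mellin transform appearing in your analysis is not that of the fixed bump $\phi_0$ at all, so the "rapid decay of the Mellin transform of $\phi_0$" that you invoke is simply unavailable, and the sketch does not go through. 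What the paper uses instead is the conjugation identity $n'(z)=z(-1)\,w\,n(-z)\,w$ (two Weyl elements, with the small parameter $-z$ in the middle). One applies the functional equation \eqref{eq:31} once to understand $\pi(w)W$; by the Stirling estimate \eqref{eq:stirling-for-general-RS-gamma} and the rapid Mellin decay of $\phi_0$, its Kirillov function is essentially supported on $|t|\le C(\pi)$; the middle factor $\pi(n(-z))$ then multiplies by $\psi(-zt)$, which is $1+\O(\eps)$ on that support; the final $\pi(w)$ returns one to $W$, with the error controlled by another Mellin inversion. Replacing your Bruhat decomposition with this conjugation is the missing idea; the rest of your sketch then tracks the paper's argument. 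A secondary point: your reduction of the non-archimedean ramified-$\psi$ case to (ii) by conjugating by $a(\varpi^m)$ rescales the $c$-entry of $k$ by $q^m$, so it does not preserve the defining constraints of $K_1(C(\pi),C(\tau),\eps)$; the paper avoids this by noting that after (ii) one may assume $q=\O(1)$ and running the same Mellin argument uniformly, and your appeal to \cite{JN19a} is circular from the paper's standpoint since the paper's appendix gives a self-contained $\GL_2$ proof.
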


Thanks to the identity
$W(g) - \eta_\omega(g)
=
(W(g) - \eta_{\omega_\pi}(g))
+ \eta_\omega(g) (\eta_{\omega_\pi - \omega}(g) - 1)$
and the triangle inequality,
the proof of Theorem
\ref{lem:produce-whittaker-against-random-central-character-wannabe}
reduces to that of Lemmas
\ref{lem:gl1-character-analytic-conductor-invariance}
and \ref{lem:produce-vector-local-whittaker} below:

\begin{lemma}\label{lem:gl1-character-analytic-conductor-invariance}~
  \begin{enumerate}[(i)]
  \item For each $\delta > 0$ there exists $\eps > 0$
    so that
    for each
    unitary character $\omega$ of $F^\times$,
    one has
    \begin{equation}\label{eq:24}
      |\omega(y) - 1| \leq  \delta
      \text{ for all }
      y \in U_1(C(\omega),\eps).
    \end{equation}
  \item Suppose $(F,\psi)$ is unramified.
    For each
    unitary character $\omega$ of $F^\times$,
    one has \eqref{eq:24} with $\delta = 0$ and $\eps = 1$.
  \end{enumerate}
\end{lemma}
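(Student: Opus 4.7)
The plan is to split according to whether $F$ is non-archimedean or archimedean. In the non-archimedean case, both parts of the lemma follow directly from the definition of the conductor: writing $C(\omega) = q^n$, one checks that for $\eps \leq 1$ the set $U_1(q^n,\eps)$ is contained in $U_1(q^n,1)$, which equals $\mathfrak{o}^\times$ when $n = 0$ and equals $1 + \mathfrak{p}^n$ when $n \geq 1$ (the first condition $q^n |y-1| \leq 1$ forces $y$ into $1 + \mathfrak{p}^n \subseteq \mathfrak{o}^\times$, and the second condition then becomes automatic). By the characterization of $C(\omega)$ as the smallest such $q^n$, $\omega$ is trivial on this set, so $\omega(y) - 1 = 0$. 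This handles (ii) entirely and handles (i) in the non-archimedean case with any fixed $\eps \in (0,1]$.

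The substance of the proof is thus the archimedean case of (i). Every unitary character of $F^\times$ factors as $\omega(y) = \omega_0(y/|y|) \, |y|^{it}$ for some $t \in \mathbb{R}$ and unitary character $\omega_0$ of the norm-one subgroup; over $\mathbb{R}$, $\omega_0$ is trivial or the sign character, while over $\mathbb{C}$, $\omega_0(e^{i\theta}) = e^{in\theta}$ for some $n \in \mathbb{Z}$. A direct computation using $\zeta_{\mathbb{C}} = \zeta_{\mathbb{R}}(\cdot)\zeta_{\mathbb{R}}(\cdot + 1)$ and the definition of the analytic conductor recalled in \S\ref{sec:intro-l-factors} yields $C(\omega) \asymp 1 + |t|$ when $F = \mathbb{R}$ and $C(\omega) \asymp (1 + |n| + |t|)^2$ when $F = \mathbb{C}$.

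Fix $\eps \leq 1/2$. For $y \in U_1(C(\omega), \eps)$ the Euclidean distance from $y$ to $1$ is at most $1/\sqrt{2}$, so $y$ lies in a small convex neighborhood of $1$ on which $\log$ is single-valued, and we may write $y = \exp(w)$ with $|w|_{\mathrm{Eucl}} \leq 2 |y-1|_{\mathrm{Eucl}}$. An elementary computation gives
\[
    \omega(y) = \exp\bigl(i n \, \Im(w) + i t \cdot \alpha\,\Re(w)\bigr)
\]
for an appropriate normalization constant $\alpha \in \{1,2\}$, whence $|\omega(y) - 1| \leq |n\,\Im(w)| + \alpha|t \Re(w)| \ll (1 + |n| + |t|)\,|y-1|_{\mathrm{Eucl}}$. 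Over $\mathbb{R}$ the normalized and Euclidean absolute values agree, so $|y-1|_{\mathrm{Eucl}} \leq \eps/C(\omega) \ll \eps/(1+|t|)$ and the bound becomes $|\omega(y) - 1| \ll \eps$. Over $\mathbb{C}$ the normalized absolute value is the square of the Euclidean one, so $|y-1|_{\mathrm{Eucl}} \leq \sqrt{\eps/C(\omega)} \ll \sqrt{\eps}/(1+|n|+|t|)$, and the bound becomes $|\omega(y)-1| \ll \sqrt{\eps}$. In either case the bound is $o(1)$ as $\eps \to 0$, uniformly in $\omega$, which gives the required conclusion.

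The main delicacy is the mismatch over $\mathbb{C}$ between the normalized absolute value (which controls $U_1$) and the Euclidean one (which controls $|\omega(y) - 1|$); this mismatch is exactly compensated by the conductor being quadratic rather than linear in the archimedean parameters, which is why the recipe $U_1(C(\omega), \eps)$ yields a uniform bound.
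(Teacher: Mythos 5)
Your proof is correct and follows the same route the paper sketches: part (ii) and the non-archimedean case of (i) reduce to the definition of the conductor, and the archimedean case of (i) is the case-by-case estimate $|\omega(y)-1| \ll (1+|n|+|t|)\,|y-1|_{\mathrm{Eucl}}$ combined with $C(\omega) \asymp 1+|t|$ (real) or $\asymp (1+|n|+|t|)^2$ (complex). The paper only records the real-case estimate $|y^{it}-1| = O(t|y-1|)$ and leaves the rest as "readily verified"; your writeup supplies the details, including the correct handling of the normalized-versus-Euclidean absolute value over $\mathbb{C}$.
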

\begin{proof}
  Assertion (ii) amounts to the definition
  of the conductor.
  Assertion (i) is readily verified
  by a case-by-case analysis;
  for instance, in the case $F = \mathbb{R}$ it follows from
  the estimate $|y^{i t} - 1| = O(t |y - 1|)$
  for $y \in [1/2,2]$.
\end{proof}

\begin{lemma}~\label{lem:produce-vector-local-whittaker}
  \begin{enumerate}[(i)]
  \item
    For each
    $\delta > 0$
    there exists $\eps > 0$
    so that
    for each generic irreducible unitary
    representation $\pi$ of $\GL_2(F)$
    there exists a unit vector $W \in \pi$
    so that
    \begin{equation}\label{eq:10}
      |W(g)  - \eta_{\omega_\pi}(g)| \leq  \delta
      \text{ for all }
      g \in K_0(C(\pi),\eps).
    \end{equation}
  \item
    Suppose $(F,\psi)$ is unramified.
    For each generic irreducible representation $\pi$
    of $\GL_2(F)$
    there exists a nonzero vector $W \in \pi$
    so that
    \eqref{eq:10}
    holds with $\delta = 0$ and $\eps = 1$.
  \end{enumerate}
\end{lemma}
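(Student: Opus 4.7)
\medskip

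\noindent\emph{Proof proposal.}
Part (ii) is a reformulation of classical newvector theory \cite{MR0337789}. Indeed, when $(F,\psi)$ is unramified, the newvector $W_0 \in \pi$ normalized by $W_0(1) = 1$ satisfies $\pi(k) W_0 = \eta_{\omega_\pi}(k) W_0$ for all $k \in K_0(C(\pi), 1)$; this is precisely the content of Casselman's uniqueness of newvectors, combined with the description \eqref{eq:48} of how $K_0(C(\pi),1) \cap K_1(C(\pi),C(\omega_\pi),1)$ acts on $W_0$ through the central character. So \eqref{eq:10} holds with $\delta = 0$ and $\eps = 1$.

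Part (i) is the analytic analogue, and I approach it as follows. First I would establish an explicit Iwasawa (or $QR$) decomposition for elements $g = \left(\begin{smallmatrix} a&b\\c&d\end{smallmatrix}\right) \in K_0(C(\pi),\eps)$, writing
\[
g = n(x)\, a(y_0)\, z(t)\, \kappa
\]
with $z(t) = t\cdot I$ central and $\kappa$ in the standard maximal compact. A direct calculation yields $|x|, |y_0 - 1|, |t - 1| = O(\eps)$ with $t$ and $d$ agreeing up to a multiplicative error of size $\eps^2/C(\pi)^2$, and $\kappa = \exp(X)$ with $\|X\| \ll \eps / C(\pi)$ (the key point being that the off-diagonal entry $c$ of $g$ is of size $\eps/C(\pi)$). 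Next I construct the vector: fix once and for all a smooth bump $\phi \in C_c^\infty(F^\times)$ with $\phi(1) = 1$, supported on a neighborhood of $y=1$ chosen small enough that $\phi$ varies by at most $\delta/10$ on $U_1(1, \eps)$, and take $W$ to be the unit-normalized image in $\pi$ of the Kirillov model element $W_0(a(y)) := \phi(y)$. Normalization constants are $O(1)$ independently of $\pi$, and $W(1)$ is a fixed positive real.

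Using $W(g) = \psi(x)\, \omega_\pi(t)\, W(a(y_0)\kappa)$, I would split the analysis into three contributions. The prefactor $\psi(x)$ is $1 + O(\eps)$ since $\psi$ is fixed and $|x|$ is small. To handle $\omega_\pi(t)$, apply Lemma \ref{lem:gl1-character-analytic-conductor-invariance} to $\omega_\pi$: since $t/d \in U_1(1, O(\eps^2/C(\pi)^2)) \subset U_1(C(\omega_\pi), \eps')$ for $\eps$ small enough, we obtain $\omega_\pi(t) = \omega_\pi(d) + O(\delta) = \eta_{\omega_\pi}(g) + O(\delta)$. The remaining and central task is to show $W(a(y_0)\kappa) = W(1) + O(\delta)$. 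For the $a(y_0)$ factor this is immediate from smoothness of $\phi$ on the fixed scale. For the $\kappa$ factor, Taylor-expand $\pi(\kappa) W_0 = W_0 + d\pi(X) W_0 + O(\|X\|^2 \|d\pi(X)\|^2 W_0)$ and show that the operator $d\pi(X)$ acts on our fixed bump $W_0$ with operator-norm gain at most $O(C(\pi))$, so that combined with $\|X\| = O(\eps/C(\pi))$ the whole perturbation is of size $O(\eps)$.

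The main obstacle is precisely this last step, the sharp $O(C(\pi))$ bound on the Kirillov-model action of the non-compact Lie algebra generators on a \emph{fixed} smooth bump $W_0$ \emph{uniformly in $\pi$}. The action of the generator $\left(\begin{smallmatrix}0&0\\1&0\end{smallmatrix}\right)$ is not a simple multiplication or differentiation in the Kirillov variable $y$; rather, it becomes such an operation only after conjugating by the Whittaker intertwiner to the opposite model, which introduces the factor $\gamma(\psi, \pi \otimes \chi, s)^{-1}$ of \eqref{eq:31}. The plan is to invoke the Stirling-type estimate \eqref{eq:stirling-for-general-RS-gamma} for this $\gamma$-factor on the Mellin side, which provides precisely the factor of $C(\pi)$ through the relation $C(\pi \otimes \chi, s) \asymp C(\pi) \cdot (1 + |s|)^{O(1)}$, and then Mellin-invert back, using the Sobolev norms on Whittaker models of \S\ref{sec:norms-whit-type-reps} to control the resulting pointwise value of $d\pi(X)W_0$ at $a(y_0)$ with $|y_0 - 1| = O(\eps)$. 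Choosing $\eps$ small enough in terms of $\delta$, independently of $\pi$, then yields \eqref{eq:10}, and via the splitting $W(g) - \eta_\omega(g) = (W(g) - \eta_{\omega_\pi}(g)) + \eta_\omega(g)(\eta_{\omega_\pi\omega^{-1}}(g) - 1)$ together with Lemma \ref{lem:gl1-character-analytic-conductor-invariance} applied to $\omega_\pi \omega^{-1}$, the full Theorem \ref{lem:produce-whittaker-against-random-central-character-wannabe}.
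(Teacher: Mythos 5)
Your core strategy coincides with the paper's: a fixed bump function in the Kirillov model, a reduction to approximate invariance under a small ``lower-triangular'' perturbation of size $\eps/C(\pi)$, and a proof of that invariance on the Mellin side using the local functional equation \eqref{eq:31} together with the Stirling-type estimate \eqref{eq:stirling-for-general-RS-gamma}, the factor $\chi^{-1}(c)$ with $|c|=C(\pi)$ cancelling the conductor growth of $1/\gamma(\psi,\pi\otimes\chi,1/2)$. Where you genuinely diverge is in how the hard perturbation is isolated. The paper factors $g\in K_0(C(\pi),\eps)$ through the big Bruhat cell, $g = z(u)\,n(x_1)\,a(y_1)\cdot w\,n(x_2/c)\,w$ with $x_1,x_2\simeq 0$ and $u,y_1\simeq 1$, so that all the difficulty sits in the single group element $w\,n(x_2/c)\,w$ (essentially $n'(-x_2/c)$), and the estimate $w\,n(x/c)\,w\,W(y)-W(y)\ll|x|$ is proved directly: on the Mellin side the operator $\pi(w\,n(x/c)\,w)-1$ is encoded \emph{exactly} by the factor $\psi(tx)-1$ in $F_3$, with no linearization. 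You instead use the Iwasawa decomposition and then Taylor-expand the compact factor $\kappa=\exp(X)$. That detour costs you two things.

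First, the Lie-algebra argument is intrinsically archimedean, so your proof of (i) does not cover non-archimedean $F$. Part (ii) does not dispose of that case in general: it assumes $\psi$ unramified, and when $\psi$ is ramified the element of $\mathcal{W}(\pi,\psi)$ obtained by translating the newvector need not be nonzero at the identity, so one cannot simply renormalize. (The paper's conventions force $q=\O(1)$ there, but a bound uniform in $\pi$ is still required; the group-element Mellin argument supplies it in all cases at once, whereas yours does not.)

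Second, even over $\mathbb{R}$ or $\mathbb{C}$ the Taylor remainder is not controlled by what you state. The second derivative of $s\mapsto(\pi(\exp(sX))W_0)(a(y_0))$ is $(d\pi(X)^2W_0)(a(y_0)\exp(sX))$, so you need a pointwise bound of size $\O(C(\pi)^2\|X\|^2)$ on $d\pi(X)^2W_0$ evaluated at points off the torus, uniformly in $s\in[0,1]$ --- a strictly stronger input than the first-order bound you announce, and one that drags the $\gamma$-factor analysis through a second application of the functional equation. This is repairable by the same machinery, but it is an avoidable complication: replacing the Iwasawa decomposition by the $N A N'$ decomposition above lands you exactly on the paper's argument, and both issues disappear. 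Your treatment of part (ii), of the central and $n(x)$ factors, of $\omega_\pi(t)$ via Lemma \ref{lem:gl1-character-analytic-conductor-invariance}, and the final deduction of Theorem \ref{lem:produce-whittaker-against-random-central-character-wannabe} from the two lemmas all match the paper and are fine.
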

\begin{proof}
  Assertion (ii) is a well-known consequence of the theory of
  local newvectors \cite{MR0337789}.
  By (ii), we may assume in the proof of (i) that $q = \O(1)$.
  
  As we shall shortly explain
  in more detail, assertion (i)
  is a consequence of the local functional equation
  after choosing $y \mapsto W(y)$ to be a fixed bump function
  on $F^\times$ taking the value $1$ at $y=1$;
  the basic idea is that then $w W(y)$
  is mostly supported on $|y| \ll  C(\pi)$,
  hence
  $W$
  is roughly invariant by
  $w n(x)w $ for $|x|$ a bit smaller than $1/C(\pi)$.
  The method
  of proof employed here
  may be understood as a soft
  analytic variant of the that used to establish
  the theory of local newvectors itself, as in \cite{MR0337789}.
  
  To implement this idea,
  suppose for the sake contradiction that
  assertion (i) fails.
  Then there exists a fixed $\delta > 0$
  and a sequence of tuples $(\eps,\pi)$ as above
  with $\eps \rightarrow 0$
  so that there does not exist a unit vector
  $W \in \pi$
  satisfying \eqref{eq:10}.
  Here and in what
  follows asymptotic notation refers to the $\eps
  \rightarrow 0$
  limit, so that for instance
  $o(1) := o_{\eps \rightarrow 0}(1)$,
  and ``fixed'' means ``independent of $\eps$.''
  We aim to produce a contradiction by showing
  that such a vector indeed exists.


  Abbreviate $C := C(\pi)$.
  Let $c$ be an element of $F^\times$ with
  $|c| = C$.
  Introduce the shorthand
  $z_1 \simeq z_2$
  for $z_1 = z_2 + o(1)$.
  Each $g \in K_0(C,\eps)$
  can be expressed as
  \begin{equation}\label{eq:produce-vector:42}
    g =
    z(u)
    n(x_1) a(y_1) 
    w
    n(x_2/c)
    w
    \text{ with }
    x_1, x_2 \simeq 0
    \text{ and }
    u,y_1 \simeq 1.
  \end{equation}
  Choose a bump function $f \in C_c^\infty(F^\times)$
  with $f(1) = 1$ and $\int_{F^\times} |f|^2 = 1$.
  Then the Mellin transform
  $F_4(\chi)
  :=
  \int_{y \in F^\times}
  f(y) \chi^{-1}(y)$
  satisfies
  \begin{equation}\label{eq:8}
    F_4(\chi) \ll C(\chi)^{-4 A}
    \text{ for each fixed } A
  \end{equation}
  and for all $\chi$ with $\Re(\chi)$ in a fixed compact set.
  Using the theory of the Kirillov model,
  we may choose $W \in \pi$ so that $W(y) = f(y)$;
  then
  $W(1) = 1$ and $\|W\| = 1$.
  For $g$ as in (\ref{eq:produce-vector:42})
  the formula $W(g)
  =
  \omega_\pi(u)
  \psi(x_1) w n(x_2/c) w W(y_1)$
  and the estimates
  $\psi(x_1) \simeq 1$,
  $W(y_1) =f(y_1) \simeq f(1) = W(1)$
  and
  $\omega_\pi(u) \simeq \eta_{\omega_\pi}(g)$
  reduce our task to showing that
  \[
    w n(x/c) w W(y)
    \simeq W(y) \text{ for all }
    y \simeq 1,
    x \simeq 0.
  \]
  Let $\tau \in (0,1/4)$ be fixed.
  By Mellin inversion, the local functional equation,
  and the identity
  $n(x/c) w W(t)
  = \psi(x t/c) w W(t)$,
  we have
  \[
    w n(x/C) w W(y)
    -
    W(y) 
    =
    \int_{\chi : \Re(\chi) = \tau}
    \chi^{-1}(y)
    F_1(\chi)
    F_2(\chi)
    \,
    d \chi
  \]
  with
  \[
    F_1(\chi) :=
    \chi^{-1}(c) /\gamma(\psi,\pi \otimes \chi,1/2),
  \]
  \[
    F_2(\chi)
  :=
  \int_{t \in F^\times}
  F_3(t) \chi(t),
  \]
  \[
    F_3(t) := (\psi(t x) - 1) W_{w \varphi}(c t).
  \]
  (Here we equip the group
  of unitary characters $\chi$ of $F^\times$,
  hence
  also
  its cosets consisting of characters of given real part,
  with the measure dual to the chosen Haar on $F^\times$.)
  By the Stirling-type estimate \eqref{eq:stirling-for-general-RS-gamma}
  for local $\gamma$-factors,
  we have
  \begin{equation}\label{eq:produce-vector:45}
    F_1(\chi) \ll C(\chi)^{2 \Re(\chi)}
    \text{ for $\tau \leq \Re(\chi) \ll 1$}.
  \end{equation}
  Our task thereby reduces to showing
  that
  (for instance)
  the estimate
  $F_2(\chi) \ll |x| C(\chi)^{-2}$
  holds
  for each character
  $\chi$ of $F^\times$
  of real part $\tau$.
  By partial integration
  we reduce to showing that
  \begin{equation}\label{eq:13}
    \int_{t \in F^\times} |\Theta F_3(t)| \ll |x|
  \end{equation}
  when $\Theta := \Delta^d$
  with $d \geq 0$ fixed
  and $\Delta = \Delta_{\GL_1(F)}$
  as defined in \S\ref{sec:comp-betw-lapl}.
  We claim that
  \begin{equation}\label{eq:produce-vector:43}
    \Theta w W(c t)
    \ll (1 + |t|)^{-A}
  \end{equation}
  for each fixed $\Theta$ and each fixed $A \geq 10$.
  From (\ref{eq:produce-vector:43}),
  the product rule,
  and the easy estimate
  $\Theta[t \mapsto  (\psi(t x) - 1)](t) \ll
  |x t|
  (1 + |x t|)^{O(1)}$
  it then follows that
  \[
    \int_{t \in F^\times}
    |\Theta F_3(t)|
    \ll
    |x|
    \int_{t \in F^\times}
    \frac{|t|(1+|x t|)^{O(1)}}{
      (1 + |t|)^A
    }
    \ll |x|,
  \]
  giving the desired estimate (\ref{eq:13}).
  We turn now to  the remaining task
  of establishing the claim
  (\ref{eq:produce-vector:43}).
  We appeal once again to Mellin inversion and the local
  functional
  equation,
  giving for $\sigma \geq 0$ that
  \[
    \Theta w W(c t)
    =
    \int_{\chi : \Re(\chi) = \sigma}
    \chi^{-1}(t)
    \Theta^\wedge(-\chi)
    F_1(\chi) 
    F_4(\chi) \,
    d \chi 
  \]
  with the complex scalar
  $\Theta^\wedge(-\chi) \ll C(\chi)^{O(1)}$ defined via
  the relation
  $\Theta^\wedge(-\chi)(\chi^{-1})
  = 
  \Theta(\chi^{-1})$
  and
  with $F_1(\chi), F_4(\chi)$ as defined above.
  By (\ref{eq:8}) and (\ref{eq:produce-vector:45}),
  we deduce that $\Theta W_{w \varphi}(c t) \ll |t|^{-\sigma}$
  for each
  $\sigma \geq 0$ belonging to a fixed compact set;
  taking $\sigma = 0$ and $\sigma = A$,
  we finally obtain (\ref{eq:produce-vector:43}).
\end{proof}

\subsection*{Acknowledgements}
We thank Giacomo Cherubini, Subhajit Jana, Emmanuel Kowalski,
Philippe Michel, Ian Petrow, Zhi Qi, K. Soundararajan, Akshay
Venkatesh, Han Wu and Matthew Young for helpful discussions and
feedback.

\bibliography{refs}{}
\bibliographystyle{plain}
\end{document}